\documentclass[12pt,a4paper,reqno,oneside]{amsart}
\usepackage[utf8]{inputenc}
\usepackage[english]{babel}

\usepackage{mathtools}
\usepackage{amssymb}
\usepackage{amsmath}
\usepackage[normalem]{ulem}
\usepackage{float}
\usepackage[shortlabels]{enumitem}
\usepackage{tikz}
\usepackage{mathrsfs}
\usepackage{booktabs,tabularx,array}
\usepackage{pgfplots}
\pgfplotsset{compat=1.18} 
\usepackage{hyperref}
\usepackage{cleveref}
\usepackage{autonum}

\makeatletter
\AddToHook{begindocument/end}{
\autonum@generatePatchedReferenceCSL{Cref}
}
\makeatother

\usetikzlibrary{arrows.meta,backgrounds}

\usepackage[left=2cm,right=2cm,top=2.25cm,bottom=2.25cm]{geometry}

\newcommand{\dd}{\, \mathrm{d}}
\newcommand{\R}{\mathbb R}
\newcommand{\g}{{\bf g}}
\newcommand{\s}{\mathfrak{s}}
\newcommand{\fraksc}{\s_{\mathrm{c}}}

\newsavebox{\surfaceglyphbox}
\newfont{\SurfaceMeasureFont}{ec-lmbx10 at 9pt}
\newdimen\surfacewidth
\newdimen\surfaceoffset
\newcommand{\SurfaceText}[3]{
  \global\surfacewidth=0pt
  \foreach \glyph in {#3}{
    \sbox{\surfaceglyphbox}{\SurfaceMeasureFont\boldmath\glyph}
    \global\advance\surfacewidth by\wd\surfaceglyphbox
  }
  \pgfmathsetmacro{\startglyph}{4.5-.5*#2*(\the\surfacewidth)/(1cm)}
  \global\surfaceoffset=0pt
  \foreach \glyph in {#3}{
    \sbox{\surfaceglyphbox}{\SurfaceMeasureFont\boldmath\glyph}
    \pgfmathsetmacro{\gx}{\startglyph+#2*(\the\surfaceoffset)/(1cm)}
    \pgfmathsetmacro{\gd}{.625+.125*(\gx-3)}
    \pgfmathsetmacro{\gz}{(5-#1)/\gd}
    \pgfmathsetmacro{\gb}{.08-.6*.125*(5-#1)/(\gd*\gd)}
    \pgfmathsetmacro{\ge}{.55+.6/\gd}
    \begin{scope}[shift={(\gx,#1,\gz)},cm={1,\gb,-.75,\ge,(0,0)},scale=#2]
      \node[anchor=base west,transform shape,inner sep=0pt,
        text=nonlocal!55!black,font=\footnotesize\bfseries\boldmath]
        at (0,0) {\glyph};
    \end{scope}
    \global\advance\surfaceoffset by\wd\surfaceglyphbox
  }
}

\newsavebox{\surfaceformulabox}
\newcommand{\SurfaceFormula}[3]{
  \sbox{\surfaceformulabox}{\fontsize{9}{11}\selectfont
    \SurfaceMeasureFont\boldmath\color{nonlocal!55!black}$\displaystyle #3$}
  \pgfmathsetmacro{\fw}{(\the\wd\surfaceformulabox)/(1cm)}
  \pgfmathsetmacro{\fs}{min(#2,2.5/\fw)}
  \pgfmathsetmacro{\fstart}{4.5-.5*\fs*\fw}
  \pgfmathsetmacro{\fhalf}{\fw/128+.0006}
  \pgfmathsetmacro{\ftop}{(\the\ht\surfaceformulabox)/(1cm)+.03}
  \pgfmathsetmacro{\fbottom}{-(\the\dp\surfaceformulabox)/(1cm)-.03}
  \foreach \strip in {0,...,63}{
    \pgfmathsetmacro{\fu}{(\strip+.5)*\fw/64}
    \pgfmathsetmacro{\fx}{\fstart+\fs*\fu}
    \pgfmathsetmacro{\fd}{.625+.125*(\fx-3)}
    \pgfmathsetmacro{\fz}{(5-#1)/\fd}
    \pgfmathsetmacro{\fb}{.08-.6*.125*(5-#1)/(\fd*\fd)}
    \pgfmathsetmacro{\fe}{.55+.6/\fd}
    \begin{scope}[shift={(\fx,#1,\fz)},x=1cm,y=1cm,
      cm={1,\fb,-.75,\fe,(0,0)},scale=\fs]
      \clip (-\fhalf,\fbottom) rectangle (\fhalf,\ftop);
      \node[anchor=base west,transform shape,inner sep=0pt,outer sep=0pt]
        at (-\fu,0) {\usebox{\surfaceformulabox}};
    \end{scope}
  }
}

\definecolor{teal}{HTML}{008080}
\definecolor{muted}{HTML}{888888}
\definecolor{rust}{HTML}{B7410E}
\definecolor{red}{RGB}{200,40,40}
\definecolor{blue}{RGB}{40,80,200}
\definecolor{redbar}{RGB}{160,15,35}
\definecolor{bluebar}{RGB}{20,40,165}
\definecolor{bramson}{HTML}{2968AF}
\definecolor{nonlocal}{HTML}{008C8C}
\definecolor{nopull}{HTML}{D69A22}
\definecolor{critical}{HTML}{8D3DAD}
\definecolor{forced}{HTML}{D54B43}

\newtheorem{theorem}{Theorem}[section]
\newtheorem{lemma}[theorem]{Lemma}
\newtheorem{claim}[theorem]{Claim}
\newtheorem{remark}[theorem]{Remark}
\newtheorem{proposition}[theorem]{Proposition}
\theoremstyle{definition}
\newtheorem{definition}[theorem]{Definition}

\numberwithin{equation}{section}

\title[Logarithmic 
corrections
in shifting environments and competition systems]{Logarithmic corrections and interacting fronts in shifting environments and competition systems}

 \author{\'Emeric Bouin}
 \address[E. Bouin]{CEREMADE - Université Paris-Dauphine, PSL Research University, UMR CNRS 7534, Place du Mar\'echal de Lattre de Tassigny, 75775 Paris Cedex 16, France.} 
 \address{DMA - \'Ecole normale supérieure, PSL Research University, UMR CNRS 8553, 45 rue d’Ulm, 75230 Paris Cedex 05, France.} \email{bouin@ceremade.dauphine.fr}

\author{King-Yeung Lam}
\address[K.-Y. Lam]{Department of Mathematics, The Ohio State University, Columbus, OH, 43210, USA.}
\email{lam.184@osu.edu}
\author{Chang-Hong Wu}
\address[C.-H. Wu]{Department of Applied Mathematics, National Yang Ming Chiao Tung University, Hsinchu, 300, Taiwan.}
\email{changhong@math.nctu.edu.tw}

\begin{document}
\begin{abstract}
We investigate refined large-time propagation and front interaction in a two-species Lotka--Volterra competition--diffusion system on the real line. When the faster competitor spreads into unoccupied habitat, its invasion front generates a moving heterogeneous environment for the trailing competitor, whose front position may be influenced by the leading front's Bramson delay.
To isolate and quantify this mechanism, we first study a scalar Fisher--KPP equation in a shifting environment.
Under weighted integrability assumptions on the growth profile, we determine the logarithmic front corrections up to a bounded error and identify the corresponding limiting wave profiles, thereby explicitly demonstrating how the logarithmic displacement of the environment modifies the classical Bramson correction. 
The resulting asymptotics distinguish locally pulled propagation, nonlocal pulling, and two critical transition regimes. 
In the forced-wave regime, the invasion front remains locked to the moving environment, and the dynamics select a unique stationary profile from a one-parameter family. In the nonlocal pulling regime, the favourable habitat ahead accelerates the invasion front even though the front lags behind the environmental shift. At the threshold of nonlocal pulling, the interaction modifies only the logarithmic correction and, at a further critical parameter value, produces an additional \(\ln\ln t\) term. 
We then apply these results to the competition system. In the nonlocally pulled regime, we determine the logarithmic displacement of the trailing invasion front, explicitly quantifying the transmission of the leading species' Bramson delay, and prove convergence to the corresponding travelling-wave profile. The proofs combine spatial correctors associated with Schr\"{o}dinger operators, sharp Gaussian barriers, and perturbative sub- and supersolution constructions for the coupled equations.
Finally, we establish a Liouville-type rigidity principle for a class of monotone parabolic systems, based on quantitative leading-edge control of a single component and mild persistence conditions. This yields uniqueness, up to translation, of monostable travelling waves in both competition regimes.
\end{abstract}
  \subjclass[2020]{35K57,35B40,35C07,92D25}

\maketitle
\tableofcontents

\section{Introduction and presentation of main results}

Propagation phenomena in reaction–diffusion equations provide a natural mathematical framework for describing the spatial spread of biological populations, chemical reactions, epidemics, and many other processes in which local growth interacts with spatial dispersal. The central questions are not only how fast an invasion spreads, 
but also the location of its interface to sublinear-in-time accuracy, whether solutions converge to a travelling-wave profile and, if so, how that profile is selected.

Two fundamental prototypes are the Fisher–KPP equation
\begin{equation}\label{e.kkpp}
n_t - d \, n_{xx}=n(r-n),\end{equation}with constant diffusion rate $d>0$ and constant or  heterogeneous growth rate, as well as the diffusive Lotka-Volterra competition system
\begin{equation}\label{e.com}
\begin{cases}
    u_t-u_{xx}=u(1-u-av),\\[5pt]
    v_t-d\, v_{xx}=rv(1-bu-v).
\end{cases}
\end{equation} 

For the scalar equation with constant $r>0$, sufficiently localised initial data generate a front with minimal propagation speed $c=2\sqrt{dr}$ \cite{fisher1937wave,kolmogorov1937study,aronson2006nonlinear}. In one dimension, a celebrated result of Bramson \cite{bramson1983convergence} established the sharper position estimate:
\begin{equation}
x(t) = ct - \frac32 \sqrt{\frac{d}{r}} \ln t + O(1).
\end{equation}

This logarithmic delay, now commonly referred to as the Bramson correction, has become one of the central features of pulled-front propagation. Such a front is selected by the
small-amplitude dynamics far ahead in its leading edge. In contrast, pushed fronts are selected by nonlinear dynamics in the transition region, and typically exhibit only a bounded phase correction. 
The distinction is more subtle in competition systems, where several linearised modes can interact; recent work has examined the pulled-pushed transition as competition parameters vary \cite{wxz2024}.

Heterogeneity introduces another difficulty by breaking translation invariance. We first study a scalar equation with a shifting growth profile $\g(x-s(t))$, where $y=x-s(t)$ is the environmental coordinate, and then apply the resulting estimates to competition--diffusion systems.

By varying the parameters governing the environmental shift $s(t)$, we distinguish five regimes according to the influence of the moving habitat on the invasion front:

\begin{itemize}
    \item[$\to$] \textbf{Classical Bramson regime}
    (\Cref{t.0,thm:standard}).
    For sufficiently slowly shifting environments, we identify conditions
    on the rate of convergence of the profile $\g$ to spatial homogeneity at $+\infty$ under
    which the classical Bramson correction remains unchanged.

    \item[$\to$] \textbf{Forced-wave regime}
    (\Cref{thm:selection}).
    At the critical shift,
    the front remains locked to the moving habitat interface, and we prove that a specific profile is selected from a family of forced waves that are not related by translations.

    \item[$\to$] \textbf{Nonlocal-pulling regime}
    (\Cref{thm:pulling}).
    As the environmental shift becomes faster, the invasion front falls
    behind the moving habitat. Nevertheless, the favourable region
    ahead can enhance propagation through nonlocal pulling, increasing
    the spreading speed and modifying the logarithmic correction.

    \item[$\to$] \textbf{Critical-pulling regime}
    (\Cref{thm:cpulling}).
    At the threshold between nonlocal pulling and no-pulling, the
    distant favourable region no longer increases the leading-order
    spreading speed, but its residual influence modifies the
    logarithmic correction and, at a further critical value, produces an additional
    $\ln\ln t$ term.
    
    \item[$\to$] \textbf{No-pulling regime}
    (\Cref{thm:no-pull}).
    When the favourable region moves sufficiently far ahead, its
    influence on the invasion front becomes negligible, and nonlocal
    pulling no longer occurs.

\end{itemize}

We next turn to the competition system \eqref{e.com} in the monostable regimes $0<a<1$ and $b \in (0,\infty)\setminus\{1\}$. Here $u$ can invade the $v$-only equilibrium. The stable state behind the $u$-front depends on $b$: the species $u$  excludes $v$ when $b>1$, whereas the two species coexist when $0<b<1$.

We are particularly interested in  initially localised populations:
\begin{equation}
   0\le u_0,v_0\le 1,\qquad
u_0,v_0\not\equiv0,\qquad
\operatorname{supp}u_0,\operatorname{supp}v_0
\text{ are compact}.
    \label{eq:compact-data}
\end{equation}

In the absence of competition, the two species spread at the scalar KPP
speeds
    $$c_u=2,
    \qquad
    c_v=2\sqrt{rd}.$$
We focus mainly on the regime $c_v>c_u$, in which the species $v$ first
invades the empty habitat. The species $u$  then propagates through the region
already occupied by $v$ (and thus travels at a speed strictly less than $c_u$) and either eventually replaces it, when $b>1$, or
drives the solution towards the stable coexistence state, when $b<1$. The
solution therefore develops a propagating terrace with two distinct
interfaces: a fast colonisation front connecting $(0,1)$ to $(0,0)$ and a
slower secondary front connecting the stable equilibrium $(u^\star,v^\star)$ to the $v$-only equilibrium $(0,1)$, where 
\begin{equation}\label{e.uv.uv}
(u^\star,v^\star)
=
(1,0)\quad\text{if }b\ge 1,
\qquad\text{or}\qquad
(u^\star,v^\star)
=
\left(
\frac{1-a}{1-ab},
\frac{1-b}{1-ab}
\right)
\quad\text{if }b<1.
\end{equation}

Far ahead of the secondary front, where $u$ is exponentially small, the $v$-component is
therefore expected to behave to leading order like a Fisher--KPP front
propagating with speed $c_v$. In turn, the equation for $u$ may be viewed
heuristically as a Fisher--KPP equation in a shifting environment, with the
effective growth coefficient $1-av$ generated by the propagation of the
competing species. This scalar problem isolates the mechanism through which
the logarithmic delay of the fast front is transmitted to the secondary
front. As the analysis below shows, the resulting logarithmic correction to
the secondary front is not, in general, given by the classical Bramson
coefficient.

A key contribution of the present work is to disentangle the mechanisms governing the interaction between the two invasion fronts in the Shigesada--Kawasaki problem. Girardin and Lam~\cite{girardin2019invasion} determined the leading-order spreading speeds through a global construction of coupled sub- and supersolutions, in which the dynamics of the two fronts are treated simultaneously. While effective for establishing spreading speeds, this approach does not readily isolate the mechanism by which the faster front influences the slower one. Our analysis instead proceeds in stages, identifying the exponentially small density of the slower species at the location of the faster front as the quantity that mediates their interaction. More precisely, setting
\begin{equation}
s(t)=c_vt-\eta_v\ln (1+t),
\qquad
c_v=2\sqrt{dr},
\qquad
\eta_v=\frac32\sqrt{\frac dr},
\end{equation}
we establish
\begin{equation}\label{e.107.1}
u(t,s(t))
\asymp
t^{-\frac32+\frac{c_v\eta_v}{2}}
e^{-\left[\tfrac{c_v^2}{4}-1\right]t}.
\end{equation}
This estimate provides the link between the two fronts and allows us to analyse the subsequent propagation of the slower species through an associated scalar equation in a shifting environment. By separating these mechanisms, we obtain a more transparent route from the interaction of the fronts to
localisation of their level sets up to a bounded error
and convergence to the corresponding travelling-wave profiles. Identifying the limiting profiles requires an additional Liouville-type rigidity argument, developed for competition systems and more general monotone parabolic systems in \Cref{app:rigid}.

\subsection{Relation to previous work}

Kolmogorov et al.~\cite{kolmogorov1937study} first showed that, for
Heaviside initial data, solutions of the Fisher--KPP equation converge in
shape to a travelling wave after a suitable time-dependent shift. This was
later extended to broader classes of monotone initial data
\cite{Rothe}. Uchiyama~\cite{uchiyama1978behavior} obtained the critical
logarithmic correction up to an $O(\ln\ln t)$ error, while
Bramson~\cite{bramson1983convergence} established the sharp asymptotics
through the probabilistic representation of the KPP equation by branching
Brownian motion. Related results were obtained by Lau~\cite{lau1985nonlinear}
using maximum-principle arguments. Bramson-type logarithmic corrections have also been established for spatially periodic environments
\cite{hamel2016logarithmic}, providing a precursor to the study of
more general spatial heterogeneity.

Models of population dynamics in shifting environments were introduced by
Potapov and Lewis~\cite{potapov2004climate} and Berestycki et al.
\cite{berestycki2009can}. Depending on the environmental shift rate, the
population front may propagate ahead of the moving habitat, remain locked to
it, or lag behind it. Of special interest are forced waves, which are stationary solutions in the moving environmental frame. For environments with a local maximum, Berestycki and Fang \cite{berestycki2018forced} established existence and multiplicity results for forced waves and investigated their attractivity, including the selection of minimal forced waves in certain parameter regimes. Here we treat the borderline case of a monotone environment, where there is no minimal forced wave, and extend the ideas in Hamel et al. \cite{hamel2013short,hamel2016logarithmic} and Nolen et al. \cite{nolen2017convergence} to prove selection of a suitable forced wave in the shifted frame.

Beyond the forced wave regime, nonlocal pulling may occur:
favourable habitat located an $O(t)$ distance ahead of the invasion front can
enhance propagation and select a supercritical speed
\cite{holzer2014accelerated,girardin2019invasion}.
Speed selection in such problems has been studied extensively, in
particular through Hamilton--Jacobi limits \cite{lam2026survey,Lam2022asymptotic}. 
For nonmonotone environmental profiles, leading-order spreading speeds have been studied recently in \cite{girardin2024spreading,lam2024asymptoticspreadingkppreactive}. 
The study of Bramson-type logarithmic corrections in shifting environments has only recently been initiated in \cite{LamWu_preprint}, where the environmental profile $\g$ is assumed to be piecewise constant, allowing the use of explicit heat-kernel estimates.

A further contribution of this work is a Liouville-type rigidity principle that may be of independent interest. In the scalar Fisher--KPP setting, the PDE proof of the Bramson correction in \cite{hamel2013short} uses a sliding argument to characterise limiting entire solutions in a moving frame, based on the Liouville theorem of Berestycki and Hamel~\cite{beresycki2007generalized}. For monotone systems, uniqueness of bistable travelling waves up to translation is classical; see Volpert et al.~\cite[pp.~15, Theorem~2.1]{Volpert1994travelling}, as well as \cite{matano2019asymptotic} for an anisotropic model. Related uniqueness results for monostable competition systems have been established in specific settings through comparison and sliding arguments, often under assumptions involving detailed spatial asymptotics or linear determinacy \cite{huang2007,Leung2011travelling,Zhao2011existence}. Our rigidity theorem applies to a general class of monostable systems, including \eqref{e.com}, and requires only rough control of the leading-edge decay to initiate sliding. Uniqueness of travelling waves up to translation then follows as a consequence.

Seminal existence results for travelling waves in Lotka--Volterra competition systems were obtained by Tang and Fife \cite{tang1980propagating}, Gardner \cite{gardner1982existence}, Conley and Gardner \cite{conley1984application}, Volpert et al.~\cite{Volpert1994travelling}, and Kan-On \cite{kan1995parameter,kan1997fisher}. Spreading from compactly supported data was studied in \cite{lewis2002spreading,li2005spreading}, while 
exponentially decaying data and nonlocal interactions between separated fronts were investigated in \cite{liu2021asymptotic}. Our results refine front localisation up to a bounded error and establish convergence to travelling-wave profiles for initial data with algebraically weighted exponential tails. 

The co-invasion setting, in which initially localised populations generate interacting fronts, was motivated by Shigesada and Kawasaki
\cite{shigesada1997biological}. Leading-order spreading speeds were established by Girardin and Lam \cite{girardin2019invasion} for $0<a<1<b$ and by Liu et al. \cite{liu2019asymptotic} for $0<a,b<1$; see also  \cite{lin2012asymptotic}. For comparison, the bistable case $a,b>1$ was studied in \cite{carrere2018spreading,peng2021sharp}, 
but it is quite different from the monostable case. See also \cite{stavrianidi2026longtimebehaviorsystemsfisherkpp} for front interaction arising from mutation of species. We extend the monostable spreading-speed description by resolving the logarithmic displacement of the interacting fronts.

\subsection{Main results for the scalar equation}

Before stating our main results, 
we recall the following standard asymptotic notations 
for two positive functions $p$, $q$ of a variable $z$. We write $p(z)\lesssim q(z)$ if $\limsup\limits_{z\to\infty}\frac{p(z)}{q(z)}<\infty$. We use the notation $p(z)\asymp q(z)$ whenever $p(z)\gtrsim q(z)$ and $p(z)\lesssim q(z)$.

We study propagation for the scalar equation in a shifting environment \cite{potapov2004climate,
berestycki2009can,berestycki2018forced,lam2026survey}, where the population is modelled by 
\begin{equation}\label{eq:mainshift}
 \begin{cases}
      n_t - n_{xx} = n( \g(x- s(t)) - n) \quad \text{ for }t>0,~x \in \mathbb{R},\\
      n(0,\cdot) = n_0,
 \end{cases}  
\end{equation}
where $n_0 \in \mathsf{L}^\infty(\mathbb R; [0,1])$ satisfies, for some $x_0>0$, 
\begin{equation}\label{e.ini}
\liminf_{x\to-\infty}n_0(x)>0, \quad n_0(x)=0 \quad \text{ for all } x\geq x_0. 
\end{equation}
The climate shift is given by
\begin{equation}
s(t) = \s t - \eta \ln(1+t),
\end{equation}
where $\s$ and $\eta$ are given constants: $\s$ represents the leading-order speed of the climate drift, while $\eta$ quantifies the delay with respect to the leading order. 

For a level $\kappa>0$, define the rightmost level-set position
\begin{equation}\label{location-u}
\xi_{\kappa}(t)=\sup\left\{x\in\mathbb{R}:n(t,x)\ge \kappa\right\}.
\end{equation}
Our goal is to locate these level sets up to a bounded error. 
As we shall see, the invasion front may advance ahead of the environmental interface, remain locked to it, or lag behind it.

Existing results describe persistence, extinction and spreading speeds in several classes of shifting environments, generally at leading order. We refine this description by determining logarithmic corrections to front locations and, where appropriate, convergence after recentering to homogeneous travelling waves $\Phi_\nu$ connecting $\nu>0$ to $0$:
\begin{equation}\label{TW0}
c_\star\Phi_{\nu}'+\Phi_\nu''+\Phi_\nu(\nu-\Phi_\nu)=0 \quad \text{ with } \lim_{-\infty}\Phi_\nu = \nu,\quad \lim_{+\infty}\Phi_\nu = 0.
\end{equation}
For each pair of environmental parameters $(\s,\eta)$, we associate a selected leading-order speed $c_\star=c_\star(\s,\eta)$ and exponential decay rate $\lambda_\star=\lambda_\star(\s,\eta)$ with the corresponding travelling-wave profile and $\nu\in\{1-a,1\}$. Specifically, we set (see \cite{Lam2022asymptotic,lam2024asymptoticspreadingkppreactive})
\begin{equation}\label{eq:lambdastar}
\lambda_\star := 
\begin{cases}
\sqrt{1-a}, &\text{when } \s > 2 ( \sqrt{1-a} + \sqrt{a} ),\\[5pt]
\frac{\s}{2} - \sqrt{a} &\text{when } \s \in (2, 2 ( \sqrt{1-a} + \sqrt{a} )],\\[5pt]
\begin{cases}
1-\sqrt{a},&\text{if } \eta<\frac{3}{2}\\
1,&\text{if } \eta \ge\frac{3}{2}
\end{cases} &\text{when } \s = 2,\\
1&\text{when } \s < 2,
\end{cases}
\end{equation}
and 
\begin{equation}\label{eq:cstar}
c_\star := 
\begin{cases}
2 \sqrt{1-a}, &\text{if } \s > 2 ( \sqrt{1-a} + \sqrt{a} ),\\[5pt]
\lambda_\star + \frac{1-a}{\lambda_\star} &\text{if } \s \in (2, 2 ( \sqrt{1-a} + \sqrt{a} )],\\[5pt]
2&\text{if } \s \leq 2.
\end{cases}
\end{equation}
For convenience, we also denote
\begin{equation}
    \fraksc :=2(\sqrt a+\sqrt{1-a}).
\end{equation}
When $\s=2$, the limiting homogeneous travelling wave is associated with $\nu=1-a$ if $\eta<\frac32$, and with $\nu=1$ if $\eta>\frac32$. At the critical value $\eta=\frac32$, a distinct forced-wave regime emerges between these two homogeneous travelling-wave regimes. Here, the invasion front remains locked to the moving environmental interface, and the dynamics select a particular forced wave from a one-parameter family; see \Cref{thm:selection}. This intermediate regime marks the transition between nonlocal pulling from the favourable region ahead and classical Fisher--KPP propagation within that region. The transition is also reflected in the discontinuity of the selected decay rate $\lambda_\star$, which jumps from $1-\sqrt a$ to $1$ as $\eta$ crosses $\frac32$, while the propagation speed remains $c_\star=2$.

Suppose $\s < 2$ and the growth rate is given by a heterogeneous function $\g \in \mathsf{L}^\infty(\mathbb{R})$ satisfying
\begin{equation}\label{e.spatial1}
  \lim_{x \to +\infty} \g(x) = 1.
\end{equation}
Then \cite[Theorem 5]{Lam2022asymptotic} shows that the spreading speed is still $c^\star=2$.

The following theorem establishes stability of the classical Bramson correction.

\begin{theorem}\label{t.0} Suppose $\s <2$ and $\eta \in \mathbb R$. 
Let $n$ be a solution of \eqref{eq:mainshift} with initial data satisfying \eqref{e.ini}. Assume that there exists $x_0 >0$ such that $\g \in \mathsf{L}^\infty(\mathbb{R})$ satisfies
\begin{equation}
{\inf_{\mathbb R} \g >0,\qquad}
   \sup_{x \geq x_0}\g \leq 1, \qquad \sigma^\star(x):= 1-\inf_{x' \geq x}\g(x') \in \mathsf{L}^1((x_0,\infty)).
\end{equation} 
Then there exists a bounded function $\Lambda: \mathbb{R}^+ \to \mathbb{R}$ such that for any
$\gamma>0$,
\begin{equation}\label{e.1016.2a}
\lim_{t\to\infty} \sup_{x \geq s(t)+\gamma \ln t}  \left| n(t,x) -  \Phi_1^\star\left(x-2t + \frac32 \ln t + \Lambda(t) \right) \right| = 0,
\end{equation}
where $ \Phi_1^\star$ is the critical travelling wave that satisfies \eqref{TW0} with $\nu =1$, $\lambda_\star = 1$ and $c_\star = 2$.
\end{theorem}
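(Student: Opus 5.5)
We will follow the PDE approach to the Bramson correction of Hamel, Nolen, Roquejoffre and Ryzhik \cite{hamel2013short}. We pass to the moving frame $z=x-2t+\frac32\ln(1+t)$ and the weighted unknown $p(t,z)=e^{z}n(t,x)$. This removes the drift and the critical mass, so the linearised equation becomes
\begin{equation}
p_t-p_{zz}-\frac{3}{2(1+t)}(p_z-p)=\big(\g(x-s(t))-1\big)p-e^{-z}p\,n .
\end{equation}
Since $\s<2$, the environmental interface satisfies $x-s(t)\ge (2-\s)t+O(\ln t)$ throughout the region $z\ge -(2-\s)t/2$. There the coefficient $\g(x-s(t))-1$ lies in $[-\sigma^\star(x-s(t)),0]$ and is $\mathsf L^\infty$ but integrable. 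The plan has three steps: upper and lower bounds on $p$ of the form $p\asymp z\,e^{-z^2/(4t)}$ for $1\le z\lesssim\sqrt t$, which give $\xi_\kappa(t)=2t-\frac32\ln t+O(1)$; a stability argument producing the bounded shift $\Lambda$; and an identification of the limit profile.

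\textbf{Upper bound.} By the assumption $\sup_{x\ge x_0}\g\le 1$, in the region $x-s(t)\ge x_0$ the function $n$ is a subsolution of the homogeneous KPP equation $n_t-n_{xx}=n(1-n)$. Behind that region we only have $n\le\max(\|\g\|_\infty,1)$, which is bounded. Since $x_0+s(t)$ lies a distance $\sim(2-\s)t$ behind $2t$, this region cannot influence the leading edge except through exponentially small terms. Concretely, we compare $p$ on the half-line $z\ge -L(t)$, with $L(t)=\epsilon t$, against the Dirichlet supersolution of the linearised problem from \cite{hamel2013short}, which is of the form $C\,(z+L)\,e^{-z^2/(4t)}/t^{3/2}\cdot t^{3/2}$. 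The boundary error at $z=-L$ is $e^{-L}\cdot O(1)$, which is super-polynomially small after the heat-kernel estimate. This gives $n(t,2t-\frac32\ln t+z)\lesssim (1+z_+)e^{-z}$.

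\textbf{Lower bound.} This is the main obstacle, because the sink $-(1-\g)$ could in principle destroy the Bramson factor $z$. Here we use $\sigma^\star\in\mathsf L^1$. Set $V(t,z)=\sigma^\star(z+2t-\frac32\ln t-s(t))$. Since $\sigma^\star$ is nonincreasing and integrable, $V$ evaluated on the relevant range of $z$ has $\int V\,dz$ bounded uniformly in $t$, and in fact $\int (1+|z|)V\,dz$ is $O(t)$-small relative to the diffusive scale. We then build a corrector: we look for a subsolution $\underline p=\phi(z)\,w(t,z)$, where $\phi$ solves the stationary Schrödinger problem $\phi''=V\phi$ with $\phi\to1$ and $\phi$ bounded below by a positive constant. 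Integrability of $\sigma^\star$ makes $\phi$ a bounded, positive perturbation of $1$; this is the ``spatial corrector associated with Schrödinger operators'' mentioned in the abstract. Alternatively, and more robustly given the time-dependence of the frame, we use Duhamel: the heat-semigroup contribution of $Vp$ is bounded by $\int_0^t\int G\,V\,p$, and the bound on $\int V$ together with the Gaussian upper bound on $p$ makes this a relative $O(\epsilon)$ perturbation on $[t/2,t]$ once the initial time is large. Next we handle the nonlinear term $-e^{-z}pn$. As in \cite{hamel2013short}, we restrict to $z\ge \frac12\ln$-scale levels and use the upper bound on $n$, then close with a Dirichlet subsolution of the form $c\,(z-z_1)e^{-z^2/(4t)}$. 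Finally, $\inf\g>0$ together with standard spreading results \cite[Theorem 5]{Lam2022asymptotic} provides the initial positive mass behind the front needed to launch this subsolution.

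\textbf{Convergence.} With $\xi_\kappa(t)=2t-\frac32\ln t+O(1)$ established, we define $\Lambda(t)$ by normalising a level set, say $\Lambda(t)=\xi_{1/2}(t)-2t+\frac32\ln t$ adjusted by a constant. We then take limits $n(t_k+\cdot,\cdot+\xi(t_k))$. In the region $x\ge s(t)+\gamma\ln t$ the environment satisfies $\g\to1$ locally uniformly along the moving frame, since $x-s(t)\to\infty$. Any limit is therefore an entire solution of homogeneous KPP, trapped between shifts of $\Phi_1^\star$ with the critical tail $ze^{-z}$, because our upper and lower bounds hold uniformly in the frame. The Liouville theorem of Berestycki--Hamel \cite{beresycki2007generalized} then forces the limit to be a translate of $\Phi_1^\star$, and the normalisation fixes the translate. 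Uniformity on all of $\{x\ge s(t)+\gamma\ln t\}$ follows from two facts: $n\to1$ behind the front up to that region, which comes from the lower bound together with $\g\ge1-\sigma^\star\to1$, and both $n$ and $\Phi_1^\star$ tend to zero ahead of it.
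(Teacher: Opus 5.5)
\paragraph{Upper bound.} This step does not produce the $\frac32\ln t$ delay. You put the absorbing boundary at $z=-L(t)=-\epsilon t$. Your barrier $C(z+L)e^{-z^2/(4t)}$ is then of size $\epsilon t$ at bounded $z$. So comparison only gives $n(t,2t-\frac32\ln t+z)\lesssim t\,e^{-z}$, i.e.\ $\xi_\kappa(t)\le 2t-\frac12\ln t+O(1)$, not $(1+z_+)e^{-z}$.

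This is intrinsic, not a matter of the particular formula. In the Bramson frame the zeroth-order term is a source (the drift enters as $+\frac{3}{2(1+t)}(p_z-p)$ on the left), so it produces growth $(1+t)^{3/2}$. Only the reflection factor $t^{-3/2}$ of an absorbing boundary at \emph{bounded} distance compensates this growth. A boundary receding linearly is essentially invisible to the diffusion, so that factor is lost. The smallness $e^{-\epsilon t}$ of the boundary data does not help, because the excess comes from the bulk.

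You have to use the bound $n\le\max\{1,\|\g\|_\infty\}$ on a boundary fixed in the Bramson frame. Since $\s<2$, for large $t$ the half-line $\{z\ge0\}$ lies in $\{x-s(t)\ge x_0\}$, where $\g\le1$, and $p(t,0)$ is bounded there. The half-line estimate of \cite{hamel2013short} then applies. The paper does it more simply. It writes $n\le A\hat n$, where $\hat n$ solves the homogeneous KPP equation from $M\mathbf 1_{(-\infty,K]}$ with $K$ large enough that $\hat n\ge\frac12$ on $\{x\le s(t)+x_0\}$. Then $A\hat n$ is a supersolution for $A\ge1+2(\|\g\|_\infty-1)_+$, and every upper bound is inherited from the homogeneous problem.

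\paragraph{Lower bound.} This step tracks the wrong quantity. Since $V(t,\cdot)$ translates in the $z$-frame at speed about $2-\s$, a stationary corrector $\phi''=V\phi$ is not available. A Duhamel estimate built on $\int V\,\dd z$ and a Gaussian bound for $p$ does not close under the sole hypothesis $\sigma^\star\in\mathsf L^1$. In particular, finiteness of $\int(1+|z|)V\,\dd z$ over the half-line requires a first moment of $\sigma^\star$. That is precisely the extra hypothesis of Theorem~\ref{thm:standard}, and it is \emph{not} assumed here.

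What matters is the spatial supremum of the sink on the support of the barrier. Take the homogeneous subsolution $\underline B$, supported in $\{x\ge X(t)\}$. In the paper this is Proposition~\ref{prop:subsolutionshiftallbeta} with $\g\equiv1$, $\s=2$, $\eta=\frac32$, and it already absorbs the KPP term. On that support $1-\g(x-s(t))\le h(t):=\sigma^\star(X(t)-s(t))$. Moreover $h\in\mathsf L^1(\dd t)$, because $X(t)-s(t)$ grows linearly while $\sigma^\star$ is nonincreasing and integrable. Hence $\exp\bigl(-\int_{T_1}^t h\bigr)\underline B$ is a subsolution with a factor bounded below, and no corrector is needed.

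\paragraph{Convergence.} The Berestycki--Hamel Liouville theorem needs trapping between two translates of $\Phi_1^\star$ on all of $\mathbb R$. Your tail bounds do not provide this near $-\infty$. The paper instead uses the variant from \cite[Lemma 8.2]{hamel2016logarithmic} (Proposition~\ref{p.109.1}), which needs only the two-sided tail bounds.
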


The intuition is that, for any $c \in (0,2-\s)$, the solution $n(t,x)$ satisfies a perturbed equation:
\begin{equation}
n_t - n_{xx} \geq n(1 - \sigma^\star(ct) - n) \quad \text{ for }t \geq 0,~ x - s(t) \geq ct.
\end{equation}
Here the additional growth coefficient $t\mapsto \sigma^\star(ct)$ is integrable and thus introduces only a bounded error in the invasion front moving frame; see \cite{Ducrot2015} for results in multi-dimensions in the case $\s=0$. However, the condition $1-\g \in \mathsf{L}^1(\R^+)$ has to be strengthened to $\left( y \mapsto y(1-\g(y))\right) \in \mathsf{L}^1(\R^+)$ when the distance between $s(t)$ and the invasion front is bounded or of order $\ln t$; see condition (G2) and \Cref{thm:standard}.

In the remainder of this section, we assume that the environmental profile $\g$ is nondecreasing, i.e. 
the habitat ahead of a right-moving environmental interface is more favourable than that behind it. 
Let $\g_+,\g_-$ denote the limits of $\g$ at $\pm \infty$, respectively. 
Up to some rescaling, one may assume that $\g_+ = 1$ and $\g_- = 1 -a$, for some $a \in (0,1)$. Since $\g$ increases from $1-a$ to $1$, the more favourable habitat lies to the right of the moving interface.
\begin{itemize}
\item[{\rm(G1)}] The function $\g\in \mathsf{L}^\infty(\mathbb{R})$ is monotonically increasing and
    \begin{equation}
    \g(-\infty) =1-a \quad \text{ and }\quad \g(+\infty) = 1 \quad \text{ for some }a \in (0,1).
    \end{equation}
     \item[{\rm (G2)}] We impose the following quantitative decay conditions at spatial infinity.
        \begin{equation}
        |\g(y)-(1-a)| \in \mathsf{L}^1((-\infty,0)) \quad \text{ and }\quad  y|1-\g(y)| \in \mathsf{L}^1((0,\infty)).\end{equation} 
\end{itemize}

When the environment shifts sufficiently slowly, the population spreads ahead of the shift, and one is led to the standard Fisher-KPP situation; see \Cref{f.1}. Note that the first part of the condition (G2) plays no role in this theorem.
\begin{theorem}\label{thm:standard}
{Suppose $\s=2$ and $\eta\in  \left(\frac{3}{2},\infty\right)$.}

Then the rightmost $\kappa$-level position defined in \eqref{location-u} satisfies
\begin{equation}\label{level-set}
 \xi_{\kappa}(t)=2 t - \frac{3}{2}\ln (1+t)+O(1) \quad \text{ for each }\kappa \in (0,1),
\end{equation}
and 
there exists a bounded function $\Lambda : \mathbb{R}^+ \mapsto \mathbb{R}$ such that for any $\gamma>0$, 
\begin{equation}\label{standard-cov}
\lim_{t \to \infty} \, \sup_{x \geq s(t) + \gamma  \ln t }\Big|n(t,x) - \Phi_{1}^\star \left(x-2 t +\frac{3}{2}  \ln t+\Lambda(t)\right)\Big| = 0,
\end{equation}
where $\Phi_{1}^\star$ is a (critical) travelling wave that satisfies \eqref{TW0} with $\nu=1$, $\lambda_\star=1$, and $c_\star=2$. In particular, for any $\kappa \in (0,1)$, the level sets lie at a logarithmically diverging distance ahead of $s(t)$:
\begin{equation}
\xi_{\kappa}(t) - s(t) = \left( \eta - \frac32  \right)\ln t + O(1).
\end{equation}
\end{theorem}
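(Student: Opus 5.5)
The plan is to sandwich $n$ between two functions that both sit at $2t-\frac32\ln t+O(1)$, and then identify the profile by a Liouville-type argument in the moving frame.

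\emph{Upper bound.} Since $\g\le 1$ by (G1), $n$ is a subsolution of the homogeneous Fisher--KPP equation $N_t-N_{xx}=N(1-N)$ with the same initial datum satisfying \eqref{e.ini}. Bramson's estimate for $N$ (or the PDE proof of Hamel et al.~\cite{hamel2013short}) then gives
\[
\xi_\kappa(t)\le 2t-\tfrac32\ln t+O(1),
\]
together with the Gaussian-type upper bound $n(t,x)\lesssim (x-2t+\frac32\ln t)\,e^{-(x-2t+\frac32\ln t)}$ ahead of the front.

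\emph{Lower bound.} Work in the frame $z=x-2t+\frac32\ln(1+t)$. Since $\eta>\frac32$, the environmental interface sits at $z=s(t)-2t+\frac32\ln t=-(\eta-\frac32)\ln t\to-\infty$. In this frame $n$ satisfies
\[
n_t-n_{zz}-\Bigl(2-\tfrac{3}{2(1+t)}\Bigr)n_z \ge n\bigl(1-\sigma(t,z)-n\bigr),\qquad \sigma(t,z)=1-\g\bigl(z+(\eta-\tfrac32)\ln t\bigr)\ge 0 .
\]
I would adapt the subsolution of \cite{hamel2013short}. Conjugating by $e^{-z}$ gives a heat-type equation on a half-line $z\ge -\beta\ln t$ (or near $z\approx \sqrt t$ scales), where the Bramson subsolution is built from the Dirichlet heat kernel $\frac{z}{t^{3/2}}e^{-z^2/4t}$. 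The perturbation $\sigma$ produces an error term of size
\[
\int_0^\infty\!\!\int \sigma(t,z)\,\frac{(z+\text{shift})_+}{t^{3/2}}\,\cdots
\]
In the linear-weighted region $z=O(\sqrt t)$ this is controlled by
\[
\int \sigma(t,z)\,\bigl(z+(\eta-\tfrac32)\ln t\bigr)\dd z=\int_{\mathbb R} y\,(1-\g(y))_+\dd y,
\]
which is finite by (G2). This weight $y$ is exactly why (G2) requires $y(1-\g)\in\mathsf L^1$. The resulting contribution to the time-integrated error is $\int^\infty t^{-3/2}\dd t<\infty$, so it costs only a bounded multiplicative factor, i.e. an $O(1)$ shift.

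Concretely, I would construct a subsolution $\underline n=(1-\epsilon(t))\,e^{-z}\,\psi(t,z)$ with $\psi$ solving the Dirichlet problem at $z=-\ln t$ (so the interface region, where $\g$ may be as small as $1-a$, lies behind the barrier). Using the spatial corrector, i.e.\ the positive solution of $-\phi''-\sigma\phi=0$ asymptotic to $y$, absorbs $\sigma$ exactly near the boundary. Persistence behind the front uses $\inf\g\ge 1-a>0$. This yields $\liminf$ of $n$ at $z=O(1)$ bounded below by a positive constant, hence $\xi_\kappa(t)\ge 2t-\frac32\ln t-O(1)$ for $\kappa$ small. The bound extends to all $\kappa<1$ because $\g\to1$ at positions $z\ge -\gamma'\ln t$, and standard spreading arguments then give $n\to1$ there.

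\emph{Convergence.} With the front localised, the approach of \cite{hamel2013short} applies: take limits of $n(t_k+\cdot,\,\cdot+2t_k-\frac32\ln t_k)$. On $\{x\ge s(t)+\gamma\ln t\}$ the coefficient $\g\to1$ locally uniformly, so the limits are entire solutions of the homogeneous KPP equation in the frame moving at speed $2$. The two-sided exponential tail bounds $z e^{-z}$ inherited from the barriers identify these limits as translates of $\Phi_1^\star$ by the Liouville theorem of Berestycki--Hamel \cite{beresycki2007generalized}; the translation parameter defines $\Lambda(t)$, which is bounded by the front localisation. Finally, $\xi_\kappa(t)-s(t)=(\eta-\frac32)\ln t+O(1)$ follows by subtraction.

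The main obstacle is the lower-bound subsolution: controlling the heterogeneous error near the logarithmically receding interface, and matching the corrector with the Gaussian barrier uniformly in $t$ while keeping only an $O(1)$ loss.
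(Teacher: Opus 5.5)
The upper bound and the compactness/Liouville step match the paper. One correction there: you only have two-sided tail bounds $\asymp(1+y)e^{-y}$ on $y\ge1$, not trapping between two translates of $\Phi_1^\star$. Identifying the limit therefore needs the Hamel--Nolen--Roquejoffre--Ryzhik version (\Cref{p.109.1}), not the Berestycki--Hamel theorem itself.

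\textbf{The lower barrier has a genuine gap.} A Dirichlet boundary at $z=-\ln t$, or at any $z=-\beta\ln t$ with $\beta>0$, cannot give a subsolution of the \emph{nonlinear} problem. In the frame of the boundary $x=2t-(\frac32+\beta)\ln t$, the Dirichlet decay $t^{-3/2}$ is more than compensated by the factor $t^{\frac32+\beta}$. So for any initial amplitude $C>0$, the linear barrier satisfies $e^{-z}\psi\approx C\,(z+\beta\ln t)\,e^{-z}$ for $0<z+\beta\ln t\lesssim\sqrt t$. This is of size $t^{\beta}$ next to the boundary and of size $\ln t$ at $z=O(1)$. That contradicts both $n\le1$ and the Bramson upper bound $n(t,X(t)+z)\lesssim(1+z)e^{-z}$ you have just established.

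The KPP nonlinearity is therefore not a perturbation, and a time-dependent factor $1-\epsilon(t)$ cannot fix this. Write $\underline n=\theta(t)B$ with $B$ solving the linearised problem. Absorbing $-\underline n^2$ then requires $(1/\theta)'\ge\sup_x B(t,\cdot)$. Since $\sup_xB$ is at least of order one even with the boundary at $X(t)$, this forces $\theta(t)\lesssim t^{-1}$; this is the non-integrable loss the paper's introduction warns about.

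What is needed is a spatially localised margin. The paper uses the corrector $\underline m''=(1-\g+h)\underline m$ with $h=(1+y)e^{-y}$. This closes only when the Dirichlet boundary sits at the Bramson position itself, where $\mathcal R\underline\Psi/h$ stays bounded in time. That is why \Cref{prop:subsolutionshiftallbeta} is restricted to $\eta\le\frac32$ when $\s=2$.

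\textbf{The missing idea is a monotone comparison that makes the receding interface harmless.} For $\eta\ge\frac32$ one has $X(t)=2t-\frac32\ln(1+t)\ge s(t)$, and $\g$ is nondecreasing, so $\g(x-s(t))\ge\g(x-X(t))$. Hence the $\eta=\frac32$ subsolution of \Cref{prop:subsolutionshiftallbeta}, with its Dirichlet boundary at $X(t)$, is already a subsolution of \eqref{eq:mainshift}. This resolves your difficulties as follows.
\begin{enumerate}
\item The interface lies behind this boundary automatically, so there is no reason to move the boundary back.
\item Your time-dependent potential $\sigma(t,z)$ is dominated by the fixed profile $1-\g(z)$ on $z>0$. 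For that profile a time-independent convex corrector exists, solving $\underline m''=\sigma\underline m$. Your equation $-\phi''-\sigma\phi=0$ has the wrong sign and would double $\sigma$ rather than absorb it.
\item It repairs your weighted-integral heuristic. $\int_{\mathbb R}y(1-\g)_+\,\dd y$ diverges because $1-\g\to a$ at $-\infty$. Only $\int_0^\infty y(1-\g)\,\dd y$ is finite by (G2), and that is exactly what a boundary at $X(t)$ sees.
\end{enumerate}
With this barrier you get $n(t,X(t)+y)\gtrsim(1+y)e^{-y}$ for $1\le y\le\sqrt{1+t}$, and the rest of your argument goes through.
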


Arguing as in \cite{hamel2013short}, one can strengthen the conclusions \eqref{e.1016.2a} and \eqref{standard-cov} in \Cref{t.0,thm:standard} to
\begin{equation}
{\lim_{R \to \infty}\lim_{t\to\infty} \sup_{x \geq s(t)+ R} \left| n(t,x) -  \Phi_1^\star\left(x-2t + \frac32 \ln t + \Lambda(t) \right) \right| = 0}.
\end{equation}

\begin{figure}[H]

\begin{tikzpicture}[scale=1.15]

\draw[->, thick] (-6,0) -- (6.2,0) node[right] {$x$};
\draw[->, thick] (-5.8,-0.4) -- (-5.8,3.2);

\draw[red, dashed] (-5.8,2.6) -- (6,2.6);
\node[red, left] at (-5.9,2.6) {$1$};

\draw[red, dashed] (-5.8,2.0) -- (6,2.0);
\node[red, left] at (-5.9,2.0) {$1-a$};

\draw[blue, dashed] (-5.8,1.1) -- (6,1.1);
\node[blue, left] at (-5.9,1.1) {$\kappa$};

\draw[red, very thick]
(-5.8,2.0)
-- (-2.5,2.0)
to[out=0,in=180] (0.25,2.6)
-- (6,2.6);

\node[red] at (-4.8,2.25) {$\g(x-s(t))$};

\draw[blue, very thick]
(-5.8,1.95)
to[out=0,in=180] (-2,1.95)
to[out=0,in=180] (1,2.55)
to[out=180,in=180] (3,2.55)
to[out=0,in=180] (6,0.05);

\node[blue] at (1.5,2.3) {$n(t,x)$};

\draw[dotted] (4.6,0) -- (4.6,1.15);
\draw[dotted] (-1.5,0) -- (-1.5,2.6);

\filldraw[blue] (4.6,0) circle (2.5pt);
\node[blue,below] at (4.6,0) {$\xi_{\kappa}(t)$};

\filldraw[red] (-1.5,0) circle (2.5pt);
\node[red,below] at (-1.5,0) {$s(t)$};

\end{tikzpicture}
\caption{Schematic illustration of the large-time propagation described in \Cref{thm:standard}.} \label{f.1}
\end{figure}

We next state the theorem at $(\s,\eta)=(2,\tfrac32)$. The family of stationary forced-wave profiles is illustrated in \Cref{fig:profiles}.

\begin{theorem}\label{thm:selection}
Assume $\s =2$ and $\eta = \frac32$. For every $A>0$, there is a unique positive bounded solution $U_A$ to
\begin{equation}\label{eq:profile}
U_A''+2U_A'+U_A(\g-U_A)=0\quad\text{in }\R
\end{equation}
such that
\begin{equation}\label{eq:profile-limits}
\lim_{\xi\to-\infty} U_A =1-a,\qquad \lim_{\xi\to+\infty}  U_A=0,\qquad
\lim_{\xi\to+\infty}\frac{U_A(\xi)}{\xi e^{-\xi}}=A.
\end{equation}
Let $n$ be the solution to \eqref{eq:mainshift}. Then there exists $A_\infty\in(0,\infty)$ such that, for every $R>0$,
\begin{equation}\label{eq:selection}
\lim_{t\to\infty}\sup_{|x|\leq R}
\left\vert n(t,s(t)+x)-U_{A_\infty}(x)\right\vert=0.
\end{equation}
Furthermore, the value of $A_\infty$ is given by 
\begin{equation}\label{eq:amplitude-formula}
A_\infty=\frac1{2\sqrt\pi}\lim_{t\to\infty}
(1+t)^{-\frac32}\int_\R \mathfrak{m}(x)e^x n(t,s(t)+x)\dd x, 
\end{equation}
where the limit $A_\infty$ exists and is positive, and where
$\mathfrak{m}$ is the unique positive solution to
\begin{equation}
\mathfrak{m}''=(1-\g)\mathfrak{m},\qquad \mathfrak{m}(-\infty)=0,\qquad \mathfrak{m}'(+\infty)=1. 
\end{equation}
Note that the existence and uniqueness of $\mathfrak{m}$ are guaranteed thanks to {\rm (G1)--(G2).}
\end{theorem}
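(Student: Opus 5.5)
The proof splits into two parts: the stationary problem \eqref{eq:profile}, and the dynamic selection \eqref{eq:selection}.

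\textbf{The stationary problem.} Substituting $U_A=e^{-\xi}w$ turns \eqref{eq:profile} into
\begin{equation}
w''=(1-\g)w+e^{-\xi}w^2 .
\end{equation}
As $\xi\to+\infty$, condition (G2), namely $y(1-\g)\in\mathsf{L}^1$, ensures that the linear part has two solutions. One is $\mathfrak{m}\sim\xi$; the other tends to a constant. Since $U_A\to0$ forces $w=o(e^{\xi})$ and the quadratic term is integrable, every bounded solution satisfies $w(\xi)\sim A\xi+B$.

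For \emph{existence}, I would use sub- and supersolutions:
\begin{itemize}
\item a supersolution $\min\{1,\,A\mathfrak{m}e^{-\xi}+Ce^{-(1+\delta)\xi}\}$;
\item a subsolution $\max\{0,\,A\mathfrak{m}e^{-\xi}-Ke^{-(1+\delta)\xi}\}$, glued to a small compactly supported bump on the left.
\end{itemize}
A monotone iteration then gives a solution squeezed between them, with the prescribed asymptotics. The limit at $-\infty$ equals $1-a$ by a standard Liouville argument for the limiting equation $U''+2U'+U(1-a-U)=0$. This uses $2\ge 2\sqrt{1-a}$ and positivity.

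For \emph{uniqueness} at fixed $A$, I would slide. Given two solutions $U,V$ with the same $A$, the ratio $U/V\to1$ at $+\infty$. This is the delicate point, because the second coefficients $B$ may differ. I would instead compare $U$ with $(1+\epsilon)V$, using the KPP structure: $(1+\epsilon)V$ is a supersolution. Decreasing $\epsilon$ to $0$ gives $U\le V$, and by symmetry $U=V$.

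\textbf{Dynamic selection.} I would follow Hamel--Nolen--Roquejoffre--Ryzhik \cite{hamel2013short,nolen2017convergence}, working in the moving frame $y=x-s(t)$. Writing $n=e^{-y}(1+t)^{-3/2}p$, the equation becomes
\begin{equation}
p_t=p_{yy}-(1-\g)p+\tfrac{3}{2(1+t)}\,p-(\text{nonlinear}).
\end{equation}
The key identity is that $\frac{d}{dt}\int\mathfrak{m}\,e^y n\,\dd y$ picks up only the nonlinear loss and a contribution from the $\eta$-term. This contribution exactly offsets the $t^{3/2}$ normalization, since $\eta=\tfrac32$, up to errors coming from the heat-kernel decay. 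The argument proceeds in three steps.
\begin{enumerate}
\item Establish the bounds $\int\mathfrak{m}e^y n\asymp t^{3/2}$ from Gaussian barriers built on the Dirichlet-type heat kernel of $\partial_t-\partial_{yy}+(1-\g)$, whose long-time profile is $\mathfrak{m}(y)\,y\,t^{-3/2}e^{-y^2/4t}$. This step combines the a priori upper and lower bounds of the type proved earlier in the paper.
\item Show that the time derivative of $(1+t)^{-3/2}\int\mathfrak{m}e^y n$ is integrable in $t$. This rests on the estimate that the nonlinear term $e^{y}n^2\mathfrak{m}$ is $O(t^{3/2})$ times an integrable factor, using the upper barrier. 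Monotonicity up to an integrable error then gives the existence of the limit $A_\infty>0$ in \eqref{eq:amplitude-formula}, with positivity coming from step 1.
\item Show that the profile in the window $|y|\le R$ converges. Parabolic estimates make $n(t,s(t)+\cdot)$ precompact, and every limit is an entire solution in the moving frame. Using the diffusive scale, its leading edge satisfies $e^{y}n\sim A_\infty\mathfrak{m}(y)$ on $1\ll y\ll\sqrt t$. Sandwich arguments, combined with the uniqueness of $U_A$ (extended to entire solutions via a Liouville/sliding argument in the spirit of \cite{beresycki2007generalized}), identify the limit as $U_{A_\infty}$ independently of the subsequence.
\end{enumerate}

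\textbf{Main obstacle.} The hardest step is the matching in the intermediate zone $1\ll y\ll\sqrt t$, which is what transfers the conserved amplitude $A_\infty$ to the local profile. Here I would first try the shifted sub- and supersolutions $e^{-y}\mathfrak{m}(y)A_\infty(1\pm\epsilon)\,\psi(y/\sqrt t)$, corrected as in \cite{nolen2017convergence}, on time intervals $[T,T+\delta T]$.
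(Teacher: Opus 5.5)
Your steps (1)--(2) match the paper's Step~1. There, the moment $M(t)=(1+t)^{-3/2}\int_\R\mathfrak{m}(y)e^{y}n(t,s(t)+y)\,\dd y$ has derivative $(1+t)^{-3/2}\bigl[\frac{3}{2(1+t)}\int\mathfrak{m}'e^{y}n-\int\mathfrak{m}e^{y}n^2\bigr]$. Its positive part is $O((1+t)^{-3/2})$ by the Gaussian upper barrier, and the limit is positive by \Cref{prop:subsolutionshiftallbeta}.

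The genuine gap is in your step (3). You say that, ``using the diffusive scale'', the leading edge satisfies $e^yn\sim A_\infty\mathfrak{m}(y)$ for $1\ll y\ll\sqrt t$. Nothing you have established gives the \emph{shape} of $e^yn$ at scale $\sqrt t$. The moment fixes one number, and the barriers of step (1) have unequal constants. The heat-kernel asymptotics you quote for $\partial_t-\partial_{yy}+(1-\g)$ are asserted rather than proved (the paper deliberately avoids them), and alone they would not identify the profile of the nonlinear solution. So your barriers $e^{-y}\mathfrak{m}A_\infty(1\pm\epsilon)\psi(y/\sqrt t)$ on $[T,T+\delta T]$, a window of length about $\delta$ in $\tau=\ln(1+t)$, cannot be initialised or controlled near $y\sim\epsilon\sqrt T$.

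The missing ingredient is the paper's Steps~2--4. Set $\tau=\ln(1+t)$, $z=y/\sqrt{1+t}$ and $\Psi_{\mathrm{SS}}=e^{-\tau/2}e^{y}n$.
\begin{itemize}
\item By (G2), $e^{\tau}(1-\g)(e^{\tau/2}z)\to0$ locally uniformly on $z>0$. Hence limits of time translates are entire, Gaussian-bounded solutions of $\partial_\theta\Psi=\Psi_{zz}+\frac z2\Psi_z+\Psi$ on $z>0$, with a Dirichlet condition at $z=0$.
\item Conjugating by $e^{z^2/8}$ gives $w_\theta+\mathcal{M}w=0$ with $\mathcal{M}=-\partial_{zz}+\frac{z^2}{16}-\frac34\ge0$. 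Its kernel is spanned by $ze^{-z^2/8}$ and its spectral gap is $1$, so boundedness for all $\theta\in\R$ forces stationarity.
\item Since $\mathfrak{m}(z\sqrt{1+t})/\sqrt{1+t}\to z$, dominated convergence turns $M(t)\to2\sqrt\pi A_\infty$ into $\int_0^\infty z\Psi^\infty_{\mathrm{SS}}\,\dd z=2\sqrt\pi A_\infty$. This pins the limit to $A_\infty ze^{-z^2/4}$, independently of the subsequence.
\end{itemize}
This is what links the conserved amplitude to a profile, and any matching argument needs it as input.

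With that input, the paper's Step~5 is more robust than your barriers and needs no Liouville theorem for entire solutions. It works with $\mathcal{E}=(e^{y}n-e^{y}U_{A_\infty})/\mathfrak{m}$. Kato's inequality gives a linear inequality $\mathcal{P}|\mathcal{E}|\le C/(1+t)$, the nonlinearity contributing only the good-sign term $(U_{A_\infty}+n)|\mathcal{E}|$. On $[T/2,T]\times[-\epsilon\sqrt{1+t},\epsilon\sqrt{1+t}]$:
\begin{itemize}
\item the left boundary values are $O(e^{-\epsilon(1-\sqrt a)\sqrt{1+t}})$, because $\mathfrak{m}(y)\sim c_1e^{\sqrt a y}$ at $-\infty$;
\item the right boundary values tend to $A_\infty(1-e^{-\epsilon^2/4})$ by the diffusive limit;
\item a cosine supersolution concludes.
\end{itemize}
Your barriers are built on $A_\infty\mathfrak{m}$, which the solution does not follow at $y=O(1)$. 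There $e^{y}n\approx e^{y}U_{A_\infty}$, which lies strictly below $A_\infty\mathfrak{m}$ because of saturation; near $-\infty$ it is about $(1-a)e^{y}$ against $A_\infty c_1e^{\sqrt a y}$. So your lower barrier must be cut off at some $y=R$, where you have no boundary data.

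On the stationary problem, your uniqueness argument is correct: take $\gamma^*=\sup U/V$ and use that $\gamma^*V$ is a strict supersolution when $\gamma^*>1$. It needs only $U/V\to1$ at both ends. Under (G2) you only get $e^{\xi}U=A\xi+o(\xi)$, not $A\xi+B$, but that suffices.

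Your existence barriers fail, however.
\begin{itemize}
\item At the critical speed, $(\partial_\xi^2+2\partial_\xi+\g)e^{-(1+\delta)\xi}=(\delta^2-(1-\g))e^{-(1+\delta)\xi}>0$ for large $\xi$. So subtracting $Ke^{-(1+\delta)\xi}$ never gives a subsolution; use $e^{-\xi}(A\mathfrak{m}-K\sqrt\xi)_+$ instead. The function $\min\{1,A\mathfrak{m}e^{-\xi}\}$ is already a supersolution and needs no correction.
\item No compactly supported bump is a stationary subsolution where $\g<1$: writing it as $e^{-\xi}\psi$ requires $\psi''\ge(1-\g)\psi>0$ on its support.
\end{itemize}
The limit $1-a$ at $-\infty$ must instead come from the hair-trigger effect applied to $U(x-2t)$ in the fixed frame, using $\g\ge1-a$. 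The paper simply cites Berestycki--Fang for existence.
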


\begin{figure}
    \centering
    \includegraphics[width=0.7\linewidth]{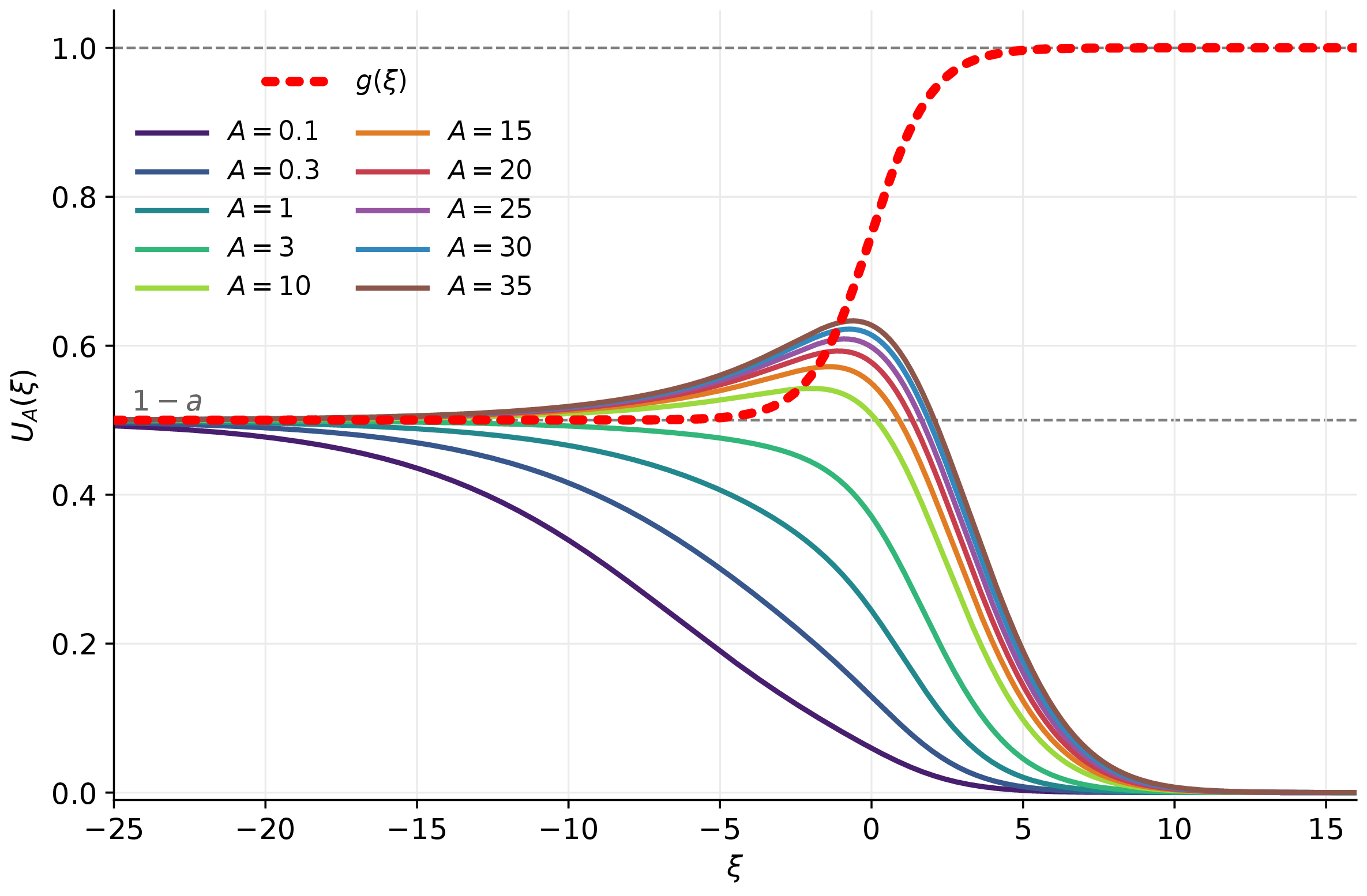}
    \caption{Forced wave profiles $U_A$ at the critical speed $\s=2$, computed
by a shooting method for \eqref{eq:profile} with 
$\g(\xi)=1-\frac{1}{2(1+e^\xi)}$.
The profiles satisfy $U_A(-\infty)=\frac12$ and are parametrised by 
$A$, defined by
$U_A(\xi)\sim A\xi e^{-\xi}$ as $\xi\to+\infty$.
The values of $A$ are indicated in the legend.
The bolded red dotted curve represents $\g$, while the grey dashed lines
show the limiting states $1-a=\frac12$ and $1$.}
    \label{fig:profiles}
\end{figure}
In the supercritical pulling regime, the front speed (which exceeds the critical speed of $2\sqrt{1-a}$ in the less favourable region behind the environmental shift) and the logarithmic correction are both influenced by the interaction with the environmental interface.

\begin{theorem}\label{thm:pulling}
Assume  
\begin{equation}
{\rm(i)} \quad (\s,\eta) \in \left(2,\,\fraksc\right)\times \mathbb R,\quad \text{ or }\quad  {\rm(ii)} \quad (\s,\eta) \in \{2\} \times \left(-\infty,\frac{3}{2}\right).
\end{equation}
Then there exists a bounded function $\Lambda : \mathbb{R}^+ \mapsto \mathbb{R}$ such that 
\begin{equation}
\lim_{t \to \infty} \,\sup_{x\in \mathbb{R} }\Big| n(t,x) - \Phi_{1-a}^\star\left(x-c_{\star} t +\frac{1}{\lambda_{\star}}\left(\frac32 - \eta  \sqrt{a}\right) \ln t+\Lambda(t)\right)\Big|=0,
\end{equation}
where $\Phi_{1-a}^\star$ is a (supercritical) travelling wave that satisfies \eqref{TW0} with $\nu=1-a$, $\lambda_\star=\tfrac{\s}{2}-\sqrt{a}$, and $c_\star=\lambda_\star + \tfrac{1-a}{\lambda_\star}$.
In particular, for each $\kappa \in (0,1-a)$, the level set $\xi_{\kappa}(t)$ defined in \eqref{location-u} is lagging behind the environmental shift and 
\begin{equation}\label{e.level.3}
s(t) - \xi_\kappa(t) \gtrsim \begin{cases}(\s - c_\star)t  &\text{ in case (i),}\\[5pt]
    \frac{1}{1-\sqrt{a}} \left(\frac32 - \eta\right) \ln t &\text{ in case (ii).}
\end{cases}
\end{equation}
\end{theorem}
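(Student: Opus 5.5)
The plan is to regard the front behind the environment as a Fisher--KPP front with growth rate $1-a$, driven by a boundary-type source of exponentially small size located at the moving interface $y=x-s(t)=0$. The proof then has three steps: (1) sharp two-sided asymptotics of $n$ near the interface; (2) transfer of these asymptotics to the leading edge of the trailing front; (3) convergence of the profile. A heuristic computation fixes the parameters. Set $\mu:=\frac{\s^2}{4}-1$ and suppose that
\begin{equation}
n(t,s(t)+y)\asymp t^{\alpha}e^{-\mu t}\quad\text{for bounded }y.
\end{equation}
An exponential solution $e^{-\lambda(x-ct)}$ of the linearisation $w_t=w_{xx}+(1-a)w$ has $c=\lambda+\frac{1-a}{\lambda}$. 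Matching it at $x=\s t$ gives $\lambda(\s-c)=\mu$, that is, $\lambda^2-\s\lambda+\frac{\s^2}{4}-a=0$. The root that is admissible (the slower decay, and the one with $c<\s$) is $\lambda_\star=\frac{\s}{2}-\sqrt a$. Keeping track of $-\eta\ln(1+t)$ in $s(t)$ and of the factor $t^{\alpha}$ (with the $-\frac32$ heat-kernel loss) should produce exactly the shift $\frac{1}{\lambda_\star}\left(\frac32-\eta\sqrt a\right)\ln t$. The proof makes this rigorous.

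\textbf{Step 1: the interface.} Write the equation in the frame $y=x-s(t)$ and apply the exponential change $n=e^{-\frac{\s}{2}y}w$. The $w$-equation has potential $\g(y)-\frac{\s^2}{4}$, up to an error $O(\frac{\eta}{t})$ coming from $s'$. Ahead of the interface, $\g\to1$, and (G2) lets us build a spatial corrector for the Schrödinger operator $-\partial_y^2+(1-\g)$, in the spirit of $\mathfrak m$ in \Cref{thm:selection}. With this corrector and Gaussian barriers, I will show that
\begin{equation}
n(t,s(t)+y)\asymp t^{-\frac32+\frac{\s\eta}{2}}e^{-\mu t}\,\psi(y)
\end{equation}
for bounded $y$, where $\psi$ is an explicit positive profile. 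The $t^{\s\eta/2}$ factor comes from $e^{\frac{\s}{2}\eta\ln t}$ in the change of variables, and the $t^{-3/2}$ from a Dirichlet-type absorption behind. In the large-$\s$ direction, the condition $\s<\fraksc$ guarantees $\lambda_\star>\sqrt{1-a}$, so the region $g\approx 1-a$ behind is supercritical. Case (ii), $\s=2$ and $\eta<\frac32$, is handled the same way with $\mu=0$: there the polynomial factor is what drives the front, and $\lambda_\star=1-\sqrt a$.

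\textbf{Step 2: the trailing leading edge.} Behind the interface I solve the forced linear problem
\begin{equation}
w_t=w_{xx}+(1-a)w \quad\text{on } x<s(t),
\end{equation}
with the Step~1 data on $x=s(t)$. The first part of (G2) makes the replacement of $\g$ by $1-a$ an integrable, hence bounded, error. Representing $w$ by the Dirichlet heat kernel in the half-line moving at speed $\s$, a Laplace-type evaluation concentrates on the ray $x=c_\star t$. It gives
\begin{equation}
n(t,x)\asymp t^{-\frac32+\lambda_\star\eta+\frac{\s\eta}{2}-\eta\cdot\frac{\s}{2}\cdots}\,e^{-\lambda_\star(x-c_\star t)}
\end{equation}
in a window $x-c_\star t\in[L,\,\delta t]$. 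After bookkeeping, the exponent of $t$ is such that this equals $e^{-\lambda_\star(x-c_\star t+\frac{1}{\lambda_\star}(\frac32-\eta\sqrt a)\ln t)}$ up to bounded factors. I then build sub- and supersolutions of the form
\begin{equation}
\Phi_{1-a}^\star\bigl(x-c_\star t+\tfrac{1}{\lambda_\star}(\tfrac32-\eta\sqrt a)\ln t\pm C\bigr)
\end{equation}
plus small corrections glued to these linear barriers. Since $\lambda_\star$ is the slow decay rate of a supercritical wave, these perturbations are stable. This yields the level-set localisation, and \eqref{e.level.3} follows because $c_\star<\s$ in case (i), and in case (ii) because $c_\star=\s=2$ with $\lambda_\star=1-\sqrt a$.

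\textbf{Step 3: convergence, and the main obstacle.} For convergence, I take limits of $n(t_k,\cdot+c_\star t_k-\ldots)$ along sequences, producing entire solutions trapped between two translates of $\Phi_{1-a}^\star$ with the same exponential tail. A Liouville/sliding argument, as in \cite{hamel2013short} (or the rigidity principle of the appendix), identifies each limit as a translate; this defines a bounded $\Lambda(t)$. The main obstacle is Step 1 together with the matching in Step 2: obtaining the exact polynomial prefactor at the interface uniformly, and propagating it through the intermediate zone between $c_\star t$ and $\s t$ without losing a $\ln t$. I would first attempt Step~1 through the corrector with an $L^1$-weighted Duhamel estimate, treating the $\eta/t$ drift perturbatively, and check that its contribution integrates to $O(1)$.
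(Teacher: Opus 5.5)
Your architecture (interface asymptotics, then transfer behind the interface, then a Liouville step via \Cref{p.109.1}) is the right one. The lower-bound half is essentially the paper's route: \Cref{prop:subsolutionshiftallbeta}, followed by \Cref{lem:liminf-case}. The genuine gap is the upper bound, where your Steps 1 and 2 are circular.

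You want $n(t,s(t)+y)\lesssim t^{-\frac32+\frac{\s\eta}{2}}e^{-(\frac{\s^2}{4}-1)t}$ first, and then use it as boundary data for a comparison on $\{x<s(t)\}$. But the Gaussian barrier of \Cref{prop:supersolutionshiftallbeta} lives on a half-line and vanishes at its left end; the factor $y$ is what produces $t^{-3/2}$. Behind the interface $n$ has no absorbing boundary, only the soft potential $\g-1\approx-a$ acting on the unknown leading edge of the trailing front. Crude global comparisons (e.g.\ with the homogeneous KPP equation of growth $1$) only give $t^{-\frac12+\frac{\s\eta}{2}}e^{-(\frac{\s^2}{4}-1)t}$, which would misplace the front by $\frac{1}{\lambda_\star}\ln t$. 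So the interface upper bound cannot be obtained independently of a supersolution behind the interface. This is exactly the ``matching'' you leave open.

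The paper closes it with one generalised supersolution $K\min\{\overline\zeta,\overline m_\mu\psi\}$. Ahead of the interface it uses the shifted Gaussian $\overline\zeta$. Behind it uses the corrector $\overline m_\mu$ of \Cref{prop:supercorrector} times the solution $\psi$ of the homogeneous equation with growth $1-a$ and initial tail $\alpha x^{q_B}e^{-\lambda_\star x}$, where $q_B=-\frac32+\sqrt a\,\eta$.
\begin{itemize}
\item By \Cref{lem:C1}(i), $\psi(t,s(t)+y)$ has the same size as $\overline\zeta$ at the interface, so $\alpha$ and $\bar y$ can be chosen so that the minimum switches branches inside the overlap.
\item \Cref{lem:C1}(ii) then gives the front of $\psi$ directly. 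Neither your Laplace evaluation nor log-shifted nonlinear barriers are needed, and the interface upper bound comes out as a by-product.
\end{itemize}
Replacing $\g$ by $1-a$ behind the interface is also not an ``integrable, hence bounded'' error by itself. \Cref{lem:2A} requires $\g-(1-a)<\mu^2$ on $y\le 0$, which is arranged by translating $\g$. It also requires the log-gradient condition $\psi_x/\psi+\dot s/2>\mu$, which is where $\frac{\s}{2}-\lambda_\star=\sqrt a>\mu$ is used.

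Case (ii) is not ``handled the same way'' in the transfer step. There $c_\star=\s=2$, and the front sits only $\frac{3/2-\eta}{1-\sqrt a}\ln t$ behind $s(t)$. Your window $x-c_\star t\in[L,\delta t]$ is therefore not behind the interface, and \Cref{lem:C1}(i) is unavailable at $x=s(t)$. The paper needs the separate barrier of \Cref{prop:supersolshiftbeta2}, namely $\overline m_\mu(x-s(t))\,\overline\Phi_\delta(t-t_0,x-b_{t_0}(t))$ on the logarithmically thin region $b_{t_0}(t)\le x\le s(t)$. Here $\overline\Phi_\delta=(1-\varepsilon(t)k_\lambda\xi)e^{-\lambda\xi}$ from \Cref{lem:3A} absorbs the $O(1/t)$ speed error.

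Two smaller corrections:
\begin{itemize}
\item $\s<\fraksc$ gives $\lambda_\star<\sqrt{1-a}$, not $\lambda_\star>\sqrt{1-a}$; otherwise the selected wave would not be supercritical.
\item Your displayed $t$-exponent in Step 2 is wrong. The correct count is $-\frac32+\frac{\s\eta}{2}-\lambda_\star\eta=-\frac32+\sqrt a\,\eta$. The term $-\lambda_\star\eta$ comes from the displacement $-\eta\ln(1+t)$ of the interface relative to $\s t$. So in Step 2 this logarithmic displacement cannot be treated as a perturbation. The linear leading edge is $(x-2\lambda_\star t)^{q_B}e^{-\lambda_\star(x-c_\star t)}$.
\end{itemize}
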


Next, we state the result in the critical pulling case. In this case, the
polynomial prefactor in the leading edge of the minimal travelling wave changes. This may modify the logarithmic coefficient even though the first-order spreading speed remains unchanged. At a critical parameter, the leading logarithmic contributions balance and an additional $\ln\ln t$ correction emerges.

\begin{theorem}\label{thm:cpulling}
Assume $\s = \fraksc$ and $\eta \in \mathbb R$. 

Set
\begin{equation}\label{m^w-2}
\ell(t) := 
\begin{cases}
 \frac{3}{2\lambda_{\star}} \ln t , &\text{ if }\eta < - \frac{1}{2\sqrt{a}},\\[5pt]
 \frac{3}{2\lambda_{\star}} \ln t - \frac{1}{\lambda_{\star}} \ln \ln t &\text{ if } \eta =-\frac{1}{2\sqrt{a}},\\[5pt]
\frac{\frac{5}{2} -\eta \sqrt{a}}{2\lambda_{\star}} \ln t &\text{ if } \eta  > -\frac{1}{2\sqrt{a}}.
\end{cases}
\end{equation}
Then there exists a bounded function $\Lambda : \mathbb{R}^+ \mapsto \mathbb{R}$ such that 
\begin{equation}
\lim_{t \to \infty}\sup_{x\in \mathbb{R} }\Big|n(t,x) - \Phi_{1-a}^\star\left(x- c_{\star} t +\ell(t)+\Lambda(t)\right)\Big|= 0,
\end{equation}
where $\Phi_{1-a}^\star$ is a (critical) travelling wave that satisfies \eqref{TW0} with $\nu=1-a$, $\lambda_\star=\sqrt{1-a}$, and $c_\star=2\sqrt{1-a}$.
In particular, for each $\kappa \in (0,1-a)$, \begin{equation}\xi_{\kappa}(t)=c_{\star} t - \ell(t)+O(1).\end{equation}
\end{theorem}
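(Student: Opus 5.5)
The plan is to work in the moving frame of the environment, $y=x-s(t)$, and to reduce the problem to a linear problem in the leading edge of the population. In the frame moving with $c_\star t$, the growth rate behind the environmental interface is $1-a$, and $\lambda_\star=\sqrt{1-a}$ is the critical decay rate. Setting $\s=\fraksc$ places the interface at distance $(\s-c_\star)t-\eta\ln t$ ahead of the front. The relevant linearised object is
\begin{equation}
w(t,y)=e^{\lambda_\star (x-c_\star t)}\,n(t,x),
\end{equation}
which solves a heat equation with a potential. This potential vanishes behind $s(t)$. Ahead of $s(t)$ it equals $a$, tempered by the exponential weight, so the region ahead of $s(t)$ acts as a distant source.

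Because $\s=\fraksc$ is exactly the threshold of nonlocal pulling, the source neither grows nor decays exponentially in the front frame. Its contribution to $w$ near the front is therefore a polynomial factor. The computation I would carry out first is an explicit one. I would solve the linearised equation for $w$ in two regions, with the Dirichlet condition (the Bramson-type absorbing boundary) near $x\approx c_\star t-\ell(t)$. The boundary between the two regions is at the moving interface, and there I would match to the solution of the Schrödinger-type corrector problem $\mathfrak m''=(\text{potential})\mathfrak m$, as in \Cref{thm:selection}.

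The mass transmitted from the interface to the front then has an explicit size. It scales like $t^{-3/2}$ times a Gaussian factor. The shift $-\eta\ln t$ of the interface contributes a factor $t^{\eta\sqrt a}$, since the decay rate ahead, relative to the interface, is $\sqrt a$. Two sources therefore compete.
\begin{itemize}
\item[$\to$] The intrinsic Bramson mechanism gives $w\sim t^{-3/2}\cdot$(linear in the distance), which yields $\ell=\frac{3}{2\lambda_\star}\ln t$.
\item[$\to$] The interface source gives a front amplitude of order $t^{-(5/2-\eta\sqrt a)/2}$. Here the square root comes from the diffusive spreading of the source over the distance $\sqrt t$ ahead of the front, because the source sits on the critical characteristic.
\end{itemize}
The comparison of exponents $3/2$ versus $(5/2-\eta\sqrt a)/2$ switches exactly at $\eta=-\frac{1}{2\sqrt a}$. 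At that value both contributions are of order $t^{-3/2}$, and integrating the source over time yields an extra factor $\ln t$, which is the $\ln\ln t$ term.

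Given these heuristics, I would build explicit sub- and supersolutions in the spirit of Hamel et al. In the frame $x-c_\star t+\ell(t)$ they take the form
\begin{equation}
e^{-\lambda_\star z}\,\bigl(\text{Gaussian-weighted linear or self-similar profile}\bigr).
\end{equation}
Each regime then needs its own barrier.
\begin{itemize}
\item[$\to$] For $\eta<-\frac1{2\sqrt a}$, the source is negligible, so I would perturb the classical Bramson barriers. The potential term would be absorbed using the integrability conditions (G2) together with an $\mathsf L^1$-in-time error, as in the intuition behind \Cref{t.0}.
\item[$\to$] For $\eta>-\frac1{2\sqrt a}$, the barrier in the region between the front and the interface would be a self-similar solution of the heat equation driven by a boundary flux at the interface. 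The flux is computed from the corrector $\mathfrak m$ and the weighted quantity $\int \mathfrak m\, e^{\cdot}\, n$, as in \eqref{eq:amplitude-formula}. Nonlinear saturation is handled by capping the barriers at $1-a$, using KPP concavity.
\end{itemize}

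These barriers give $\xi_\kappa(t)=c_\star t-\ell(t)+O(1)$. Convergence to $\Phi_{1-a}^\star$ along a bounded shift $\Lambda(t)$ then follows by the standard argument. One takes limits of time-translates in the moving frame, which produce entire solutions trapped between shifts of the critical wave with the correct leading-edge decay $z e^{-\lambda_\star z}$. The Liouville theorem of Berestycki--Hamel, or the rigidity principle of \Cref{app:rigid}, then identifies these limits as translates of $\Phi_{1-a}^\star$.

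I expect the main obstacle to be the sharp two-sided control of the transmitted flux at the interface in the critical case $\eta=-\frac1{2\sqrt a}$. There the $\ln\ln t$ term requires the upper and lower barriers to agree up to $O(1)$ in the logarithm of the amplitude, so the error terms coming from the heterogeneity of $\g$ must be integrable in time. I would handle this with the corrector $\mathfrak m$ and the weighted bound $y|1-\g(y)|\in\mathsf L^1$ from (G2).
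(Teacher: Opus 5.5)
Your heuristics are sound. With $q_B=-\frac32+\sqrt a\,\eta$, the competition between the front amplitudes $t^{-3/2}$ and $t^{-(1-q_B)/2}$ switches at $q_B=-2$, i.e. at $\eta=-\frac1{2\sqrt a}$, and it reproduces all three forms of $\ell(t)$. The gap is in turning this into barriers.

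\textbf{The regime $\eta<-\frac1{2\sqrt a}$.} Your plan here is to perturb the classical Bramson barriers, absorbing the potential through (G2) and an $\mathsf L^1$-in-time error. This cannot give the upper bound.
\begin{itemize}
\item[$\to$] The potential $\g(x-s(t))-(1-a)$ is close to $a$ on the whole half-line $x>s(t)$ at every time, so it is not an integrable perturbation. The trick behind \Cref{t.0} works only because there the front lies ahead of the interface.
\item[$\to$] Quantitatively, in your variable $w=e^{\lambda_\star(x-c_\star t)}n$ one has $w\asymp t^{q_B}$ at the interface $z=x-c_\star t\approx 2\sqrt a\,t$. A Bramson-type barrier $t^{-3/2}z\,e^{-z^2/(4t)}$ is of size $e^{-at}$ there. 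It is already overtaken by the transmitted tail $z^{q_B}$once $z\gtrsim\sqrt{t\ln t}$.
\end{itemize}
So the influx from the favourable region must be built into the supersolution in every regime, even when it is invisible at the front.

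\textbf{The missing reduction.} What makes the paper's proof work is the following chain, which your proposal does not use.
\begin{itemize}
\item[$\to$] First, the interface value is pinned down up to constants: $n(t,s(t))\asymp t^{-\frac32+\frac{\s\eta}{2}}e^{-[\frac{\s^2}{4}-1]t}$. The upper bound comes from the Dirichlet-type Gaussian supersolution $\overline\zeta$ of \Cref{prop:supersolutionshiftallbeta} on $y>0$. The lower bound comes from the corrector subsolution of \Cref{prop:subsolutionshiftallbeta}.
\item[$\to$] With this in hand, the region behind the interface receives the same boundary data as the homogeneous KPP solution $\psi$ with initial tail $\alpha x^{q_B}e^{-\lambda_\star x}$ (\Cref{lem:C0}).
\item[$\to$] The paper then glues $K\overline\zeta$ to $K\overline m_\mu(x-s(t))\psi$ (\Cref{lem:2A}) for the upper bound, and uses \Cref{lem:liminf-case} for the lower bound.
\item[$\to$] All three forms of $\ell(t)$, including the $\ln\ln t$ term, are imported from Bramson's classification of critical tails $x^qe^{-\sqrt{1-a}\,x}$ with $q<-2$, $q=-2$, $q>-2$ (\Cref{lem:C1}(ii)). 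No regime-specific barrier is constructed.
\end{itemize}
Without this reduction you must re-derive Bramson's marginal case $q=-2$ by hand, which your proposal does not do. The difficulty there is also not the one you name. The heterogeneity of $\g$ enters only through the interface amplitude, up to constants, and is handled once via $\underline m$, $\overline m_\mu$ and (G2). The real difficulty is the homogeneous critical-tail dynamics behind the interface.

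Two smaller points:
\begin{itemize}
\item[$\to$] The weighted moment $\int\mathfrak m\,e^{\cdot}n$ of \Cref{thm:selection} serves to identify a limiting amplitude, and is unnecessary when only two-sided bounds are needed.
\item[$\to$] Capping barriers at $1-a$ does not produce barriers. Since $\g\ge 1-a$, the constant $1-a$ is a subsolution, so $\min\{\cdot,1-a\}$ preserves neither sub- nor supersolutions. The paper instead puts the nonlinear $\psi$ into the upper barrier and uses the margin $h$ in the corrector $\underline m$ for the lower one.
\end{itemize}
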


The critical phase correction can also be interpreted through an effective exponent, in the spirit of Bramson's analysis \cite{bramson1983convergence} (see also \cite{alfaro2024bramson,zhang2025sharpasymptoticskppequation} for PDE arguments in the style of \cite{hamel2013short}). 
Indeed, a useful way to interpret the logarithmic corrections obtained in a shifting
environment is to view the moving environmental interface as a mechanism
which dynamically generates an effective tail as seen by the population residing in the less favourable region. As will appear later, up to multiplicative
constants and bounded spatial shifts, the population density near the
interface has the characteristic size
\begin{equation}
   n(t,s(t))
    \asymp
    t^{-\frac32+\frac{\s\eta}{2}}
    e^{-\left[\frac{\s^2}{4}-1\right]t}.
\end{equation}

The factor $t^{-\frac32}$ records the diffusive loss produced by the Dirichlet
leading-edge dynamics, whereas the factor $t^{\frac{\s\eta}{2}}$ records the
logarithmic displacement of the environmental interface. 
This estimate must then be matched with the dynamics in the region behind the interface, where the linearised equation is 
\begin{equation} w_t=w_{xx}+(1-a)w. \end{equation} 
For this homogeneous equation, a tail of the form $ w_0(x)\asymp x^q e^{-\lambda x}$ with $0<\lambda\leq \sqrt{1-a}$
generates, at the linear level, 
\begin{equation}\label{eq:linear-effective-tail} w(t,x) \asymp (x-2\lambda t)^q \exp\left(-\lambda x+(\lambda^2+1-a)t\right) \qquad \text{ where }\lambda x- (\lambda^2 +1-a)t \gtrsim t. 
\end{equation} 
At the environmental interface, one has \begin{equation} s(t)-2\lambda t = (\s-2\lambda)t-\eta\ln(t+1). \end{equation} The exponential behaviour in \eqref{eq:linear-effective-tail} agrees with $n(t,s(t))$ when 
\begin{equation}\label{eq:lambda-star-effective-tail} \lambda=\lambda_\star:=\frac{\s}{2}-\sqrt a \quad \text{ and }\quad q = q_B. 
\end{equation} 
Indeed, $-\s\lambda_\star+\lambda_\star^2+1-a = 1-\frac{\s^2}{4}$ leads to $\s-2\lambda_\star=2\sqrt a$.
Consequently, evaluating \eqref{eq:linear-effective-tail} at $x=s(t)$ gives $$t^{q+\lambda_\star\eta} e^{-\left[\frac{\s^2}{4}-1\right]t},$$ up to a multiplicative constant. The matching happens if and only if $q+\lambda_\star\eta = -\frac32+\frac{\s\eta}{2}$, or equivalently, $ q_{\mathrm{B}}=-\frac32+\sqrt a\,\eta$. 
Thus, the exponent $q_B$ measures the polynomial thickness of the effective tail generated by the shifting interface. Its two contributions have a clear origin: the term $-\frac32$ comes from the Dirichlet heat-kernel decay, while the term $\sqrt a\,\eta$ describes the transfer of the logarithmic phase of the environmental interface to the population front.

The next result concerns the non-pulling case. In this regime, the fast interface is too far away to influence the secondary front at leading order. One recovers a Bramson-type delay associated with the growth rate $1-a$. 
 
\begin{theorem}\label{thm:no-pull}
Assume $\s > \fraksc$ and $\eta \in \mathbb R$. Then there exists a bounded function $\Lambda : \mathbb{R}^+ \mapsto \mathbb{R}$ such that 
\begin{equation}\label{thm1.7-conv}
\lim_{t \to \infty}\sup_{x\in\mathbb{R}}  \Big|n(t,x )  -\Phi_{1-a}^\star\left(x- c_{\star} t+ \frac{3}{2\lambda_{\star}}\ln t + \Lambda(t)\right) \Big| = 0,
\end{equation} 
where $\Phi_{1-a}^\star$ is a (critical) travelling wave that satisfies \eqref{TW0} with $\nu=1-a$, $\lambda_\star=\sqrt{1-a}$, and $c_\star=2\sqrt{1-a}$.
 In particular, for each $\kappa \in (0,1-a)$, $\xi_{\kappa}(t)=c_{\star} \,t - \frac{3}{2\lambda_{\star}}\ln t+O(1)$.
\end{theorem}

The preceding results also show that
the spreading front is unaffected by the environmental shift when $s(t)$ moves sufficiently fast, in the sense that
   \begin{equation}
   \liminf_{t \to \infty} \left[\frac{s(t) - \fraksc t -\frac{1}{2\sqrt a} \ln (1+t)}{\ln(1+t)}\right] 
    >\varepsilon
   \quad \text{ for some }\varepsilon>0.
   \end{equation} 
In this case, the distance between the two fronts satisfies
\begin{equation}
s(t) -\xi_\kappa(t) > 2\sqrt a t +\left(\frac1{2\sqrt{a}} + \frac3{2\sqrt{1-a}} \right) \ln(1+t) + \varepsilon \ln(1+t)  \quad \text{ for }t\gg 1.
\end{equation}

\begin{figure}[H]

\begin{tikzpicture}[scale=1.15]

\draw[->, thick] (-6,0) -- (6.2,0) node[right] {$x$};
\draw[->, thick] (-5.8,-0.4) -- (-5.8,3.2);

\draw[red, dashed] (-5.8,2.6) -- (6,2.6);
\node[red, left] at (-5.9,2.6) {$1$};

\draw[red, dashed] (-5.8,2.0) -- (6,2.0);
\node[red, left] at (-5.9,2.0) {$1-a$};
\draw[blue, dashed] (-5.8,1) -- (0,1);
\node[blue, left] at (-5.9,1.0) {$\kappa$};

\draw[red, very thick]
(-5.8,2.0)
-- (2.0,2.0)
to[out=10,in=180] (3.5,2.59)
-- (6,2.6);

\node[red] at (4.8,2.25) {$\g(x-s(t))$};

\draw[blue, very thick]
(-5.8,1.95)
to[out=0,in=180] (-2.4,1.95)
to[out=0,in=160] (0.6,0.75)
to[out=340,in=180] (6,0.05);

\node[blue] at (0,1.6) {$n(t,x)$};

\filldraw[blue] (0,0) circle (2.5pt);
\node[blue,below] at (0,0) {$\xi_\kappa(t)$};

\filldraw[red] (2,0) circle (2.5pt);
\node[red,below] at (2,0) {$s(t)$};

\draw[dotted] (0,0) -- (0,1);
\draw[dotted] (2,0) -- (2,2.6);
\end{tikzpicture}
\caption{Schematic illustration of the large-time propagation described in \Cref{thm:pulling}, \Cref{thm:cpulling}, \Cref{thm:no-pull}. Depending on the situation, the gap between $s(t)$ and $\xi_\kappa(t)$ is either linear, logarithmic, or of constant order.}
\end{figure}

Let us comment on the terminology associated with each regime. In the climate-shift problem, \textit{pulling} refers to the situation in which the invasion front is selected by the linearised dynamics near the unstable state $n=0$, rather than by the nonlinear core of the population profile, as in standard Fisher-KPP dynamics. When the growth rate equals $1-a$ behind the moving environmental interface and $1$ ahead of it, \textit{nonlocal pulling} occurs if the front remains in the less favourable region but is accelerated by the exponentially small amount of population that spawns from the distant favourable region ahead. The selected speed is then determined not only by the local growth rate $1-a$, but also by the velocity $\s$ of the environmental interface. More precisely, in the \textit{supercritical pulling regime}
$$
2<\s< \fraksc,
$$
the remote favourable region selects the decay exponent
$$
\lambda_\star=\frac{\s}{2}-\sqrt a
<\sqrt{1-a},
$$
and hence the speed
$$
c_\star=\lambda_\star+\frac{1-a}{\lambda_\star}
> 2\sqrt{1-a}.
$$
Thus, “supercritical” refers here to the fact that the selected wave in the less favourable medium has a speed strictly larger than its minimal KPP speed. 

At the threshold $\s=\fraksc$, one has $\lambda_\star=\sqrt{1-a}$ and $c_\star=2\sqrt{1-a}$; this is the \textit{critical pulling regime}. The distant favourable region then no longer changes the first-order propagation speed, but its marginal influence still modifies the logarithmic correction: for instance, when $\eta=0$, the usual Bramson delay $-\frac{3}{2\lambda_{\star}}\ln t$ is replaced by $-\frac{5}{4\lambda_{\star}}\ln t$, while particular values of $\eta$ may produce an additional $\ln\ln t$ correction. 

Finally, when $\s>\fraksc$, the favourable region moves too far ahead to affect the front even at logarithmic order, and the population recovers the classical locally pulled Fisher--KPP dynamics in the medium of growth rate $1-a$. Thus, the expression “no pulling” in this classification means no pulling by the remote favourable region; the resulting front may nevertheless remain pulled in the usual local KPP sense.

We briefly recall the Dirichlet-kernel mechanism behind the classical Bramson correction before explaining its adaptation to heterogeneous growth. The PDE approach of Hamel, Nolen, Roquejoffre and Ryzhik \cite{hamel2013short,hamel2016logarithmic} replaces the nonlinear leading-edge dynamics with a linear equation on a half-line endowed with an absorbing boundary. In the critical frame $y=x-c_\star t$, this equation is
$$
w_t-c_\star w_y=w_{yy}+rw,\qquad y>0,\qquad w(t,0)=0.
$$

Factoring out an exponential prefactor via $p(t,y)=e^{\lambda_\star y}w(t,y)$ reduces this equation to the heat equation on the half-line with a homogeneous Dirichlet condition. The explicit Dirichlet heat kernel then gives
$$
w(t,y)\asymp\,{t^{-\frac32}}{y}\,e^{-\lambda_\star y-\frac{y^2}{4t}},
\qquad 0\leq y\lesssim\sqrt t.
$$
The factor $t^{-\frac32}$ arises from cancellation in the reflected heat kernel. If the absorbing boundary is shifted to
$c_\star t-\frac{\rho}{\lambda_\star}\ln(t+t_0)$,
the corresponding linear solution carries the prefactor $t^{\,\rho-\frac32}$; hence $\rho=\frac32$ is the unique choice for which the solution remains of order one at a fixed distance from the moving boundary. Matching this leading-edge behaviour with the decay of the critical travelling wave selects the position
$$
c_\star t-\frac{3}{2\lambda_\star}\ln t.
$$
In \cite{hamel2013short}, rigorous upper and lower barriers are obtained from suitably shifted Dirichlet problems and then matched to the nonlinear travelling-wave tail. Comparison traps the fixed level sets within a bounded distance of this position.

For a heterogeneous profile $\g$, the corresponding linearised equation does not admit an explicit heat kernel. To overcome this difficulty, we introduce spatial correctors and construct Gaussian-type sub- and supersolutions for the heterogeneous linear problem, retaining the sharp decay estimates at the moving interface $x=s(t)$; see \Cref{prop:supersolutionshiftallbeta,prop:subsolutionshiftallbeta}. These estimates quantify the interaction between the two fronts and determine the logarithmic correction to the secondary front's position. A further key ingredient is the construction of a Gaussian-type subsolution for the full nonlinear equation that, unlike conventional decaying barriers for linearised problems, remains nontrivial uniformly in time. This allows us to directly track the moving level sets up to a bounded error and, together with the subsequent rigidity analysis, establish convergence to the selected travelling-wave profile.

\begin{table}[H]
\centering
\label{tab:scalar-regimes}
\small
\renewcommand{\arraystretch}{1.35}
\setlength{\tabcolsep}{4pt}
\begin{tabularx}{\textwidth}{@{}
>{\raggedright\arraybackslash}p{0.19\textwidth}
>{\raggedright\arraybackslash}X
>{\centering\arraybackslash}p{0.10\textwidth}
>{\centering\arraybackslash}p{0.18\textwidth}
>{\raggedright\arraybackslash}p{0.3\textwidth}@{}}
\toprule
Regime & Parameters & Speed $c_\star$ &
Phase correction $A(\s,\eta)$& Selected profile\\
\midrule

Classical Bramson
&
$\s<2$, or \quad 
$\s=2$, $\eta>\frac32$
&
$2$
&
$\frac32\ln t$
&
Critical KPP wave $\Phi_1^\star$
\\
\hline
Forced wave
&
$\s=2$, $\eta=\frac32$
&
$2$
&
$\frac32\ln t$
&
Selected forced wave $U_{A_\infty}$
\\
\hline

Nonlocal pulling
&
$2<\s<\fraksc$, or
$\s=2$, $\eta<\frac32$
&
$\lambda_\star+
\frac{1-a}{\lambda_\star}$
&
$\frac{\frac32-\eta\sqrt a}{\lambda_\star}
\ln t$
&
Supercritical KPP wave $\Phi_{1-a}^\star$
\\
\hline

Critical pulling
&
$\s=\fraksc$
&
$2\sqrt{1-a}$
&
$\ell(t)$;  see \eqref{m^w-2}
&
Critical KPP wave $\Phi_{1-a}^\star$
\\
\hline

No pulling
&
$\s>\fraksc$
&
$2\sqrt{1-a}$
&
$\frac{3}{2\sqrt{1-a}}\ln t$
&
Critical KPP wave $\Phi_{1-a}^\star$
\\
\bottomrule
\end{tabularx}
\medskip
\caption{Summary of the five propagation regimes.
The front position satisfies
$\xi_\kappa(t)=c_\star t-\ell(t)+O(1)$
for the admissible levels $\kappa$ in each regime.}
\end{table}

\begin{figure}[htbp]
\centering
\begin{tikzpicture}[scale=1.2]
\begin{axis}[
    width=10cm,
    height=6cm,
    xmin=1.35,
    xmax=3.5,
    ymin=1.15,
    ymax=3.25,
    axis lines=left,
    xtick={2,2.73205},
    xticklabels={$2$,$\fraksc$},
    ytick={1.3,1.5,1.73205,3},
    yticklabels={$\frac{5}{4\sqrt{1-a}}$,$\color{teal}{\frac32} $,$\frac{3}{2\sqrt{1-a}}$,$\frac{3}{2(1-\sqrt a)}$},
    xlabel style={at={(current axis.right of origin)}, anchor=west, inner sep=0pt, xshift=5pt, yshift=6pt},
    clip=false
]
\addplot[teal,very thick,domain=1.35:2]{1.5};
\addplot[teal,very thick,domain=2:2.7320508,samples=80]{3/(2*(x/2-.5))};
\addplot[teal,very thick,domain=2.7320508:3.5]{sqrt(3)};
\addplot[muted,densely dashed] coordinates {(2,1.15)(2,3)};
\addplot[muted,densely dashed] coordinates {(2.73205,1.15)(2.73205,1.73205)};
\addplot[only marks,mark=o,teal,mark size=3pt,mark options={fill=white}] coordinates {(2,1.5)(2.73205,1.73205)};
\addplot[only marks,mark=*,teal,mark size=3pt] coordinates {(2,3)};
\addplot[only marks,mark=*,rust,mark size=3.5pt] coordinates {(2.73205,1.3)};
\end{axis}
\end{tikzpicture}
\caption{For $\eta=0$, the figure summarises the logarithmic correction coefficient in Theorems~\ref{t.0}--\ref{thm:no-pull}. The horizontal axis is $\s$, and the vertical axis 
is the logarithmic correction coefficient $A(\s,\eta)$.
It is drawn for the parameter range
$\frac{5}{4\sqrt{1-a}}<\frac{3}{2}$ (a schematic representation).
Although the spreading speed $c_\star$ varies continuously with $\s$ (see \eqref{eq:cstar}), the correction coefficient is discontinuous at both $\s=2$ and $\s=\fraksc$.
}
\label{fig:phase-correction}
\end{figure}

\begin{figure}[H]
\centering
\resizebox{\linewidth}{!}{
\begin{tikzpicture}[
  x={(1cm,.08cm)},y={(.75cm,-.55cm)},z={(0cm,.6cm)},
  >=Stealth,every node/.style={font=\small},
  declare function={n(\x,\y)=(5-\y)/(.625+.125*(\x-3));}]

\fill[black!2] (0,0,0)--(9,0,0)--(9,4.4,0)--(0,4.4,0)--cycle;
\foreach \x in {0,1.5,3,4.5,6,7.5,9}
  \draw[black!10,thin] (\x,0,0)--(\x,4.4,0);
\foreach \y in {0,1,2,3.5,4.4}
  \draw[black!10,thin] (0,\y,0)--(9,\y,0);
\draw[black!35,densely dashed] (3,0,0)--(3,4.4,0);
\draw[black!35,densely dashed] (6,0,0)--(6,4.4,0);

\filldraw[fill=bramson!28,fill opacity=.65,draw=bramson!65]
  (0,0,2.4)--(3,0,2.4)--(3,4.4,2.4)--(0,4.4,2.4)--cycle;
\foreach \x in {.6,1.2,1.8,2.4}
  \draw[bramson!28,thin] (\x,0,2.4)--(\x,4.4,2.4);
\foreach \y in {.55,1.1,1.65,2.2,2.75,3.3,3.85}
  \draw[bramson!28,thin] (0,\y,2.4)--(3,\y,2.4);

\filldraw[fill=nopull!32,fill opacity=.55,draw=nopull!80!black]
  (6,0,3)--(9,0,3)--(9,4.4,3)--(6,4.4,3)--cycle;
\foreach \x in {6.6,7.2,7.8,8.4}
  \draw[nopull!40,thin] (\x,0,3)--(\x,4.4,3);
\foreach \y in {.55,1.1,1.65,2.2,2.75,3.3,3.85}
  \draw[nopull!40,thin] (6,\y,3)--(9,\y,3);

\foreach \i in {0,...,9}{
  \foreach \j in {0,...,10}{
    \pgfmathsetmacro{\xa}{3+.3*\i}
    \pgfmathsetmacro{\xb}{\xa+.3}
    \pgfmathsetmacro{\ya}{.4*\j}
    \pgfmathsetmacro{\yb}{\ya+.4}
    \filldraw[fill=nonlocal!35,fill opacity=.68,
      draw=nonlocal!38,line width=.25pt]
      (\xa,\ya,{n(\xa,\ya)})--(\xb,\ya,{n(\xb,\ya)})--
      (\xb,\yb,{n(\xb,\yb)})--(\xa,\yb,{n(\xa,\yb)})--cycle;
  }
}

\draw[bramson,thick,densely dashed] (3,0,2.4)--(3,3.5,2.4);
\draw[bramson,very thick] (3,3.5,2.4)--(3,4.4,2.4);
\draw[nonlocal,very thick] (3,0,{n(3,0)})--(3,3.5,{n(3,3.5)});
\draw[nonlocal,thick,densely dashed] (3,3.5,{n(3,3.5)})--(3,4.4,{n(3,4.4)});
\draw[nonlocal,thick,densely dashed] (6,0,5)--(6,4.4,.6);
\draw[nopull!85!black,thick,densely dashed] (6,0,3)--(6,4.4,3);

\draw[white,line width=3.4pt] (6,0,3)--(6,1,3)--(6,4.4,1.3);
\draw[critical,line width=1.8pt] (6,0,3)--(6,1,3)--(6,4.4,1.3);
\filldraw[fill=black,draw=white,line width=.7pt] (6,1,3) circle (2.7pt);
\filldraw[fill=forced,draw=white,line width=.8pt] (3,3.5,2.4) circle (3pt);

\draw[nonlocal!80!black,line width=1.1pt,domain=3:6,samples=28]
  plot (\x,2,{n(\x,2)});
\draw[black!50,densely dashed] (3,2,2.4)--(3,2,{n(3,2)});
\draw[critical,densely dashed] (6,2,2.5)--(6,2,3);
\filldraw[draw=bramson,fill=white,line width=.9pt] (3,2,2.4) circle (2.2pt);
\filldraw[draw=white,fill=nonlocal,line width=.7pt] (3,2,{n(3,2)}) circle (2.4pt);
\filldraw[draw=nonlocal,fill=white,line width=.9pt] (6,2,3) circle (2.2pt);
\filldraw[draw=white,fill=critical,line width=.7pt] (6,2,2.5) circle (2.4pt);

\draw[->] (0,4.4,0)--(9.7,4.4,0) node[right] {$\s$};
\draw[->] (0,0,0)--(0,4.9,0) node[below right] {$\eta$};
\draw[->] (0,0,0)--(0,0,8.8) node[above] {$A(\s,\eta)$};
\draw (3,4.4,0)--(3,4.57,0) node[below] {$2$};
\draw (6,4.4,0)--(6,4.57,0) node[below] {$\s_c$};
\foreach \y/\lab in {1/\eta_c,2/0,3.5/{\frac32}}{
  \draw (0,\y,0)--(-.13,\y,0) node[left] {$\lab$};
}

\begin{scope}[shift={(1.5,2.05,2.4)},cm={.75,-.55,1,.08,(0,0)}]
  \node[transform shape,scale=.92,align=center,text=bramson!75!black,
    font=\small\bfseries\boldmath,inner sep=0pt] at (0,0)
    {Classical Bramson\\[5pt]$\displaystyle A=\frac32$};
\end{scope}
\begin{scope}[shift={(7.55,2.3,3)},cm={.75,-.55,1,.08,(0,0)}]
  \node[transform shape,align=center,text=nopull!55!black,
    font=\small\bfseries\boldmath,inner sep=0pt] at (0,0)
    {No pulling\\[5pt]$\displaystyle A=\frac{3}{2\sqrt{1-a}}$};
\end{scope}

\SurfaceText{.39}{1}{N,o,n,l,o,c,a,l,{\hspace{.3em}},p,u,l,l,i,n,g}
\SurfaceFormula{1.02}{.84}{A=\frac{\frac32-\eta\sqrt a}{\lambda_\star}}

\node[anchor=west,text=critical,font=\small\bfseries\boldmath,
  align=left,inner sep=2pt] (clabel)
  at (8.8,.2,5.1) {Critical pulling\\$\eta=\eta_c:= - \frac{1}{2 \sqrt{a}}$};
\draw[->,critical,thick] (clabel.west) to[bend right=12] (6,1,3);
\node[anchor=west,text=forced,font=\small\bfseries,
  inner sep=2pt] (flabel) at (.9,-.6,5.5) {Forced wave};
\draw[->,forced,thick] (flabel.south east) to[out=-35,in=125] (3,3.5,2.4);
\end{tikzpicture}
}
\caption{Schematic illustration of the value of the logarithmic coefficient.
This plot illustrates where the correction
coefficient $A(\s,\eta)$ is continuous or discontinuous. Colours identify the regimes. Solid boundary curves give attained values and dashed boundary
curves give one-sided limits. The darker curve on the green surface marks the section
$\eta=0$; open and filled circles distinguish limits from attained values.
At $\s=2$, the nonlocal value is attained for $\eta<\frac32$ and the classical
value for $\eta\geq\frac32$. The purple curve gives the critical values.}
\label{fig:phase-correction-3d}
\end{figure}

\subsection{Travelling waves and invasion dynamics of the competition system and interaction of interfaces}

We now return to the competition system \eqref{e.com} and consider travelling waves of the form
\begin{equation}
u(t,x)=U_c(\xi),\quad v(t,x)=V_c(\xi), \quad \xi=x-ct,
\end{equation}
where $c$ is the propagation speed. Substituting this ansatz into
\eqref{e.com}, we find that the wave profile $(U_c,V_c)$ satisfies
\begin{equation}\label{TW-sys}
\begin{cases}
U_c''+cU_c'+U_c(1-U_c-aV_c)=0,&\quad \xi\in\mathbb{R},\\
dV_c''+cV_c'+rV_c(1-V_c-bU_c)=0,&\quad \xi\in\mathbb{R},\\
\lim_{\xi \to -\infty}(U_c,V_c)(\xi)=E^-,\quad \lim_{\xi \to +\infty}(U_c,V_c)(\xi)=E^+.
\end{cases}
\end{equation}

Kan-On \cite{kan1997fisher} established a critical number $c^0_{\rm LLW} \in [2\sqrt{1-a},2]$ such that \eqref{TW-sys} has a travelling wave solution connecting $(u^\star, v^\star)$ to $(0,1)$ if and only if $c \in [c^0_{\rm LLW},\infty)$, where $(u^\star,v^\star)$ denotes the stable state given in \eqref{e.uv.uv}. Furthermore, we say that the minimal travelling wave is pulled (or linearly determined) if $c^0_{\rm LLW} = 2\sqrt{1-a}$, while it is pushed (or nonlinearly determined) if $c^0_{\rm LLW} > 2\sqrt{1-a}$. The question of linear determinacy was first studied by Hosono \cite{hosono1998minimal} and Lewis et al. \cite{lewis2002spreading}.

Let $d,r>0$, $0<a<1$ and {$b >0$}. We consider entire classical solutions
$0<u,v<1$ in the moving frame $y=x-ct$ of
\begin{equation}\label{eq:LV}
\begin{cases}
u_t=u_{yy}+cu_y+u(1-u-av),\\[5pt]
v_t=dv_{yy}+cv_y+rv(1-v-bu),
\end{cases}
\qquad (t,y)\in\mathbb{R}^2.
\end{equation}
A travelling wave of speed $c$ is thus a stationary solution of \eqref{eq:LV}. 
We first establish uniqueness up to translation of such waves in both monostable parameter regimes, extending 
\cite[Proposition~A.7]{girardin2019invasion}.
\begin{theorem}[Uniqueness of travelling fronts]\label{t.uniqueness}
Let $a \in (0,1)$ and $b \in (0,1)\cup(1,\infty)$ and let $c>0$ be given.  
If $(U,V)$ and $(\widetilde U, \widetilde V)$ are bounded nonnegative travelling waves satisfying \eqref{TW-sys} and connecting $(u^\star, v^\star)$ to $(0,1)$, then $U' <0 < V'$ on $\mathbb{R}$ and 
\begin{equation}
(U, V) \equiv  (\widetilde U(\cdot\,-\sigma), \widetilde V(\cdot\,-\sigma))
\qquad \text{ for some }\sigma \in \mathbb R.
\end{equation}
\end{theorem}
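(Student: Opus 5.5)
The plan is to convert the system into a cooperative one and then run a sliding argument, initiated by rough control of the leading edge of the single component $U$. Set $W := 1-V$. Then $(U,W)$ solves a system that is cooperative in the moving frame $y=x-ct$:
\begin{equation}
U''+cU'+U(1-a-U+aW)=0,\qquad dW''+cW'+r(1-W)(bU-W)=0.
\end{equation}
The off-diagonal terms $aUW$ and $rb(1-W)U$ are nondecreasing in the other component since $U\ge0$ and $W\le1$. Both waves connect $(u^\star,1-v^\star)$ at $-\infty$ to $(0,0)$ at $+\infty$. By the strong maximum principle, $0<U$ and $0<W<1$ wherever the profiles are nontrivial.

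\textbf{Step 1 (leading-edge control).} Linearising at $(0,1)$, the $U$-equation becomes $U''+cU'+(1-a)U\approx0$. The $W$-equation becomes $dW''+cW'-rW=-rbU$, which is invertible. Hence $W$ is slaved to $U$ in the sense that $W\le C U$ for large $y$. This follows from a comparison on a half-line $[y_0,\infty)$, using that $-r$ is strictly negative and that $U$ decays no faster than exponentially, via a Harnack inequality. For $U$, the standard ODE analysis gives $U(y)\asymp y^{k}e^{-\mu y}$ with $\mu$ a root of $\mu^2-c\mu+1-a=0$ and $k\in\{0,1\}$. What sliding actually needs is weaker: for every $\tau>0$ there is $R$ with
\begin{equation}
\widetilde U(y-\tau)\ge U(y),\qquad \widetilde W(y-\tau)\ge W(y)\quad \text{for } y\ge R,
\end{equation}
after an initial large translation. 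I would obtain this from the ratio $\widetilde U(y-\tau)/U(y)$ being bounded below far ahead. Both decay rates lie in $\{\mu_-,\mu_+\}$. If the rates differ, I would exclude the case $\widetilde U$ faster than $U$ by swapping roles, and apply sliding in the direction where the slower-decaying wave dominates. If the rates coincide, Harnack plus the asymptotic form gives a ratio bounded between positive constants. Only one component needs quantitative control, since $W$ follows from $U$ through the slaving estimate.

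\textbf{Step 2 (sliding).} Define $\tau^*=\inf\{\tau:(\widetilde U,\widetilde W)(\cdot-\tau')\ge(U,W)\ \text{for all }\tau'\ge\tau\}$. Finiteness of $\tau^*$ comes from Step 1 on $[R,\infty)$. On $(-\infty,-R]$ it comes from the stability of $(u^\star,1-v^\star)$ for the cooperative system: the Jacobian there has negative principal eigenvalue for $b\neq1$, which is exactly where $b\ne1$ is used. In between, compactness handles $[-R,R]$ for large shifts. I then suppose $\tau^*$ is not attained as equality and derive a contradiction in three cases:
\begin{itemize}
\item touching at a finite point forces equality everywhere, by the strong maximum principle for the cooperative system, which is irreducible since $a,b>0$;
\item touching at $-\infty$ is excluded by the stability of $E^-$ using a small perturbation $(\varepsilon\phi_1,\varepsilon\phi_2)$ with the positive eigenvector;
\item touching at $+\infty$ is excluded by the leading-edge ratio bound, using a supersolution perturbation $\varepsilon(\psi_1,\psi_2)e^{-\mu y}$ of the linearised system.
\end{itemize}
Symmetry of the roles of the two waves then yields $(U,V)\equiv(\widetilde U,\widetilde V)(\cdot-\sigma)$.

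\textbf{Step 3 (monotonicity).} Applying the same sliding with $(\widetilde U,\widetilde W)=(U,W)$ shows $(U,W)(\cdot-\tau)\ge(U,W)$ for all $\tau\ge0$. The strong maximum principle applied to the derivatives then gives $U'<0<V'$.

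The main obstacle is the $+\infty$ end in Step 2 when decay rates are only roughly known, in particular the degenerate rate $\mu_-=\mu_+$ at $c=2\sqrt{1-a}$ with a possible $y e^{-\mu y}$ prefactor. Here I would first try to show via a Wronskian-type argument on the $U$-equation, treating $aUW$ as a perturbation integrable against $e^{\mu y}$, that $U(y)e^{\mu y}/(y+1)$ has a positive finite limit or $U e^{\mu y}$ has one. This makes the ratio of two waves converge, which suffices to start and close the sliding.
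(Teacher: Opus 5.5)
Your overall architecture matches the paper's: cooperative variables $(U,1-V)$, sliding, stability of $(u^\star,v^\star)$ at $-\infty$ (where $b\neq 1$ enters), a convergent tail ratio for $U$ at $+\infty$, then swapping the two waves. The gap is in how you control the second component $W=1-V$ on the right half-line.

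\textbf{The slaving estimate fails.} The bound $W\le CU$ is false in general. Near $+\infty$ you have $dW''+cW'-rW\approx -rbU$, whose decaying homogeneous mode is $e^{-\nu y}$ with $\nu=\frac{c+\sqrt{c^2+4dr}}{2d}$. Suppose the decay rate $\mu$ of $U$ exceeds $\nu$. This happens, e.g., for $c=2$ and $d$ large, since $\mu\ge\frac{1-a}{c}$ while $\nu\to 0$ as $d\to\infty$. Then the particular solution $\frac{rb}{r+c\mu-d\mu^2}\,U$ is negative. Positivity of $W$ therefore forces $W\sim Ke^{-\nu y}$ with $K>0$, so $W/U\to\infty$.

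\textbf{Consequences for the sliding.} In that regime the $W$-tails of two waves are $Ke^{-\nu y}$ and $\widetilde K e^{-\nu y}$ with a priori unrelated constants. At the critical shift, $\widetilde W(y-\tau^*)/W(y)\to(\widetilde K/K)e^{\nu\tau^*}$, and there is no reason for this to exceed $1$. So no ratio argument closes the sliding for $W$. Your perturbation $\varepsilon(\psi_1,\psi_2)e^{-\mu y}$ does not help either. In the $U$-equation, $\mu^2-c\mu+1-a=0$ leaves no strict margin. In the $W$-equation, $d\mu^2-c\mu-r>0$ when $\mu>\nu$, so no $\psi_2>0$ gives a supersolution. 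Even where $W\le CU$ does hold, an upper bound on $W$ alone would not yield $\widetilde W(\cdot-\tau)\ge W$.

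\textbf{The missing idea.} No decay information on $W$ is needed at all. Set $Z=\widetilde W(\cdot-\tau)-W$ and $P=\widetilde U(\cdot-\tau)-U$. Then
$dZ''+cZ'+r\bigl(1-bU-V-\widetilde V(\cdot-\tau)\bigr)Z+rb\,\widetilde V(\cdot-\tau)P=0$.
Fix $A_1$ with $V\ge\frac34$ on $[A_1,\infty)$. Suppose $P\ge0$ on $[\bar y,\infty)$ and $Z(\bar y)>0$ for some $\bar y\ge A_1$. A negative minimum of $Z$ on $(\bar y,\infty)$ would be attained, since $Z\to0$. At such a point $\widetilde V(\cdot-\tau)>V\ge\frac34$, so the zeroth-order coefficient is at most $-\frac r2$. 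The equation then gives $dZ''+cZ'<0$, which contradicts the minimum. This is exactly \Cref{l.liouville.2}.

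\textbf{How this repairs your plan.} Initiating and closing the sliding then only require ordering $U$ on $[-A_2,\infty)$, via the tail ratio plus compactness, and $W$ on the compact set $[-A_2,A_1]$. \Cref{l.liouville.2,l.liouville.3} propagate the strict ordering to all of $\mathbb R$. So replace your slaving step and the $+\infty$ perturbation for $W$ by this half-line maximum principle. The only tail information then needed is the precise $U$-asymptotics $U\sim(\ell_0+\ell_1y)e^{-\lambda y}$. The paper takes these from Girardin--Lam; your Levinson/Wronskian argument could supply them.
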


The uniqueness theorem follows from the rigidity result below, which assumes only two-sided control of the leading-edge tail of $u$ and suitable persistence at spatial infinity. Unlike sliding methods that compare both components with translates of a known front  \cite{Leung2011travelling,Wang2019uniqueness,Zhao2011existence,wu2023sharp}, our argument exploits stability of the remaining asymptotic modes. This allows us to initiate the sliding argument without detailed decay expansions for $v$ or for the left tail of $u$. The structural extension to monotone systems is given in  \Cref{app:rigid}.

\begin{theorem}\label{thm:liouville}
Let $0<a<1$ and $b\in (0,\infty)\setminus\{1\}$. Suppose there exist $c >0$ and a bounded, positive solution $(u,v)$ of \eqref{eq:LV} satisfying the following properties:
\begin{itemize}
    \item[{\rm(i)}]  \text{(Asymptotics of $u$ at $y=+\infty$)} 
    \begin{equation}\label{e.917.2}
u(t,y) \asymp  (\ell_0 + \ell y)e^{-\lambda y}
\qquad \text{ uniformly in } t\in\mathbb{R},\ y\ge R,
\end{equation}
for some $R,\lambda>0$, and $\ell_0,\ell \geq 0$ such that $\ell_0 + \ell >0$. 
    \item[{\rm(ii)}]  \text{(Uniform positivity at opposite spatial ends)}
\begin{equation}\label{e.917.-1}
\inf_{t\in \mathbb{R},~ y \ge A_0} v(t,y) >0\quad \text{ and }\quad  \inf_{t\in \mathbb{R},~ y \leq -A_0} u(t,y) >0 \quad \text{ for some }A_0>0.
\end{equation}
\end{itemize}
In case $b\in (0,1)$, we assume in addition that 
\begin{itemize}
    \item[{\rm(iii)}] \text{(Additional positivity condition in the coexistence case)}
    \begin{equation}\label{e.917.-2}
\inf_{t\in \mathbb{R},~ y \le -A'_0} v(t,y) >0\quad \text{ for some }A'_0>0.
\end{equation}
\end{itemize}
Then there exists $\sigma\in\mathbb{R}$ such that
\begin{equation}
(u(t,y),v(t,y))=(U_c(y-\sigma),V_c(y-\sigma))
\qquad \text{ for }~(t,y)\in\mathbb{R}^2,
\end{equation}
where $(U_c,V_c)$ is a monotone front connecting the stable state $(u^\star, v^\star)$ to $(0,1)$.
\end{theorem}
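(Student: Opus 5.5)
The plan is a sliding argument in the moving frame, applied to the competitive system recast as a cooperative one. Set $w:=1-v$; then $(u,w)$ satisfies a cooperative system with equilibria $(0,0)$ at $y=+\infty$ and $(u^\star,1-v^\star)$ at $y=-\infty$, and the comparison principle holds for the order $(u_1,w_1)\le(u_2,w_2)$ componentwise.

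\textbf{Step 1: existence of a comparison front and its tail.} By Kan-On's result, a monotone front $(U_c,V_c)$ of speed $c$ exists. First I would show $c\ge c^0_{\rm LLW}$. Otherwise $u$ decays with a rate $\lambda$ incompatible with the linearisation $\lambda^2-c\lambda+1-a=0$ at $(0,1)$: by (i) and (ii), $u$ is a positive entire solution of an approximately linear equation near $+\infty$, and a Liouville/eigenvalue argument forces $\lambda$ to be a real root, hence $c\ge 2\sqrt{1-a}$. The monotone wave then has a $u$-tail $(\ell_0'+\ell'y)e^{-\lambda' y}$, with $\lambda'$ a root. A second use of the linearised equation matches $\lambda=\lambda'$ and the polynomial prefactor type; for the pushed or minimal case, standard ODE asymptotics are used.

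\textbf{Step 2: initial ordering.} For $\sigma$ large, I want $(U_c(y+\sigma),W_c(y+\sigma))\le(u,w)$ everywhere and all $t$, and similarly a reverse ordering. The point of the theorem is that only the $u$-tail is controlled. I would split $\mathbb R$ into three regions.
\begin{itemize}
\item[\textbf{Right region} $y\ge R$:] by (i), the $u$-ordering holds after a large shift. For $w$, note that $v$ is uniformly positive by (ii), so in the $v$-equation $w$ is driven by $bu$. The stability of $v=1$ (the remaining mode) gives $w\lesssim u$ through a maximum-principle argument with supersolution $Ku+\epsilon$, letting $\epsilon\to0$ using boundedness in $t\in\mathbb R$. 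This is the stability of the remaining asymptotic modes mentioned before the theorem.
\item[\textbf{Left region} $y\le -A$:] by (ii), and by (iii) when $b<1$, the solution stays in a compact subset of the basin of the stable state $(u^\star,v^\star)$. A standard argument for entire solutions, using the stability of that equilibrium for the reduced kinetics, gives $(u,v)\to(u^\star,v^\star)$ as $y\to-\infty$ uniformly in $t$. This allows comparison with a small perturbation.
\item[\textbf{Middle compact region}:] use a large shift, so that the shifted front is near $(0,1)$ there.
\end{itemize}
Then, as in Volpert et al./Berestycki–Hamel, I would add the standard $\epsilon$-perturbation $(\epsilon e^{-\delta y}$-type or constant$)$ to absorb the mismatch in the left and right regions, using the strict stability of both end states.

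\textbf{Step 3: slide.} Let $\sigma^*$ be the infimum of the admissible shifts. If the ordering is not touching, the strong maximum principle and the uniform-in-$t$ limits at $\pm\infty$ allow decreasing $\sigma$, a contradiction. If it is touching at some finite point, possibly via a sequence $t_n\to\pm\infty$ and a limiting entire solution, the strong maximum principle for cooperative irreducible systems gives equality. This yields $(u,v)\ge$ a translate of $(U_c,V_c)$ and symmetrically $\le$ a translate. The identification $(u,v)=(U_c,V_c)(\cdot-\sigma)$ then follows by sliding between the two bounds. Monotonicity $U'<0<V'$ of the fronts, used in Theorem~\ref{t.uniqueness}, follows by comparing a front with its own translates.

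\textbf{Main obstacle.} I expect the hardest step to be the control of the $w$-component at $+\infty$ from $u$ alone, uniformly in $t\in\mathbb R$, when $\ell=0$ versus $\ell>0$ (the critical wave). The touching-at-infinity case must also be handled, since the two tails could asymptotically coincide without touching. For the latter I would use exact tail matching of $u/((\ell_0+\ell y)e^{-\lambda y})$ via a Harnack-type limit along the right end to rule out loss of compactness.
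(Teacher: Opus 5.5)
There is a genuine gap, located where you anticipate the main obstacle.

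\textbf{Steps 1 and 3 do not go through.} Step 1 is unsupported. Your eigenvalue argument gives only $c\ge 2\sqrt{1-a}$, not $c\ge c^0_{\mathrm{LLW}}$, so in the pushed regime there may be no front of speed $c$ to compare with. Moreover, the linearisation at $(0,1)$ does not select the root or the polynomial prefactor for the time-dependent solution $u$; ODE asymptotics apply to the front, not to $u$. So you have not shown that $u$ and $U_c$ have comparable tails, which Step 2 needs in order to place translates of the front on both sides of the solution.

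More fundamentally, Step 3 cannot produce equality. At the extremal shift, contact along $t_n\to\pm\infty$ only shows that a \emph{limit} of time translates is a front. Relative contact at $y=+\infty$ is also not ruled out under the mere $\asymp$ control in (i). The normalised limits $u(t+t_n,y+y_n)/u(t_n,y_n)$ and $U_c(y+y_n+\sigma^*)/u(t_n,y_n)$ solve the same linear equation, and the second is a multiple of $e^{-\lambda y}$. The strong maximum principle then only gives that the first limit also equals $Ce^{-\lambda y}$, with no contradiction. Exact convergence of $u/((\ell_0+\ell y)e^{-\lambda y})$ is not available from (i). Your scheme therefore only recovers the two-sided trapping already obtained in Step 2. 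The closing ``sliding between the two bounds'' is the Liouville statement itself.

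\textbf{The missing idea.} Slide $(u,v)$ against its own space-time translates $(u^\xi,v^\xi)(t,y)=(u,v)(t+\xi,y+h-c\xi)$ with $\xi>h/c$, not against a front. Set $p^\xi=u^\xi-u$ and $q^\xi=v^\xi-v$.
\begin{itemize}
\item The ordering $p^\xi>0>q^\xi$ is initiated for large $\xi$ from (i) and the uniform limits in \Cref{l.liouville.1}.
\item It is propagated by two half-line maximum principles. \Cref{l.liouville.2} uses the stability of $v=1$ once $p^\xi\ge0$ on the right. \Cref{l.liouville.3} uses $B_-(1,1)^\top\ll0$ on the left.
\item Contact at $+\infty$ is excluded in \Cref{l.liouville.4} precisely because the comparison object is a spatial translate of $u$ itself. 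The identity $\hat U\equiv U$ of normalised limits iterates to $u(t_n+k\xi_*,y_n+k(h-c\xi_*))/u(t_n,y_n)\to1$ for each fixed $k$. But (i), together with $h-c\xi_*<0$, forces this ratio to be $\gtrsim e^{\lambda k(c\xi_*-h)}$, which is a contradiction for $k$ large.
\end{itemize}
Hence the infimal admissible $\xi$ equals $h/c$ for every $h$. Replacing $h$ by $-h$ gives $(u,v)(t+h/c,\cdot)\equiv(u,v)(t,\cdot)$, so $(u,v)$ is itself a monotone front. This route needs no a priori front and no tail matching, and it yields $c\ge c^0_{\mathrm{LLW}}$ as a corollary. \Cref{t.uniqueness} then identifies the front with a translate of $(U_c,V_c)$.
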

In particular, \Cref{thm:liouville} shows that the existence of an entire solution satisfying its hypotheses forces $c\ge c_{\rm LLW}^0$. Moreover, the decay exponent $\lambda$ in \eqref{e.917.2} is necessarily the characteristic exponent of the corresponding travelling wave, namely
\begin{equation}\label{e.921.5}
\lambda= 
\begin{cases}\frac12 ({c - \sqrt{c^2-4(1-a)}}), &\text{when } c > c^0_{\rm LLW},\\[5pt]
\frac12 ({c + \sqrt{c^2-4(1-a)}})&\text{when } c = c^0_{\rm LLW},
\end{cases}
\end{equation}

The invasion dynamics of \eqref{e.com} have been extensively studied. For compact perturbations of  $(0,1)$ with $u_0\not\equiv 0$, Lewis, Li and Weinberger \cite{lewis2002spreading,li2005spreading} identified the minimal wave speed  ($c^0_{\mathrm{LLW}} \in [2\sqrt{1-a}, 2]$) as the spreading speed of the Cauchy problem:
\begin{align}
&\lim_{t\to\infty}\sup_{|x|<ct} \left(|u(t,x)-u^{\star}|+|v(t,x) - v^{\star}|\right)=0\quad \mbox{for each $0<c<c^0_{\mathrm{LLW}}$,}\\
&\lim_{t\to\infty}\sup_{|x|>ct} \left(|u(t,x)|+|v(t,x)-1|\right)=0\quad \mbox{for each $c>c^0_{\mathrm{LLW}}$,}
\end{align}
where $(u^{\star},v^{\star})$ is the stable state given in \eqref{e.uv.uv}.

For compactly supported populations, or for initial data satisfying 
\begin{equation}\label{ic:GL2019}
\begin{cases}
0\le u_0,v_0\le1,\qquad
\quad v_0\not\equiv0,
\qquad \operatorname{supp}v_0\text{ is compact},\\[5pt]
 \liminf\limits_{x\to-\infty}u_0(x)>0,\quad  u_0(x)=0\quad \text { for all large } x,
\end{cases}
\end{equation}
a second invasion interface may arise as the slower species $u$  advances through the region already colonised by $v$. This co-invasion scenario was proposed by Shigesada and Kawasaki \cite{shigesada1997biological} in their study of postglacial tree migrations  \cite{davis1981quaternary}. The corresponding spreading speeds were determined for strong-weak competition by Girardin and Lam \cite{girardin2019invasion} using coupled barriers and for weak competition by Liu et al. \cite{liu2019asymptotic} using Hamilton-Jacobi methods. Both settings allow nonlocal pulling, whereby the fast front attracts the trailing front.

In the strong--weak case $0<a<1<b$, 
 when $rd<1$, the slower $v$-species becomes extinct and the $u$-species spreads at speed $2$, 
 under the initial-data assumptions of \cite{girardin2019invasion}.

Under the corresponding assumptions on the initial data, when $rd>1$, the species $v$  first colonises the empty habitat with speed $c_v^\star=2\sqrt{rd}$, while the species $u$  invades the region occupied by $v$ at a slower speed $c_u^\star$. More precisely, the following spreading behaviour holds:

\begin{equation}
\begin{cases}
\lim\limits_{t\to\infty}\sup\limits_{0\leq x<ct} \left(|u(t,x)-u^{\star}|+|v(t,x)-v^{\star}|\right)=0,\quad &\mbox{for any $c<c_{u}^{\star}$,}\\[5pt]
\lim\limits_{t\to\infty}\sup\limits_{c_1 t<x< c_2t} \left(|u(t,x)|+|v(t,x)-1|\right)=0,\quad &\mbox{for any $c_{u}^{\star}<c_1<c_2<2\sqrt{rd}$,}\\[5pt]
\lim\limits_{t\to\infty}\sup\limits_{x>ct} \left(|u(t,x)|+|v(t,x)|\right)=0,\quad &\mbox{for any $c>2\sqrt{rd}$}.    
\end{cases}
\end{equation}
Here $(u^\star,v^\star)$ is the stable equilibrium defined in \eqref{e.uv.uv}: it is $(1,0)$ in the strong--weak case and the positive coexistence equilibrium in the weak competition case. The selected speed $c_u^\star$ is given by
\begin{equation}\label{c-u-star}
    c^{\star}_u:= \begin{cases}
    c^0_{\mathrm{LLW}} &\text{ if }\sqrt{rd} \geq \sqrt{1-a} + \sqrt a,\\[5pt]
    \max\left\{c^0_{\mathrm{LLW}}, c_{\rm GL}\right\}  &\text{ if }1<\sqrt{rd} < \sqrt{1-a}+ \sqrt{a},
    \end{cases}
\end{equation}
where
\begin{equation}
c_{\mathrm{GL}} := \sqrt{rd}-\sqrt a + \frac{1-a}{ \sqrt{rd}-\sqrt a},  
\end{equation}
which corresponds to \eqref{eq:cstar}  with $\s = 2\sqrt{rd}$.
In particular, under the condition 
\begin{equation}\label{eq:NP}
   1 < \sqrt{rd} < \sqrt{1-a} + \sqrt{a} \quad \text{ and } \quad c_{\mathrm{GL}} > c^0_{\rm LLW}, 
\end{equation}
we have \begin{equation}
c^\star_v = 2\sqrt{dr} >2 \qquad  c^\star_u = c_{\mathrm{GL}} \in (2\sqrt{1-a}, c^\star_v),\end{equation} 

Under \eqref{eq:NP}, the trailing front moves faster than the minimal wave in the homogeneous habitat occupied by $v$, despite remaining behind the leading front of $v$. This enhancement, known as nonlocal pulling, was identified for a decoupled system in \cite{holzer2014accelerated} and subsequently for the coupled model in \cite{girardin2019invasion}; see also the survey \cite{lam2026survey}.

We first consider the case where $u$ is the invasive species and $v$ is the resident species. In this setting, assume $u_0$ is front-like with a polynomially weighted exponential tail, in the sense that 
\begin{align}\label{ic-lv-u1}
0\leq u_0\leq 1 \text{ in } \mathbb{R},\quad 
\liminf_{x\to-\infty}u_0(x)>0, \quad u_0(x) \asymp x^{q} e^{-\lambda x} \text{ for all } x\geq x_0. 
\end{align}
for some $x_0>1$ and some $q\in\mathbb{R}$,
while the density $v_0$ of the resident species satisfies 
\begin{align}\label{ic-lv-v}
k_0\leq v_0\leq 1 \quad\text{ in } \mathbb{R}, \text{ for some } k_0>0.
\end{align}
For each $\lambda\in(0,\sqrt{1-a}]$, we associate the speed
\begin{equation}\label{c-lambda}
c_\lambda:=\lambda+\frac{1-a}{\lambda}.
\end{equation}
We further define
\begin{equation}\label{lambda-LLW}
\lambda_{\mathrm{LLW}}:=
\frac{c^0_{\mathrm{LLW}}-\sqrt{(c^0_{\mathrm{LLW}})^2-4(1-a)}
}{2}\in(0,\sqrt{1-a}],
\end{equation}
which is the smaller positive root of
\begin{equation}
\lambda^2-c^0_{\mathrm{LLW}}\lambda+1-a=0.
\end{equation}
Since $\lambda\mapsto c_\lambda$ is strictly decreasing on $(0,\sqrt{1-a}]$,
\begin{equation}\label{equiv-c-lambda-LLW}
c_\lambda>c^0_{\mathrm{LLW}}\quad\Longleftrightarrow\quad
0<\lambda<\lambda_{\mathrm{LLW}}.
\end{equation}
For $\kappa >0$, define the rightmost $\kappa$-level position
\begin{align}
 \xi_{\kappa}[u](t):=\sup\left\{x\in\mathbb R:\ u(t,x)\geq\kappa\right\}.
\end{align}

We now state the corresponding convergence result for these algebraically weighted exponential tails.

\begin{theorem}\label{thm:syst1}
Let $0<a<1$ and $b\in (0,\infty)\setminus \left\{1\right\}$ be given and let  $(u,v)$ be the solution of \eqref{e.com} with initial data satisfying
\eqref{ic-lv-u1} for some $\lambda >0$, $x_0>1$ and $q\in\mathbb{R}$, and \eqref{ic-lv-v}. Suppose   \begin{equation}
\lambda \in (0,\sqrt{1-a}), \qquad c_\lambda>c^0_{\mathrm{LLW}}.\end{equation} Then 
for any $\delta\in(0,u^{\star})$, the level set of $u$ satisfies
\begin{equation}
\xi_\delta[u](t)=c_\lambda t+\frac{q}{\lambda}\ln t+O(1)
\quad\text{as }t\to\infty.
\end{equation}
Moreover, there exists a bounded function
$\Lambda:\mathbb R_+\to\mathbb R$ such that
\begin{align}
\lim_{t\to\infty}\sup_{x\in\mathbb{R}}
&\Bigg\{\left|u(t,x)-U_{c_\lambda}\left(
x-c_{\lambda} t-\frac{q}{\lambda}\ln t-\Lambda(t)
\right)\right\vert \nonumber\\
&+
\left|v(t,x)-V_{c_\lambda}\left(x-c_\lambda t-\frac{q}{\lambda}\ln t-\Lambda(t)\right)\right\vert\Bigg\}=0,\label{sys1-TW-conv}
\end{align}
where $(U_{c_\lambda},V_{c_\lambda})$ is the travelling wave solution to \eqref{TW-sys} with $c=c_\lambda$, $E^-=(u^{\star},v^{\star})$ and $E^+=(0,1)$, which is unique up to translation.
\end{theorem}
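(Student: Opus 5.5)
The plan is to treat this as a Bramson-type problem in which the initial tail $x^q e^{-\lambda x}$ is transported by the linearised $u$-equation ahead of the front. The first step is a reduction. Since $v_0\ge k_0>0$ and $u\le 1$, comparison for the $v$-equation shows that $v\to 1$ uniformly on the region where $u$ is small. Quantitatively, $1-v$ is controlled by $u$ there, up to exponentially decaying transients. Hence, in the leading edge, $u$ satisfies
\begin{equation}
u_t-u_{xx}=u(1-a-u)+u\,a(1-v),\qquad 0\le a(1-v)\lesssim u+e^{-\mu t},
\end{equation}
so the perturbation is quadratic in $u$. For the linear equation $w_t=w_{xx}+(1-a)w$ with initial data $w_0\asymp x^qe^{-\lambda x}$ and $\lambda<\sqrt{1-a}$, the heat-kernel computation behind \eqref{eq:linear-effective-tail} gives
\begin{equation}
w(t,x)\asymp (x-2\lambda t)^q e^{-\lambda(x-c_\lambda t)}
\end{equation}
uniformly for $x-2\lambda t\gtrsim t$. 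Because $\lambda<\sqrt{1-a}$, we have $c_\lambda>2\lambda$, so the region $x\approx c_\lambda t$ lies well inside this zone. There the tail reads $t^q e^{-\lambda(x-c_\lambda t)}$ up to constants, which locates the front at $c_\lambda t+\frac q\lambda\ln t+O(1)$.

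Next I would construct barriers. For the upper bound on $\xi_\delta[u]$, take $\bar u=\min\{1,Cw\}$ with $w$ the linear solution above. Since $1-u-av\le 1-a+a(1-v)$ and $1-v$ is small in the tail, this is a supersolution up to an integrable correction; I would absorb that correction with a factor $e^{\int_0^t\varepsilon(s)ds}$, or simply use $1-u-av\le 1-av$ together with $v\ge 1-Ce^{-\mu t}$ in the far field. This gives $u(t,x)\le C t^{q}e^{-\lambda(x-c_\lambda t)}$ for $x\ge c_\lambda t+\frac q\lambda \ln t$, hence $\xi_\delta[u](t)\le c_\lambda t+\frac q\lambda\ln t+O(1)$.

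For the lower bound I would use a sliding subsolution built on the travelling wave, in the spirit of Fife--McLeod. Set
\begin{equation}
(\underline u,\overline v)(t,x)=\Big(U_{c_\lambda}\big(x-c_\lambda t-\tfrac q\lambda\ln t-\zeta(t)\big)-p(t),\;V_{c_\lambda}(\cdots)+p(t)\Big),
\end{equation}
with $\zeta(t)$ bounded and $p(t)$ decaying. Because $c_\lambda>c^0_{\mathrm{LLW}}$, the wave has decay rate exactly $\lambda$, and this is the profile the barrier must match. The hypothesis $c_\lambda>c^0_{\mathrm{LLW}}$ is needed at this point: it makes $(U_{c_\lambda},V_{c_\lambda})$ exist with a noncritical exponent $\lambda$, so the term $\frac{q}{\lambda t}U'$ produced by the moving logarithmic shift is integrable in the weighted sense, and the spectral gap at the rest states allows exponential corrections. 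The wave is unique up to translation by \Cref{t.uniqueness}. A time-dependent comparison argument for the competitive (monotone after $v\mapsto 1-v$) system then traps $u$ from below, and together with the upper bound this yields the level-set estimate.

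For convergence, take a sequence $t_k\to\infty$ and recentre at $c_\lambda t_k+\frac q\lambda\ln t_k+\Lambda(t_k)$, with $\Lambda$ chosen as a bounded function, for example the shift of a fixed level set. Parabolic estimates give an entire limit $(u_\infty,v_\infty)$ of \eqref{eq:LV} with $c=c_\lambda$. The two-sided barriers pass to the limit as $u_\infty\asymp e^{-\lambda y}$ for $y\ge R$, which is hypothesis (i) of \Cref{thm:liouville} with $\ell=0$. The lower barrier and the spreading result give (ii), and (iii) when $b<1$. \Cref{thm:liouville} then identifies $(u_\infty,v_\infty)$ as a translate of $(U_{c_\lambda},V_{c_\lambda})$, and since $\Lambda$ is normalised by a level set, the translate is fixed. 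The limit is therefore independent of the subsequence, which gives \eqref{sys1-TW-conv} locally uniformly. Uniform convergence on $\mathbb{R}$ follows from the tail barriers on the right and from stability of $(u^\star,v^\star)$ on the left.

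The main obstacle is making the two-sided estimate $u\asymp e^{-\lambda y}$ uniform in $t$ at the scale $O(1)$ around the front. The upper barrier is linear while the lower one is a nonlinear wave, and their polynomial prefactors $t^q$ must agree exactly. I would first try to match them in the intermediate zone $y\in[R,\sqrt t]$, using the explicit Gaussian form of $w$ there.
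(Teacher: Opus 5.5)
\textbf{Your upper bound does not go through, and it is the heart of the theorem.} With $v$ frozen as a coefficient, $\min\{1,Cw\}$ is a supersolution of the $u$-equation only if $a(1-v)\le Cw$ on $\{Cw<1\}$. That set begins a fixed distance $\frac1\lambda\ln C+O(1)$ ahead of $c_\lambda t+\frac q\lambda\ln t$.

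\begin{itemize}
\item Nothing forces $1-v$ to decay in $t$ on that set. If the conclusion holds, $1-v$ stays close to $1-V_{c_\lambda}$ at that distance, so no integrable $\varepsilon(t)$ dominates $a(1-v)$ there.
\item The far-field bound $v\ge 1-Ce^{-\mu t}$ is only available for $x\ge(c_\lambda+\epsilon)t$. It therefore gives $u\lesssim t^qe^{-\lambda(x-c_\lambda t)}$ only at distance $\epsilon t$ ahead of the front, not to $O(1)$.
\end{itemize}

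What you need is control of $1-v$ at distance $O(1)$ ahead of the front, with the same precision as the conclusion. Since $v$ is driven down by $bu$, that requires an upper bound on $u$ first. The problem is genuinely coupled, and this, not matching the $t^q$ prefactors, is the real obstacle.

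Your reduction $a(1-v)\lesssim u+e^{-\mu t}$ does not rescue this; it is unproved and false in general. Indeed $1-V_{c_\lambda}$ decays like $e^{-\mu_v\xi}$ with $\mu_v=\frac{c_\lambda+\sqrt{c_\lambda^2+4dr}}{2d}$. This is slower than $e^{-\lambda\xi}$ whenever $(d-1)\lambda^2>1-a+r$, which is admissible (small $\lambda$, large $d$).

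\textbf{The missing idea is a supersolution \emph{pair} for the system in the competitive order.}

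\begin{itemize}
\item The paper uses the linear tail only on a far ray $\xi=\delta' t$ of the frame $\xi=x-c_\lambda t-\frac q\lambda\ln(t_0+t)$ (\Cref{lem.0815.4}). On that ray $1-v$ is exponentially small, by comparison with the perturbed wave of \Cref{lem:perturbed-TW}.
\item In the wedge $\xi\le\delta' t$, it compares $(u,v)$ with $(\overline U,\underline V)$. This pair is built from the wave $(U^\alpha_{c_\lambda},V^\alpha_{c_\lambda})$ of the $\alpha$-perturbed system plus $\frac{5\sqrt{\delta'}}{1+\delta' t}(P,Q)$, with $P=A(U^\alpha_c)'-\partial_cU^\alpha_c$, suitably truncated (\Cref{lem:U0V0}, \Cref{lem.qsmall}).
\item The $(P,Q)$ corrector absorbs the non-integrable drift $\rho'=\frac{q}{\lambda(t_0+t)}$ while preserving the $e^{-\lambda\xi}$ tail.
\item The comparison gives $u\le\overline U$ and $v\ge\underline V$ simultaneously. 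The bound $v\ge\underline V$ is what provides $\inf_{t,\,y\ge0}v_\infty>0$ in hypothesis (ii) of \Cref{thm:liouville}. Your lower barrier, which bounds $v$ from above, cannot supply this.
\end{itemize}

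\textbf{Your Fife--McLeod subsolution is unnecessary, and it fails at the unstable state $(0,1)$.}

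\begin{itemize}
\item Where $U$ is small, subtracting $p(t)$ produces the positive error $p(1-U-aV)\approx(1-a)p$.
\item For $q>0$, the drift term $-\frac{q}{\lambda t}U'\approx\frac qtU$ has relative size $1/t$, which no bounded $\zeta$ can absorb.
\end{itemize}

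Since $v\le1$ gives $u_t\ge u_{xx}+u(1-a-u)$, Bramson's scalar result (\Cref{lem:C1}(ii)) already yields the lower level-set bound and $u_\infty\gtrsim e^{-\lambda y}$. Levels in $[1-a,u^\star)$ then follow from the convergence to the wave.
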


\Cref{thm:syst1} makes explicit how the initial leading edge determines the refined propagation law: the decay rate $\lambda$ selects the speed $c_\lambda$, whereas the algebraic factor $x^q$ induces the phase correction $\frac{q}{\lambda}\ln t$. This gives a delay for $q<0$, no logarithmic displacement for $q=0$, and an advance for $q>0$, provided $c_\lambda > c^0_{\rm LLW}$. Unlike spectral convergence results for sufficiently localised perturbations of a fixed wave 
\cite[Chapter~5]{Volpert1994travelling}, our maximum-principle approach accommodates these nonstationary shifts; compare also \cite[Theorem 1.1]{wu2023sharp} for the case without an algebraic phase correction.

We next treat nonlocal pulling when both species are initially localised, building on the leading-order barrier constructions of \cite{girardin2019invasion}. Define
\begin{equation}
c^{\star}_v:=2\sqrt{rd},\quad \lambda_v:=\sqrt{\frac{r}{d}},\quad \eta_{v}:=\frac{3}{2\lambda_v}=\frac{3}{2}\sqrt{\frac{d}{r}},
\end{equation}
\begin{equation}
\lambda_{\mathrm{GL}}:=\frac{c^{\star}_v}{2}-\sqrt{a}=
\sqrt{dr}-\sqrt{a}, \quad
c_{\mathrm{GL}}:=\lambda_{\mathrm{GL}}
+\frac{1-a}{\lambda_{\mathrm{GL}}}.
\end{equation}

\begin{theorem}\label{thm:syst2}
Assume $0<a<1$, $b\in (0,\infty)\setminus\left\{ 1\right\}$ and the nonlocal-pulling regime \eqref{eq:NP}:
\begin{equation}
1<\sqrt{rd}<\sqrt{1-a}  + \sqrt a, \qquad c_{\mathrm{GL}}>c^0_{\mathrm{LLW}}.\end{equation}
Let $(u,v)$ be the solution of \eqref{e.com} with initial data 
satisfying \eqref{ic:GL2019}\footnote{The same conclusion holds also when $u_0$, $v_0$ are nonnegative, nontrivial, and compactly supported.}.
Then for any $\kappa\in(0,u^\star)$, the rightmost $\kappa$-level  position satisfies
\begin{equation}
\xi_\kappa[u](t)=c_{\mathrm{GL}}t+\frac{q_u}{\lambda_{\mathrm{GL}}}\ln t+O(1)
\quad\text{as }t\to\infty,
\end{equation}
where $(u^\star,v^\star)$ is the stable state given in \eqref{e.uv.uv} and 
\begin{equation}
q_u:=-\frac32+\sqrt a\,\eta_{v}
=\frac32\left(\sqrt{\frac{ad}{r}}-1
\right).
\end{equation}
Equivalently,
\begin{equation}
\xi_\kappa[u](t)=c_{\mathrm{GL}}t+\frac{
\frac32\left(\sqrt{\frac{ad}{r}}-1\right)
}{\sqrt{dr}-\sqrt a
}\ln t+O(1)
\quad\text{as }t\to\infty.
\end{equation}
Moreover, there exists a bounded function
$\Lambda:\mathbb R_+\to\mathbb R$ such that for each $\eta'>0$,
\begin{align}\label{sys2-TW-conv}
\lim_{t\to\infty}\sup_{0\leq x<c^{\star}_v t-(\eta_{v} + \eta')\ln(1+t)}
\Bigg\{&\left|u(t,x)-U_{c_{\mathrm{GL}}}\left(
x-c_{\mathrm{GL}} t-\frac{q_u}{\lambda_{\mathrm{GL}}}\ln t-\Lambda(t)
\right)\right\vert
\nonumber\\
&+
\left|v(t,x)-V_{c_{\mathrm{GL}}}\left(x-c_{\mathrm{GL}} t-\frac{q_u}{\lambda_{\mathrm{GL}}}\ln t-\Lambda(t)\right)\right\vert\Bigg\}=0,
\end{align}
where $(U_{c_{\mathrm{GL}}},V_{c_{\mathrm{GL}}})$ satisfies \eqref{TW-sys} with $E^-=(u^\star, v^\star)$ and $E^+=(0,1)$.
\end{theorem}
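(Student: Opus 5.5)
The plan is to reduce \Cref{thm:syst2} to the scalar nonlocal-pulling result, \Cref{thm:pulling}(i), with $\s=c_v^\star=2\sqrt{rd}\in(2,\fraksc)$ after the parabolic rescaling that turns the $v$-equation into a Fisher--KPP equation with unit coefficients. The environment seen by $u$ is $1-av$, and the shift is $s(t)=c_v^\star t-\eta_v\ln(1+t)$. We proceed in three stages.

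\textbf{Step 1: localising the fast front.} First we localise the leading front of $v$. Where $u$ is exponentially small, which by the leading-order result of \cite{girardin2019invasion} holds for $x\ge (c_u^\star+\epsilon)t$, the equation for $v$ is a Fisher--KPP equation perturbed by the term $-rbuv$. We would show that this perturbation is integrable in the Bramson moving frame, in the spirit of \Cref{t.0}. The Hamel--Nolen--Roquejoffre--Ryzhik barriers then apply to $v$ and give
\[
v(t,x)=\Phi_v\bigl(x-s(t)+O(1)\bigr)
\]
uniformly on $x\ge(c_u^\star+\epsilon)t$. Consequently the effective growth rate $1-av(t,x)$ of $u$ is trapped between two profiles $\g_\pm(x-s(t)\mp C)$. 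Each $\g_\pm$ is monotone with limits $1-a$ and $1$, and satisfies (G1)--(G2), because $1-\Phi_v$ decays exponentially at $-\infty$ and $\Phi_v$ has tail $ye^{-\lambda_v y}$ at $+\infty$. In the original $x$-orientation the favourable side lies ahead of the interface, as required.

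\textbf{Step 2: the link estimate and comparison on the leading edge.} On the region $x\ge(c_{\rm GL}+\epsilon)t$, where $u$ is small, $u$ is a sub- and supersolution of the linear equations $w_t=w_{xx}+w(\g_\pm(x-s(t)\mp C))$, up to a quadratic error. Applying the corrector-based Gaussian barriers of \Cref{prop:supersolutionshiftallbeta,prop:subsolutionshiftallbeta} yields the link estimate \eqref{e.107.1}:
\[
u(t,s(t))\asymp t^{-\frac32+\frac{c_v\eta_v}{2}}e^{-[c_v^2/4-1]t}.
\]
By the effective-tail matching \eqref{eq:lambda-star-effective-tail}, this forces a leading-edge profile
\[
u\asymp (x-2\lambda_{\rm GL}t)^{q_u}e^{-\lambda_{\rm GL}x+(\lambda_{\rm GL}^2+1-a)t},
\qquad q_u=-\tfrac32+\sqrt a\,\eta_v,
\]
behind the interface. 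In the secondary region the pair $(u,v)$ is a small perturbation of $(0,1)$. There the system is cooperative after the change $v\mapsto 1-v$, so we construct coupled sub- and supersolutions of the form
\[
(\underline u,\overline v)=\bigl(U_{c_{\rm GL}},V_{c_{\rm GL}}\bigr)\bigl(x-c_{\rm GL}t-\tfrac{q_u}{\lambda_{\rm GL}}\ln t\mp C\bigr)
\]
plus correction terms. These are glued, at distance $\gg\ln t$ ahead, to the linear barriers from the scalar analysis. Because $c_{\rm GL}>c^0_{\rm LLW}$, the wave $U_{c_{\rm GL}}$ has the slow decay rate $\lambda_{\rm GL}$, and this is what makes the gluing consistent. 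The outcome is the level-set estimate $\xi_\kappa[u](t)=c_{\rm GL}t+\frac{q_u}{\lambda_{\rm GL}}\ln t+O(1)$.

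\textbf{Step 3: convergence to the travelling wave.} To obtain convergence we take any sequence $t_n\to\infty$ and recentre at $\xi_\kappa[u](t_n)$. Parabolic estimates give entire limits in the frame moving with speed $c_{\rm GL}$; the logarithmic drift vanishes in the limit. These limits inherit the two-sided tail bound $u\asymp ye^{-\lambda_{\rm GL}y}$ or $e^{-\lambda_{\rm GL}y}$ from the barriers, together with the positivity conditions (ii)--(iii) from the spreading result. \Cref{thm:liouville} then identifies every such limit as a translate of $(U_{c_{\rm GL}},V_{c_{\rm GL}})$, which is unique by \Cref{t.uniqueness}. The translates are bounded, so a bounded $\Lambda(t)$ exists.

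\textbf{Main obstacle.} We expect Step 2 to be the hardest part: the feedback of $u$ on $v$ near the $u$-front must be shown to be negligible at the level of $O(1)$ shifts, and the barriers must be shown uniform up to $x<c_v^\star t-(\eta_v+\eta')\ln t$. To handle the feedback, we would first use that $bu$ is exponentially small at the $v$-front, so that the shift it induces on $v$ is summable in time.
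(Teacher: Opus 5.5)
Your overall architecture matches the paper's: a scalar reduction with $\s=2\sqrt{rd}$ and $\eta=\eta_v$, Gaussian barriers at the interface, a log-frame system supersolution, then \Cref{thm:liouville}. The lower bound is also fine. Since $v\le 1$ and $v$ obeys the Bramson upper bound of homogeneous KPP, $u$ is a supersolution on all of $\mathbb R$ of the scalar problem with $\g=1-\overline\sigma$, so \Cref{thm:pulling} applies directly. No rescaling is needed, because the $u$-equation already has unit coefficients.

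The genuine gap is in the upper bound. It is hidden in ``applying the Gaussian barriers yields the link estimate'' and in the claim that effective-tail matching ``forces'' the leading-edge profile behind the interface.
\begin{itemize}
\item The inequality $u_t\le u_{xx}+u(\g(x-s(t))+\chi(t)-u)$ holds only on $\{x\ge\underline ct\}$ with $\underline c>c_{\rm GL}$. Behind the $u$-front, $v$ is close to $v^\star<1$, so $1-av$ exceeds $\g\approx1-a$.
\item The supersolution $\overline\zeta$ of \Cref{prop:supersolutionshiftallbeta} lives on $y>0$ and needs precisely the unknown value $u(t,s(t))$ as boundary data.
\item The scalar supersolutions with the correct decay behind the interface (the glued $\hat n$ from the proof of \Cref{thm:pulling}, or the scalar solution $n$ itself) have their front at $c_{\rm GL}t$. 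They therefore tend to $0$ on $x=\underline ct$, where you only know $u\le1$.
\end{itemize}
So the comparison cannot be started. Neither the upper half of the link estimate nor the bound $u(t,\bar ct)\lesssim t^{q_u}e^{-\lambda_{\rm GL}(\bar c-c_{\rm GL})t}$ follows from your plan. This is the real obstacle, not the feedback of $u$ on $v$; \Cref{lem:1C} handles that routinely because $u$ is exponentially small on $x\ge\underline ct$.

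The paper overcomes it in two steps.
\begin{enumerate}
\item It compares $u\,e^{-\int_0^t\chi}$ on $\{x\ge\underline ct\}$ with $\widetilde k\widetilde n$. Here $\widetilde n$ solves the scalar problem for the weaker profile $\widetilde\g=1-\frac{a'}{a}(1-\g)$, with $a'<a$ close to $a$. Then $\widetilde\g\ge\g$ and the front of $\widetilde n$ moves at $\widetilde c_{\rm GL}>\underline c$, so $\widetilde k\widetilde n\ge1$ on $x=\underline ct$. Yet by \Cref{e.1011.5} its size at $s(t)$ is still $t^{-\frac32+\frac{\s\eta_v}{2}}e^{-[\frac{\s^2}{4}-1]t}$, which depends only on $(\s,\eta)$. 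This gives the upper link estimate.
\item On $2t\le x\le s(t)-L$ it uses $K\bigl(n+e^{-(1+\sqrt{a'})(x-2t)}\bigr)$, with $n$ the scalar solution for $\g$ itself. The exponential term is a supersolution where $\g\le1-a'$ and equals $1$ at $x=2t$. It is negligible both at $s(t)-L$ and at $\bar ct$ because $\frac{c_v^2}{4}-1<(1+\sqrt{a'})(c_v-2)$.
\end{enumerate}
Only then does \eqref{e.102.13} yield the sharp bound at $\bar ct$. That bound is fed into \Cref{lem.0815.4} and serves as boundary data at $\xi=\delta t$ for the supersolutions of \Cref{sec:supersolsystem}. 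This is a sequential comparison on $\{\xi\le\delta t\}$, not a gluing.

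Two smaller corrections:
\begin{itemize}
\item You do not get two-sided trapping of $v$ by translates of $\Phi_v$. You get only the Bramson upper bound together with $v\ge\Phi_1^\star(x-s(t)+L_0)-\chi_0e^{-\chi_1t}$ on $\underline ct\le x\le s(t)$, and that suffices.
\item The positivity hypotheses of \Cref{thm:liouville} at bounded distance from the front come from barriers (a shifted $\Phi^\star_{1-a}$ below $u$, and $v\ge\underline V$). They do not follow from the leading-order spreading result.
\end{itemize}
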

We finish by highlighting the main technical inputs and novelties of the proofs. To extend the piecewise-constant results of \cite{LamWu_preprint} to general profiles $\g$, it is essential to retain the heterogeneity in the linearised problem. Under the weighted integrability hypothesis (G2), we construct spatial correctors by analysing the stationary zero-energy spectral problems of certain one-dimensional Schrödinger operators on $\mathbb{R}$ or on half-lines.
In contrast to the homogeneous Dirichlet case, where the corrector is given by $\underline m(y)=y$,
the correctors for the inhomogeneous case are asymptotically linear, convex functions whose
convergence towards their asymptotes need not have a prescribed rate. Our Gaussian barriers incorporate these correctors directly, carrying a defect
\begin{equation}
    y\underline m'(y)-\underline m(y)=\int_0^y z \, \underline m''(z)\, \dd z,
\end{equation}
which need not be bounded. Coupled adjustments to the Gaussian centre and diffusion time absorb this error, while weighted integrability controls the remaining amplitude correction. This preserves the characteristic Dirichlet factor $t^{-\tfrac32}$ without imposing higher moment conditions on $\g$ at infinity. For the nonlinear lower barrier, a further correction is needed: at $\s =2$, the linear profile near the interface is of order $(1+t)^{\eta - \tfrac32}$, and a purely time-dependent correction can create a non-integrable loss when $\eta \geq \tfrac12$. To overcome this difficulty, identified in \cite{hamel2013short}, we instead exploit spatial localisation by taking a corrector satisfying
$$m''=(1-\g+h)m,$$
where the positive, rapidly decaying function $h$ provides an additional margin to take into account the KPP nonlinearity.

At the forced-wave threshold $(\s,\eta)=(2,\tfrac32)$, the remaining question is which stationary profile is selected. We identify the front through the limit of a weighted moment, chosen  to cancel the heterogeneous linear term. Following the approach of Nolen et al. \cite{nolen2017convergence}, we derive estimates at scale $\sqrt t$ and transfer them to bounded distances from the interface. We demonstrate that the uniform moment bounds in the original variables $(t,x) = (t,y + s(t))$, together with a compactness argument, imply convergence in the self-similar variables $(\tau,z) = (\ln(1+t), \tfrac{y}{\sqrt{1+t}})$ to a multiple of the principal eigenfunction of the associated Dirichlet Fokker-Planck operator. 

For the competition system, a logarithmic recentering introduces a velocity error of order $(1+t)^{-1}$ in the travelling-wave equations. The perturbation corrector constructed in \Cref{sec:supersolsystem}, following \cite{girardin2019invasion,LamWu_preprint}, compensates for this error while preserving the leading exponential tail. Together with the scalar estimates, this corrector yields front localisation up to a bounded error in \Cref{sec:proofsyst}. In the nonlocally pulled regime, the resulting exponent $q_u = -\tfrac32 + \sqrt a \eta_v$, predicted heuristically in \cite{LamWu_preprint}, captures both leading-edge diffusion and the Bramson delay of the faster competitor, refining the earlier spreading-speed results. The argument also applies to algebraically weighted exponential initial tails. 

Finally, the rigidity theorem in \Cref{sec:liouville}  converts these front estimates into profile convergence. The sliding argument uses quantitative decay only for the invading component at $+\infty$, supplemented by persistence in the other spatial regions, rather than bounds by translates for both components \cite[Lemma 2.10]{Wang2019uniqueness}; see also \cite{wu2023sharp}. It yields uniqueness in the two monostable competition regimes and removes the additional ordering assumption on decay exponents in \cite[Proposition A.7]{girardin2019invasion}. \Cref{app:rigid} identifies the corresponding stability and irreducibility hypotheses for more general monotone parabolic systems. 

The remainder of the paper is organised as follows. Sections~\ref{sec:corrector}--\ref{sec:proofshift} develop the scalar correctors, barriers and proofs of \Cref{t.0,thm:standard,thm:pulling,thm:cpulling,thm:no-pull}; \Cref{sec:selection} establishes the forced-wave selection theorem. The competition-system constructions and proofs appear in \Cref{sec:supersolsystem,sec:proofsyst}. This is followed by the self-contained rigidity and uniqueness results in \Cref{sec:liouville}. The appendices contain an auxiliary system supersolution (\Cref{app:auxiliary}) whose construction is more standard, background results from \cite{LamWu_preprint} and other sources (\Cref{app:previouspaper}), and an extension of rigidity to general monotone parabolic systems (\Cref{app:rigid}).

\section{Interlude: existence and properties of correctors}\label{sec:corrector}
In this section, we construct the corrector functions $\underline{m}$, $\mathfrak{m}$ and $\overline{m}$, and establish properties needed for the construction of sub- and supersolutions. 
Although related constructions are standard in the theory of one-dimensional Schr\"{o}dinger operators, we provide the proofs for completeness.

\begin{proposition}\label{prop:subcorrector}
Let ${\sigma} \in \mathsf{L}^1(\mathbb{R}^+,y\,\dd y)$. Then the initial value problem 
\begin{align}\label{m-eq}
    \underline{m}''(y)={\sigma(y)}\underline{m}(y),\quad y>0,\quad \underline{m}(0)=0,\quad \underline{m}'(0)=1,
\end{align}
has a unique solution $\underline{m}\in \mathscr{C}^1(\mathbb{R}^+)\cap \mathsf{W}_{\rm loc}^{2,1}(\mathbb{R}^+)$.
If $\sigma> 0$ almost everywhere on $(0,\infty)$, then $\underline{m}$ is positive and strictly convex. Moreover, $\ell:= \underline{m}'(+\infty)\in (1,\infty)$ exists and satisfies
\begin{equation}\label{m-behavior-a}
\underline{m}(y) \sim \ell y, \qquad  y \leq \underline{m}(y)\leq \ell y \quad \text{for all } y \geq 0.
\end{equation}
If ${\sigma}$ is continuous, then $\underline{m}\in \mathscr{C}^2(\mathbb{R}^+)$. 
\end{proposition}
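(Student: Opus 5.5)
We will recast \eqref{m-eq} as the Volterra integral equation
\begin{equation}
\underline{m}(y)=y+\int_0^y (y-z)\,\sigma(z)\,\underline{m}(z)\dd z,\qquad y\ge 0,
\end{equation}
which is equivalent to \eqref{m-eq} for $\underline{m}\in\mathscr{C}^1\cap\mathsf{W}^{2,1}_{\rm loc}$, since $\sigma\underline m\in \mathsf{L}^1_{\rm loc}$ once $\underline m$ is continuous. On each interval $[0,Y]$ we solve this equation by Picard iteration in $\mathscr{C}([0,Y])$. The kernel $(y-z)\sigma(z)$ is bounded by $Y|\sigma(z)|\in \mathsf{L}^1(0,Y)$, so the $k$-th iterate difference is bounded by $Y\,(Y\int_0^Y|\sigma|)^k/k!$ and the series converges. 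Uniqueness follows from Gronwall's inequality applied to the difference of two solutions. The integral representation then gives $\underline m'(y)=1+\int_0^y\sigma\underline m$, which is absolutely continuous, so $\underline m\in \mathscr{C}^1\cap \mathsf{W}^{2,1}_{\rm loc}$. If $\sigma$ is continuous, $\underline m'' = \sigma\underline m$ is continuous, hence $\underline m\in\mathscr C^2$.

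Next assume $\sigma>0$ a.e. We first show positivity. Let $y_1=\sup\{y>0:\underline m>0 \text{ on }(0,y)\}$, which is positive because $\underline m'(0)=1$. On $(0,y_1)$ we have $\underline m''=\sigma\underline m\ge 0$, so $\underline m'\ge 1$ and $\underline m(y)\ge y$ there. This contradicts $\underline m(y_1)=0$ if $y_1<\infty$. Hence $\underline m(y)\ge y>0$ for $y>0$. Then $\underline m''>0$ a.e., so $\underline m'$ is strictly increasing and $\underline m$ is strictly convex. In particular $\ell:=\lim_{y\to\infty}\underline m'(y)$ exists in $(1,\infty]$, and $\ell>1$ strictly because $\underline m'(y)>1$ for every $y>0$.

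The key step is finiteness of $\ell$, and this is where the weight $y\,\dd y$ enters. Since $\underline m'$ is increasing, $\underline m(z)\le z\,\underline m'(z)$. Setting $p=\underline m'$, we get $p'=\sigma\underline m\le z\sigma(z)p$ a.e., so
\begin{equation}
p(y)\le \exp\Big(\int_0^y z\sigma(z)\dd z\Big)\le \exp\big(\|\sigma\|_{\mathsf{L}^1(y\dd y)}\big)<\infty .
\end{equation}
Thus $\ell<\infty$. Because $\underline m'$ increases to $\ell$, integration gives $y\le\underline m(y)\le \ell y$. L'Hôpital's rule, or the monotone limit of $\underline m(y)/y$, gives $\underline m(y)\sim\ell y$, which yields \eqref{m-behavior-a}.

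The only mildly delicate point is this Gronwall bound on $\underline m'$. It uses only the monotonicity of $\underline m'$, so no sign condition beyond $\sigma>0$ is needed there. The rest is standard ODE theory with $\mathsf{L}^1_{\rm loc}$ coefficients.
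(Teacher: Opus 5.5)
Your overall route is the paper's (Volterra equation, Picard iteration, Gronwall), and your later steps are fine. The existence/uniqueness step, however, does not work under the stated hypothesis.

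You bound the kernel by $Y|\sigma(z)|$ and assert $Y|\sigma|\in\mathsf{L}^1(0,Y)$. But $\sigma\in\mathsf{L}^1(\mathbb{R}^+,y\dd y)$ only controls $\int_0^\infty z|\sigma(z)|\dd z$, and it allows a non-integrable singularity at $0$, e.g. $\sigma(z)=z^{-3/2}\mathbf{1}_{(0,1)}(z)$. For such $\sigma$:
\begin{itemize}
\item your iterate bound $Y(Y\int_0^Y|\sigma|)^k/k!$ is infinite;
\item the map $m\mapsto y+\int_0^y(y-z)\sigma m$ does not even act on $\mathscr{C}([0,Y])$ (take $m\equiv 1$);
\item your unweighted Gronwall argument for uniqueness fails for the same reason.
\end{itemize}
The same remark applies to ``$\sigma\underline m\in\mathsf{L}^1_{\rm loc}$ once $\underline m$ is continuous''. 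What is actually needed is $|\underline m(z)|\lesssim z$ near $0$, which does hold for $\mathscr{C}^1$ functions with $\underline m(0)=0$.

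The repair is to use the weight the hypothesis provides, as the paper does. For $n(y):=\underline m(y)/y$ the equation reads
\[
n(y)=1+\int_0^y\frac{(y-z)\,z}{y}\,\sigma(z)\,n(z)\dd z ,\qquad \Bigl|\frac{(y-z)\,z}{y}\,\sigma(z)\Bigr|\le z|\sigma(z)|\in\mathsf{L}^1(0,\infty).
\]
Hence:
\begin{itemize}
\item Picard iteration converges uniformly on all of $(0,\infty)$ at once;
\item Gronwall gives uniqueness among $\mathscr{C}^1$ solutions, for which $\underline m(z)/z$ is bounded near $0$;
\item one obtains the sign-free bound $|\underline m(y)|\le y\exp\bigl(\int_0^\infty z|\sigma(z)|\dd z\bigr)$.
\end{itemize}

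In the paper this last bound also gives $\ell<\infty$ directly from $\underline m'(y)=1+\int_0^y\sigma\underline m$. Your alternative, Gronwall on $p=\underline m'$ via $\underline m\le y\,\underline m'$, is also correct, but it needs positivity and convexity first. Your continuity argument for $\underline m\ge y$ is a more careful justification than the paper's one-line appeal to the integral equation. For locally integrable $\sigma$, such as $\sigma=1-\g+h$ used later in the paper, your proof is complete as written.
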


\begin{proof}[{\bf Proof of \Cref{prop:subcorrector}}]
Integrating \eqref{m-eq} twice, we have
\begin{align}\label{m-inte}
 \underline{m}'(y)=1+\int_0^y {\sigma}(\xi)\underline{m}(\xi)\, \dd \xi,\quad  \underline{m}(y)=y+\int_0^y(y-\xi){\sigma}(\xi)\underline{m}(\xi)\, \dd \xi.    
\end{align}
Hence, the existence and uniqueness of $\underline{m}$ follow by Picard iteration applied to the second Volterra integral equation. 

We next establish a global linear-growth bound. The second equality in \eqref{m-inte} implies that, for $y>0$,
\begin{equation}
\frac{|\underline{m}(y)|}{y}\leq 1+\frac{1}{y}\int_0^y(y-\xi)|{\sigma}(\xi)|\, |\underline{m}(\xi)|\, \dd \xi
\leq 1+\int_0^{y}\xi|{\sigma}(\xi)|\frac{|\underline{m}(\xi)|}{\xi} \, \dd \xi,
\end{equation}
where we used $\frac{y-\xi}{y}\leq 1$ (here $\xi\in[0,y]$).
Therefore, since $\left( \xi \mapsto \xi {\sigma}(\xi) \right) \in \mathsf{L}^1(\mathbb{R}^+)$,  Gr\"{o}nwall's inequality yields  
\begin{align}\label{m/1+y-upp}
   \frac{|\underline{m}(y)|}{y}\leq \exp\left(\int_0^{\infty}\xi|{\sigma}(\xi)|\, \dd \xi\right) <+\infty,\quad \text{ for } y> 0.
\end{align}
From now on, assume in addition that ${\sigma} >0$ for a.e. $y\geq 0$. 
It follows from \eqref{m-inte} that $\underline{m}(y)\geq y \geq 0$ and thus $\underline{m}''(y) > 0$
for a.e. $y\geq 0$ and thus $\underline{m}'$ is strictly increasing. From the bounds \eqref{m-inte} and \eqref{m/1+y-upp}, we deduce that $\ell:= \lim_{\xi \to \infty}\underline{m}'(\xi) \in (1,\infty)$ exists and $y \leq \underline{m}(y)\leq \ell y$ for all $y\geq0$, which is \eqref{m-behavior-a}.
\end{proof}

\begin{proposition}\label{lem:m-correction0930} Assume {\rm(G1)--(G2)}.
There exists a unique solution $\mathfrak{m}$ of
\begin{equation}\label{eq:h}
\mathfrak{m}''=(1-\g)\mathfrak{m},\qquad \mathfrak{m}(-\infty)=0,\qquad \mathfrak{m}'(+\infty)=1. 
\end{equation}
Moreover, $\mathfrak{m}$ is positive and strictly increasing, and there exists $c_1>0$ such that
\begin{equation}\label{m-asymp0930}
  \begin{cases}
  \mathfrak{m}(y)\sim c_1e^{\sqrt{a}y},\quad 
  \mathfrak{m}'(y)\sim c_1\sqrt{a}e^{\sqrt{a}  y}& \quad \text{ as } y\to-\infty,\\
  \mathfrak{m}(y)\sim y,\quad 
  \mathfrak{m}'(y)\sim 1& \quad \text{ as } y\to+\infty, 
  \end{cases}
\end{equation}
and for some $c_2,c_3>0$,
\begin{equation}\label{m-bounds0930}
  \frac{\mathfrak{m}'(y)}{\mathfrak{m}(y)}
       \ge\frac{c_2}{1+y_+},\quad
       0<\mathfrak{m}'(y)\le1\quad \text{ for } y\in\mathbb{R}; \quad 
  \mathfrak{m}(y)\le c_3(1+y)\quad\text{ for } y\geq0, 
\end{equation}
where $y_+=\max\{y,0\}$.
\end{proposition}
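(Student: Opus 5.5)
The plan is to build $\mathfrak m$ as the decaying solution at $-\infty$ and then normalise it. Set $\sigma:=1-\g$. By (G1), $\sigma$ decreases from $a$ at $-\infty$ to $0$ at $+\infty$, and it is positive everywhere. By (G2), $\sigma-a\in\mathsf L^1((-\infty,0))$ and $y\sigma(y)\in\mathsf L^1((0,\infty))$.

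\textbf{Step 1: the solution decaying at $-\infty$.} On $(-\infty,0]$ we view $\mathfrak m''=a\mathfrak m+(\sigma-a)\mathfrak m$ as an $\mathsf L^1$-perturbation of $\mathfrak m''=a\mathfrak m$. The standard Levinson/Jost argument gives a solution $\phi$ with $\phi(y)=e^{\sqrt a y}(1+o(1))$ and $\phi'(y)=\sqrt a e^{\sqrt a y}(1+o(1))$ as $y\to-\infty$. Concretely, $\phi$ solves the Volterra equation
\[
\phi(y)=e^{\sqrt a y}+\int_{-\infty}^y \frac{\sinh(\sqrt a(y-z))}{\sqrt a}(\sigma(z)-a)\phi(z)\dd z .
\]
Writing $\psi:=e^{-\sqrt a y}\phi$, the kernel is bounded by $C|\sigma-a|$, so Picard iteration converges and the remainder is $o(1)$. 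This solution is unique up to a multiplicative constant, because the second independent solution grows like $e^{-\sqrt a y}$. That fixes uniqueness once the normalisation at $+\infty$ is shown to be attainable. Since $\phi>0$ and $\phi'>0$ near $-\infty$, and $\phi''=\sigma\phi>0$ wherever $\phi>0$, $\phi'$ is increasing as long as $\phi$ stays positive. Hence $\phi'>0$ and $\phi>0$ on all of $\mathbb R$, $\phi$ is strictly convex, and $\phi'$ is increasing.

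\textbf{Step 2: behaviour at $+\infty$.} On $[0,\infty)$ we write $\phi=\alpha\underline m+\beta\tilde m$. Here $\underline m$ is the corrector of \Cref{prop:subcorrector} with $\underline m(0)=0$, $\underline m'(0)=1$, and $\tilde m$ is the solution with $\tilde m(0)=1$, $\tilde m'(0)=0$. The same Gr\"onwall argument as in \Cref{prop:subcorrector}, using $y\sigma\in\mathsf L^1$, shows that $\tilde m(y)=O(1+y)$ and that $\tilde m'(+\infty)$ exists. Consequently $\phi'(\infty)=\lim\phi'$ exists, being the limit of an increasing function. It is finite because
\[
\phi'(y)=\phi'(0)+\int_0^y\sigma\phi\le \phi'(0)+C\int_0^\infty \sigma(z)(1+z)\dd z<\infty,
\]
using $\phi=O(1+y)$. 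It is positive because $\phi'(\infty)\ge\phi'(0)>0$. We then set $\mathfrak m:=\phi/\phi'(\infty)$. This gives $\mathfrak m'(+\infty)=1$, $\mathfrak m'\le 1$ (as $\mathfrak m'$ is increasing), $\mathfrak m'>0$, the asymptotics \eqref{m-asymp0930} with $c_1=1/\phi'(\infty)$, $\mathfrak m(y)\sim y$ at $+\infty$, and $\mathfrak m(y)\le \mathfrak m(0)+y\le c_3(1+y)$ for $y\ge 0$.

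\textbf{Step 3: the ratio bound.} For $y\to-\infty$ we have $\mathfrak m'/\mathfrak m\to\sqrt a$. On compact sets the ratio is positive and continuous. For $y\to+\infty$, $\mathfrak m'/\mathfrak m\sim 1/y$. Together these give $\mathfrak m'/\mathfrak m\ge c_2/(1+y_+)$ on $\mathbb R$.

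The only delicate point, which I expect to be the main obstacle, is Step 1: making the Volterra iteration at $-\infty$ rigorous, and checking that $\psi'\to0$ so that the derivative asymptotics hold. Both rely on $\sigma-a\in\mathsf L^1(-\infty,0)$ and are routine.
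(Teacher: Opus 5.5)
Your proposal is correct and follows essentially the paper's route: the Volterra construction from $-\infty$ using $1-a-\g\in\mathsf{L}^1((-\infty,0))$, convexity to propagate positivity of $\phi$ and $\phi'$, a Gr\"onwall/moment bound on $(0,\infty)$ from $y(1-\g)\in\mathsf{L}^1((0,\infty))$, and normalisation by $\phi'(+\infty)$. Beyond the paper, you also spell out uniqueness (the second solution blows up like $e^{-\sqrt a y}$) and the ratio bound, which the paper leaves implicit; one small correction is that (G1) only guarantees $1-\g\ge 0$, so drop ``strictly convex'', since nonnegativity is all your argument actually uses.
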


\begin{proof}[{\bf Proof of \Cref{lem:m-correction0930}}]
The proof of Lemma~\ref{lem:m-correction0930} is similar to that of Proposition~\ref{prop:subcorrector}.
 First, using $q=1-a-\g \in \mathsf{L}^1((-\infty,0))$, we construct the solution $\mathfrak{m}$ with the Volterra integral equation
    \begin{equation}
\mathfrak{m}(y)=e^{\sqrt a y}
+\frac1{\sqrt a}\int_{-\infty}^y
\sinh\left(\sqrt a(y-z)\right)\,
q(z)\mathfrak{m}(z)\,\dd z. 
\end{equation}
The resulting solution $\mathfrak{m}$ has the following asymptotics before normalization:
\begin{equation}
\mathfrak m(y)\sim e^{\sqrt a y}, \qquad \mathfrak{m}'(y) \sim \sqrt ae^{\sqrt a y},
\qquad y\to-\infty.
\end{equation}
Convexity, which follows from \eqref{eq:h} and the nonnegativity of $\sigma=1-\g$, then extends the positivity of $\mathfrak{m}$ and $\mathfrak{m}'$ to all of $\mathbb{R}$. As in the proof of \Cref{prop:subcorrector}, integrating $\mathfrak{m}''=\sigma \mathfrak m$ twice gives
\begin{align}\label{m-inte'}
 \mathfrak{m}'(y)= \mathfrak{m}'(0)+\int_0^y {\sigma}(\xi)\mathfrak{m}(\xi)\, \dd \xi,\qquad  \mathfrak{m}(y)=\mathfrak{m}(0) + \mathfrak{m}'(0)y+\int_0^y(y-\xi){\sigma}(\xi)\mathfrak{m}(\xi)\, \dd \xi.   
\end{align}
The remaining properties of $\mathfrak{m}$  follow, as before, from $\sigma \in \mathsf{L}^1((0,\infty);y \dd y)$ as before. Finally, normalise by replacing $\mathfrak m(y)$ by $\frac{1}{\mathfrak m'(+\infty)} \mathfrak{m}(y)$.
\end{proof}

\begin{proposition}\label{prop:supercorrector}
Assume that ${\sigma} \in \mathsf{L}^1(\mathbb{R})$ and that there exists $\theta>0$ such that
\begin{equation}
-\theta^2 < {\sigma} \leq 0 \quad \text{ on } \, \mathbb{R}^-, \quad \text{ and }\quad  {\sigma} \geq 0  \quad \text{ on } \, \mathbb{R}_\star^+.
\end{equation}
Then, for any $\mu \in [\theta, \infty)$, the problem
\begin{equation}\label{e.0817.1}
    \begin{cases}
        \overline{m}'' +2\mu \overline{m}' = {\sigma}\overline{m} \quad \text{ on } \mathbb{R},\\[5pt]      
\displaystyle\lim_{-\infty}  \overline{m}=1, \quad \lim_{-\infty}  \overline{m}' = 0.
    \end{cases}
\end{equation}
has a unique solution $\overline{m}_{\mu}\in\mathscr{C}^1(\mathbb{R})\cap \mathsf{W}_{\mathrm{loc}}^{2,1}(\mathbb{R})$. Furthermore, $\overline{m}_\mu>0$ on $\mathbb{R}$, and the following
properties hold: 
\begin{enumerate}
\item For each $M>0$ and all $\mu \in [\theta,\theta + M]$,
\begin{equation}\label{e.0819.1} 
\overline{m}_{\mu}(y) \asymp_M 1 \quad \text{ for  } y\to \pm\infty,
\end{equation}
\item  For any $\mu  \in [\theta, \infty)$, $\overline{m}_{\mu}' \leq 0$ for $y \leq 0$.
\item If, in addition, ${\sigma}= 0$ a.e. on $\mathbb{R}_+$, then $\overline{m}'_\mu \leq 0$
for $y \in \mathbb{R}$, and
\begin{align}\label{id-m(3)}
 \overline{m}_{\mu} (y) = - \frac{\overline{m}_{\mu}'(0)}{2\mu} e^{-2\mu y} 
+ \inf_{\mathbb{R}} \overline{m}_{\mu}\quad \text{ for }y >0.   
\end{align}
If, furthermore, ${\sigma}<0$ a.e. on $\R_-^\star$, then $\overline{m}_\mu'<0$ on $\mathbb{R}$.
\end{enumerate}
\end{proposition}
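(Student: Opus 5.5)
We plan to treat \eqref{e.0817.1} as a first-order system for $(\overline m,\overline m')$ and to construct the solution by a Volterra integral equation posed from $-\infty$, as in \Cref{prop:subcorrector,lem:m-correction0930}. Writing $\overline m''+2\mu\overline m'=(e^{2\mu y}\overline m')'e^{-2\mu y}$, the conditions $\overline m(-\infty)=1$, $\overline m'(-\infty)=0$ lead to
\begin{equation}
\overline m'(y)=\int_{-\infty}^y e^{-2\mu(y-z)}\sigma(z)\overline m(z)\dd z,\qquad
\overline m(y)=1+\int_{-\infty}^y \frac{1-e^{-2\mu(y-z)}}{2\mu}\,\sigma(z)\overline m(z)\dd z.
\end{equation}
The kernel $k(y,z)=(1-e^{-2\mu(y-z)})/(2\mu)$ lies in $[0,1/(2\mu)]$. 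Since $\sigma\in\mathsf L^1(\R)$, Picard iteration on $(-\infty,Y]$ converges in $\mathsf L^\infty$. Gr\"onwall then gives $|\overline m|\le \exp(\|\sigma\|_{\mathsf L^1}/(2\mu))\le \exp(\|\sigma\|_{\mathsf L^1}/(2\theta))$, which yields a global solution with the required regularity.

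For uniqueness, take the difference of two solutions, which solves the homogeneous ODE with zero limits at $-\infty$. Its general form near $-\infty$ is a perturbation of $\alpha+\beta e^{-2\mu y}$. Boundedness as $y\to-\infty$ forces $\beta=0$, and the limit $\overline m(-\infty)$ then forces $\alpha=0$; the same Volterra argument makes this rigorous.

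Positivity and item (2) come from the sign structure of $\sigma$. On $\R^-$ we have $\sigma\le0$, so as long as $\overline m>0$ the formula above gives $\overline m'\le0$. The main obstacle is ensuring that $\overline m$ does not reach zero before $y=0$, since $\overline m$ is decreasing there. We plan to compare with the constant-coefficient equation $w''+2\mu w'=-\theta^2 w$. Because $\mu\ge\theta$, this equation is non-oscillatory (its characteristic roots $-\mu\pm\sqrt{\mu^2-\theta^2}$ are real), and the solution with $w(-\infty)$ finite stays positive.

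Concretely, set $p=\overline m'/\overline m$. It satisfies the Riccati equation $p'=\sigma-2\mu p-p^2$ with $p(-\infty)=0$. Since $\sigma>-\theta^2$ on $\R^-$, the constant $p_-=-\mu+\sqrt{\mu^2-\theta^2}\le0$ is a strict lower barrier. Hence $p$ stays in $(p_-,0]$ on $\R^-$, so $\overline m$ cannot vanish there. For $y>0$ we have $\sigma\ge0$; once $\overline m>0$, the quantity $e^{2\mu y}\overline m'$ is nondecreasing. Combined with the one-sided bounds from the integral formula, this keeps $\overline m$ positive and bounded on $\R^+$. Indeed, $\overline m'$ can only become positive, and $\overline m\ge\overline m(0)\,$ minus an exponentially decaying amount.

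The two-sided bound (1) then follows from the uniform Gr\"onwall upper bound together with a lower bound. The lower bound on $\R^-$ comes from $p\ge p_-$ and $\sigma\in\mathsf L^1$: integrating $p$ with $p$ small where $\sigma$ is small gives $\int_{-\infty}^0|p|\le C_M$. The bound on $\R^+$ comes from monotonicity in $e^{2\mu y}\overline m'$. All constants depend only on $\theta$, $M$ and $\|\sigma\|_{\mathsf L^1}$. For item (3), with $\sigma=0$ on $\R_+$ the equation is $(e^{2\mu y}\overline m')'=0$, so $\overline m'(y)=\overline m'(0)e^{-2\mu y}$ for $y>0$. Integrating gives \eqref{id-m(3)}, with $\overline m(+\infty)=\inf_{\R}\overline m$ because $\overline m'\le0$ everywhere. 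If $\sigma<0$ a.e. on $\R^-_\star$, the integral formula gives strict negativity of $\overline m'$ on $\R^-$, and it propagates to $\R^+$ through $\overline m'(0)<0$.
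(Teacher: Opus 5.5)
Your proof is correct. The existence and uniqueness part (Volterra equation posed from $-\infty$, Gr\"onwall bound $\exp(\|\sigma\|_{\mathsf L^1}/(2\mu))$) and item (3) are exactly what the paper does. For positivity, item (2) and the lower bound, you take a somewhat different route.

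The paper works with $\phi=e^{\mu y}\overline m$, which satisfies $\phi''=(\mu^2+\sigma)\phi$ with $\mu^2+\sigma>0$ on all of $\R$. Convexity and $\phi'>0$ near $-\infty$ give $\phi,\phi'>0$ globally in one step. The sign $\overline m'\le0$ on $\R^-$ then comes from the integral formula. The lower bound on $\R^+$ comes from comparing $\phi$ with the solution of $w''=\mu^2w$ having the same Cauchy data at $0$. Uniformity over $\mu\in[\theta,\theta+M]$ is obtained by a compactness/continuous-dependence argument.

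Your Riccati variable is $p=\phi'/\phi-\mu$, so the paper's positivity statement is $p>-\mu$. Your barrier $p\ge p_-=-\mu+\sqrt{\mu^2-\theta^2}$ is sharper, and it pays off quantitatively. With $r=-p\in[0,\mu]$ on $\R^-$ you get $r'\le|\sigma|-\mu r$, hence $\int_{-\infty}^0r\le\|\sigma\|_{\mathsf L^1}/\mu$ and $\overline m(0)\ge e^{-\|\sigma\|_{\mathsf L^1}/\theta}$. This yields the explicit bounds $\tfrac12e^{-\|\sigma\|_{\mathsf L^1}/\theta}\le\overline m_\mu\le e^{\|\sigma\|_{\mathsf L^1}/(2\theta)}$, uniform in all $\mu\ge\theta$, with no compactness argument. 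On $\R^+$, your monotonicity of $e^{2\mu y}\overline m'$ gives the same inequality as the paper's comparison, namely $\overline m(y)\ge\overline m(0)+\frac{\overline m'(0)}{2\mu}(1-e^{-2\mu y})$.

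Two points need tightening.

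First, on $\R^+$ the deficit is not ``exponentially decaying''. It increases to $|\overline m'(0)|/(2\mu)$, so positivity there requires $p(0)>-2\mu$. Your barrier does supply this ($p(0)\ge p_-\ge-\mu$, hence $\overline m\ge\overline m(0)/2$ on $\R^+$), but it is the crux of that step and should be stated explicitly.

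Second, at $\mu=\theta$ the barrier $p_-=-\mu$ is a double root of $p^2+2\mu p+\theta^2$. Since $\sigma+\theta^2>0$ need not be bounded away from $0$ and $p$ is only absolutely continuous, the ``strict barrier'' argument does not apply verbatim. Instead use that $q=p+\mu$ satisfies $q'=\sigma+\mu^2-q^2\ge-q^2$ a.e. with $q(-\infty)=\mu>0$. Then $(1/q)'\le1$, so $q$ cannot reach $0$ at a finite point; this is precisely the paper's convexity argument in logarithmic form.
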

\begin{proof}[{\bf Proof of \Cref{prop:supercorrector}}]
For notational simplicity, we write $\overline{m} = \overline{m}_{\mu}$.

Existence and uniqueness follow once again by Picard iteration for a Volterra equation. To see this, rewrite the first equation in \eqref{e.0817.1} in divergence form, 
\begin{equation}
(e^{2\mu y}\overline{m}')'(y) = e^{2\mu  y} {\sigma(y)}\overline{m}(y), \qquad y \in \R.
\end{equation}
By 
integrating from $-\infty$ and using the initial data and $\mu>0$, we obtain
\begin{equation}\label{e.m.2}
\overline{m}'(y) = \int_{-\infty}^y e^{-2\mu (y-s)} {\sigma}(s)\overline{m}(s)\,\dd s.
\end{equation}
Integrating once more, exchanging the order of integration, and simplifying, we obtain
\begin{equation}\label{e.m.3}
\overline{m}(y) = 1+ \int_{-\infty}^y 
 H_\mu(y-s)
{\sigma}(s)\overline{m}(s)\,\dd s,
\end{equation}
where
\begin{equation}
H_\mu(\zeta):=\frac{1}{2\mu }( 1- e^{-2\mu \zeta})\leq \frac{1}{2\mu},\quad  \zeta\geq 0.
\end{equation}

This is a standard Volterra integral equation. Hence, for any $\mu>0$ and any integrable ${\sigma}$, the existence and uniqueness of $\overline{m}$  follow by Picard iteration. 

The upper bound 
\begin{align}\label{bound-m1}
 \|\overline{m}\|_{\mathsf{L}^\infty(\mathbb{R})} \leq \exp\left( \frac1{2\mu }\|{\sigma}\|_{\mathsf{L}^1(\mathbb{R})}\right)   
\end{align}
follows from applying Gr\"{o}nwall's inequality to the following consequence of \eqref{e.m.3}:
\begin{equation}
|\overline{m}(y)| \leq  1+ \int_{-\infty}^y \frac{1}{2\mu } |{\sigma}(s)|\, |\overline{m}(s)|\,\dd s.
\end{equation}
This bound also implies that
$\lim_{+\infty} \overline{m}' = 0$. Indeed, observe that 
$$|\overline{m}'(y)| \leq \|\overline{m}\|_\infty \int_{-\infty}^y e^{-2\mu (y-s)} |{\sigma}(s)|\,\dd s,$$
and the conclusion follows from the standard decay properties of convolutions, since $\sigma\in\mathsf{L}^1(\mathbb{R})$. 

We next show that, for $\mu\geq\theta$, one has $\overline{m}>0$ on $\mathbb{R}$ and $\overline{m}'\leq0$ on $(-\infty,0]$. Set \begin{equation}\phi(y):=e^{\mu y}\overline{m}(y).\end{equation}

Since $\overline{m}\sim 1$ is bounded as $y \to -\infty$, $\phi \sim e^{\mu y}$ as $y \to -\infty$. Also, using $e^{-\mu y}
\phi'(y)
=\mu\overline{m}(y)+\overline{m}'(y)\to\mu>0$ as $y\to-\infty$,
we have $\phi>0$ and $\phi'>0$ for $y \ll -1$. If $\phi$ has its leftmost zero at $y_0$, then $\phi>0$ in $(-\infty, y_0)$. From $\phi''=(\mu^2 + {\sigma})\phi$, we see that 
$\phi$ is strictly increasing and cannot vanish at $y_0$, a contradiction. Consequently,
\begin{equation}
\phi>0,\quad \phi'>0 \text{ in } \mathbb{R}.
\end{equation}
In particular, $\overline{m} >0$ on $\mathbb{R}$.
It then follows from \eqref{e.m.2} and ${\sigma}\leq 0$ that $\overline{m}'\leq 0$ in $(-\infty,0]$. 

We now show that $\inf_{\mathbb{R}} \overline{m} >0$ for each $\mu \in [\theta,\infty)$. Since $\overline{m}' \leq 0$ in $(-\infty,0]$, it remains to show that $\overline{m}$ is bounded below by a positive constant for $y \geq 0$.  Define
\begin{equation}
w(y) := \frac{1}{2}\left(\phi(0) + \frac{\phi'(0)}{\mu}\right) e^{\mu y} + \frac{1}{2}\left(\phi(0) - \frac{\phi'(0)}{\mu}\right) e^{-\mu y}. 
\end{equation}
Then $(w(0),w'(0)) = (\phi(0),\phi'(0))$ and
\begin{equation}
-\phi'' + \mu^2 \phi = - {\sigma}\phi \leq 0 = - w'' + \mu^2 w \quad\text{ on } \R^+.
\end{equation}
By the ODE comparison principle, we have $\phi\geq w$ on $\R_+^\star$, and hence
\begin{equation}
\overline{m}(y) \geq \frac{1}{2}\left(\phi(0) + \frac{\phi'(0)}{\mu}\right)  + \frac{1}{2}\left(\phi(0) - \frac{\phi'(0)}{\mu}\right)  e^{-2\mu y} \geq \min\left\{\frac{1}{2}\left(\phi(0) + \frac{\phi'(0)}{\mu}\right), \phi(0)\right\} >0,
\end{equation}
for $y \geq 0$ where we used $\phi(0), \phi'(0)>0$ proved earlier.

We have proved, for each $\mu \geq \theta$, the boundedness of $\overline{m}$ from above and below by positive constants, which possibly depend on $\mu$. Moreover, it follows from compactness (and uniqueness of $\overline{m}$) that such bounds are uniform on $[\theta,\theta+ M]$, for each $M>0$. This proves assertions (1) and (2).

Finally, we prove part (3). Suppose that ${\sigma}=0$ almost everywhere on $\mathbb{R}_+$. Then 
\begin{align}\label{Id-m1}
    \overline{m}'(y)=\overline{m}'(0)e^{-2\mu y} \quad \text{ for } y\geq 0.
\end{align}
Since $\overline{m}'(0)\leq 0$, we have $\overline{m}'\leq0$ on $\mathbb{R}$.
In addition, integrating \eqref{Id-m1} over $[0,y]$ gives
\begin{align}
\overline{m}(y)=\overline{m}(0)+\frac{\overline{m}'(0)}{2\mu}\left(1-e^{-2\mu y}\right)=-\frac{\overline{m}'(0)}{2\mu}e^{-2\mu y}+\lim_{z\to+\infty}\overline{m}(z).
\end{align}
Since $\overline{m}$ is nonincreasing, we immediately obtain \eqref{id-m(3)}. 

If, furthermore, ${\sigma}<0$ for almost every $y<0$, then \eqref{e.m.2} gives $\overline{m}'(y)<0$ for $y\leq0$, in particular $\overline{m}'(0)<0$.  The above discussion implies $\overline{m}'<0$ on $\mathbb{R}$. 
Therefore, assertion~3 holds and the proof of \Cref{prop:supercorrector} is complete.
\end{proof}

\section{Construction of supersolutions}\label{sec:supersolshift}

In this section, we construct a supersolution to \eqref{eq:mainshift} for $\s \geq 2$. Start from \eqref{eq:mainshift}. 
In the shifted variable $y=x-s(t)$, equation \eqref{eq:mainshift} becomes
\begin{equation}
\zeta_t - \dot s \zeta_y - \zeta_{yy} = (\g - \zeta) \zeta \quad \text{ for }t>0,~y \in \mathbb{R},
\end{equation}
Following the standard approach (see \cite{hamel2013short,hamel2016logarithmic}), 
we factor out the leading exponential decay and the algebraic contribution of the logarithmic shift by setting
\begin{equation}
\zeta(t,y) = (1 +t)^{\frac{\s \eta}{2}}e^{-\left[\frac{\s^2}{4}-1\right]t} e^{-\frac{\s}{2}y}\Psi(t,y).
\end{equation}
We obtain the following equation for $\Psi$:
\begin{align}
\Psi_t + \left( \s - \dot s \right) \Psi_y - \Psi_{yy}
&= \left(\g -1 - \zeta \right) \Psi\qquad \text{ for }t>0,~y \in \mathbb{R}.
\end{align}
Under (G1), the right-hand side is nonpositive.

\begin{proposition}\label{prop:supersolutionshiftallbeta}
Let $\s\geq2$ and $\eta\in\mathbb{R}$.  Then the function
\begin{equation}
\overline{\Psi}(t,y):= 
   \frac{y}{\tau(t)^{\frac32}} \exp \left( - \frac{(y + 2|\eta|)^2}{4\tau(t)} -\frac{4(\eta^2 +2)}{\tau(t)^{\frac12}}\right) \text{ for }t>0,~ y >0,
\end{equation}
where $\tau(t) = 1+ t + \sqrt{1+t}$,
satisfies the differential inequality
\begin{equation}
\overline{\Psi}_t - \overline{\Psi}_{yy} + \frac{\eta}{1+t} \overline{\Psi}_y {+(1-\g)\overline\Psi}\geq 0 \quad \text{ for } t >0,~ y >0.
\end{equation}
Consequently,
\begin{equation}\label{e.102.zeta}
\overline\zeta(t,y) := (1 +t)^{\frac{\s \eta}{2}}e^{-\left[\frac{\s^2}{4}-1\right]t} e^{-\frac{\s}{2}y}\overline\Psi(t,y)
\end{equation}
satisfies 
\begin{equation}
\overline\zeta_t - \dot s \overline\zeta_y - \overline\zeta_{yy} \geq (\g - \overline\zeta) \overline\zeta \quad \text{ for }t>0,~y >0.
\end{equation}
\end{proposition}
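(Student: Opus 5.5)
The plan is a direct computation. Since $\overline\Psi>0$ on $(0,\infty)^2$ and $1-\g\ge0$ by (G1), the term $(1-\g)\overline\Psi$ is nonnegative. So it suffices to prove
$$\overline\Psi_t-\overline\Psi_{yy}+\frac{\eta}{1+t}\overline\Psi_y\ge0 .$$
The consequence for $\overline\zeta$ then follows from the transformation computed just before the statement, using $\s-\dot s=\frac{\eta}{1+t}$. Indeed, with the nonlinear term included, the $\Psi$-equation reads $\Psi_t+(\s-\dot s)\Psi_y-\Psi_{yy}=(\g-1-\zeta)\Psi$. The right-hand side is smaller than $(\g-1)\overline\Psi$, because the extra term $-\overline\zeta\,\overline\Psi$ is nonpositive. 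Multiplying back by the positive prefactor gives the inequality for $\overline\zeta$.

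To compute, I set $z=y+2|\eta|$, $K=4(\eta^2+2)$, and write $\overline\Psi=yG$ with $G=\tau^{-3/2}\exp(-z^2/(4\tau)-K\tau^{-1/2})$. I then divide all derivatives by $\overline\Psi$. Using $\tau'=1+\frac{1}{2\sqrt{1+t}}$, one gets
$$\frac{\overline\Psi_t}{\overline\Psi}=\tau'\Big(-\frac{3}{2\tau}+\frac{z^2}{4\tau^2}+\frac{K}{2\tau^{3/2}}\Big),\qquad \frac{\overline\Psi_y}{\overline\Psi}=\frac1y-\frac{z}{2\tau},$$
$$\frac{\overline\Psi_{yy}}{\overline\Psi}=-\frac{z}{y\tau}+\frac{z^2}{4\tau^2}-\frac1{2\tau}.$$
Writing $\frac{z}{y\tau}=\frac1\tau+\frac{2|\eta|}{y\tau}$, the order-$\tau^{-1}$ terms $-\frac{3}{2\tau}+\frac1{2\tau}+\frac1\tau$ cancel exactly. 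This cancellation is the Dirichlet $t^{-3/2}$ mechanism. What remains is
$$\frac{2|\eta|}{y\tau}+\frac{\eta}{(1+t)y}-\frac{\eta z}{2\tau(1+t)}+\frac{z^2}{8\sqrt{1+t}\,\tau^2}-\frac{3}{4\sqrt{1+t}\,\tau}+\frac{\tau' K}{2\tau^{3/2}} .$$

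The key step, and the only delicate one, is absorbing the singular drift term $\frac{\eta}{(1+t)y}$ and the cross term $\frac{\eta z}{2\tau(1+t)}$.

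For the singular term, the shift $2|\eta|$ in the Gaussian centre is exactly what is needed. Since $1+t\le\tau\le2(1+t)$, we have $\frac{2|\eta|}{y\tau}\ge\frac{|\eta|}{(1+t)y}$, which dominates the $1/y$ term regardless of the sign of $\eta$.

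For the cross term, I use Young's inequality $ab\le a^2+b^2/4$ with $a=\frac{z}{2\sqrt2\,\tau(1+t)^{1/4}}$ and $b=\frac{\sqrt2|\eta|}{(1+t)^{3/4}}$. This gives
$$\frac{|\eta|z}{2\tau(1+t)}\le\frac{z^2}{8\sqrt{1+t}\,\tau^2}+\frac{\eta^2}{2(1+t)^{3/2}}.$$
The first piece is exactly the extra term produced by the inflated diffusion time $\tau=1+t+\sqrt{1+t}$.

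Finally, the terms of order $(1+t)^{-3/2}$ are handled by the factor $e^{-K\tau^{-1/2}}$. Using $\tau'\ge1$ and $\tau\le 2(1+t)$,
$$\frac{\tau'K}{2\tau^{3/2}}\ge\frac{\eta^2+2}{\sqrt2\,(1+t)^{3/2}}\ge\frac{\eta^2}{2(1+t)^{3/2}}+\frac{3}{4(1+t)^{3/2}}\ge\frac{\eta^2}{2(1+t)^{3/2}}+\frac{3}{4\sqrt{1+t}\,\tau}.$$
This shows the whole remainder is nonnegative, which proves the claimed inequality.
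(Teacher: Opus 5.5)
Your proposal is correct and follows essentially the same computation as the paper: you use the same logarithmic derivatives with $z=y+2|\eta|$, the same exact cancellation of the $\tau^{-1}$ terms, the same absorption of $\frac{\eta}{(1+t)y}$ via $\tau\le 2(1+t)$, and the same control of the $(1+t)^{-3/2}$ remainder by the factor $e^{-4(\eta^2+2)\tau^{-1/2}}$. The only cosmetic difference is that you bound the cross term $\frac{\eta z}{2\tau(1+t)}$ by Young's inequality where the paper completes the square, which amounts to the same estimate.
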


\begin{proof}[{\bf Proof of \Cref{prop:supersolutionshiftallbeta}}]
Setting $z=y+2|\eta|$, direct differentiation gives 
\begin{equation}
\frac{\overline\Psi_t}{\overline\Psi} = -\frac{3\tau'}{2\tau} + \frac{z^2}{4\tau^2}\tau' + 2(\eta^2 +2) \frac{\tau'}{\tau^{\frac{3}{2}}}, \qquad \frac{\overline\Psi_y}{\overline\Psi} = \frac1{y} - \frac{2z}{4\tau}, \qquad 
\frac{\overline\Psi_{yy}}{\overline\Psi} = -\frac1{2\tau} - \frac{z}{y\tau} + \frac{z^2}{4\tau^2}.
\end{equation}
Therefore, 
\begin{align}
&\frac{1}{\overline{\Psi}}\left[\overline{\Psi}_t-\overline{\Psi}_{yy}+\frac{\eta}{1+t}\overline{\Psi}_y\right]\notag \\
&= \frac{2(\eta^2 +2)\tau'}{\tau^{\frac32}}
 + (\tau' -1)\frac{z^2}{4 \tau^2} - \frac32 \frac{\tau'}{\tau} +
\left(
 \frac{z}{\tau}+\frac{\eta}{1+t}\right) \frac{1}{y}
+\frac{1}{2\tau}
-\frac{z}{2\tau}
 \frac{\eta}{1+t}.
\label{Psi-eq'}
\end{align}
For $t>0$, we have
\begin{equation}
1+t<\tau<2(1+t),
\qquad\quad 
 \tau'-1 =  \frac{1}{2\sqrt{1+t}},\qquad \quad 
\frac{z}{\tau}+\frac{\eta}{1+t}\geq \frac{y}{\tau}.
\end{equation}
Using also $1-\g\geq0$, we obtain
\begin{align}
&\frac{1}{\overline{\Psi}}\left[ \overline{\Psi}_t-\overline{\Psi}_{yy}+\frac{\eta}{1+t}\overline{\Psi}_y+(1- \g)\overline{\Psi}\right] \notag \\
&\geq
\frac{2(\eta^2 +2)\tau'}{\tau^{\frac32} } + \frac{1}{2\sqrt{1+t}} \cdot \frac{z^2}{4\tau^2} - \frac{3}{2\tau}\left(1+  \frac{1}{2 \sqrt{1+t}}\right) + \frac{1}{\tau} + \frac{1}{2\tau} - \frac{\eta z}{2\tau(1+t)}
 \notag \\
 &=\frac{2(\eta^2 +2)}{\tau^{\frac32} }\left( 1 + \frac{1}{2\sqrt{1+t}}\right) + \frac{1}{2\sqrt{1+t}} \cdot \frac{z^2}{4\tau^2} - \frac{3}{4\tau \sqrt{1+t}} - \frac{\eta z}{2\tau(1+t)}  \notag \\
  &\geq \frac{2(\eta^2 +2)}{\tau^{\frac32} }
 + \frac{1}{2\sqrt{1+t}}\left[\frac{z^2}{4\tau^2} - 2 \frac{z}{2\tau}\frac{\eta}{\sqrt{1+t}} - \frac{3}{2\tau}\right] 
 \notag\\
  &=\frac{2(\eta^2 +2)}{\tau^{\frac32} }
  + \frac{1}{2\sqrt{1+t}}\left[\left(\frac{z}{2\tau} - \frac{\eta}{\sqrt{1+t}}\right)^2 - \frac{\eta^2}{1+t}- \frac{3}{2\tau}\right] 
  \notag\\
  &\geq \frac1{(1+t)^{\frac32} }\left[\frac{\eta^2 +2}{2^{\frac12} } - \frac{\eta^2}{2} - \frac{3}{4}\right] \geq 0.
\end{align}
The asserted supersolution inequality for $\overline\zeta$ now follows from the exponential transformation. 
\end{proof}

\begin{proposition}\label{lem:2A}
Fix $\mu\in(0,\sqrt{a})$ such that
\begin{equation}
-\mu^2 < 1-a-\g(0) \leq 1-a-\g(y) \leq 0 ,\quad y\leq 0.
\end{equation}
Define
\begin{align}\label{sigma-0911}
{\sigma(y)}:=\begin{cases}
1-a-\g(y)\leq 0,&\quad y\leq0,\\
0,&\quad y>0. 
\end{cases}
\end{align}
Let $\overline m_\mu$ be the corrector from
\Cref{prop:supercorrector} associated with $\mu$ and ${\sigma}$.
Let $\psi$ be a bounded, positive supersolution of 
\begin{equation}
\psi_t \geq \psi_{xx} + \psi(1-a-\psi) \quad \text{ on } \R_+^\star \times \R,
\end{equation}
such that, for some $T>0$,
\begin{equation}\label{e.3heat}
\inf_{x \leq s(t)}\left( \frac{\psi_x(t,x)}{\psi(t,x)}  +\frac{\dot s(t)}{2} \right) > \mu \quad \text{ for }t \geq T,    
\end{equation}
Then there exists $k>1$ such that $\overline n(t,x):=k\overline m_\mu(x-s(t))\psi(t,x)$ satisfies
\begin{equation}
    \overline{n}_t - \overline{n}_{xx} \geq \overline{n}(\g(x-s(t)) - \overline{n} )\quad \text{ for }t>T,~ x <s(t).
\end{equation}
\end{proposition}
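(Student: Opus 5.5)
Write $y=x-s(t)$, $m=\overline m_\mu(y)$, and $\overline n=k m\psi$. The plan is a direct computation of the operator applied to $\overline n$. Using $m_t=-\dot s\,m'$ and $m_x=m'$, one gets
\begin{equation}
\overline n_t-\overline n_{xx}-\overline n(\g-\overline n)=k\Big[m(\psi_t-\psi_{xx})-\dot s m'\psi-2m'\psi_x-m''\psi-m\psi\g+k m^2\psi^2\Big].
\end{equation}
Insert the supersolution inequality $\psi_t-\psi_{xx}\ge\psi(1-a-\psi)$ and the corrector equation $m''=-2\mu m'+\sigma m$. On $y\le0$ we have $\sigma=1-a-\g$, so the growth terms cancel exactly: $m(1-a)-\sigma m-\g m=0$. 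The bracket is then bounded below by
\begin{equation}
\psi\Big[-m'\big(\dot s+2\tfrac{\psi_x}{\psi}-2\mu\big)\Big]+\psi^2 m(km-1).
\end{equation}

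The first term is handled by sign information. \Cref{prop:supercorrector}(2) gives $m'\le0$ on $y\le0$; it applies since $\mu\ge\theta$ and $-\mu^2<\sigma\le0$ on $\R^-$, where $\sigma\equiv0$ on $\R^+$ and $\sigma\in\mathsf L^1$ by (G2). Hypothesis \eqref{e.3heat} gives $\dot s/2+\psi_x/\psi>\mu$, i.e. $\dot s+2\psi_x/\psi-2\mu>0$, for $t\ge T$ and $x\le s(t)$. Hence $-m'(\cdots)\ge0$.

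For the nonlinear term, $\inf_{\R}\overline m_\mu>0$ by \Cref{prop:supercorrector}(1). Choose $k>\max\{1,1/\inf\overline m_\mu\}$; then $km-1\ge0$, which completes the inequality on $t>T$, $x<s(t)$.

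The only point needing care is the sign bookkeeping: one must check that the cross term $-2m'\psi_x$ combines with $-\dot s m'\psi+2\mu m'\psi$ into precisely the quantity controlled by \eqref{e.3heat}. Regularity is not an obstacle, since $m\in \mathsf W^{2,1}_{\rm loc}$ and the inequality can be read a.e. or in the weak sense.
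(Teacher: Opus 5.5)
Your proposal is correct and follows the paper's proof essentially line by line. It uses the same expansion of $\overline n=k\overline m_\mu\psi$, the same cancellation of the growth terms through $m''+2\mu m'=(1-a-\g)m$ on $y\le 0$, and the same sign argument ($m'\le 0$ combined with \eqref{e.3heat}) for the cross term; it also chooses $k$ so that $km-1\ge 0$, where the paper uses $m(y)\ge m(0)$ for $y\le 0$ rather than the global infimum, an immaterial difference.
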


\begin{proof}[{\bf Proof of \Cref{lem:2A}}]
Set $y=x-s(t)<0$ and write
$m=\overline m_\mu$. Since
\begin{equation}
m''+2\mu m'=(1-a-\g)m,
\end{equation}
a direct calculation gives
\begin{align}
&\overline n_t-\overline n_{xx}
-\g\overline n+\overline n^2\\
&\quad =
km\left(\psi_t-\psi_{xx}-(1-a)\psi\right)
-km'(\dot s\,\psi+2\psi_x-2\mu\psi)
+k^2m^2\psi^2\\
&\quad \ge
-2k\psi m'
\left(\frac{\dot s}{2}+\frac{\psi_x}{\psi}-\mu\right)
+km(km-1)\psi^2,
\end{align}
where we used
$\psi_t-\psi_{xx}-(1-a)\psi\ge-\psi^2$.

By \Cref{prop:supercorrector},
\begin{equation}
m'(y)\le0,\qquad
m(y)\ge m(0)>0,\qquad y\le0.
\end{equation}
Choose $k>1$ such that $km(0)\ge1$.
Then $km(y)-1\ge0$ for every $y\le0$.
Moreover, \eqref{e.3heat} implies
\begin{equation}
\frac{\dot s(t)}2+
\frac{\psi_x(t,x)}{\psi(t,x)}-\mu>0
\qquad(t\ge T,\ x\le s(t)).
\end{equation}
Both terms in the preceding lower bound are
therefore nonnegative, as required.
\end{proof}

\begin{lemma}\label{lem:3A}
Let $\lambda\in(0,\sqrt{1-a})$. Define
\begin{align}\label{c-k-def}
c_{\lambda}  = \lambda + \frac{1-a}{\lambda},\qquad k_{\lambda}=\frac{\lambda}{c_{\lambda}-2\lambda}.
\end{align}
For $\delta>0$, set
\begin{align}\label{Phi0911}
\overline{\Phi}_\delta(t,\xi) = \left( 1 - \varepsilon(t) k_{\lambda}\xi \right) e^{-\lambda \xi},\qquad \text{ where }\quad 
\varepsilon(t) = \frac{5 \sqrt\delta}{1 + \delta t}.
\end{align}
Then there exists $\hat\delta>0$ such that for any $\delta \in (0,\hat\delta]$ and any function $\rho$ satisfying 
\begin{align}\label{rho-cond}
|\rho'(t)| \leq \frac{\varepsilon(t)}{5}  \qquad \text{ for }~t\geq 0,
\end{align}
the function $\overline{\Phi}_\delta$ is positive for $t \geq 0,~ \xi \in [0,\delta t]$ and 
\begin{equation}\label{e.910.3a}
(\overline{\Phi}_\delta)_t - (c_{\lambda}  + \rho'(t))(\overline{\Phi}_\delta)_\xi - (\overline{\Phi}_\delta)_{\xi\xi} - (1-a)  \overline{\Phi}_\delta  \geq 0, \quad \text{ for }t \geq 0,~ \xi \in [0,\delta t]. 
\end{equation}
Moreover, there exists $C_{\lambda} = 10 k_\lambda$ independent of $\delta$, such that
\begin{equation}\label{e.0910.3}
-C_\lambda\sqrt{\delta}\leq\frac{(\overline{\Phi}_\delta)_\xi(t,\xi)} {\overline{\Phi}_\delta(t,\xi)} +\lambda < 0 \quad \text{ for }t \geq 0,~ \xi \in [0,\delta t].
\end{equation}
\end{lemma}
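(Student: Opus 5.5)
The plan is a direct computation. We first reduce the differential inequality to an algebraic one using the dispersion relation defining $c_\lambda$. Then we control the two $\delta$-dependent error terms using the restriction $\xi\in[0,\delta t]$ and the smallness of $\sqrt\delta$. Throughout, $c_\lambda-2\lambda=\frac{1-a-\lambda^2}{\lambda}>0$ because $\lambda<\sqrt{1-a}$, so $k_\lambda>0$ is well defined.

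\textbf{Step 1 (positivity).} The key elementary bound is
$$
\varepsilon(t)k_\lambda\xi\le \frac{5\sqrt\delta\, k_\lambda\,\delta t}{1+\delta t}\le 5k_\lambda\sqrt\delta \qquad\text{for } \xi\in[0,\delta t].
$$
We choose $\hat\delta$ so that $5k_\lambda\sqrt{\hat\delta}\le\frac12$. Then $1-\varepsilon k_\lambda\xi\in[\frac12,1]$ on the region, which gives positivity of $\overline\Phi_\delta$.

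\textbf{Step 2 (the inequality).} Write $\overline\Phi_\delta=P(t,\xi)e^{-\lambda\xi}$ with $P=1-\varepsilon k_\lambda\xi$, and compute $\overline\Phi_t,\overline\Phi_\xi,\overline\Phi_{\xi\xi}$. After dividing by $e^{-\lambda\xi}$, the terms proportional to $P$ coming from $c_\lambda$, $-\lambda^2$ and $-(1-a)$ cancel, since $\lambda c_\lambda-\lambda^2-(1-a)=0$. The identity $k_\lambda(c_\lambda-2\lambda)=\lambda$ turns the terms $c_\lambda\varepsilon k_\lambda-2\lambda\varepsilon k_\lambda$ into $\varepsilon\lambda$. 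One should arrive at
$$
e^{\lambda\xi}\,\mathcal L\overline\Phi_\delta=\varepsilon\lambda-\varepsilon'(t)k_\lambda\xi+\rho'(t)\big(\varepsilon k_\lambda+\lambda P\big).
$$
Here $-\varepsilon'=\varepsilon\frac{\delta}{1+\delta t}\ge0$, so the second term is nonnegative. Using \eqref{rho-cond} and $P\le1$, the last term is bounded in absolute value by $\frac{\varepsilon}{5}(5k_\lambda\sqrt\delta+\lambda)$. This is strictly less than $\varepsilon\lambda$ once $5k_\lambda\sqrt\delta<4\lambda$, so we shrink $\hat\delta$ accordingly and obtain \eqref{e.910.3a}.

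\textbf{Step 3 (logarithmic derivative).} From the same computation,
$$
\frac{(\overline\Phi_\delta)_\xi}{\overline\Phi_\delta}+\lambda=-\frac{\varepsilon k_\lambda}{P}.
$$
This is strictly negative, and since $P\ge\frac12$ it is at least $-2\varepsilon k_\lambda\ge-10k_\lambda\sqrt\delta$, using $\varepsilon\le5\sqrt\delta$. This gives \eqref{e.0910.3} with $C_\lambda=10k_\lambda$.

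No step is a real obstacle. The only care needed is in Step 2: the $\rho'$ contribution must be absorbed by the gain $\varepsilon\lambda$, which is why the bound $|\rho'|\le\varepsilon/5$ is matched to the same $\varepsilon(t)$ and why the smallness of $\varepsilon k_\lambda\xi$ must hold uniformly on $[0,\delta t]$.
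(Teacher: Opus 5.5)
Your proposal is correct and matches the paper's proof. Both use the factorisation $\overline\Phi_\delta=(1-\varepsilon k_\lambda\xi)e^{-\lambda\xi}$, the cancellations $\lambda c_\lambda-\lambda^2-(1-a)=0$ and $k_\lambda(c_\lambda-2\lambda)=\lambda$, and absorb the $\rho'$ term through $|\rho'|\le\varepsilon/5$, $\varepsilon\le 5\sqrt\delta$ and $1-\varepsilon k_\lambda\xi\in[\tfrac12,1]$. Your exact identity, with the nonnegative term $-\varepsilon' k_\lambda\xi$ kept explicit, is only a slightly tidier version of the paper's chain of inequalities.
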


\begin{proof}[{\bf Proof of \Cref{lem:3A}}]
Since $c_{\lambda}-2\lambda>0$, we have $k_{\lambda}>0$. 
For $\xi\in[0,\delta t]$, $\varepsilon(t)(1+\xi)\leq 5\sqrt{\delta}$. We may choose $\hat\delta>0$ sufficiently small that $1-k_{\lambda}\varepsilon(t)\xi\geq \frac{1}{2}$ and hence $(\overline{\Phi}_\delta)>0$
in $[0,\infty)\times[0,\delta t]$.

A direct computation gives
\begin{align}
    \frac{(\overline{\Phi}_\delta)_\xi}{\overline{\Phi}_\delta}&= \frac{-\lambda - k_{\lambda} \varepsilon(t) (1-\lambda \xi)}{1-k_{\lambda} \varepsilon(t) \xi}= -\lambda-\frac{k_{\lambda} \varepsilon(t)}{1-\varepsilon(t) k_{\lambda}\xi},
\end{align}
which implies \eqref{e.0910.3}, where we also used 
$1-\varepsilon(t) k_{\lambda}\xi\geq \frac{1}{2}$ and $\varepsilon(t)\leq 5\sqrt{\delta}$
in the domain.

We next verify the differential inequality. 
\begin{align}
    e^{\lambda\xi}(\overline{\Phi}_\delta)_t &= -\varepsilon'(t) k_{\lambda}\xi \\
    e^{\lambda\xi}(\overline{\Phi}_\delta)_{\xi} &= -\varepsilon(t) k_{\lambda} - \lambda \left( 1 - \varepsilon(t) k_{\lambda}\xi \right) \\
    e^{\lambda\xi}(\overline{\Phi}_\delta)_{\xi\xi} &= 2 \lambda \varepsilon(t) k_{\lambda}+ \lambda^2 \left( 1 - \varepsilon(t) k_{\lambda}\xi \right)
\end{align}
Consequently,
\begin{align}
&\left((\overline{\Phi}_\delta)_t - (c_{\lambda}  + \rho'(t))(\overline{\Phi}_\delta)_\xi - (\overline{\Phi}_\delta)_{\xi\xi} - (1-a) \overline{\Phi}_\delta\right)e^{\lambda\xi} \\
&= -\varepsilon'(t) k_{\lambda}\xi + (c_{\lambda} + \rho'(t)) \left( \varepsilon(t) k_{\lambda} + \lambda \left( 1 - \varepsilon(t) k_{\lambda}\xi \right)\right) \\
& \qquad \qquad \qquad- \left( 2 \lambda \varepsilon(t) k_{\lambda}+ \lambda^2 \left( 1 - \varepsilon(t) k_{\lambda}\xi \right)\right) - (1-a) \left( 1 - \varepsilon(t) k_{\lambda}\xi \right) \\
&\geq
{\rho'(t)\varepsilon k_{\lambda}}+
\rho'(t) \lambda \left( 1 - \varepsilon(t) k_{\lambda}\xi \right) + \varepsilon(t) \lambda
{\geq -|\rho'(t)|(\varepsilon k_{\lambda}+\lambda)+\varepsilon \lambda},  
    \end{align}
where we used \eqref{c-k-def}, $0<1-\varepsilon(t)k_\lambda\xi\leq1$, and $\varepsilon'(t)\leq0$.
    
Using $|\rho'(t)|\leq \frac{\varepsilon(t)}{5} \leq \sqrt\delta$,
we further obtain 
\begin{equation}
    -|\rho'(t)|(\varepsilon k_{\lambda}+\lambda)+\varepsilon \lambda \geq -\frac{\varepsilon}{5}(\varepsilon k_\lambda + \lambda) + \varepsilon\lambda \geq \varepsilon\lambda\left(\frac45 - \frac{k_\lambda}{\lambda} \sqrt\delta\right)\geq 0.
\end{equation}
This proves \eqref{e.910.3a} for $t>0$, $0\leq \xi \leq \delta t$, provided $\delta$ is small enough.
\end{proof}

\begin{proposition}\label{prop:supersolshiftbeta2}
 
Assume $\s=2$ and $\eta < \frac32$, and set $\lambda_\star=1-\sqrt{a}$. Then there exist  $\delta>0$, $\mu\in(0,\sqrt{a})$, and $\overline{t}>1$ such that, for every $t_0 \geq \overline{t}$ and every $\overline{k}_2 >0$,
\begin{equation}
\overline\Phi(t,x):= \overline{k}_2\overline{m}_\mu(x-s(t)) \overline\Phi_{\delta}(t-t_0,x-b_{t_0}(t)) 
\end{equation}
satisfies  
\begin{align}\label{e.929.1}
\overline{\Phi}_t -\overline{\Phi}_{xx} - \g(x-s(t)) \overline{\Phi} + \overline{\Phi}^2 \geq 0 \qquad \text{ in }\Omega_0:= \left\{(t,x):~ t \geq t_0,~ b_{t_0}(t) \leq x \leq s(t)\right\},
\end{align}
where $\overline\Phi_{\delta}$ is given in \eqref{Phi0911} with $\lambda=\lambda_\star$ and 
$\overline m_\mu$ is the corrector given by \Cref{prop:supercorrector} associated with ${\sigma}$ defined by $\sigma:=(1-a-\g)\mathbf{1}_{\mathbb{R}^-}$,  
and
\begin{equation}
b_{t_0}(t) = s(t_0) + 2(t-t_0) - \frac{1}{\lambda_\star}\left(\frac32 - \eta\sqrt a\right)\ln\frac{1+t}{1+t_0}.
\end{equation}
Moreover, the following boundary bounds hold:
\begin{equation}
     \inf_{t \geq t_0} (1+t)^{\tfrac32 - \eta}\overline\Phi(t,s(t)) >0, \qquad  \inf_{t \geq t_0} \overline\Phi(t,b_{t_0}(t)) >0.
\end{equation}
\end{proposition}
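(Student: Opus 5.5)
The plan is to write $\overline\Phi=\overline k_2\, m(y)\,\psi_0(t,x)$ with $y=x-s(t)$, $m=\overline m_\mu$ and $\psi_0(t,x)=\overline\Phi_\delta(t-t_0,\xi)$, $\xi=x-b_{t_0}(t)$. I would redo the computation in the proof of \Cref{lem:2A} directly, rather than quoting that proposition. Quoting it would force the amplitude to satisfy $k\,m(0)\ge 1$, whereas here $\overline k_2>0$ is arbitrary. This works because $\psi_0$ will be a supersolution of the \emph{linear} equation, so the quadratic term is not needed.

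\textbf{Step 1 (linear supersolution via \Cref{lem:3A}).} With $\lambda=\lambda_\star=1-\sqrt a$ one has $c_{\lambda_\star}=(1-\sqrt a)+(1+\sqrt a)=2$. Then
\[
\dot b_{t_0}(t)=2-\frac{1}{\lambda_\star}\Big(\frac32-\eta\sqrt a\Big)\frac{1}{1+t},
\]
so $\dot b_{t_0}=c_{\lambda_\star}+\rho'$ with $|\rho'(t)|= C_0/(1+t)$ and $C_0:=|\tfrac32-\eta\sqrt a|/\lambda_\star$. Condition \eqref{rho-cond} in the shifted time $t-t_0$ requires
\[
C_0(1+\delta(t-t_0))\le \sqrt\delta\,(1+t).
\]
This holds for all $t\ge t_0$ if $C_0\delta\le\sqrt\delta$ and $C_0\le\sqrt\delta(1+t_0)$, i.e.\ first $\delta$ small and then $t_0\ge\overline t(\delta)$. \Cref{lem:3A} then gives $\psi_{0,t}-\psi_{0,xx}-(1-a)\psi_0\ge0$ for $0\le\xi\le\delta(t-t_0)$. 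To see that $\Omega_0$ lies inside this range, compute
\[
s(t)-b_{t_0}(t)=\frac{\frac32-\eta}{1-\sqrt a}\,\ln\frac{1+t}{1+t_0}.
\]
This is positive since $\eta<\frac32$, and it is at most $\frac{\frac32-\eta}{1-\sqrt a}\,\frac{t-t_0}{1+t_0}\le\delta(t-t_0)$ once $t_0$ is large.

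\textbf{Step 2 (the product).} Since $m''+2\mu m'=\sigma m$ with $\sigma=1-a-\g$ on $y\le0$, the computation of \Cref{lem:2A} gives
\[
\overline\Phi_t-\overline\Phi_{xx}-\g\overline\Phi+\overline\Phi^2
\ge \overline k_2\, m\,(\psi_{0,t}-\psi_{0,xx}-(1-a)\psi_0)
-2\overline k_2\,\psi_0\, m'\Big(\frac{\dot s}{2}+\frac{\psi_{0,x}}{\psi_0}-\mu\Big)+\overline\Phi^2 .
\]
The first term is $\ge0$ by Step 1, the last is trivially $\ge0$, and $m'\le0$ for $y\le0$ by \Cref{prop:supercorrector}(2). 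It remains to check the bracket. By \eqref{e.0910.3},
\[
\frac{\psi_{0,x}}{\psi_0}+\frac{\dot s}{2}\ \ge\ -\lambda_\star-C_\lambda\sqrt\delta+1-\frac{|\eta|}{2(1+t)}=\sqrt a-C_\lambda\sqrt\delta-\frac{|\eta|}{2(1+t)} .
\]
I will fix $\mu\in(0,\sqrt a)$ with $-\mu^2<1-a-\g(0)$; this is possible as long as $\g(0)<1$, which holds under (G1). Then I shrink $\delta$ and enlarge $\overline t$ so that the right-hand side exceeds $\mu$. The hypothesis $-\mu^2<\sigma\le 0$ on $\mathbb R^-$ needed for \Cref{prop:supercorrector} holds by monotonicity of $\g$. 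This yields \eqref{e.929.1}.

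\textbf{Step 3 (boundary bounds).} At $x=b_{t_0}(t)$ we have $\xi=0$ and $\overline\Phi_\delta=1$, while $m\ge m(0)>0$ on $y\le0$. At $x=s(t)$, the bound $1-\varepsilon k_\lambda\xi\ge\frac12$ together with the gap formula above gives
\[
\overline\Phi\ \ge\ \tfrac12\,\overline k_2\, m(0)\, e^{-\lambda_\star(s-b_{t_0})}
=\tfrac12\,\overline k_2\, m(0)\Big(\frac{1+t}{1+t_0}\Big)^{-(\frac32-\eta)} .
\]
Both infima are therefore positive.

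The main obstacle I expect is Step 1: the drift error $\rho'$ in the original time decays like $1/(1+t)$, but it must be dominated by $\varepsilon(t-t_0)$ measured in the shifted time. This is what forces the order of choices $\mu$, then $\delta$, then $\overline t$.
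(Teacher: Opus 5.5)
Your proof is correct and follows the paper's argument essentially step for step. You verify \eqref{rho-cond} and $\Omega_0\subset\{0\le\xi\le\delta(t-t_0)\}$ with $\delta$ chosen before $\overline t$, redo the product computation directly rather than invoking \Cref{lem:2A} (the paper does the same, for the reason you give), use $m'\le0$ on $y\le0$, and obtain the same two boundary bounds; the only difference is that you take $\mu\in(\sqrt{\g(0)-(1-a)},\sqrt a)$ instead of the paper's $\mu=\frac{\sqrt a}{2}$ combined with a translation of the environment. This avoids the translation but uses $\g(0)<1$, which holds for strictly increasing $\g$; for a merely nondecreasing $\g$ with $\g(0)=1$ you would need the paper's shift.
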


\begin{proof}[{\bf Proof of \Cref{prop:supersolshiftbeta2}}]

Choose $\mu= \frac12(1-\lambda_\star)=\frac12\sqrt{a}$.
 By replacing $s(t)$ by $s(t) + x_1$ for some $x_1 \in \mathbb{R}$, we may assume without loss of generality that 
    \begin{equation}
    -\mu^2 {<} 1-a - \g(y) \leq 0,\quad y\leq 0.
    \end{equation}
Let  $\hat{\delta} >0$ and $C_{\lambda_\star}$ be as given in \Cref{lem:3A}. Choose $\delta\in(0,\hat{\delta})$ sufficiently small so that
\begin{equation}\label{c-star-delta}
C_{\lambda_\star}\sqrt{\delta}<\frac{\sqrt{a}-\mu}{4},
\quad
\frac{|\frac{3}{2}-\sqrt{a}\eta |}{\lambda_\star}\delta
\leq\frac{\sqrt\delta}{2}.
\end{equation}
Next, choose $\overline t>1$ sufficiently large that
\begin{equation}\label{t1-cond}
\frac{|\eta|}{2(1+\overline{t})}<\frac{\sqrt{a}-\mu}{4},
\quad
\frac{\frac{3}{2}-\eta}{\lambda_\star(1+\overline{t})}\leq\delta,
\quad \frac{|\frac32-\eta\sqrt{a}|}{\lambda_\star(1+\overline{t})}\leq\frac{\sqrt{\delta}}{2}.
\end{equation}

Fix $t_0\geq \overline{t}$.
For $t\geq t_0$, we set $\tau:=t-t_0$.
Define
\begin{equation}
\rho(\tau):=b_{t_0}(t_0+\tau)-b_{t_0}(t_0)-2\tau,
\end{equation}
so that
\begin{equation}
\rho(\tau)=-\frac{\frac{3}{2}-\sqrt{a}\eta}{\lambda_\star}
\ln\frac{1+t_0+\tau}{1+t_0}.
\end{equation}
By \eqref{c-star-delta} and \eqref{t1-cond}, for $\tau\geq 0$,
\begin{align}
\frac{\left\vert \frac{3}{2}-\sqrt{a}\eta \right\vert}{\lambda_\star}(1+\delta\tau)
=\frac{\left\vert \frac{3}{2}-\sqrt{a}\eta \right\vert}{\lambda_\star}+
\frac{\left\vert \frac{3}{2}-\sqrt{a}\eta \right\vert}{\lambda_\star}\delta\tau\leq{\sqrt\delta}(1+t_0)+{\sqrt\delta}\tau.
\end{align}
It follows that
\begin{equation}
|\rho'(\tau)|=\frac{\left\vert \frac{3}{2}-\sqrt{a}\eta \right\vert}{\lambda_\star(1+t_0+\tau)}
\leq \frac{\sqrt{\delta}}{1+\delta\tau},
\quad \tau\geq0,
\end{equation}
which is exactly condition \eqref{rho-cond} of \Cref{lem:3A}.

Since $\eta<\frac{3}{2}$, we have $s(t)-b_{t_0}(t)\geq 0$. Moreover, by \eqref{t1-cond}, 
\begin{align}
s(t)-b_{t_0}(t)=\frac{\frac{3}{2}-\eta}{\lambda_\star}\ln\frac{1+t}{1+t_0}
\leq \delta(t-t_0),\quad t\geq t_0.
\end{align}
Thus, $\Omega_0$ is contained in the domain of validity of \Cref{lem:3A}.

Define
\begin{equation}
\psi(t,x):=\overline{\Phi}_\delta\left(t-t_0,x-b_{t_0}(t)\right).
\end{equation}
Since $b_{t_0}'(t)=2+\rho'(t-t_0)$,
\Cref{lem:3A} gives
$\psi_t-\psi_{xx}-(1-a)\psi\geq0$ in $\Omega_{0}$.
Moreover, by the choice $2\mu = \sqrt a$,  \eqref{e.0910.3}, \eqref{c-star-delta} and \eqref{t1-cond},
\begin{equation}\label{e.929.2}
\frac{\psi_x}{\psi}+\frac{\dot s}{2} \geq  
\left(-\lambda_\star-C_{\lambda_\star}\sqrt\delta\right)
+1-\frac{\eta}{2(1+t)}>
\mu, \qquad \text{ for }(t,x)\in\Omega_0.
\end{equation}
We now show that $\overline\Phi$ satisfies the supersolution inequality \eqref{e.929.1} in $\Omega_0$. 

Set
\begin{equation}
W(t,x):=\overline m_\mu(x-s(t))\psi(t,x),
\qquad y:=x-s(t)\le0.
\end{equation}
Since
\begin{equation}
\overline m_\mu''+2\mu\overline m_\mu'
=(1-a-\g)\overline m_\mu
\quad\text{on }(-\infty,0),
\end{equation}
direct differentiation gives
\begin{align}
&W_t-W_{xx}-\g(y)W\\
&\quad=
\overline m_\mu(y)
\left(\psi_t-\psi_{xx}-(1-a)\psi\right)
-2\psi\,\overline m_\mu'(y)
\left(\frac{\dot s(t)}2+\frac{\psi_x}{\psi}-\mu\right).
\end{align}
By \Cref{lem:3A}, the first term is nonnegative
throughout $\Omega_0$. The second is also nonnegative,
since $\overline m_\mu'\le0$ on $(-\infty,0]$
by \Cref{prop:supercorrector}, and
\eqref{e.929.2} holds. Consequently,
\begin{equation}
W_t-W_{xx}-\g(x-s(t))W\ge0
\qquad\text{in }\Omega_0.
\end{equation}
For any $\overline k_2>0$, setting
$\overline\Phi=\overline k_2W$ yields
\begin{equation}
\overline\Phi_t-\overline\Phi_{xx}
-\g(x-s(t))\overline\Phi+\overline\Phi^2
=
\overline k_2
\left(W_t-W_{xx}-\g(x-s(t))W\right)
+\overline k_2^2W^2
\ge0.
\end{equation}
This proves \eqref{e.929.1}.

It remains to establish the boundary estimates. At $x=b_{t_0}(t)$, we have
\begin{equation}
\overline{\Phi}(t,b_{t_0}(t))= \overline{k}_2 \overline{m}_\mu(b_{t_0}(t)-s(t))\geq  \overline{k}_2\left(\inf_{y\leq0}\overline{m}_\mu(y)\right)>0.
\end{equation}
Finally, by decreasing $\delta$ in \eqref{Phi0911} if necessary, we have 
\begin{equation}
\overline\Phi_\delta(\tau,\xi)\geq\frac{1}2e^{-\lambda_\star\xi},\qquad
0\leq\xi\leq\delta\tau.
\end{equation}
Consequently,
\begin{align}
\overline\Phi(t,s(t))
&= \overline{k}_2\overline{m}_\mu(0)
\overline{\Phi}_\delta\left(t-t_0,s(t)-b_{t_0}(t)\right)\\
&\geq\frac{ \overline{k}_2\overline{m}_\mu(0)}{2}
e^{-\lambda_\star\left(s(t)-b_{t_0}(t)\right)}=\frac{ \overline{k}_2\overline{m}_\mu(0)}{2}
\left(
\frac{1+t}{1+t_0}
\right)^{-\frac32+\eta},
\end{align}
by the definition of $b_{t_0}$. 
This completes the proof.
\end{proof}

\section{Construction of a subsolution}\label{sec:subsolshift}

In this section, we construct a subsolution to \eqref{eq:mainshift} when $\s > 2$ or when $\s = 2$ and $\eta \in (-\infty, \frac32]$. Note such a subsolution, for the case $\s=2$ and $\eta = \frac32$, automatically takes care of the remaining case $\s = 2$ and $\eta \in (\frac32,\infty)$, in which the invasion front is moving strictly faster than the moving environmental interface $s(t)$.

Writing $n(t,x) = \zeta(t,y)$ in the moving frame $y=x-s(t)$, we rewrite  \eqref{eq:mainshift} as
\begin{equation}\label{e.zetaa}
\zeta_t - \dot s \zeta_y - \zeta_{yy} = (\g - \zeta) \zeta \quad \text{ for }t>0,~y \in \mathbb{R},
\end{equation}
Following \cite{hamel2013short,hamel2016logarithmic}, we factor out the leading 
exponential factors and polynomial corrections in time. Specifically, we write
\begin{equation}\label{e.zeta.1}
\underline\zeta(t,y)= \mathcal{R}(t,y)\underline\Psi(t,y) \qquad \text{ with } \quad \mathcal{R}(t,y) =(1 +t)^{\frac{\s \eta}{2}}e^{-\left[\frac{\s^2}{4}-1\right]t} e^{-\frac{\s}{2}y}.\end{equation}
Then $\underline\zeta$ is a subsolution of \eqref{e.zetaa} if and only if $\underline\Psi$ satisfies
\begin{align}
\underline\Psi_t + \left( \s - \dot s \right) \underline\Psi_y - \underline\Psi_{yy} &\leq  \left(\g -1 - \mathcal{R}(t,y)\underline\Psi\right) \underline\Psi\qquad \text{ for }t>0,~y \in \mathbb{R}.
\end{align}
We now construct such a subsolution $\underline\Psi(t,y)$.

\begin{lemma}\label{lem:1Aa} Let $K>0$ be given.
Let $\underline{m}$ be the corrector given by \Cref{prop:subcorrector}, with  
\begin{equation}
{\sigma} = 1- \g + h>0,\quad \mbox{ where }
h(y)=  (1+y)e^{-y}.
\end{equation}
Denote $\ell = \underline{m}'(+\infty)\in (1,\infty)$ and 
 define
    \begin{align}
    &\delta(t) = \frac{1}{\sqrt{1+t}} \int_0^{\sqrt{1+t}} z \sigma(z)\underline{m}(z) \dd z, \quad 
    & \tau(t) = 1+t - K\int_0^t \delta(s) \dd s, ~~\quad \\
    &y(t) = |\eta| + \frac{\int_0^{\sqrt{1+t}} z \sigma(z)\underline{m}(z) \dd z}{\underline{m}'(\sqrt{1+t})}, & e(t) = -\frac52 K {\int_{0}^t \frac{\delta(s)}{\max\{1,\tau(s)\}} \, \dd s},
\end{align}
where $\tau, e$ depend on $K$. Then, there exists $T_1=T_1(K)>0$ such that
\begin{equation}
\tau(t) >1 ~ ~ \text{ in }[T_1,\infty),\qquad \lim_{t\to+\infty}\dot\tau(t) = 1 \quad \text{ and }\quad  \lim_{t\to\infty} e(t)\in (-\infty,0)~\text{exists},
\end{equation}
and there exists $0<c_1<1$ independent of $K$ such that for $t\in [T_1,\infty)$,
\begin{equation}\label{e.913.2}
 \delta(t) = o(1), \qquad \delta(t) \ge \frac{{c_1}}{\sqrt{1+t}}, \qquad \dot y(t) \leq  \frac{3\delta}{\sqrt{1+t}},  
 \end{equation}
\begin{equation}\label{e.913.5}
\frac{|y(t)|}{\sqrt{1+t}} =o(1), \qquad  \frac{\dot{y}^2}{\delta} \le  10\frac{\delta}{\tau} , \qquad \frac{1}{(1+t)^2 \delta} \leq  \frac{1}{\tau^2\delta} \leq \frac{1}{c_1}\frac{\delta}{\tau}.
\end{equation}
In addition, the following estimate holds:
\begin{equation}\label{e.928.1m}
    \frac{1}{y}\int_0^y z \sigma(z)\underline{m}(z) \dd z \leq \ell y^2\frac{\delta}{\tau}  \qquad \text{ for }t \geq T_1, ~y \geq 
    \sqrt{1+t}.
\end{equation}
\end{lemma}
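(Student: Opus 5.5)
The plan is to reduce everything to elementary properties of the single function
\begin{equation}
I(L):=\int_0^{L} z\sigma(z)\underline m(z)\dd z,\qquad L=\sqrt{1+t},
\end{equation}
so that $\delta(t)=I(L)/L$. By (G2), $\sigma=1-\g+h\in \mathsf L^1(\R^+,y\dd y)$, $\sigma>0$ a.e. (since $h>0$ and $1-\g\ge 0$ by (G1)). Hence \Cref{prop:subcorrector} applies and gives $y\le \underline m\le \ell y$ and $1\le\underline m'\le \ell$. The key observation is that $z\sigma(z)\underline m(z)\le \ell z^2\sigma(z)$, and $z^2\sigma$ need \emph{not} be integrable. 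So $I(L)$ may grow, but we have the crude bound
\begin{equation}
I(L)\le \ell L\int_0^L z\sigma(z)\dd z\le \ell L\|z\sigma\|_{\mathsf L^1}.
\end{equation}
More sharply, $I(L)/L=\int_0^L z\sigma\underline m/L\,\dd z$ with $\underline m(z)/L\le \ell z/L\le \ell$ and $\to 0$ pointwise. Dominated convergence then gives $\delta(t)=o(1)$. The lower bound $\delta\ge c_1/L$ follows from $I(L)\ge I(1)\ge \int_0^1 z h(z) z\dd z>0$, and $c_1$ is independent of $K$ since $\delta$ does not involve $K$.

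The statements about $\tau$ and $e$ come next. Since $\delta=o(1)$, we get $\dot\tau=1-K\delta(t)\to 1$, so $\tau(t)\ge (1+t)/2$ for $t\ge T_1(K)$, and in particular $\tau>1$. For $e$, we have $\delta(s)/\max\{1,\tau(s)\}\lesssim \delta(s)/(1+s)$. Integrability on $[0,\infty)$ is the one place where something beyond $o(1)$ is needed. I would substitute $L=\sqrt{1+s}$, $\dd s=2L\dd L$, to get $\int \delta/(1+s)\dd s=2\int^\infty I(L)L^{-2}\dd L$. By Fubini,
\begin{equation}
\int_1^\infty \frac{I(L)}{L^2}\dd L\le \int_0^\infty z\sigma(z)\underline m(z)\frac{1}{\max\{z,1\}}\dd z\le \ell\int_0^\infty z\sigma+C<\infty .
\end{equation}
Hence $e(\infty)$ exists and is finite and negative; negativity holds because $\delta>0$. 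This Fubini step is the main point of the lemma, and I expect it to be the only genuinely nontrivial estimate.

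For the estimates on $y(t)$, I would use $\underline m'(L)\in[1,\ell]$, which gives $y(t)\le |\eta|+I(L)$ and hence $|y|/L\le |\eta|/L+\delta=o(1)$. Differentiating gives
\begin{equation}
\dot y=\frac{\dot L}{\underline m'(L)}\Big(L\sigma(L)\underline m(L)-\frac{I(L)\sigma(L)\underline m(L)}{\underline m'(L)}\Big),\qquad \dot L=\frac{1}{2L}.
\end{equation}
The sign of the bracket is not controlled a priori, and this is the step I expect to be delicate. The claimed bound $\dot y\le 3\delta/L$ is one-sided, and the second term is nonpositive, so it suffices to bound $\frac{1}{2L}L\sigma(L)\underline m(L)$. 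This has no uniform bound for rough $\sigma$, which suggests reading the inequality in an a.e. or integrated sense, or redefining via absolute continuity. I would first try to exploit $\underline m''=\sigma\underline m$, writing $L\sigma\underline m=L\underline m''(L)$. Integrating by parts, $I(L)=L\underline m'(L)-\underline m(L)$, so $y(t)=|\eta|+L-\underline m(L)/\underline m'(L)$. Then
\begin{equation}
\dot y=\dot L\,\frac{\underline m\,\underline m''}{(\underline m')^2}\ge 0,
\end{equation}
and the upper bound requires controlling $\underline m''$. I expect the intended argument uses this identity, combined with $\delta\ge c_1/L$ and a redefinition or smoothing of $\sigma$ if needed. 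Given $\dot y\lesssim \delta/L$, the bound $\dot y^2/\delta\le 9\delta/L^2\le 10\delta/\tau$ follows for large $t$, using $\tau\le 1+t$ and $\tau\sim 1+t$. Likewise $\frac{1}{\tau^2\delta}\le \frac{1}{c_1}\frac{\delta}{\tau}$ is equivalent to $\tau\delta^2\ge c_1^{-1}\cdot$ const, which follows from $\delta^2\ge c_1^2/(1+t)$ after adjusting constants and $T_1$.

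Finally, \eqref{e.928.1m} is proved as follows. For $y\ge L$, split $\int_0^y=\int_0^L+\int_L^y$. The first piece equals $I(L)=L\delta\le y^2\delta\,\frac{1}{y}\cdot\frac{y}{L}$, and since $\tau\lesssim L^2\le y^2$ this is bounded by $\ell y^3\delta/\tau$ up to constants. The second piece is at most $\ell y\int_L^y z\sigma\dd z$. Dividing by $y$ leaves terms that must be compared with $y^2\delta/\tau\ge c_1 y^2/L^3$. If the tail integral cannot be bounded this way, I would instead use $\underline m\le \ell z\le \ell y$ to get $\frac1y\int_0^y\le \ell I(y)/y\cdot$ and compare $\delta(\cdot)$ at $y^2-1$ with $\delta(t)$, using monotonicity of $I$ and $\delta\ge c_1/L$.
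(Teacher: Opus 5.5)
\paragraph{Gap: monotonicity of $\sigma$ is never used.}
Most of your plan is sound and agrees with the paper. This covers $\delta=o(1)$ by dominated convergence, the lower bound from $\int_0^1 z^2\sigma$, the behaviour of $\tau$, your Fubini argument for $\int\delta/\max\{1,\tau\}$ (equivalent to the paper's integration by parts), $|y|/\sqrt{1+t}=o(1)$, and the identity $I(L)=L\underline m'(L)-\underline m(L)$. That identity gives $\dot y=\sigma(L)\underline m(L)^2/(2L\,\underline m'(L)^2)$, which is the paper's formula.

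The gap is that you never use that $\sigma=1-\g+h$ is \emph{nonincreasing} on $(0,\infty)$. This holds because $\g$ is nondecreasing by (G1) and $h'(y)=-ye^{-y}\le0$. You invoke (G1) only for $1-\g\ge0$. As a result, $\dot y\le 3\delta/\sqrt{1+t}$ is left unproved. Your argument for \eqref{e.928.1m} also does not close: the tail piece $\frac{\ell}{y}\int_L^y z^2\sigma$ is never bounded. The fallback via monotonicity of $I$ goes the wrong way, since $I$ nondecreasing only bounds $I(y)$ from below.

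Smoothing $\sigma$ or an a.e.\ reading cannot repair this. The corrector is tied to $\sigma$ through $\underline m''=\sigma\underline m$, and without monotonicity both estimates are false. For example, add to $h$ unit-width bumps of height $1/(k^2z_k)$ on $[z_k,z_k+1]$, with $z_k=2^{2^k}$. This keeps $\sigma\in\mathsf L^1(\R^+,y\,\dd y)\cap\mathsf L^\infty$. However, $I(z_k)\lesssim\sqrt{z_k}$ while $I(z_k+1)\gtrsim z_k/k^2$. Consequently $\dot y\big/(\delta/\sqrt{1+t})$ is unbounded for $\sqrt{1+t}\in(z_k,z_k+1)$. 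The ratio of the two sides of \eqref{e.928.1m} at $\sqrt{1+t}=z_k$, $y=z_k+1$ is also unbounded.

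\paragraph{How monotonicity closes both estimates.}
Monotonicity gives $L^3\sigma(L)\le 3\int_0^L z^2\sigma(z)\,\dd z$. Combine this with convexity, $\underline m(L)\le L\underline m'(L)$, and with $\underline m(z)\ge z$:
\[
\dot y\le \frac{L\sigma(L)}{2}\le \frac{3}{2L^2}\int_0^L z^2\sigma(z)\,\dd z\le \frac{3}{2L^2}\int_0^L z\sigma(z)\underline m(z)\,\dd z=\frac{3\delta}{2L}.
\]
For \eqref{e.928.1m}, do not split at $L$. Instead set $\mathcal G(y):=y^{-3}\int_0^y z^2\sigma(z)\,\dd z$. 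Then
\[
\mathcal G'(y)=y^{-4}\Bigl(y^3\sigma(y)-3\int_0^y z^2\sigma(z)\,\dd z\Bigr)\le 0 .
\]
Hence, for $y\ge L$, using $z\le\underline m\le\ell z$ and $\tau\le 1+t=L^2$,
\[
\frac1y\int_0^y z\sigma\underline m\,\dd z\le \ell y^2\mathcal G(y)\le \ell y^2\mathcal G(L)\le \ell y^2\frac{\delta}{L^2}\le \ell y^2\frac{\delta}{\tau}.
\]
This yields \eqref{e.928.1m} with the exact constant $\ell$, whereas your splitting only aimed at a bound up to constants.
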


\begin{proof}[{\bf Proof of \Cref{lem:1Aa}}]
We first establish \eqref{e.928.1m}. Set  $\mathcal{G}(y) = \frac{1}{y^3}\int_0^y z^2 \sigma (z) \dd z$.
Since $\sigma$ is positive, nonincreasing, and belongs to $\mathsf{L}^1((0,\infty),y\dd y)$, we have
\begin{align}
   \mathcal{G}'(y)& = \frac{y^3  
   {\sigma(y)}
   -  \int_0^y 3 \,z^2  
   {\sigma(z)}
   \dd z }{\vert y\vert^4}  
   \leq 0.
\end{align}
For $y\geq\sqrt{1+t}$ and sufficiently large $t$, using $\tau(t)\leq1+t$ and $z\leq\underline{m}(z)$, we obtain
\begin{align}
    \mathcal{G}(y) &\leq \mathcal{G}
    (\sqrt{1+t})
    \leq  (1+t)^{-\tfrac32} \int_0^{\sqrt{1+t}} z^2 \sigma(z)\,\dd z
    \le \frac{\delta}{\tau}.
\end{align}
Since $\underline{m}(z)\leq\ell z$ gives $\frac{1}{y^3}\int_0^y z\sigma(z)\underline{m}(z)\,\dd z\leq\ell\mathcal{G}(y)$,
 this proves \eqref{e.928.1m}.

Observe that $\int_0^{\sqrt{1+t}} z \underline{m}''(z)\,\dd z \geq \int_0^1 z^2 \sigma(z)\,\dd z>0$. Hence
\begin{equation}\label{e.928.1}
    \delta(t) \geq \frac{c_1}{\sqrt{1+t}} \quad \text{ for }t\geq 0, \qquad  \text{ for }0<c_1 < \int_0^1 z^2 \sigma(z)\,\dd z.
\end{equation}
Next, we prove $\delta(t) = o(1)$ as a consequence of Lebesgue's dominated convergence theorem:
\begin{equation}\label{e.913.7}
    |\delta(t)| =  \int_0^\infty z \sigma(z) \underline{m}(z) \frac{\mathbf{1}_{z \leq \sqrt{1+t}}}{\sqrt{1+t}}\,\dd z \lesssim \int_0^\infty \frac{z}{\sqrt{1+t}} \mathbf{1}_{z \leq \sqrt{1+t}} \, z \sigma(z) \,\dd z.
\end{equation}
The right-hand side tends to zero as $t\to\infty$. Consequently, $\int_0^t\delta(s)\dd s=o(t)$ and $\dot\tau(t)\to1$ for every $K>0$. We may therefore choose $T_1>0$, depending on $K$, sufficiently large that

\begin{equation}\label{e.928.2}
\frac{9}{10}(1+t) \leq \tau(t) \leq(1+t) \quad \text{ for }t\ge T_1.
\end{equation}

Next, we show that $e(t)$ is uniformly bounded. By its definition, it suffices to prove that $\delta/\tau$ is integrable over $[T_1,\infty)$. To this end, we use Tonelli's theorem. Setting
\[
I(R)=\int_0^R z\sigma(z)\underline{m}(z) \,\mathrm{d}z,
\]
we obtain
\begin{align}
    \int_{T_1}^\infty \frac{\delta(t)}{\tau(t)} \dd t  &\lesssim  \int_{T_1}^\infty 
    \frac{I(\sqrt{1+t})}{(1+t)^{\frac{3}{2}}} \dd t \lesssim \int_{\sqrt{1+T_1}}^\infty \frac{I(R)}{R^2} \dd R \lesssim \left|\frac{I(\sqrt{1+T_1})}{\sqrt{1+T_1}} + \int_{\sqrt{1+T_1}}^\infty \sigma(z) \underline{m}(z)\, \dd z\right|
\end{align}
The last integral is finite since $\underline{m}(z)\approx z$, and $\int_0^\infty z \sigma(z)\,\dd z <+\infty$. Thus, $\tfrac{\delta}{\tau}$ is integrable and $\lim\limits_{t\to\infty}e(t)$ exists and is negative.

Next, we claim that
\begin{equation}\label{e.913.7a}
\dot{y} \leq  \frac{3\delta(t)}{2(1+t)^{\frac12}} \quad \text{ and }\quad \frac{y(t)}{\sqrt{1+t}} = o(1).
\end{equation}
The second assertion follows from
\begin{equation}
\frac{y(t)}{\sqrt{1+t}} = \frac{|\eta|}{\sqrt{1+t}} + \frac{\delta(t)}{\underline{m}'(\sqrt{1+t})} \to 0,
\end{equation}
since $\underline{m}'\geq1$ and $\delta(t)\to0$.
For the first assertion, set $R=\sqrt{1+t}$. Then
\begin{align}
\dot{y}(t) = \frac{\sigma(R) \underline{m}(R)^2}{2R \underline{m}'(R)^2} \qquad \text{ for a.e.}~  t.
\end{align}
By convexity and $\underline{m}(0)=0$, we have $\underline{m}(R)\leq R\underline{m}'(R)$. Together with the monotonicity of $\sigma$, this yields
\begin{align}
R\dot{y}(t)&\leq  \frac{R^2}{2}\sigma(R) \leq \frac{3}{2R}\int_0^R z^2\sigma(z) \dd z \leq \frac{3}{2}\delta(t).
 \end{align}
This proves the first assertion in \eqref{e.913.7a}.

Thus \eqref{e.913.2} follows from \eqref{e.928.1}, \eqref{e.913.7}, and \eqref{e.913.7a}. Similarly, \eqref{e.913.5} follows from \eqref{e.913.2}, \eqref{e.928.2}, and \eqref{e.913.7a}, after decreasing $c_1$ if necessary.\end{proof}

\begin{definition}
For $\underline{C}>0$ and $K>0$, let $T_1$, $\tau(t)$, $y(t)$, $e(t)$ be as in \Cref{lem:1Aa}. Since $\tau(t)>1$, we can define
    \begin{equation}
\underline{\Psi}_{\underline{C},K}(t,y):= \begin{cases}
\displaystyle  \underline{C}  \frac{\underline{m}(y)}{\tau(t)^{\frac32}} \exp \left( - \frac{(y - y(t))^2}{4\tau(t)} +e(t)\right) &\text{ for } {t\geq T_1},~ y >0,\\[5pt]
    0  &\text{ for }t \geq T_1,~ y \leq 0,
\end{cases}
\end{equation}
where $\underline{m}$ is given by \Cref{prop:subcorrector} applied with ${\sigma} = 1- \g + h>0$ and $h(y):=  (1+y)e^{-y}$. 
\label{d.1}
\end{definition}

\begin{proposition}\label{prop:subsolutionshiftallbeta}
Assume that either $\s > 2$ and $\eta \in \mathbb{R}$, or $\s =2$ and $\eta \le  \frac32$. Let $\underline{\Psi}_{\underline{C},K}$ be defined as in \Cref{d.1}.   
Then there exist constants $\underline{C}^\star$ and $K_*$ such that, for any $\underline{C} \in (0, \underline{C}^\star)$ and $K \geq K_*$, the function
$\underline{\Psi} = \underline\Psi_{\underline{C},K}$ 
satisfies 
\begin{equation}\label{e.929.10}
\underline{\Psi}_t - \underline{\Psi}_{yy} + \frac{\eta}{1+t} \underline{\Psi}_y + (1- \g )\underline{\Psi} \leq - \mathcal{R}\underline{\Psi}^2 \quad \text{ for } t >T,~ y \in \mathbb{R}
\end{equation}
for some {$T \geq T_1$}, depending on $K$, where $\mathcal{R}(t,y)$ is defined in  \eqref{e.zeta.1}.
Furthermore, the auxiliary functions $\tau$, $y$ and $e$, given in \Cref{lem:1Aa}, which also depend on $K$, satisfy
\begin{equation}
\lim_{t\to\infty} e(t)\in(-\infty,0), \qquad y(t) = o(\sqrt{t}), \qquad  \tau(t)\geq 1~\text{ for }t\geq T,\qquad 
\frac{\tau(t)}{1+t} \to 1.
\end{equation}
In particular, the solution $n$ of \eqref{eq:mainshift} with initial condition \eqref{e.ini} obeys 
\begin{equation}\label{e.929.11}
n(t,s(t)) \gtrsim t^{-\tfrac32 + \tfrac{\s \eta}2} e^{- \left[\tfrac{\s^2}4-1 \right]t} \quad \text{ for }t \gg 1.    
\end{equation}
\end{proposition}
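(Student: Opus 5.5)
The plan is a direct computation in the variables of \Cref{d.1}, mirroring the proof of \Cref{prop:supersolutionshiftallbeta} but with $y$ replaced by the corrector $\underline m$. For $y\le 0$ the function vanishes and \eqref{e.929.10} is trivial. At $y=0$ we have $\underline m(0)=0$ and $\underline\Psi_y(t,0^+)>0$, so the kink has the correct sign for a subsolution in the generalized (maximum-of-subsolutions) sense. It therefore suffices to work on $y>0$. Write $z=y-y(t)$ and $G=\exp(-z^2/(4\tau)+e)$. Then
\[
\frac{\underline\Psi_t}{\underline\Psi}=-\frac{3\dot\tau}{2\tau}+\frac{z^2\dot\tau}{4\tau^2}+\frac{z\dot y}{2\tau}+\dot e ,
\]
and, using $\underline m''=(1-\g+h)\underline m$,
\[
\frac{\underline\Psi_{yy}}{\underline\Psi}=(1-\g+h)-\frac{\underline m'}{\underline m}\frac{z}{\tau}-\frac1{2\tau}+\frac{z^2}{4\tau^2}.
\]
The potential $1-\g$ therefore cancels, leaving a gain $-h$.

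The left side of \eqref{e.929.10}, divided by $\underline\Psi$, becomes
\[
-h-\frac{3\dot\tau-1}{2\tau}+(\dot\tau-1)\frac{z^2}{4\tau^2}+\frac{z\dot y}{2\tau}+\dot e+\frac{z}{\tau}\frac{\underline m'}{\underline m}+\frac{\eta}{1+t}\Big(\frac{\underline m'}{\underline m}-\frac{z}{2\tau}\Big).
\]
The key step is to handle $\frac{z\underline m'}{\tau\underline m}$. I write it as
\[
\frac{y\underline m'-\underline m}{\tau\underline m}+\frac1\tau-\frac{y(t)\underline m'}{\tau\underline m}.
\]
The $1/\tau$ term combines with $-(3\dot\tau-1)/(2\tau)$ to give $-(3/2)(\dot\tau-1)/\tau=\tfrac32 K\delta/\tau^{2}$-type terms. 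The defect is
\[
y\underline m'-\underline m=\int_0^y z\sigma\underline m\,\mathrm dz .
\]
\begin{itemize}
\item For $y\le\sqrt{1+t}$, the defect is at most $\sqrt{1+t}\,\delta\,\underline m'(\sqrt{1+t})$, and it is absorbed by the term $-y(t)\underline m'/(\tau\underline m)$. This is precisely why $y(t)$ contains $\int_0^{\sqrt{1+t}}z\sigma\underline m/\underline m'(\sqrt{1+t})$; the $|\eta|$ in $y(t)$ absorbs the $\frac{\eta}{1+t}\frac{\underline m'}{\underline m}$ drift term near $y=0$, where $\underline m'/\underline m\sim 1/y$ is large.
\item For $y\ge\sqrt{1+t}$, \eqref{e.928.1m} bounds the defect ratio by $\ell y^2\delta/\tau$. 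This is compensated by $(\dot\tau-1)z^2/(4\tau^2)=-K\delta z^2/(4\tau^2)$ once $K$ is large, using $y(t)=o(\sqrt t)$ so that $z\approx y$ there.
\end{itemize}
The remaining terms are handled as follows. The term $z\dot y/(2\tau)$ is controlled by Young's inequality, $|z\dot y|/\tau\le \epsilon K\delta z^2/\tau^2+\dot y^2/(\epsilon K\delta)$, together with $\dot y^2/\delta\le 10\delta/\tau$ from \eqref{e.913.5}. The term $\frac{\eta z}{2\tau(1+t)}$ is bounded in the same way, using $1/((1+t)^2\delta)\lesssim\delta/\tau$. All leftover errors are $O(\delta/\tau)$ and are killed by $\dot e=-\tfrac52K\delta/\tau$, once $K\ge K_*$ and $t\ge T$ are chosen. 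I expect this bookkeeping to be the main obstacle, because $\delta/\tau$ decays only like $t^{-3/2}$ at best and must dominate every error uniformly in $y$.

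For the nonlinear term, $h>0$ supplies the margin. It remains to show $\mathcal R\underline\Psi\le h$ for small $\underline C$. Near $y$ bounded, $\mathcal R\underline\Psi\lesssim \underline C\, t^{\s\eta/2-3/2}e^{-(\s^2/4-1)t}\,y\,e^{-\s y/2}$. This is at most $\underline C\, y e^{-y}\le\underline C h$, since $\s\ge2$ and either $\s>2$ (exponential decay in $t$) or $\s=2,\ \eta\le\frac32$ (polynomial factor bounded). For large $y$, the Gaussian factor and $e^{-\s y/2}\le e^{-y}$ again give domination by $h$. Hence $\mathcal R\underline\Psi^2\le \underline C h\underline\Psi\le h\underline\Psi$ for $\underline C<\underline C^\star$. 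The listed properties of $\tau,y,e$ are those of \Cref{lem:1Aa}.

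Finally, $\underline\zeta=\mathcal R\underline\Psi$ is a subsolution of \eqref{e.zetaa} in the moving frame. By positivity of $n(T,\cdot)$ and the fact that $\underline\Psi(T,\cdot)$ has Gaussian decay, we have $n(T,\cdot+s(T))\ge\underline\zeta(T,\cdot)$ after reducing $\underline C$. The comparison principle then gives $n(t,s(t)+1)\ge\underline\zeta(t,1)\asymp t^{-3/2+\s\eta/2}e^{-(\s^2/4-1)t}$. Parabolic Harnack shifts the evaluation point from $y=1$ to $y=0$, which yields \eqref{e.929.11}.
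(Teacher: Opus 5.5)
Your scheme matches the paper's in structure: the zone split at $\sqrt{1+t}$, \eqref{e.928.1m} against $-K\delta z^2/(4\tau^2)$, Young's inequality for the $\dot y$ and $\eta$ cross terms, small $\underline C$, then comparison and Harnack. However, the Zone 1 step does not go through as written. After using $\eta\tau/(1+t)\le|\eta|$, what you need for $0<y\le\sqrt{1+t}$ is
\[
\frac{1}{\underline m'(y)}\int_0^y z\sigma\underline m\,\mathrm dz\;\le\;\frac{1}{\underline m'(\sqrt{1+t})}\int_0^{\sqrt{1+t}} z\sigma\underline m\,\mathrm dz \;=\; y(t)-|\eta| .
\]
Only then is the defect cancelled by $-y(t)\underline m'(y)/(\tau\underline m(y))$. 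Your stated bound, $\int_0^y z\sigma\underline m\le\sqrt{1+t}\,\delta\,\underline m'(\sqrt{1+t})$, does not imply this: the factor $\underline m'(\sqrt{1+t})$ sits on the wrong side, and $\underline m'(y)$ is absent. Even the sharper $\int_0^y\le\int_0^{\sqrt{1+t}}$ combined with $1\le\underline m'\le\ell$ misses by a factor $\ell>1$.

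The leftover cannot be absorbed elsewhere either. Crude bounds leave a positive term controlled only by $\frac{C}{\tau y}\int_0^{\sqrt{1+t}}z\sigma\underline m$. This exceeds the reserve $K\delta/\tau=\frac{K}{\tau\sqrt{1+t}}\int_0^{\sqrt{1+t}}z\sigma\underline m$ whenever $y\ll\sqrt t/K$. Meanwhile $h$ is negligible once $y\gg\ln t$, and the Gaussian term $-K\delta z^2/(4\tau^2)$ is smaller still in that range.

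The missing idea is that $F(y):=\int_0^y z\underline m''(z)\,\mathrm dz/\underline m'(y)$ is nondecreasing. From $y\underline m'-\int_0^y z\underline m''=\underline m$ one gets $F'=\underline m\,\underline m''/(\underline m')^2\ge0$, hence $F(y)\le F(\sqrt{1+t})$ throughout Zone 1. This is exactly how the paper closes Zone 1, and it is why the normalisation by $\underline m'(\sqrt{1+t})$ in $y(t)$ works.

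Two smaller points. First, the $\tau$-terms combine to $\frac32(1-\dot\tau)/\tau=\frac32K\delta/\tau$, not $K\delta/\tau^2$. This term grows with $K$ and is of the same order as $\dot e$, so it is not one of the $K$-independent $O(\delta/\tau)$ errors. It is the coefficient $\frac52>\frac32$ in $e$ that leaves the net reserve $-K\delta/\tau$; your conclusion survives, but the bookkeeping should say so. Second, in the initial comparison, Gaussian decay of $\underline\zeta(T,\cdot)$ is not by itself enough, because $n(T,\cdot)$ also decays only like $e^{-x^2/(4T)}$. You need $\tau(T)<T$, which holds for large $T$ since $K\int_0^T\delta\to\infty$. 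That gives $\underline\zeta(T,\cdot)/n(T,\cdot+s(T))\to0$ at $+\infty$ before you reduce $\underline C$.
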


\begin{proof}[{\bf Proof of \Cref{prop:subsolutionshiftallbeta}}]
For simplicity, write $m=\underline{m}$ throughout the proof. Let $K>0$ be chosen later. By \Cref{lem:1Aa}, the function $e$ is bounded, $\tau(t)>1$, and $\tau(t)\sim t$. Since $\int_0^t\delta(s)\dd s\to+\infty$ and $y(t)^2/\tau(t)\to0$, we may choose $T\geq\max\{T_1,1\}$ sufficiently large that
\begin{align}\label{tau-cond}
 t-\tau(t) \to +\infty,\qquad  \frac9{10}(1+t)\le\tau(t)<t,\qquad
\frac{y(t)^2}{\tau(t)}\le1 \quad\text{for all }t\ge T.  
\end{align}
In particular, $\tau(t)>1$ and
$0\le y(t)\le\sqrt{1+t}$ for $t\ge T$.

Since $\sigma=1-\g+h>0$, we have $m''(y)=\sigma(y)m(y)>0$ for almost every $y>0$.

By the notion of a generalised subsolution \cite[Definition 1.1]{lam2022introduction}, it suffices to verify the differential inequality for $y>0$, where $\underline{\Psi}>0$, because $m(0)=0$. Direct differentiation gives
\begin{align}
\frac{\underline{\Psi}_t}{\underline{\Psi}} &= -\frac{3}{2} \frac{\dot \tau}{\tau}    + \dot e - \left( \frac{-2(y - y(t)) \dot y(t) \tau(t) - (y - y(t))^2 \dot \tau}{4\tau(t)^2} \right) \\
\frac{\underline{\Psi}_y}{\underline{\Psi}} &= \frac{m'}{m} - \frac{(y-y(t))}{2 \tau} \\
\frac{\underline{\Psi}_{yy}}{\underline{\Psi}} &= \left(\frac{m'}{m} - \frac{(y-y(t))}{2 \tau}\right)^2 + \left(\frac{m'}{m}\right)' - \frac{1}{2 \tau} = \left(\frac{m'}{m} - \frac{(y-y(t))}{2 \tau}\right)^2 + \frac{m''m - \vert m' \vert^2}{m^2} - \frac{1}{2 \tau}\\
&= -\frac{m'}{m} \frac{(y-y(t))}{ \tau} +\frac{(y-y(t))^2}{4 \tau^2} + \frac{m''}{m} - \frac{1}{2 \tau} 
\end{align}
Combining these identities yields
\begin{align}
&\frac{1}{\underline{\Psi}}\left[\underline{\Psi}_t-\underline{\Psi}_{yy}+\frac{\eta}{1+t}\underline{\Psi}_y+(1- \g)\underline{\Psi} \right]\notag \\
&= -\frac{3}{2} \frac{\dot \tau}{\tau}    + \dot e +  \left(\dot y(t) - \frac{\eta}{1+t}\right) \frac{y - y(t)}{2 \tau}  - (1-\dot \tau) \frac{(y - y(t))^2 }{4\tau(t)^2} \\
& \qquad \qquad \qquad + \frac{1}{2 \tau}+ \left(\frac{\eta \tau }{1+t}+ y-y(t) \right) \frac{m'}{\tau m}  -h.
\end{align}
Here we used $m''=(1-\g+h)m$.

Using the identity $ym'(y)=m(y)+\int_0^y z\,m''(z)\dd z$, we obtain
\begin{align}
&\frac{1}{\underline{\Psi}}\left[\underline{\Psi}_t-\underline{\Psi}_{yy}+\frac{\eta}{1+t}\underline{\Psi}_y+(1- \g)\underline{\Psi} \right] \\ 
&\leq \frac{3}{2} \frac{(1-\dot \tau)}{\tau}    + \dot e  + \left(\dot y(t) - \frac{\eta}{1+t}\right) \frac{y - y(t)}{2 \tau} -  (1-\dot \tau)\frac{(y - y(t))^2 }{4\tau(t)^2}\\
&\quad + \left(\frac{\eta \tau }{1+t}-y(t) + \frac{\int_0^y z \, m''(z) \dd z}{m'(y)}  \right) \frac{m'}{\tau m} -h \label{e.928m}
\end{align}

\medskip

\noindent {\bf\# Zone 1 :} $0<y \leq \sqrt{1+t}$.
\begin{align}
\frac{1}{\underline{\Psi}}\left[\underline{\Psi}_t-\underline{\Psi}_{yy}+\frac{\eta}{1+t}\underline{\Psi}_y+(1- \g)\underline{\Psi} \right]\notag 
&\leq \frac{3}{2} \frac{(1-\dot \tau)}{\tau}    + \dot e  + \left(\dot y(t) - \frac{\eta}{1+t}\right) \frac{y - y(t)}{2 \tau} \\
&\quad + \left(\frac{\eta \tau }{1+t}-y(t) + \frac{\int_0^y z \, m''(z) \dd z}{m'(y)}  \right) \frac{m'}{\tau m} -h
\end{align}
since $\dot\tau\leq1$. Moreover,
\begin{align}
   \frac{\dd }{\dd y} \left( \frac{\int_0^y z \, m''(z) \dd z}{m'(y)} \right)& = \frac{ym''(y)m'(y) - \int_0^y z \, m''(z) \dd z m''(y)}{\vert m'(y)\vert^2}\\
   &=\frac{ym'(y) - \int_0^y z \, m''(z) \dd z}{\vert m'(y)\vert^2}m''(y) = \frac{m''(y)m(y)}{\vert m'(y)\vert^2} \geq 0.
\end{align}
Since $0<\tau(t)\leq1+t$, we have $\eta\tau(t)/(1+t)\leq|\eta|$, regardless of the sign of $\eta$. By the definition of $y(t)$, the last bracket in \eqref{e.928m} is therefore nonpositive in this region. Consequently,
\begin{align}
&\frac{1}{\underline{\Psi}}\left[\underline{\Psi}_t-\underline{\Psi}_{yy}+\frac{\eta}{1+t}\underline{\Psi}_y+(1- \g)\underline{\Psi} \right] \leq \frac{3}{2} \frac{(1-\dot \tau)}{\tau}    + \dot e +  \left(\dot y(t) - \frac{\eta}{1+t}\right) \frac{y - y(t)}{2 \tau}  -h
\end{align}
By \eqref{e.913.2} and the choice of $T$,
$$\delta(t) = o(1), \qquad \delta \ge \frac{c_1}{\sqrt{1+t}}, \qquad 0 \leq \dot y(t) \le\frac{3\delta}{\sqrt{1+t}}, \qquad 
0\leq y(t)\leq \sqrt{1+t}
$$
and hence
\begin{equation}
  \left|\dot y(t)\frac{y - y(t)}{2 \tau}\right| \leq \sqrt{1+t} \frac{\dot y(t)}{2 \tau} \le  \frac{3\delta}{2\tau},
  \end{equation}

\begin{equation}
\left| \frac{\eta}{1+t}\frac{y - y(t)}{2 \tau} \right| \leq \frac{|\eta|}{2(1+t)\tau} \left(\sqrt{1+t} + y(t)\right) \leq 
{\frac{|\eta|}{c_1}\frac{\delta}{\tau}}
\end{equation}
where $c_1$ is given by \eqref{e.913.2}. It follows that
\begin{equation}\label{e.928.5a}
     \left| \left(\dot y(t) - \frac{\eta}{1+t}\right) \frac{y - y(t)}{2 \tau} \right\vert \leq {\left(\frac{3}{2}+\frac{|\eta|}{c_1}\right)} \frac{\delta}{\tau},  \text{ where the bound is  independent of }K
\end{equation}
By the definitions of $\tau$ and $e$, we have $\dot\tau=1-K\delta$ and $\dot e=-\frac52K\delta/\tau$, so
\begin{equation}\label{e.928.5b}
      \frac32 \frac{(1-\dot\tau)}{\tau}  +\dot e = - K\frac{\delta}{\tau}.
\end{equation}
Combining \eqref{e.928.5a} and \eqref{e.928.5b}, and taking $K$ sufficiently large, yields
\begin{align}
&\frac{1}{\underline{\Psi}}\left[\underline{\Psi}_t-\underline{\Psi}_{yy}+\frac{\eta}{1+t}\underline{\Psi}_y+(1- \g)\underline{\Psi} \right] \leq -h. \label{e.928.n}
\end{align}

\medskip

\noindent {\bf \# Zone 2 :} $y \geq \sqrt{1+t}$.

Using \eqref{e.928.5b} and $\eta \tau/(1+t)\leq |\eta|\leq y(t)$
in \eqref{e.928m}, we obtain 
\begin{align}
&\frac{1}{\underline{\Psi}}\left[\underline{\Psi}_t-\underline{\Psi}_{yy}+\frac{\eta}{1+t}\underline{\Psi}_y+(1- \g)\underline{\Psi} \right] + h + K\frac{\delta}{\tau} \notag \\
&\leq -  (1-\dot \tau)\frac{(y - y(t))^2 }{4\tau(t)^2} + \left(\dot y(t) - \frac{\eta}{1+t}\right) \frac{y - y(t)}{2 \tau} + \left(\frac{\int_0^y z \, m''(z) \dd z}{m'(y)}  \right) \frac{m'}{\tau m}  \\
&=   
-(1-\dot \tau)\left[ \frac{(y - y(t))^2 }{8\tau(t)^2}
+ \left(\frac{1}{\sqrt 2}\frac{y - y(t)}{2 \tau} - \frac{\dot y(t) - \frac{\eta}{1+t}}{\sqrt 2(1-\dot \tau)}\right)^2\right] +\frac{{\left(\dot y(t) - \frac{\eta}{1+t}\right)^2}}{2(1-\dot \tau)}\\
&\qquad + \frac{1}{\tau m} \int_0^y z \sigma(z){m}(z) \dd z  \qquad \qquad \hfill\text{(Completing the squares and using $m'' = \sigma m$.)} \\
&\leq    
-(1-\dot \tau)\left[ \frac{\frac{y^2}{2} + (\frac{y}{\sqrt 2} - \sqrt 2 y(t))^2 - y(t)^2 }{8\tau(t)^2}
\right]+ \frac{{\left(\dot y(t) - \frac{\eta}{1+t}\right)^2}}{2(1-\dot \tau)}  +\frac{1}{\tau m} \int_0^y z \, \sigma(z){m}(z) \dd z  \quad \\
&\leq   - \frac{(1-\dot\tau) }{16} \frac{y^2}{\tau^2} + \frac{(1-\dot\tau)}{8} \frac{y(t)^2}{\tau^2}  + \frac{1}{1-\dot\tau}\left({|\dot y(t)|^2} +  \frac{\eta^2}{(1+t)^2}\right)  + \frac{1}{\tau y} \int_0^y 
{z}\sigma(z){m}(z)\dd z, 
\end{align}
where the last inequality uses $m(y)\geq y$.
Using $1-\dot\tau=K\delta$ and \eqref{e.928.1m}, we find
\begin{align}
&\frac{1}{\underline{\Psi}}\left[\underline{\Psi}_t-\underline{\Psi}_{yy}+\frac{\eta}{1+t}\underline{\Psi}_y+(1- \g)\underline{\Psi} \right] + h \\
&\leq -{ K \frac{\delta}{\tau}} - \frac{K }{16} \frac{y^2}{\tau}\frac{\delta}{\tau} + \frac{K}{8} \frac{y(t)^2}{\tau}\frac{\delta}{\tau}  + \frac{1}{K}\frac{|\dot y|^2}{\delta} + \frac{\eta^2}{K} \frac{1}{\delta(1+t)^2}  + \ell \frac{y^2 \delta}{\tau^2 } \\
&=- \frac{y^2 \delta}{\tau^2} \left[ \frac{K}{16} -\ell \right] - 
\frac{\delta}{\tau} \left[ {K} - \frac{K}{8} \frac{y(t)^2}{\tau} - \frac{1}{K} \frac{|\dot y|^2}{\delta} \frac{\tau}{\delta} - \frac{\eta^2}{K} \frac{1}{\delta(1+t)^2}\frac{\tau}{\delta}  \right] \\
&= - \frac{y^2 \delta}{\tau^2} \left[ \frac{K}{16} -\ell \right] - 
\frac{\delta}{\tau} \left[ {K} - 
{\frac{K}{8}}
- \frac{10}{K}  - \frac{\eta^2}{K} \frac{1}{c_1}  \right] \qquad \text{ by \eqref{e.913.5} and \eqref{tau-cond}.} 
\end{align}
Since $c_1$ and $\eta$ are independent of $K$, choosing $K$ sufficiently large ensures that \eqref{e.928.n} also holds in Zone 2 for $t\geq T$.

It remains to choose $\underline{C}>0$ sufficiently small to control the nonlinear term, namely, to ensure $\mathcal{R}\underline\Psi\leq h$, where $\mathcal{R}$ is defined in \eqref{e.zeta.1}. Together with the preceding estimates, this yields \eqref{e.929.10}:
\begin{equation}
\frac{1}{\underline{\Psi}}\left[\underline{\Psi}_t-\underline{\Psi}_{yy}+\frac{\eta}{1+t}\underline{\Psi}_y+(1- \g)\underline{\Psi} \right] \leq 
-h \leq - \mathcal{R} \underline\Psi.
\end{equation}
Indeed, for $t\geq T$ and $y>0$, we have
\begin{equation}
\frac{\mathcal{R}(t,y)\underline{\Psi}(t,y)}{h(y)}
=\underline{C}\frac{m(y)}{1+y}
 \frac{(1+t)^{\frac{\s\eta}{2}}}{\tau(t)^{\frac{3}{2}}}
 \exp\left\{
 -\left(\frac{\s^2}{4}-1\right)t
 -\left(\frac{\s}{2}-1\right)y
 -\frac{(y-y(t))^2}{4\tau(t)}+e(t)
 \right\}.
\end{equation}
Using $\s\geq 2$, $m(y)\leq \ell y$, $e(t)\leq 0$ and \eqref{tau-cond}, we further have 
\begin{equation}
\frac{\mathcal{R}(t,y)\underline{\Psi}(t,y)}{h(y)}\leq \left(\frac{10}{9}\right)^{\frac{3}{2}}\ell \underline{C}(1+t)^{\frac{\s\eta}{2}-\frac{3}{2}}\exp\left[-\left(\frac{\s^2}{4}-1\right)t\right].
\end{equation}

When $\s>2$, the exponential decay dominates any polynomial growth. When $\s=2$, the bound is uniform for $\eta\leq\frac32$. Thus, after choosing $\underline{C}>0$ sufficiently small, we have $\mathcal R\underline\Psi/h\leq1$ for all $t\geq T$, proving \eqref{e.929.10}.

It remains to establish \eqref{e.929.11}. For every fixed $T>0$, the solution satisfies
\begin{equation}
n(T,x) \gtrsim e^{-\frac{(x-A)^2}{4T}} \qquad \mathrm{ for~some } \, A \in \mathbb R,~ \text{ for }x  \gg 1.
\end{equation}
The constructed function $\underline n$, defined below, satisfies an analogous Gaussian upper bound with $T$ replaced by $\tau(T)\in(0,T)$; see \eqref{tau-cond}. Consequently,
 $\frac{\underline{n}(T,x)}{n(T,x)}\to0$ as $x\to+\infty$, where 
\begin{equation}
\underline{n}(T,x)
:=\mathcal{R}(T,x-s(T))\underline{\Psi}(T,x-s(T)).
\end{equation}
After decreasing $\underline{C}$ if necessary, we obtain
$\underline{n}(T,x)\le n(T,x)$ for all $x\in\mathbb R$. The parabolic comparison principle on $[T,\infty)\times\mathbb R$ then gives
\begin{equation}
n(t,y + s(t)) \geq \mathcal{R}(t,y)\underline\Psi(t,y) \qquad \text{ in }[T,\infty)\times \mathbb{R}.\end{equation}
In particular,
\begin{equation}
n(t,s(t)+1)\gtrsim (1+t)^{-\frac{3}{2}+\frac{\s \eta}{2}}\exp\left[-\left(\frac{\s^2}{4}-1\right)t\right],\quad t\gg1.
\end{equation}
The parabolic Harnack inequality then yields \eqref{e.929.11}, completing the proof.
\end{proof}

\section{The moving habitat problem with logarithmic correction: \texorpdfstring{\\}{e} proofs of Theorems
\texorpdfstring{\ref{t.0}}{t.0},
\texorpdfstring{\ref{thm:standard}}{thm:standard}, \texorpdfstring{\ref{thm:pulling}}{thm:pulling}, \texorpdfstring{\ref{thm:cpulling}}{thm:cpulling} and \texorpdfstring{\ref{thm:no-pull}}{thm:no-pull}}\label{sec:proofshift}

In this section, we take advantage of the constructions made in all previous sections to perform the proofs of most of the results concerning the spreading estimate for the climate-shift equation.

\begin{proof}[{\bf Proof of \Cref{t.0}}] By a translation, we may assume without loss that $\inf_{y \geq 0}\g>0$. 
Set
\begin{equation}
X(t):=2t-\frac32\ln(1+t).\end{equation}
Take a solution $\hat{n}$ of the homogeneous KPP equation $\hat{n}_t - \hat{n}_{xx} = \hat{n}(1-\hat n)$ with initial data  (with $M, K \gg 1$) such that
\begin{equation}
\hat{n}_0 = M\mathbf{1}_{(-\infty, K]}, \qquad  \hat{n}_0 \geq n_0.
\end{equation}
The upper bound of level sets of $\hat n$ is well known to be within a bounded distance from $X(t)$ \cite{bramson1983convergence,hamel2013short}. 
If  $\g(x-s(t))\leq 1$  globally, then $n_t - n_{xx} \leq n(1-n)$ and $n \leq \hat n$ by comparison; the upper bound of the level set follows. Since $\s<2$, we can choose a larger $K$ so that 
\begin{equation}
\hat n(t,x) \geq \frac12 \quad \text{ for }t \geq 0,\quad x - s(t) \leq x_0.
\end{equation}
Then, for $
A\ge \max\{1,\,1+2(\|g\|_\infty-1)_+\}$, 
it is clear that 
\begin{equation}
(A\hat{n})_t - (A\hat{n})_{xx} - A\hat{n}(\g -A\hat n) \geq A \hat n\left( (A-1)\hat n + 1 - \g(x-s(t))\right) \geq 0 \quad \text{ for }t\geq 0,~ x\in \mathbb{R}.
\end{equation}
Then $n \leq A \hat n$ in $\mathbb{R}^+\times \mathbb{R}$ and again the upper bound of the level set 
\begin{equation}
\xi_\kappa(t) \leq  X(t) + O(1) \qquad \text{ for }\kappa \in (0,1),
\end{equation}
follows from that of $\hat n$. Moreover, using \cite[Proposition 2.3]{hamel2013short}, we have
\begin{equation}\label{e.109.1}
\sup_{0 \leq y \leq \sqrt t} (1+y)^{-1} e^{y}   n(t,X(t) +y) \lesssim 1 \qquad \text{ for }t\gg 1. 
\end{equation}

For the lower bound, let
\begin{equation}
\underline B(t,x)
:=\underline\zeta_0(t,x-X(t)),
\end{equation}
where
$\underline\zeta_0 
=\mathcal R \underline\Psi $
is the subsolution constructed in
\Cref{prop:subsolutionshiftallbeta} for the
homogeneous growth profile $\g\equiv1$,
with $\s=2$ and $\eta=\frac32$.
In particular,
\begin{equation}
\underline B_t-\underline B_{xx}
\le\underline B(1-\underline B) \qquad \text{ in }[T,\infty)\times \mathbb R,
\end{equation}
in the generalised sense, and
$
\operatorname{supp}\underline B(t,\cdot)
\subseteq[X(t),\infty).$

Since $\s<2$, we have
\begin{equation}
X(t)-s(t)
=(2-\s)t+
\left(\eta-\frac32\right)\ln(1+t)
\longrightarrow+\infty.
\end{equation}
For sufficiently large $t$, define
\begin{equation}
h(t):=
\sup_{x \, \in \,\operatorname{Supp}\underline B(t,\cdot)}
\left(1-\g(x-s(t))\right)_+.
\end{equation}
Then
\begin{equation}
0\le h(t)\le
\sigma^\star\left(X(t)-s(t)\right).
\end{equation}
Since $X -s$ is eventually increasing
with derivative bounded below by a positive
constant, the integrability of $\sigma^\star$
implies
\begin{equation}
\int_{T}^\infty h(t)\,\dd t<\infty.
\end{equation}

We now choose a single comparison time
$T_1\ge T$ sufficiently large that
$\underline B(t,\cdot)$ is defined for $t\in [T_1,\infty)$
and
$
1<\tau(T_1)<T_1,$ 
where $\tau$ is the diffusion-time correction
appearing in the construction of
$\underline\zeta_0$.
Define, for $t\ge T_1$,
\begin{equation}
\beta(t):=
\exp\left(-\int_{T_1}^t h(r)\,\dd r\right),
\qquad
\underline n_1(t,x) :=
\beta(t) \underline B(t,x).
\end{equation}
The integrability of $h$ gives
$
0<\beta_\infty:=
\exp\left(-\int_T^\infty h(r)\,\dd r\right)
\le\beta(t)\le1.$ Also, wherever $\underline B>0$, we have
\begin{align}
&\beta^{-1}
\left[
(\underline n_1)_t-(\underline n_1)_{xx}
-\underline n_1(\g(x-s(t))-\underline n_1)
\right]\\
&\quad=
\underline B_t-\underline B_{xx}
-\underline B\left(
\g(x-s(t))+h(t)-\beta\underline B
\right)\\
&\quad\le
\underline B
\left[
1-\g(x-s(t))-h(t)
+(\beta-1)\underline B
\right]
\le0.
\end{align}
The inequality also holds in the generalised
sense across the boundary of the support.
Hence $\underline n_1$ is a subsolution of
\eqref{eq:mainshift} on $[T_1,\infty)\times\mathbb R$.

We next compare $\underline n_1(T_1,\cdot)$
with $n(T_1,\cdot)$. Since $n_0$ is nontrivial
and the reaction coefficient is bounded,
comparison with a linear heat equation yields
constants $A\in\mathbb R$ and $c_{T_1}>0$ such that
\begin{equation}
n(T_1,x)\ge
c_{T_1}\exp\left(-\frac{(x-A)^2}{4T_1}\right)
\qquad\text{for all sufficiently large }x.
\end{equation}
On the other hand, by the explicit expression
of $\underline\zeta_0$, there exist
$C_{T_1}>0$ and $D_{T_1}\in\mathbb R$ such that
\begin{equation}
\underline n_1(T_1,x)
\le C_{T_1}(1+x)
\exp\left(
-\frac{(x-D_{T_1})^2}{4\tau(T_1)}
\right)
\qquad\text{for all sufficiently large }x.
\end{equation}
Since $\tau(T_1)<T_1$, it follows that
\begin{equation}
\lim_{x\to+\infty}
\frac{\underline n_1(T_1,x)}{n(T_1,x)}=0.
\end{equation}
Furthermore,
$\underline n_1(T_1,x)=0$ for $x\le X(T_1)$,
whereas $n(T_1,x)>0$ for every $x\in\mathbb R$.
By continuity, there exists
$\varepsilon\in(0,1]$ such that
\begin{equation}
\varepsilon\underline n_1(T_1,x)
\le n(T_1,x),
\qquad x\in\mathbb R.
\end{equation}

Since $\varepsilon\le1$ and the reaction term
is of Fisher--KPP type,
$\varepsilon\underline n_1$ remains a
subsolution. The comparison principle gives
\begin{equation}\label{eq:single-time-comparison}
n(t,x)\ge
\varepsilon\underline n_1(t,x)
=\varepsilon\beta(t)\underline B(t,x),
\qquad t\ge T_1,\quad x\in\mathbb R.
\end{equation}
In particular, for each fixed $y>0$,
the explicit expression of $\underline\zeta_0$
and the properties
\begin{equation}
\tau(t)\sim t,\qquad
y(t)=o(\sqrt t),\qquad
e(t)\longrightarrow e_\infty>-\infty
\end{equation}
imply
\begin{equation}\label{e.109.2}
    \inf_{ 1\leq y \leq \sqrt{1+t}} (1+y)^{-1}e^y n(t,X(t)+y) \gtrsim \inf_{ y \geq 1} \frac{\underline{m}(y)}{1+y} \gtrsim 1.
\end{equation}

Thus the lower barrier supplies the desired
lower localisation at the critical Bramson
position. Together with the corresponding
estimate behind the interface, this yields
\begin{equation}
\xi_\kappa(t)\ge
X(t)+O(1), \qquad \text{ for }\kappa\in(0,1).
\end{equation}

Finally, we prove the convergence to the wave profile \eqref{e.1016.2a}, which is equivalent to the existence of some $R\in (0,\infty)$ such that, for a given constant $\gamma>0$, 
\begin{equation}
    \lim_{t\to\infty} \min_{|z|\leq R} \sup_{y \geq s(t) - X(t) + \gamma \ln t} \left|n(t,y + X(t)) - \Phi^\star_1(y + z) \right| =0.
\end{equation}
Fix $\gamma>0$ and let $\{(t_j,y_j)\}$ be a sequence such that $t_j \to \infty$ and $y_j \geq - X(t_j)+ s(t_j) + \gamma  \ln t$. It suffices to show that 
\begin{itemize}
    \item[(i)] $n(t_j, y_j + X(t+ t_j)) \to 0$ if $y_j \to +\infty$;
    \item[(ii)] $n(t_j, y_j + X(t+ t_j)) \to 1$ if $y_j \to -\infty$;
    \item[(iii)] if $y_j$ is uniformly bounded, then up to passing to a subsequence, the sequence of functions
\begin{equation}\label{e.kpppp}
\tilde{n}_j(t,y) = n(t+t_j, y + X(t+ t_j))
\end{equation}
satisfies
$\tilde{n}_j (t,y) \to \Phi^\star_1(y +z)$ in $\mathscr{C}_{\rm loc}(\mathbb{R}^2)$;\footnote{This is equivalent to $\tilde{n}_j (t,y+y_j) \to \Phi^\star_1(y+y_j +z)$ in $\mathscr{C}_{\rm loc}(\mathbb{R}^2)$.}
\item[(iv)] such a phase correction $z$ is bounded uniformly independent of the sequence $(t_j,y_j)$.
\end{itemize}

Consider first the case  $y_j \to +\infty$; then define
\begin{equation}
    n_j(t,y) = n(t+t_j, y + y_j + X(t+ t_j)).
\end{equation}
Since $X(t)-\xi_\kappa(t) = O(1)$ for all $\kappa \in (0,1)$,  we may pass to a sequence so that $n_j \to w_\infty$ in $\mathscr{C}_{\rm loc}(\mathbb R^2)$ and  $\liminf_{j \to \infty} y_j - \xi_\kappa(t_j) \to \infty$ for all $\kappa \in (0,1)$. Then $w_\infty$ is a nonnegative entire solution of
\begin{equation}\label{e.kppp}
    w_t - 2 w_y - w_{yy} = w(1-w) \qquad \text{ in }\mathbb{R}^2,
\end{equation}
and such that $w_\infty(0,0) = 0$. By the strong maximum principle, it follows that $w_\infty \equiv 0.$ This shows the convergence in case $y_j \to +\infty$.

Next, assume $y_j \to -\infty$. Now
\eqref{e.109.2} implies $\inf_{t \geq 1} n(t,X(t))>0$. Combining with \eqref{e.ini} and $\inf\g >0$, it follows from the parabolic maximum principle that
\[
\inf_{t >1,~ x \leq X(t)} n(t,x) >0.
\]
Then we can again pass to a subsequence such that $n_j \to w_\infty$ in $\mathscr{C}_{\rm loc}(\mathbb R^2)$ such that $w_\infty$ satisfies \eqref{e.kppp} and $\inf_{\mathbb{R}^2}w_\infty >0$. A standard classification of entire solutions of \eqref{e.kppp} gives $w_\infty \equiv 1$.

Finally, in the case where $\{y_j\}$ is uniformly bounded, we consider $\tilde{n}$ as given in \eqref{e.kpppp}.
Passing to a sequence, we may assume $\tilde{n}_j \to \tilde{w}_\infty$.

Moreover, $\tilde{w}_\infty$ satisfies \eqref{e.kppp} with the following tail bounds thanks to \eqref{e.109.1} and \eqref{e.109.2}:
\begin{equation}
 \frac{1}{M_0}(1+y)e^{-y} \leq  \tilde{w}_\infty(t,y) \leq M_0 (1+y)e^{-y} \qquad \text{ in }\mathbb{R}\times [1,\infty),
\end{equation}
where $M_0$ is independent of the sequence $(t_j,y_j)$. We can then apply \Cref{p.109.1}, which identifies the limit as a translate of the critical travelling wave, i.e.
\[
\tilde{w}_\infty(t,y) = \Phi_1^\star(y + z) \qquad \text{ for some }z \in \mathbb{R}.
\]
Moreover, since $\Phi_1^\star(y) \sim (1+y) e^{-y}$, it follows that $\frac{1}{M_0} \leq e^{-z} \leq M_0$, so that $|z| \leq \ln M_0$ is uniformly bounded independently of the sequence $(t_j,y_j)$. This proves (iii) and (iv) and hence the convergence to profile \eqref{e.1016.2a}. In fact, the bounded phase function $\Lambda(t)$ can be defined by the unique root $\Lambda$ of
\[
n(t,X(t)+1) = \Phi^\star_1(\Lambda +1),
\]
which is well defined since $n(t,X(t)+1) \in (0,1)$ for all $t\gg 1$. 
This completes the proof.

\end{proof}

\begin{proof}[{\bf{Proof of \Cref{thm:standard}}}]
The upper bound of the level set follows by repeating the argument in the proof of Theorem \ref{t.0}. We only need to establish the lower bound.

Set
\begin{equation}
X(t):=2t-\frac32\ln(1+t),\qquad
\underline B(t,x):=\underline\zeta^\star(t,x-X(t)),
\end{equation}
where
$\underline\zeta^\star
=\mathcal R\underline\Psi$
is the subsolution constructed in
\Cref{prop:subsolutionshiftallbeta} with heterogeneous growth profile $\g$ and
with $\s=2$ and $\eta=\frac32$. (This is in contrast to what we did in the proof of \Cref{t.0}, when we took $\g_{\rm hom}\equiv 1.$ We used the additional integrability assumption for $1-\g$ to obtain the corrector function.) 
Then Proposition \ref{prop:subsolutionshiftallbeta} gives
\begin{equation}
\underline B_t-\underline B_{xx} - \underline B (\g(x-s(t)) - \underline{B}) \leq \underline B_t-\underline B_{xx}-
 \underline B\left(g(x-X(t))-\underline B\right) \leq 0,
\end{equation}
using also the facts that $g$ is non-decreasing and $\eta\ge\frac{3}{2}$.

By repeating the preceding proof, we therefore obtain the lower estimate for the level set,
\begin{equation}
\xi_\kappa(t) \geq 2t - \frac32 \ln(1+t) + O(1).
\end{equation}
The convergence to the critical homogeneous
travelling-wave profile follows by the
same compactness and Liouville argument
as in the proof of \Cref{t.0}.
\end{proof}
Note that the above level set estimate also applies for the case $s(t) = 2t -  \tfrac32 \ln(1+t)$. However, the convergence to the wave profile is much more involved, and we postpone the proof of \Cref{thm:selection} to \Cref{sec:selection}.

\begin{proof}[{\bf Proof of  \Cref{thm:pulling} and \Cref{thm:cpulling}}]

Define 
\begin{equation}
 \lambda_\star = \frac{\s}2 - \sqrt a,\qquad c_\star = \lambda_\star + \frac{1-a}{\lambda_\star},
\end{equation}
\begin{equation}
    X(t) =\begin{cases}
      c_\star t-\frac1{\lambda_\star}\left(\frac32 - \eta \sqrt{a}\right) \ln (1+t) &\text{ if }2<\s <2\sqrt{a} + 2\sqrt{1-a},~\text{ or }\s =2,~ \eta <\frac32,\\[5pt]
        c_\star t-\ell(t) &\text{ if }\s = 2\sqrt{a} + 2\sqrt{1-a},
    \end{cases}
\end{equation} 
 with $\ell$ as in \eqref{m^w-2}.
Let $n$ be the solution to \eqref{eq:mainshift}.

\medskip

\noindent{{\bf \# Step 1.}} Start with the lower estimate. By \Cref{prop:subsolutionshiftallbeta}, we have
\begin{equation}\label{e.0823.9}
    n(t,s(t)) \gtrsim t^{-\frac32 + \frac{\s \eta}{2}} e^{-\left[\frac{\s^2}{4}-1\right]t}.
\end{equation}
Since $n_t \geq n_{xx} + n(1-a - n)$ in the sectorial region $\left\{x < s(t)\right\}$, we can apply \Cref{lem:liminf-case} to conclude immediately that 
\begin{equation}\label{e.iii}
\liminf_{t\to\infty }\left[\inf_{ {-\infty} < x \leq s(t)}\left(n(t,x )  -\Phi_{1-a}^\star\left(x- X(t)-C\right)\right)\right] \geq  0.
\end{equation}

\medskip

\noindent{{\bf \# Step 2.}}  We now turn to the upper estimate. 

Our strategy is to glue the supersolution from \Cref{prop:supersolutionshiftallbeta} with the one constructed in  (i) \Cref{lem:2A} in case $\s >2$; or (ii) \Cref{prop:supersolshiftbeta2} in case $\s=2$.

\noindent By \Cref{prop:supersolutionshiftallbeta}, for $t$ large enough, and any $\bar y \geq 0$
\begin{equation}
\overline \zeta(t,y):= (1+t)^{\frac{\s \eta}{2}}e^{-\left[\frac{\s^2}{4}-1\right]t} e^{-\frac{\s}{2}(y + \bar y)}\overline{\Psi}\left(t,y + \bar y \right)
\end{equation}
satisfies 
\begin{equation}\label{e.930.2c}
\overline\zeta_t - \dot{s} \overline\zeta_y- \overline\zeta_{yy} \geq (\g(y+ \bar y) - \overline\zeta) \overline\zeta \geq (\g(y) - \overline\zeta) \overline\zeta,\quad \text{ for }~ y \geq -\bar y.
\end{equation}
Moreover, we have
\begin{equation}\label{e.911.2}
    \lim_{t\to\infty} (1+t)^{\frac32 - \frac{\s\eta}{2}} e^{\left(\frac{\s^2}{4}-1\right)t}\zeta(t,y ) = F(y):=(y+ \bar y) e^{-(y + \bar y)\frac{\s}{2}} \quad \text{ for each }y \geq -\bar y. 
\end{equation}

\medskip
{\bf \#\# Case 1: $2<\s \leq \fraksc$.} 
\medskip
Set 
\begin{equation}
q_B = -\frac32 + \sqrt a \eta, \qquad \lambda_\star = \frac{\s}2 - \sqrt a \in (1-\sqrt a, \sqrt{1-a}],\qquad \mu = \frac{\sqrt a}2.
\end{equation}
By replacing $\g$ by a translation, and fixing $x_0>1$ large, we may assume without loss that
\begin{equation}
-\mu^2 < 1-a-\g(0) \leq 1-a-\g(y) \leq 0 \quad \text{ holds for }y \leq 0, \qquad  \frac{2|q_B|}{x_0} < \frac{\sqrt a}2.
\end{equation}
Let $\overline{m}_\mu$ be given by \Cref{prop:supercorrector} with $\sigma = (1-a-\g\mathbf{1}_{(-\infty,0]}$, and let $\psi$ be the solution to the homogeneous FKPP equation $\psi_t - \psi_{xx} = \psi(1-a-\psi)$ in $[0,\infty)\times \mathbb R$ where the initial datum $\psi_0 \in \mathsf{L}^\infty(\mathbb R)$ is smooth, 
and satisfies
\begin{align}
&\psi_0(x) = \alpha x^{q_B} e^{-\lambda_\star x} \quad \quad ~~\text{ for }x \geq x_0,\qquad \liminf_{x \to -\infty}\psi(0,x) >0 \quad \text{ for }x <-x_0,\\
   & \frac{\psi'_0(x)}{\psi_0(x)} \geq -\frac{2|q_B|}{x_0} - \lambda_\star \quad \text{ for } x \in \mathbb R,
\end{align}
where $\alpha>0$ will be chosen later.
Since $\lambda_\star \in (0,\sqrt{1-a}]$, we can apply \Cref{lem:C1}(i) to obtain 
\begin{equation}\label{e.930.4a}
    \lim_{t\to\infty} \sup_{\frac{x}{t} \in [c_\star + \delta, c_\star + \frac{1}{\delta}]} \left| (x-2\lambda_\star t)^{-q_B} e^{\lambda_\star x- (\lambda_\star^2 + 1-a)t } \psi(t,x)- \alpha\right| = 0 \quad \text{ for each } \delta \in (0,1),
\end{equation}
which implies, denoting $A_\psi = (\s -2\lambda_\star)^{q_B}$, 
\begin{equation}\label{e.930.2a}
 \lim_{t\to\infty}(1+t)^{\tfrac32 - \tfrac{\s \eta}2} e^{\left[\tfrac{\s^2}4 -1\right]t} \psi(t,s(t) + y)       = \alpha A_\psi e^{-\lambda_\star y} \qquad \text{ for each }y \in \mathbb{R}.
\end{equation}
We claim that there exists $T_1>0$ such that
\begin{equation}\label{c.930.1}
   \inf_{x \leq s(t)} \left(\frac{\psi_x}{\psi} + \frac{\dot s}{2} \right)> \mu \qquad \text{ for }t \geq T_1.
\end{equation}
This follows from applying the parabolic maximum principle to $W := \frac{\psi_x}{\psi}$, which satisfies 
\begin{equation}
W_t = W_{xx} + 2WW_x -\psi W, \quad W(0,x) \geq -\frac{2|q_B|}{x_0}- \lambda_\star.
\end{equation}
By our choice of $\mu$, we may fix $T_2>0$ such that 
\begin{equation}\label{e.26.11}
\inf_{x \in \mathbb{R}} \left(\frac{\psi_x}{\psi}(t,\cdot) + \frac{\dot s(t)}2 \right)\geq - \frac{2|q_B|}{x_0}-\left(\frac{\s}{2} - \sqrt{a}\right)  + \frac{\s}2 - \frac{\eta}{2(1+t)}  > \mu \quad \text{ for }t \geq T_2.
\end{equation}
Fix $\alpha>0$ small and then $\bar{y} > \frac2{\s}$ large so that
\begin{equation}\label{e.930.2}
\frac2{\s e} > \alpha A_\psi, \qquad 
    \bar y \exp\left( -\frac{\s \bar y}2 \right) < \alpha A_\psi \bar{m}_\mu(0) e^{- \lambda_\star \bar y + \frac{2\lambda_\star}{\s}}.
\end{equation}
Define
\begin{equation}
\overline n(t,y) = \overline{m}_\mu(y)\psi(t,s(t) + y + \bar y - \frac{2}{\s}).
\end{equation}
By \Cref{lem:2A}, for any $K>1$ large enough, $K\overline n$
is a supersolution to 
\begin{equation}
(K\overline{n})_t - \dot{s} (K\overline{n})_y- (K\overline{n})_{yy} \geq (K\overline{n})(\g - K\overline{n}) \quad \text{ for }t \gg 1,~y \leq 0. \label{e.930.2b}
\end{equation}
Then we verify that for any $K>1$ large enough, the following is a supersolution:
\begin{equation}
    \hat n(t,y):=
    \begin{cases}
        K\overline\zeta(t,y),
        & y\geq0,\\[5pt]
        K\min\bigl\{
            \overline\zeta(t,y),\,\overline n(t,y)
        \bigr\},
        & -\bar y+\dfrac{2}{\s}\leq y<0,\\
        K\overline n(t,y),
        & y<-\bar y+\dfrac{2}{\s}.
    \end{cases}
\end{equation}
Notice that $K\overline\zeta$ and $K\overline n$ are classical
supersolutions of \eqref{eq:mainshift}, written in the $(t,y)$ moving frame,
on the domains
\begin{equation}
    \left\{(t,y):~ t\geq T,~y\geq-\bar y+\tfrac{2}{\s}\right\}
    \qquad\text{and}\qquad
    \{(t,y):~ t\geq T,~y\leq0\},
\end{equation}
respectively, by \eqref{e.930.2c} and \eqref{e.930.2b}. We glue these two
locally defined classical supersolutions on their overlapping region by
taking their pointwise minimum. The resulting function $\hat n$ is
continuous, by \eqref{e.911.2} and \eqref{e.930.2a}, although it need not be
smooth across the interfaces where the two branches meet. It nevertheless
defines a generalised supersolution; see, for instance,
\cite[Definition~1.1.1]{Lam2022asymptotic}. 

Both $\overline\zeta(t,y)$ and $\overline{n}(t,y)$  satisfy the differential inequalities in their respective domains (thanks to \eqref{e.930.2c} and \eqref{e.930.2b}).

Set $y_L:=-\bar y+\frac{2}{\s}<0$. By
\eqref{e.911.2} and \eqref{e.930.2a},
for every fixed $y\in[y_L,0]$,
\begin{align}
&\lim_{t\to\infty}
(1+t)^{\frac32-\frac{\s\eta}{2}}
e^{\left(\frac{\s^2}{4}-1\right)t}
\left(\overline\zeta(t,y)-\overline n(t,y)\right)\\
&\quad=
(y+\bar y)e^{-\frac{\s}{2}(y+\bar y)}
-\alpha A_\psi\overline m_\mu(y)
e^{-\lambda_\star(y+\bar y-\frac{2}{\s})}.
\end{align}
Based on our choice of parameters in \eqref{e.930.2}, at $y=0$ this limit equals
\begin{equation}
\bar y e^{-\s\frac{\bar y}{2}}
-\alpha A_\psi\overline m_\mu(0)
e^{-\lambda_\star\bar y+2\frac{\lambda_\star}{\s}}
<0,
\end{equation}
whereas at $y=y_L$ it equals
\begin{equation}
\frac{2}{\s e}
-\alpha A_\psi\overline m_\mu(y_L)>0.
\end{equation}
Consequently, there exists $T \gg 1$ such that 
\begin{equation}
\overline\zeta(t,0)<\overline n(t,0),
\qquad
\overline\zeta(t,y_L)>\overline n(t,y_L) \qquad \text{ for }t \geq T.
\end{equation}
Thus the pointwise minimum agrees with
$\overline\zeta$ near $y=0$ and with
$\overline n$ near $y=y_L$.
It follows that the piecewise function
$\hat n$ is continuous and is a generalised supersolution
on $[T,\infty)\times \mathbb R$.

Since $\tau(T)>T$,
the Gaussian tail of $\overline\zeta(T,\cdot)$
decays more slowly than that of $n(T,\cdot)$
as $y\to+\infty$. 
Together with positivity and continuity
on bounded intervals, these properties
allow us to increase $K$ so that
\begin{equation}
n(T,s(T)+y)\le\hat n(T,y),
\qquad y\in\mathbb R.
\end{equation}

Hence, we can apply comparison to deduce that $n(t,y + s(t)) \leq \hat n(t,y)$ for $t\gg 1$, and $y \in \mathbb{R}$. 
In particular, whenever $\s = \lim_{t\to\infty} \dot{s} >2$, we have
\begin{equation}\label{e.102.4}
n(t,s(t)+ y ) \lesssim (1 + y) t^{-\tfrac32 + \tfrac{\s \eta}2} e^{- \left[ \frac{\s^2}4 -1\right]t} e^{-\frac{\s}2 y} \qquad \text{ for }y\geq 0.
\end{equation}

Moreover, 
the level set of $n(t,y + s(t))$ is bounded from above by that of $\hat{n}(t,y)$ which is at a bounded distance from $\psi(t,y + s(t))$. By \Cref{lem:C1}(ii), we deduce
\begin{equation}
\xi_\kappa(t) \leq X(t) + O(1).
\end{equation}
Since the convergence to a travelling wave can be derived from the level set estimate as in the proof of \Cref{thm:standard},  we have proved  \Cref{thm:cpulling}, as well as \Cref{thm:pulling} in the case $\s \in (2,\fraksc)$.

\begin{remark}\label{e.1011.5}
    We make two observations here for later purposes. First, using  
    \Cref{lem:C1}(i) and \eqref{e.102.4}, for each $\overline{c} \in (2, \s)$, we have
\begin{equation}\label{e.102.13}
    n(t, \overline{c} t) \lesssim \psi(t,\overline{c}t +O(1) )\lesssim t^{q_B} e^{-\lambda_\star(\bar c - c_\star) t} \qquad \text{ with }\quad q_B = -\frac32 + \sqrt a \eta. 
\end{equation}
Next, based on \eqref{e.102.4} and \Cref{prop:subsolutionshiftallbeta}, for any $(\s,a)$ such that $2<\s< \fraksc$,
    \begin{equation}
n(t,s(t)) \asymp_{\s,a} (1+t)^{-\tfrac32 + \tfrac{\s \eta}2} e^{- \left[ \tfrac{\s^2}4-1\right]t} 
    \end{equation}
\end{remark}

\medskip

{\bf \#\# Case 2: $\s =2$ and $\eta < \tfrac32$.}
\medskip
\noindent In this case, we need to use the refined supersolution $\overline\Phi$ from \Cref{lem:3A} for $t > T$ large enough. Choose 
\begin{equation}
\lambda = 1-\sqrt a \quad \text{ and }\quad q= -\frac32 + \eta \sqrt a
\end{equation}
with $\delta \in (0,\hat\delta]$ where $\hat\delta$ is specified by \Cref{prop:supersolshiftbeta2} as soon as $\lambda=1-\sqrt a$ is fixed.

Fix $t_0$ sufficiently large so that
\Cref{prop:supersolshiftbeta2} applies,
and set
\begin{equation}
\overline\Psi_1(t,y)
:=\overline\Phi(t,y+s(t)),
\end{equation}
where $\overline\Phi$ is constructed
using the reference time $t_0$ and is defined in \begin{equation}
\Omega_0 = \left\{(t,x):~ t\geq t_0,~ b_{t_0}(t) \leq x\leq s(t)\right\}.\end{equation} 

Choose $\bar y>1$ and then $T>t_0$
sufficiently large that
\begin{equation}
b_{t_0}(T)-s(T)<1-\bar y.
\end{equation}
Since $s(t)-b_{t_0}(t)$ is increasing,
we have
\begin{equation}
b_{t_0}(t)-s(t)<1-\bar y,
\qquad t\ge T.
\end{equation}
Consequently, the entire overlap
$1-\bar y\le y\le0$ lies in the domain
of $\overline\Psi_1$ for all $t\ge T$.

Define
\begin{equation}
    \hat{n}(t,y) :=  \begin{cases}
    K\overline\zeta(t,y) &\text{ for } y \geq 0,\\
    K\min\left\{ \overline\zeta(t,y),~  \overline\Psi_1(t,y)\right\} &\text{ for } -\bar{y} + 1 \leq y < 0,\\
    K\overline\Psi_1(t,y) &\text{ for }b_{t_0}(t) - s(t) \leq y <-\bar{y} + 1 .
    \end{cases} 
\end{equation}

As in Case 1, the glued function is a generalised
supersolution in
\begin{equation}
\mathcal D_T
:=\{(t,x):t\ge T,\ x\ge b_{t_0}(t)\}.
\end{equation}
By increasing its multiplicative constant,
we can arrange
\begin{equation}
\hat n(T,x-s(T))\ge n(T,x),
\qquad x\ge b_{t_0}(T),
\end{equation}
and
\begin{equation}
\hat n(t,b_{t_0}(t)-s(t))
\ge1\ge n(t,b_{t_0}(t)),
\qquad t\ge T.
\end{equation}
The comparison principle in the moving
domain $\mathcal D_T$ therefore gives
\begin{equation}
n(t,x)\le\hat n(t,x-s(t)),
\qquad (t,x)\in\mathcal D_T.
\end{equation}
We deduce the remaining inequality in \eqref{e.level.3}:
\begin{equation}
\xi_\kappa(t) \leq b_{t_0}(t)  + O(1)\quad \text{ for }t \gg 1.
\end{equation}
Since $|X(t) - b_{t_0}(t)|$ is uniformly bounded, 
this proves the estimate of level sets.

\medskip

\noindent{{\bf \# Step 3.}} To prove convergence to the travelling-wave profile,  consider any subsequential limits of
\begin{equation}
w(t,y) = \lim_{t_j \to +\infty} n(t+ t_j, y + X(t + t_j)),
\end{equation}
then
\begin{equation}
w_t - w_{yy} - c_\star w_y = w(1-a -w)  \qquad \text{ in }\mathbb{R}^2
\end{equation}
By Step 1, we have
\begin{equation}\label{e.116.1a}
w(t,y) \geq \Phi^\star_{1-a}(x+C_1) 
\quad \text{ in }\mathbb{R}\times [1,\infty),
\end{equation}
whereas in Step 2, we have, in Case 1, 
\begin{align}
n(t+ t_j, y + X(t + t_j)) \leq \bar{n} (t+ t_j, y  -s(t+t_j)+ X(t+t_j)) \lesssim\psi(t+ t_j, y + X(t+t_j)),
\end{align}
for $0 \leq y \leq s(t+t_j) - X(t+t_j)$, where $s(t) - X(t) \to +\infty$.
Hence, we use the convergence of $\psi(t+t_j,y+X(t+t_j))$ to the travelling-wave profile $\Phi_{1-a}$ (due to \Cref{lem:C1}(ii)) to deduce that 
\begin{equation}\label{e.116.1b}
    w(t,y)\le \Phi^\star_{1-a}(x - C_2) \quad \text{ in }\mathbb R\times [1,\infty); 
\end{equation}
whereas in Case 2,
\begin{equation}
n(t+ t_j, y + X(t + t_j)) \leq
    \overline\Phi(t+ t_j, y + X(t + t_j)) \quad \text{ for } ~ b_{t_0}(t+t_j) - X(t+t_j) \leq y \leq s(t+t_j) - X(t+t_j).
\end{equation}
Here, using $s(t) - X(t) \to +\infty$, and in addition that $b_{t_0}(t) - X(t)$ remains bounded, we deduce that there exists $y_0 >0$ such that, uniformly in $t\in \mathbb{R}$, 
\begin{equation}\label{e.116.1c}
 w(t,y)\lesssim e^{-\lambda_\star y} \text{ in } \mathbb R\times [y_0,\infty).
\end{equation}
The convergence to the travelling-wave profile follows from \eqref{e.116.1a}, \eqref{e.116.1b} in case 1 (resp. \eqref{e.116.1a}, \eqref{e.116.1c} in case 2), upon applying \Cref{p.109.1}.
\end{proof}

\begin{proof}[{\bf Proof of \Cref{thm:no-pull}}]
We deduce \Cref{thm:no-pull} from \Cref{thm:cpulling}.
On the one hand, the upper bound for the level-set position $\xi_\kappa(t)$ follows from \Cref{thm:cpulling} used with the shift $t \mapsto \fraksc t-\eta_c\ln(1+t)$, for some $\eta_c<-\frac1{2\sqrt a}$. The level set in that case bounds $\xi_\kappa(t)$ from above, thanks to
\begin{equation}
g(x-s(t)) <g(x-\fraksc t+\eta_c\ln(1+t) + C)  \quad \text{ for all }t\geq 0,~ x \in \mathbb R,
\end{equation}
for some $C>0$.
Hence, setting $X(t) = 2\sqrt{1-a} t - \frac{3}{2\sqrt{1-a}}\ln (1+t)$, we have
\begin{equation}
\xi_\kappa(t) \leq X(t) + O(1) \quad \text{ for }t \gg 1.
\end{equation}
On the other hand,  observe that $n_t - n_{xx} \geq n(1-a-n)$, so the level set is also bounded below by the homogeneous problem analysed in \cite{bramson1983convergence,hamel2013short}:
\begin{equation}
\xi_\kappa(t) \geq 2\sqrt{1-a}  t- \frac{3}{2\sqrt{1-a}}\ln t + O(1) \quad \text{ for }t \gg 1.
\end{equation}
The above reasoning also implies that any subsequential limits of 
\begin{equation}
w_j(t,y) = n\left(t + t_j, y + X(t+t_j)\right)
 \qquad t_j \to +\infty,
 \end{equation}
are trapped between two translates of travelling wave solutions $\Phi^\star_{1-a}(y \pm C)$. The convergence to travelling profiles then follows from \Cref{p.109.1}. 
This completes the proof.
\end{proof}

\section{Selection of a forced wave at the borderline regime:   \texorpdfstring{$\s = 2$}{s = 2} and \texorpdfstring{$\eta = \frac{3}{2}$}{eta = {3}{2}}}\label{sec:selection}

In this section, we extend aspects of some works by Hamel, Nolen, Roquejoffre and Ryzhik \cite{hamel2013short,hamel2016logarithmic}, Nolen, Roquejoffre and Ryzhik \cite{nolen2017convergence}, and Berestycki and Fang \cite{berestycki2018forced}. Unless otherwise stated, we enforce $\s =2$ and $\eta = \frac32$ throughout this section.

\begin{lemma}\label{lem:profiles}
The decaying positive bounded profiles solving  \eqref{eq:profile} are parameterised by their $A\in(0,\infty)$ in
\eqref{eq:profile-limits}. Moreover,
\begin{equation}\label{eq:W-tail}
(e^\xi U_A)'\to A,\qquad e^\xi U_A(\xi)=A\xi+o(\xi)\quad \text{ as } \xi \to+\infty,
\end{equation} 
and
\begin{equation}\label{eq:W-bounds}
\sup_{\xi\in\R}\frac{e^\xi\left(U_A(\xi)+|U_A'(\xi)|\right)}{\mathfrak{m}(\xi)}<\infty.
\end{equation}
\end{lemma}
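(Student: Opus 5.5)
The natural substitution is $W:=e^{\xi}U$, which removes the drift. If $U$ solves \eqref{eq:profile}, then
\begin{equation}
W''=(1-\g)W+e^{-\xi}W^2 .
\end{equation}
This is exactly the corrector equation for $\mathfrak{m}$ from \Cref{lem:m-correction0930}, perturbed by a nonnegative term. The proof then has three parts: existence for each $A$, the tail and bound estimates for a given profile, and uniqueness for a given $A$ (which together give the parameterisation).

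\textbf{Tail behaviour and bounds.} Let $U$ be a positive bounded solution with $U(-\infty)=1-a$ and $U(+\infty)=0$.
\begin{enumerate}
\item Since $W''\ge0$, $W$ is convex and positive. Hence $W'(+\infty)=:A'\in(-\infty,\infty]$ exists.
\item Because $U$ is bounded, $W\le Ce^{\xi}$. Integrating the equation gives $W'(\xi)=W'(0)+\int_0^\xi\left[(1-\g)W+e^{-\xi}W^2\right]$.
\item A Grönwall argument as in \Cref{prop:subcorrector}, using $y(1-\g)\in\mathsf{L}^1$ from (G2), controls the first part of the integrand. The second part, $e^{-\xi}W^2=W\,U$, requires an a priori decay of $U$ at $+\infty$.
\item For that decay, note that $U$ satisfies a linear equation with coefficient $\g-U\to1$. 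Its solutions behave like $e^{-\xi}(\alpha\xi+\beta)$ up to $o$-corrections, so $U\lesssim(1+\xi)e^{-\xi}$. Then $\int^\infty WU<\infty$.
\item Consequently $A'<\infty$, and $e^\xi U_A=A'\xi+o(\xi)$, which is \eqref{eq:W-tail}.
\item Positivity and convexity of $W$ force $A'\ge0$. If $A'=0$, then $W$ is nonincreasing and bounded below, and the integrability forces $W$ to converge to a constant $\beta>0$. I expect a contradiction from this: comparing with $\mathfrak{m}$ via the Wronskian $\mathfrak{m}W'-\mathfrak{m}'W$ should make it increase strictly. I expect this to be delicate, since with $A=0$ the Wronskian tends to $-\beta\le0$; I would try to exclude this case by a sliding argument against $U_A$ with $A>0$.
\item For \eqref{eq:W-bounds} on $\xi\to-\infty$, I use $U\to1-a$. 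Then $W\sim(1-a)e^{\xi}$, while $\mathfrak{m}\sim c_1e^{\sqrt a\xi}$ with $\sqrt a<1$, so the ratio tends to $0$.
\item For $\xi\to+\infty$ the ratio is bounded because $\mathfrak{m}\sim\xi$ and $W\sim A\xi$. The derivative bound for $U'$ follows from interior elliptic estimates.
\end{enumerate}

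\textbf{Uniqueness for given $A$.} I would use a sliding/ordering argument. Take two solutions $U_1,U_2$ with the same $A$ and set $W_i=e^{\xi}U_i$. The difference $D=W_1-W_2$ satisfies
\begin{equation}
D''=\left[(1-\g)+U_1+U_2\right]D ,
\end{equation}
which is a linear equation with a positive potential. Its boundary behaviour is $D=o(\xi)$ at $+\infty$ and $D=o(e^{\xi})$ at $-\infty$, so $D$ cannot have a positive maximum. To exclude sublinear growth I compare with $\mathfrak{m}$ through the Wronskian and $\mathfrak{m}'(+\infty)=1$. The goal is $D\equiv0$.

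\textbf{Existence for every $A>0$.} I shoot from $+\infty$: solve the Volterra equation
\begin{equation}
W(\xi)=A\xi+\int_\xi^\infty(z-\xi)\left[(1-\g)W+e^{-z}W^2\right]\dd z
\end{equation}
on $[R,\infty)$ by a contraction, then continue leftward. To show the solution stays positive and converges to $1-a$ at $-\infty$, I would build ordered sub- and supersolutions of \eqref{eq:profile}. For the subsolution I use a small multiple $\epsilon\,e^{-\xi}\mathfrak{m}$ truncated where it is positive; it is a subsolution since the nonlinear term is small. For the supersolution I use $\min\{1, K e^{-\xi}\mathfrak{m}\}$. A monotone iteration between these gives a solution, and its $A$ depends continuously and monotonically on the chosen multiple. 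Covering all $A\in(0,\infty)$ by scaling is the step I expect to be the main obstacle; I would try to obtain it by a continuity argument in the shooting parameter combined with the uniqueness above.
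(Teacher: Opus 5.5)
\textbf{Main gap: existence.} The missing piece is that every $A\in(0,\infty)$ is attained, i.e.\ surjectivity of the parameterisation. The paper does not prove this; it takes it from \cite[Theorems 1.1 and 1.4]{berestycki2018forced}. Your self-contained construction fails at three concrete points.

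First, the Volterra equation from $+\infty$ is not well defined under (G2). Since $W\approx Az$, the term $\int_\xi^\infty(z-\xi)(1-\g)W\,\dd z$ is finite only if $\int^\infty z^2(1-\g)<\infty$. Condition (G2) gives only a first moment; take for instance $1-\g(z)=z^{-2}(\ln z)^{-2}$. This is exactly why the lemma claims only $A\xi+o(\xi)$, with no constant term. Moreover, fixing the constant term to $0$ selects one arbitrary solution of a second-order ODE. The real selection happens at $-\infty$, where $1-a$ is a saddle (roots $-1\pm\sqrt{2-a}$) with a one-dimensional family of solutions converging to it. No later sub/supersolution argument can change the behaviour of a trajectory you have already fixed.

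Second, $\phi=\epsilon e^{-\xi}\mathfrak m$ is never a subsolution. Because $\mathfrak m''=(1-\g)\mathfrak m$ holds exactly, you get $\phi''+2\phi'+\phi(\g-\phi)=-\epsilon^2e^{-2\xi}\mathfrak m^2<0$. The sign of the nonlinear term decides, not its size, so $\phi$ is a supersolution. It is also positive everywhere and unbounded at $-\infty$, so ``truncated where it is positive'' does nothing. A subsolution needs a margin, as in \Cref{sec:subsolshift} with $\underline m''=(1-\g+h)\underline m$.

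Third, covering all of $(0,\infty)$ is left open. You should either cite \cite{berestycki2018forced}, or carry out a genuine one-parameter shooting from $-\infty$ and prove that $A$ depends continuously and monotonically on the parameter with range $(0,\infty)$.

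\textbf{Tail analysis and uniqueness.} Step 4 of your tail analysis is circular. Solutions of $U''+2U'+qU=0$ with $q\to1$ behave like $e^{-\xi}(\alpha\xi+\beta)$ only if $\int^\infty\xi|1-q|<\infty$. Here that means $\int^\infty\xi U<\infty$, which is what you are trying to prove; with $1-q=\gamma(\gamma-1)\xi^{-2}$ one gets $W=\xi^\gamma$.

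What $q\to1$ does give is subexponential growth of $W$, and that suffices. The paper sets $R=W'/W$, so that $R'=(1-\g+U)-R^2$. Since $1-\g+U\le\frac14$ for large $\xi$, a $\tanh$-type barrier gives $R\le\frac12$ eventually. Hence $U$ decays exponentially, its first moment is finite, and the Gr\"onwall argument of \Cref{prop:subcorrector} applies.

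The case $A'=0$ in step 6 is not delicate. Since $U$ and $U'$ are bounded, $W,W'\to0$ at $-\infty$, so $A'=\int_\R(1-\g+U)W\,\dd\xi>0$ immediately.

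Your uniqueness argument for fixed $A$ is correct, and more explicit than the paper's one-line remark. The Wronskian is not needed. On a positivity component of $D$, strict convexity together with either $D(-\infty)=D'(-\infty)=0$ or a zero at the left endpoint forces $D'\ge D'(\xi_0)>0$ to the right, contradicting $D=o(\xi)$.
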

\begin{proof}[{\bf Proof of \Cref{lem:profiles}}] This goes back to \cite{berestycki2018forced}. The existence is \cite[Theorem 1.1 and Theorem 1.4]{berestycki2018forced}. 
Note that
\begin{align}\label{e.104.1}
\left(e^\xi U_{A}\right)_{\xi \xi} &= (1 - \g + U_{A})e^\xi U_{A}  .
\end{align}
Since $U_A$ is bounded, $e^\xi U_A,  (e^\xi U_A)' \to 0$ at $-\infty$. Integrating over $\R$, one obtains 
\begin{equation}
    A := (e^\xi U_A)'(+\infty) = \int_\R (1 - \g(\xi) + U_{A}(\xi))e^\xi U_{A}(\xi) \, \dd \xi.
\end{equation}
The map $U_A \mapsto A$ is thus monotonically increasing: it provides a parametrisation.
The asymptotics can now be  derived as in {\Cref{prop:subcorrector}}, where we view $e^\xi U_A$ and $1-\g + U_A$ as playing the roles of $\underline{m}$ and $\sigma$ in $\eqref{m-eq}$, respectively. To repeat the argument therein, it remains to prove an \textit{a priori} exponential bound on $U_{A}$ (which gives finiteness of the first moment). To obtain this, just observe that 
\begin{align}
\frac{\dd}{\dd \xi}\frac{\left(e^\xi U_{A}\right)_{\xi}}{e^\xi U_{A}}& = \frac{\left(e^\xi U_{A}\right)_{\xi \xi}\left(e^\xi U_{A}\right)-\left(e^\xi U_{A}\right)_{\xi}^2}{\left(e^\xi U_{A}\right)^2} =  \frac{(1 - \g + U_{A}) \left(e^\xi U_{A}\right)^2-\left(e^\xi U_{A}\right)_{\xi}^2}{\left(e^\xi U_{A}\right)^2}\\
&= (1 - \g + U_{A}) - \left(\frac{\left(e^\xi U_{A}\right)_{\xi}}{e^\xi U_{A}}\right)^2.
\end{align}
Since, for large $\xi$, say $\xi \geq \xi_0$, $(1 - \g + U_{A})(\xi) \leq \frac14$, we deduce that $\frac{\left(e^\xi U_{A}\right)_{\xi}}{e^\xi U_{A}} \leq \frac12$ for $\xi$ even larger, since $\xi \mapsto \frac12 \tanh\left(\tanh^{-1}\left(2\frac{\left(e^\xi U_{A}\right)_{\xi}}{e^\xi U_{A}}(\xi_0)\right)+ \frac{\xi}{2}\right)$ is then an upper barrier on $[\xi_0,\infty)$ once $\frac{\left(e^\xi U_{A}\right)_{\xi}}{e^\xi U_{A}}(\xi_0) < \frac12$. 
\end{proof}

\medskip

We are ready to prove \Cref{thm:selection}.

\begin{proof}[{\bf Proof of \Cref{thm:selection}}]

\noindent{{\bf \# Step 1. The value of $A_\infty$.}} 

\medskip

Writing $\eqref{eq:mainshift}$ in the moving coordinate $y=x-s(t)$, we obtain
\begin{equation}
\zeta_t - \dot s \zeta_y - \zeta_{yy} = (\g - \zeta) \zeta \quad \text{ for }t>0,~y \in \mathbb{R}.
\end{equation}
Following the standard leading-edge transformation (see \cite{hamel2013short,hamel2016logarithmic}), 
we shall pull out an exponential factor by setting \begin{equation}
\zeta(t,y) = e^{-y}\Psi(t,y).\end{equation}
We then obtain the following equation for $\Psi$:
\begin{align}\label{eq:Psi}
\Psi_t - \Psi_{yy} + \frac32\frac{1}{1+t} (\Psi_y - \Psi) + \left(1-\g \right) \Psi + e^{-y}\Psi^2&= 0,\qquad \text{ for }t>0,~y \in \mathbb{R}.
\end{align}

Let  $\overline{\Psi}(t,y)$ be defined in Proposition~\ref{prop:supersolutionshiftallbeta} with $\s=2$ and $\eta=\frac{3}{2}$.
We next set 
\begin{equation}
{\Psi}_1(t,y):=(1+t)^{\tfrac32}\overline{\Psi}(t,y+1),\quad y\geq 0,
\end{equation}
so that ${\Psi}_1(t,0)>0$. Since $\g$ is monotone increasing,
\begin{equation}
(\Psi_1)t - (\Psi_1)_{yy} + \frac32\frac{1}{1+t}((\Psi_1)_y-\Psi_1) + \left(1-\g \right) \Psi_1\geq [\g(y+1)-\g(y)]\Psi_1\geq 0\quad y\geq 0.
\end{equation}
Thus
we can easily  see that $K{\Psi}_1$ is a supersolution of \eqref{eq:Psi} for any $K>1$.
Furthermore, observing that $1+t\leq \tau(t):=1+t+\sqrt{1+t}\leq 2(1+t)$, we see that the  exact form of  $\overline{\Psi}$ in Proposition~\ref{prop:supersolutionshiftallbeta} implies that $\inf_{t\geq 0}{\Psi}_1(t,0)>0$. Therefore, we can choose $K>0$ sufficiently large such that 
\begin{align}
K {\Psi}_1(t,0)\geq 1  \geq \Psi(t,0)\quad \text{ for } t\geq 0;\quad 
K{\Psi}_1(0,y)\geq \Psi(0,y)\quad \text{ for } y\geq 0,
\end{align}
where we also used that $\Psi(0,y)$ is compactly supported. Therefore, we can apply the comparison principle to conclude that
$\Psi\leq K{\Psi}_1$ on $[0,\infty)^2$. 

Next, choose $T_0>0$ such that 
\begin{equation}
\tau(t) \leq \tfrac32 (1+t)\quad  \text{ for }t \geq T_0 \qquad  \text{(using}~ \tau(t) \sim (1+t)).\end{equation}
These estimates and the exact form of  $\Psi_1(t,y)$ and $\tau(t) \sim (1+t)$ imply that 
there exists $C>0$ such that 
\begin{equation}
\Psi(t,y)\leq K\frac{(1+t)^{\frac{3}{2}}(y+1)}{\tau(t)^{\frac{3}{2}}}
  \exp\left[-\frac{(y+4)^2}{4\tau(t)}
                  -\frac{17}{\sqrt{\tau(t)}}\right]
  \lesssim (1+y)e^{-\frac{y^2}{6(1+t)}},
\end{equation}
for $t>T_0$ and $y>0$. 
On the other hand, since $0\leq n\leq 1$, we have
\begin{equation}
0\leq \Psi(t,y)=e^{y}n(t,s(t)+y)\leq e^{y},\qquad \text{ for }t\in \mathbb R,~ y < 0.
\end{equation}
Consequently, there is a constant $C$ independent of $t \in [T_0,\infty)$ such that
\begin{equation}\label{Psi-bound0930}
  0\le\Psi(t,y)\lesssim
  \begin{cases}
    (1+y)\exp\left(-\dfrac{y^2}{6(1+t)}\right)&\quad \text{ for } (t,y)\in [T_0,\infty) \times \mathbb{R}^+,\\
    e^y&\quad \text{ for } (t,y)\in\mathbb{R}^+ \times \mathbb{R}^-.
  \end{cases}
\end{equation}

Define
\begin{equation}\label{eq:M}
M(t)=(1+t)^{-\frac32}\int_\R \mathfrak{m}(y)\Psi(t,y)\dd y.
\end{equation}
By Lemma~\ref{lem:m-correction0930} and the bound \eqref{Psi-bound0930}, there exists $K_0>0$ such that $0\leq M(t)\leq K_0$ for all $t\geq 0$.
We claim that $M(t)$ converges. By direct computation, we obtain
\begin{equation}\label{eq:Mprime}
M'(t)=(1+t)^{-\frac32}\left[
\frac{3}{2(1+t)}\int_\R \mathfrak{m}'(y) \Psi(t,y) \dd y
-\int_\R \mathfrak{m}(y)e^{-x}\Psi(t,y)^2\dd y\right],
\end{equation}
where we have used  integration by parts and 
$\mathfrak{m}''=(1-\g)\mathfrak{m}$. 

By Lemma~\ref{lem:m-correction0930} and the bound \eqref{Psi-bound0930} again,
\begin{align}
(1+t)^{\tfrac52}M'(t)\lesssim  \int_\R \mathfrak{m}'(y) \Psi(t,y) \dd y &\lesssim 1 +\int_{0}^{\infty}(1+x)e^{-\frac{y^2}{6(1+t)}}\dd y\\ &= 1 +\int_{0}^{\infty}e^{-\frac{y^2}{6(1+t)}}\dd y + \int_{0}^{\infty}ye^{-\frac{y^2}{6(1+t)}}\dd y \\
&\lesssim 1+t.
\end{align}
Hence, $M'(t) \lesssim (1+t)^{-\tfrac32}$. To summarise,
\begin{equation}
0 \leq M(t)\leq K_0, \qquad  \int_0^\infty \max\{0,M'(t)\}\,dt <+\infty,
\end{equation}
which implies that $ A_\infty:= \frac{1}{2\sqrt\pi}\lim\limits_{t\to\infty}M(t)$ exists, and belongs to $[0,\infty)$.

It remains to prove that this limit is positive. For this, we use the subsolution from \Cref{prop:subsolutionshiftallbeta} to estimate
\begin{align}\label{eq:1M}
M(t) = (1+t)^{-\frac32} \int_{\mathbb R} \mathfrak m (y)\Psi(t,y)\,\dd y &\geq \int_0^\infty \mathfrak{m}(y)\underline\Psi(t,y)\dd y \\
&\geq \tau(t)^{-\frac32}\int_0^{+\infty} y^2\exp \left( - \frac{(y - y(t))^2}{4\tau(t)}\right) \dd y \gtrsim 1,
\end{align}
where we performed the change of variables $y={z}\sqrt{1+t}$.

\medskip

\noindent{{\bf \# Step 2. Transforming into self-similar variables.}}

\medskip

We change variables to self-similar variables (diffusive scaling) on the positive half-line. 

\begin{equation}
    \tau = \ln(1+t), \qquad z=\frac{y}{(1+t)^{\frac{1}{2}}}, \qquad \Psi_{\mathrm{SS}}(\tau,z)=e^{-\frac{\tau}{2}}\Psi(e^\tau-1,e^{\frac{\tau}{2}}z).
\end{equation}
(The time variable $\tau$ here is to be distinguished from the $\tau(t)$ which only appears in \# Step 1.)

Then \eqref{eq:Psi} becomes 
\begin{equation}\label{eq:PsiSS}
(\Psi_{\mathrm{SS}})_\tau=(\Psi_{\mathrm{SS}})_{zz}+\frac{z}2(\Psi_{\mathrm{SS}})_z+ \Psi_{\mathrm{SS}} 
-e^{\tau}(1-\g)(e^{\frac{\tau}{2}}z) \Psi_{\mathrm{SS}} -\frac{3}{2}e^{-\frac{\tau}{2}}(\Psi_{\mathrm{SS}})_z
-e^{\frac{3}{2}\tau}e^{-z e^{\frac{\tau}{2}}} \Psi_{\mathrm{SS}} ^2.
\end{equation}
By \eqref{Psi-bound0930}, there is a constant $C_0>0$  
such that
\begin{align}\label{cptness-1}
  0\leq \Psi_{\mathrm{SS}}(\tau,z) \leq C_0(e^{-\frac{\tau}{2}}+z)e^{-\frac{z^2}{6}},\qquad \tau >\ln(T_0 +1),~z\geq 0.  
\end{align}
Observe {from (G2)} that the monotonicity and moment assumptions of $1-\g$ imply that $e^{\tau}(1-\g)(e^{\frac{\tau}{2}}z)$ goes to zero locally uniformly on $\R_+^\star$:
\begin{equation}\label{cptness-2}
e^{\tau}(1-\g)(e^{\frac{\tau}{2}}z) = (e^{\frac{\tau}{2}})^2 (1-\g)(e^{\frac{\tau}{2}}z)\leq\frac8{3z^2}\int_{\frac{e^{\frac{\tau}{2}}z}{2}}^{e^{\frac{\tau}{2}}z} y(1-\g)(y)\dd y\longrightarrow_{\tau \to \infty}0.
\end{equation}
Therefore, the last three terms in \eqref{eq:PsiSS} all vanish, in the weak sense, {on compact intervals of $z$ as $\tau \to +\infty$.
}

For any $\theta \in \R$, and $\tau_n \geq - \theta$ a sequence of times going to $+\infty$, one may consider $$\Psi_{\mathrm{SS}}^{(n)}: (\theta,z) \mapsto \Psi_{\mathrm{SS}}(\theta + \tau_n,z).$$ By the sliding method (parabolic regularity), 
every sequence of time translates has a convergent subsequence $\Psi_{\mathrm{SS}}^{(n)}$ in $\mathscr{C}_{\rm loc}(\R \times \R_+^\star)$ with the limit being an entire solution to 
\begin{equation}\label{eq:PsiSSinfty}
\begin{cases}
(\Psi_{\mathrm{SS}}^{\infty})_\theta=(\Psi_{\mathrm{SS}}^{\infty})_{zz}+\frac{z}2(\Psi_{\mathrm{SS}}^{\infty})_z+\Psi_{\mathrm{SS}}^{\infty},& \qquad  \theta \in \R, z>0,      \\[5pt]
\Psi_{\mathrm{SS}}^{\infty}(\theta,0) = 0,& \qquad \theta \in \R,\\
   0\leq \Psi^{\infty}_{\mathrm{SS}}(\theta,z)\lesssim ze^{-\frac{z^2}{6}},& \qquad \theta \in \mathbb{R},~z\geq 0.
    \end{cases}
\end{equation}
where $C_0>0$ is independent of $\theta$ and the Dirichlet boundary condition is inherited 
from \eqref{cptness-1}. We will show that $\Psi^{\infty}_{\mathrm{SS}}$ is independent of $\theta$ and hence it is a constant multiple of the principal eigenfunction of  \eqref{eq:PsiSS}.

\medskip

\noindent{{\bf \# Step 3. The selection of $A_\infty$.}} 
In this step, we prove that
\begin{equation}\label{moment-Id}
    \int_{\R^+} z \Psi_{\mathrm{SS}}^{\infty}(\theta,z) \, \dd z = 2\sqrt{\pi}A_\infty, \qquad \text{for each } \theta \in \R.
\end{equation}

First, we claim
\begin{equation}\label{M-t}
M(t)=\int_0^\infty\frac{\mathfrak{m}(\sqrt{1+t} \,z)}{\sqrt{1+t}}
\Psi_{\mathrm{SS}}(\tau,z)\dd z+O(t^{-\frac32}). 
\end{equation}
To see \eqref{M-t}, we first focus on $y<0$. Since $\Psi(t,y)\leq e^{y}$ and $0<\mathfrak{m}$ is monotone increasing,
\begin{equation}
0\le (1+t)^{-\frac{3}{2}}\int_{-\infty}^0\mathfrak{ m}(y)\Psi(t,y)\dd y
\le (1+t)^{-\frac{3}{2}}\mathfrak{m}(0)\int_{-\infty}^0e^y \dd y=\mathfrak{m}(0)(1+t)^{-\frac{3}{2}}.
\end{equation}

For $y>0$, let $y=z\sqrt{1+t}$. A direct computation gives 
\begin{equation}
(1+t)^{-\frac{3}{2}}\int_0^\infty\mathfrak{m}(y)\Psi(t,y)\dd y=\int_0^\infty\frac{\mathfrak{m}(z\sqrt{1+t})}{\sqrt{1+t}}\Psi_{SS}(\ln(1+t),z)\dd z.
\end{equation}
This gives \eqref{M-t}.

Now, taking the limit in \eqref{M-t}, we need two properties of $\mathfrak{m}$ from Lemma~\ref{lem:m-correction0930}:
\begin{align}\label{key-m}
    \mathfrak{m}(y)\lesssim (1+y),\quad y\geq 0;\quad \mathfrak{m}(y)\sim y\quad \text{ as } y\to\infty.
\end{align}

Fix $z>0$. By \eqref{key-m}, we have 
\begin{align}
&\frac{\mathfrak{m}(z\sqrt{1+t})}{\sqrt{1+t}}
=z\frac{\mathfrak m(z\sqrt{1+t})}{z\sqrt{(1+t)}}
\to z,\quad \text{ as } t\to\infty,\\
&0\leq\frac{\mathfrak{m}(z\sqrt{1+t} )}{\sqrt{1+t}}\lesssim\frac{(1+z\sqrt{1+t})}{\sqrt{1+t}}=(1+t)^{-\frac{1}{2}}+z,\quad z>0
\end{align}

Together with \eqref{cptness-1}, we have 
\begin{equation}
\frac{\mathfrak{m}(z\sqrt{1+t})}{\sqrt{1+t}}\Psi_{SS}(\ln(1+t),z)\lesssim (1+z)^2e^{-\frac{z^2}{6}},\quad t\geq 0,\ z\geq0.
\end{equation}
Thus, we can use Lebesgue's dominated convergence theorem in \eqref{M-t} to derive \eqref{moment-Id}.

\medskip

\noindent{{\bf \# Step 4. Characterisation of $\Psi^\infty_{\rm SS}(\theta,z)$.}} In this step, we use the spectral decomposition argument used in the proof of \cite[Lemma~2.2]{hamel2013short} to show that $\Psi^{\infty}_{\mathrm{SS}}$ is stationary and satisfies
\begin{equation}\label{exact-PSIss}
    \Psi_{\mathrm{SS}}^{\infty}(\theta,z) = A_\infty z e^{-\frac{z^2}{4}},  \quad z \geq  0. 
\end{equation}
Since this limit is independent of the subsequence $\tau_n \to \infty$ in the definition of $\Psi^\infty_{\rm SS}$, we have
\begin{equation}\label{e.104.5}
  \lim_{\tau \to +\infty}  \Psi_{\rm SS}(\tau,z)  = A_\infty z e^{-\tfrac{z^2}4} \qquad \text{ in }\mathscr{C}([0,\infty)).
\end{equation}

We proceed to prove \eqref{exact-PSIss}. Set
\begin{equation}
w(\theta,z):=e^{\frac{z^2}{8}}\Psi^\infty_{\rm SS}(\theta,z).
\end{equation}
It follows from \eqref{eq:PsiSSinfty} that
\begin{equation}\label{w-L2}
    \sup_{\theta\in\mathbb R}
    \|w(\theta,\cdot)\|_{L^2(0,\infty)}<\infty,
\end{equation}
and that $w$ is a weak $L^2$-solution of
\begin{equation}\label{e.104.2}
    w_\theta+\mathcal Mw=0,\qquad
    w(\theta,0)=0,\qquad
    \theta\in\mathbb R,\ z>0,
    \qquad
    \mathcal M:=-\partial_{zz}+\frac{z^2}{16}-\frac34.
\end{equation}

We introduce the associated quadratic form
\begin{equation}
Q(w):=\langle \mathcal Mw,w\rangle_{L^2(0,\infty)}
=\int_0^\infty
\left(
    |w_z|^2+
    \left(\frac{z^2}{16}-\frac34\right)w^2
\right)\,\dd z,
\end{equation}
with form domain
\begin{equation}
\mathcal D(Q)
=
\left\{
w\in H_0^1(0,\infty): zw\in L^2(0,\infty)
\right\}.
\end{equation}
The operator $\mathcal M$ is self-adjoint and nonnegative, with
\begin{equation}
\ker\mathcal M=\operatorname{span}\{e_0\},
\qquad
e_0(z)=\frac{z e^{-\frac{z^2}{8}}}{\sqrt{2\sqrt{\pi}}}.
\end{equation}
The operator $\mathcal M$ is the Dirichlet half-line harmonic oscillator,
shifted by $-\frac{3}{4}$. Using the standard spectral decomposition of the harmonic
oscillator in Hermite functions (see, e.g., \cite[Theorem 8.5]{MR3243083}),
and restricting the odd eigenfunctions to $(0,\infty)$, one obtains
$
\sigma(\mathcal M)=\{0,1,2,\ldots\}.$ 

Hence if we define
\begin{equation}
w^\perp = w - \langle w, {\bf e}_0\rangle {\bf e}_0, 
\end{equation}
then
\begin{equation}\label{e.104.3}
    Q(w)\ge \|w^\perp\|_{L^2((0,\infty))}^2,
    \qquad
    \text{ for }w\in H^1_0((0,\infty)),~ zw \in L^2((0,\infty)).
\end{equation}

Multiplying \eqref{e.104.2} by $w^\perp$ and integrating by parts, we have
\begin{equation}
\frac{d}{d\theta} \|w^\perp(\theta,\cdot)\|^2_{L^2} = -2Q(w^\perp (\tau,\cdot)) \leq  -2\|w^\perp(\theta,\cdot)\|^2_{L^2}\quad \text{ for }\theta \in \mathbb{R},
\end{equation}
where we used \eqref{e.104.3}.
Combining this inequality with \eqref{w-L2}, we deduce that $w^\perp(\theta,\cdot)  \equiv 0$ for all $\theta \in \mathbb{R}$, and thus
\begin{equation}
w(\theta,z) = \widetilde C z e^{-\tfrac{z^2}8} \qquad \text{ for some constant }\widetilde C.
\end{equation}
Using \eqref{moment-Id}, we derive \eqref{exact-PSIss}.

Note that it would be possible to derive the bound directly by using the heat kernel in the half space, together with explicit \textit{a priori} bounds on it. 

\medskip

\noindent{{\bf \# Step 5. The convergence step (so-called ``tail-to-interior" as in \cite{nolen2017convergence})}}

\medskip

We now transfer the estimate at the diffusive scale to an estimate on compact sets, using a suitable supersolution. Define the difference between the solution and the wave, renormalised by $\mathfrak{m}$, in the spirit of \cite{nolen2017convergence}:
\begin{equation}
    \mathcal{E}(t,y) = \frac{
    {\Psi(t,y)}- e^yU_{A_\infty}(y)}{\mathfrak{m}(y)}
\end{equation}
The goal of this step is to show that, 
for each fixed $\epsilon$, there exists $\underline{T}(\epsilon)>0$ such that for all $T \geq \underline{T}$, 
\begin{align}\label{E-bound1}
\left\vert \mathcal{E}(T,y) \right\vert {\leq M} \left(e^{-\epsilon (1-\sqrt{a}) \sqrt{1+\tfrac{{T}}{2}}} + \Lambda_T(\epsilon)+ \epsilon^2 +e^{-\mu_T \tfrac{T}2}\right).      
\end{align}
We will then take $T\to+\infty$ and then $\epsilon \to 0$.

For this purpose, write $\Psi = \mathfrak{m}\mathcal{E} + e^yU_{A_\infty}(y)$ and substitute into \eqref{eq:Psi},
\begin{align}
\Psi_t &= \mathfrak{m}\mathcal{E}_t\\
\Psi_y &= \mathfrak{m}'\mathcal{E} +\mathfrak{m}\mathcal{E}_y + (e^yU_{A_\infty})_y\\
\Psi_{yy} &= \mathfrak{m}''\mathcal{E} +2\mathfrak{m}'\mathcal{E}_y + \mathfrak{m}\mathcal{E}_{yy}+ (e^yU_{A_\infty})_{yy}
\end{align}
which leads to 
\begin{align}
&\mathcal{E}_t - \mathcal{E}_{yy} - \left( 2 \frac{\mathfrak{m}'}{\mathfrak{m}} - \frac32\frac{1}{1+t}\right) \mathcal{E}_y  + \left( \frac32\frac{1}{1+t} \left[\frac{\mathfrak{m}'}{\mathfrak{m}}-1 \right]\right)\mathcal{E} \\
&= \frac{(e^yU_{A_\infty})_{yy}-\left(1-\g \right) ( e^yU_{A_\infty}(y))-  e^{-y}\Psi^2}{\mathfrak{m}} -\frac32\frac{1}{1+t}\frac{e^y(U_{A_\infty})_{y}}{\mathfrak{m}},\\
&= \frac{e^y U_{A_\infty}^2-  e^{-y}\Psi^2}{\mathfrak{m}} -\frac32\frac{1}{1+t}\frac{e^y(U_{A_\infty})_{y}}{\mathfrak{m}}= -\mathcal{E}( U_{A_\infty}+ e^{-y}\Psi) -\frac32\frac{1}{1+t}\frac{e^y(U_{A_\infty})_{y}}{\mathfrak{m}},
\end{align}
recalling $(e^yU_{A_\infty})_{yy}=\left(1-\g +U_{A_\infty}\right) ( e^yU_{A_\infty}(y))$ and
\begin{align}
e^y U_{A_\infty}^2-  e^{-y}\Psi^2 & = e^{-y}( (e^y U_{A_\infty})^2- \Psi^2) \\
&= e^{-y}( e^y U_{A_\infty} - \Psi)( e^y U_{A_\infty}+ \Psi) = -\mathfrak{m}\mathcal{E}( U_{A_\infty}+ e^{-y}\Psi).     
\end{align} 
In the end,  
\begin{align}
&\mathcal{E}_t - \mathcal{E}_{yy} - \left( 2 \frac{\mathfrak{m}'}{\mathfrak{m}} - \frac32\frac{1}{1+t}\right) \mathcal{E}_y  + \left( \frac32\frac{1}{1+t} \left[\frac{\mathfrak{m}'}{\mathfrak{m}}-1 \right] + U_{A_\infty}+ e^{-y}\Psi\right)\mathcal{E} = -\frac32\frac{1}{1+t}\frac{e^y(U_{A_\infty})_{y}}{\mathfrak{m}}.
\end{align}
Observe that, by the Kato inequality, 
\begin{align}
&\left(\left\vert \mathcal{E} \right\vert\right)_t - \left(\left\vert \mathcal{E} \right\vert\right)_{yy} - \left( 2 \frac{\mathfrak{m}'}{\mathfrak{m}} - \frac32\frac{1}{1+t}\right) \left(\left\vert \mathcal{E} \right\vert\right)_y  \\
&\qquad \leq -\left( \frac32\frac{1}{1+t} \left[\frac{\mathfrak{m}'}{\mathfrak{m}}-1 \right] + U_{A_\infty}+ e^{-y}\Psi\right)\left\vert \mathcal{E} \right\vert  + \frac32\frac{1}{1+t}\left\vert \frac{e^y(U_{A_\infty})_{y}}{\mathfrak{m}} \right\vert \\& \qquad \leq  \frac32\frac{1}{1+t} \left\vert \mathcal{E} \right\vert + \frac{C}{1+t}.
\end{align}
Hence,
\begin{equation}\label{e.PPP}
 \mathcal{P}\left\vert \mathcal{E} \right\vert \leq \frac{C}{1+t},\qquad \text{ where}\quad    \mathcal{P}:=\partial_t-\partial_{yy}
-\left(2\frac{\mathfrak{m}'}{\mathfrak{m}}-\frac{3}{2(1+t)}\right)\partial_y-\frac{3}{2(1+t)},
\end{equation}
Let us construct a function $\overline{\mathcal{E}}$ such that
\begin{align}
& \mathcal{P}\overline{\mathcal{E}}\geq  \frac{C}{1+t}.
\end{align}
on the parabolic domain $$\Omega_{{T}} = \left\lbrace (t,y) \in \R^+ \times \R  , \;t \in  \left[\frac{{T}}{2},{T}\right] \quad \text{and} \quad y \in [-\epsilon \sqrt{1+t},\epsilon \sqrt{1+t}] \right\rbrace.$$

Observe that, for any $t \in [\frac{{T}}{2},{T}]$,
\begin{equation}\label{e.104.8a}
    \left\vert \mathcal{E}(t,-\epsilon \sqrt{1+t}) \right\vert =  \left\vert  \frac{n(t,s(t)-\epsilon \sqrt{1+t}) - U_{A_\infty}(-\epsilon \sqrt{1+t})}{e^{\epsilon \sqrt{1+t}} \mathfrak{m}(-\epsilon \sqrt{1+t})} \right\vert \lesssim e^{-\epsilon (1-\sqrt{a}) \sqrt{1+t}}, 
\end{equation}
uniformly in $t$ and
\begin{align}
     \left\vert \mathcal{E}  (t,\epsilon \sqrt{1+t})\right\vert &=  \left\vert \frac{\Psi(t,\epsilon \sqrt{1+t}) - e^{\epsilon \sqrt{1+t}}U_{A_\infty}(\epsilon \sqrt{1+t})}{\mathfrak{m}(\epsilon \sqrt{1+t})} \right\vert\\
    &= \frac{\epsilon \sqrt{1+t}}{\mathfrak{m}(\epsilon \sqrt{1+t})}  \left\vert \frac{\Psi(t,\epsilon \sqrt{1+t}) - e^{\epsilon \sqrt{1+t}}U_{A_\infty}(\epsilon \sqrt{1+t})}{\epsilon \sqrt{1+t}}\right\vert
\end{align}
But, (using \eqref{e.104.5} in particular)
\begin{align}
&\frac{\epsilon \sqrt{1+t}}{\mathfrak{m}(\epsilon \sqrt{1+t})} = 1 + o_t(1),\\ 
&\frac{\Psi(t,\epsilon \sqrt{1+t})}{\epsilon \sqrt{1+t}} = \frac{\Psi_{\mathrm{SS}}\left({\ln(1+t)},\epsilon\right)}{\epsilon} \to A_\infty e^{-\frac{\epsilon^2}{4}}, \quad
\frac{e^{\epsilon \sqrt{1+t}}U_{A_\infty}(\epsilon \sqrt{1+t})}{\epsilon \sqrt{1+t}}  
\longrightarrow A_\infty,
\end{align}
as $t\to\infty$ and hence 
also uniformly for $t\in[\frac{T}{2},{T}]$ as $T\to\infty$.
Consequently, defining 
\begin{align}\label{Lambda-T}
\Lambda_{T}(\epsilon):=\sup_{\tau \in \left[\frac{{T}}{2},{T}\right]} |\mathcal E(\tau,\epsilon\sqrt{1+\tau})|,
\end{align}
we deduce, for each small $\epsilon>0$,
\begin{align}\label{Lambda-limit}
\Lambda_{{T}}(\epsilon)\to A_\infty(1-e^{-\frac{\epsilon^2}{4}})
\quad\text{as }T\to\infty.    
\end{align}
The above fact will not be needed immediately, but will become useful at the end of the proof.

Note that, for any $\epsilon' \in (-1,1)$,
\begin{equation}\label{e.104.8b}
    \left\vert \mathcal{E} \right\vert\left(\tfrac{{T}}{2}, \epsilon' \sqrt{1+\tfrac{{T}}{2}}\right) = \left|\frac{ \Psi\left(\frac{{T}}{2},\epsilon'\sqrt{1+\tfrac{{T}}{2}} \right) - e^{\epsilon' \sqrt{1+\tfrac{{T}}{2}}} U_{A_\infty}\left(\epsilon' \sqrt{1+\tfrac{{T}}{2}}\right)}{\mathfrak{m}\left(\epsilon' \sqrt{1+\tfrac{{T}}{2}}\right)} \right|\lesssim 1.
\end{equation}

Define 
$$\overline{\mathcal{E}}(t,y) := M\left(e^{-\epsilon (1-\sqrt{a}) \sqrt{1+\tfrac{{T}}{2}}} + 
\Lambda_{T}(\epsilon)
+ \epsilon^2 + 
e^{-\mu_{T}(t-\frac{{T}}{2})}
\right)\mathcal{E}_{\mathrm{cos}}(y) 
,$$
where $M>0$ will be determined later,
\begin{equation}
\mu_{T}:=\frac{\pi^2}{128\epsilon^2(1+T)},\quad 
\mathcal{E}_{\mathrm{cos}}(y) :=\cos\left(\frac{\pi}{8} \frac{y+\epsilon \sqrt{1+{T}}}{\epsilon \sqrt{1+{T}}}\right).
\end{equation}

This function has just a slight dependency with respect to $t$.

Note that, on $\Omega_T$, we have
\begin{equation}
0\le\frac{\pi}{8}\frac{y+\epsilon\sqrt{1+T}}{\epsilon\sqrt{1+T}}\le\frac{\pi}{4},
\quad
\frac1{\sqrt2}\le\mathcal E_{\mathrm{cos}}\le1.
\end{equation}

Observe that the bounds \eqref{e.104.8a} and \eqref{e.104.8b} are independent of $\epsilon$ and $T$. Hence, by construction, there exists $M_1$ independent of $\epsilon$ such that for $M \geq M_1$,
\begin{align}
&\left\vert\mathcal{E}(\tau,-\epsilon \sqrt{1+\tau}) \right\vert \leq \overline{\mathcal{E}}(\tau,-\epsilon \sqrt{1+\tau}),  \quad \left\vert\mathcal{E}(\tau,\epsilon \sqrt{1+\tau}) \right\vert \leq \overline{\mathcal{E}}(\tau,\epsilon \sqrt{1+\tau}), \quad \tau \in \left[\frac{{T}}{2},{T}\right],  \\
&\left\vert \mathcal{E} \right\vert\left(\tfrac{{T}}{2}, \epsilon' \sqrt{1+\tfrac{{T}}{2}}\right) \leq  \overline{\mathcal{E}}\left(\tfrac{{T}}{2}, \epsilon' \sqrt{1+\tfrac{{T}}{2}}\right), \quad  \epsilon' \in (-\epsilon,\epsilon).
\end{align}
A direct computation gives 
\begin{align}
\mathcal{P}\mathcal{E}_{\mathrm{cos}}&= (\mathcal{E}_{\mathrm{cos}})_t - (\mathcal{E}_{\mathrm{cos}})_{yy} - \left( 2 \frac{\mathfrak{m}'}{\mathfrak{m}} - \frac32\frac{1}{1+t}\right) (\mathcal{E}_{\mathrm{cos}})_y - \frac32\frac{1}{1+t} \mathcal{E}_{\mathrm{cos}}\\
&= \left(\frac{\pi^2}{64 \epsilon^2 (1+{T})} - \frac32\frac{1}{1+t} \right)\mathcal{E}_{\mathrm{cos}} + \frac{\pi}{8 \epsilon (1+{T})^{\frac12}} \left( 2 \frac{\mathfrak{m}'}{\mathfrak{m}} - \frac32\frac{1}{1+t}\right) \sin\left(\frac{\pi}{8} \frac{y+\epsilon \sqrt{1+{T}}}{\epsilon \sqrt{1+{T}}}\right),\label{Ecos-ineq}
\end{align}
where we also used $(\mathcal{E}_{\mathrm{cos}})_t=0$.

Now fix $0<\epsilon\le\epsilon_0$, where
$\epsilon_0^2\le \frac{\pi^2}{384}$.
By Lemma~\ref{lem:m-correction0930},
$\frac{\mathfrak{m}'(y)}{\mathfrak{m}(y)}
\gtrsim \frac{1}{1+y_+}$. Since $t\ge \frac{T}{2}$ and $y_+\le\epsilon_0\sqrt{1+T}$, we further have 
\begin{equation}
2\frac{\mathfrak{m}'}{\mathfrak{m}}
-\frac{3}{2(1+t)}\geq\frac{2c_2}{1+\epsilon_0\sqrt{1+T}}-\frac3{1+T}\ge0
\end{equation}
for all large $T$, uniformly in $0<\epsilon\le\epsilon_0$. 
Thus, the second term in \eqref{Ecos-ineq} is nonnegative. Furthermore,
\begin{align}
\frac{\pi^2}{64\epsilon^2(1+T)}-\frac{3}{2(1+t)}\geq
\frac{\pi^2}{64\epsilon^2(1+T)}
-\frac3{1+T}\geq
\frac{\pi^2}{128\epsilon^2(1+T)}=\mu_T.    
\end{align}

We obtain
\begin{equation}
\mathcal{P}\mathcal E_{\mathrm{cos}}
\ge\mu_T\mathcal E_{\mathrm{cos}} \quad \Longrightarrow \quad \mathcal{P} \left( e^{-\mu_T(t-\frac{T}{2})} \mathcal{E}_{\mathrm{cos}} \right) \geq 0 \quad \text{ in }\Omega_T.
\end{equation}
Due to this inequality, we obtain the desired differential inequality: 
\begin{align}
\mathcal{P}\overline{\mathcal E}
&\ge M\mu_T\left(e^{-\epsilon(1-\sqrt a)\sqrt{1+\frac{T}{2}}}
+\Lambda_T(\epsilon)+\epsilon^2\right)\mathcal {E}_{\mathrm{cos}}\geq\frac{M\pi^2}{128\sqrt2(1+T)}\geq
\frac{C}{1+t}  \qquad \text{ in }\Omega_T,
\end{align}
provided $M= \max\{M_1,M_2\}$, with   $M_2 = \frac{256\sqrt2\,C}{\pi^2}$. 

As a conclusion of that step, by the parabolic comparison principle in $\Omega_T$, and then setting $t=T$, we obtain in particular that for each small and fixed $\epsilon$, there exists $\underline{T}(\epsilon)>0$ such that for all $T \geq \underline{T}$,
\begin{align}\label{E-bound}
\left\vert \mathcal{E}({T},y) \right\vert \leq \overline{\mathcal{E}}({T},y)   {\leq M} \left(e^{-\epsilon (1-\sqrt{a}) \sqrt{1+\tfrac{{T}}{2}}} + 
    {\Lambda_T(\epsilon)}+ \epsilon^2 
    +e^{-\mu_T \tfrac{T}2}
    \right),   
\end{align}
where we used $\mathcal{E}_{\rm cos} \leq 1$ and set $t=T$. 
\medskip

\noindent{{\bf \# Step 6. Conclusion.}}
\medskip

\noindent Fix $R>0$. {For each fixed $0<\epsilon\le\epsilon_0$, the interval $[-R,R]\subset[-\epsilon\sqrt{1+T},\epsilon\sqrt{1+T}]$ for all sufficiently
large $T$.}

Taking $T\to\infty$ in \eqref{E-bound} and using \eqref{Lambda-limit}, it follows that
\begin{equation}
\limsup_{T\to\infty}\sup_{|y|\le R}|\mathcal{E}(T,y)|
\leq M\left(A_\infty(1-e^{-\frac{\epsilon^2}{4}})+\epsilon^2 + e^{-\frac{\pi^2}{256 \epsilon^2}}
\right).
\end{equation}
where we used $\mu_T T \to \frac{\pi^2}{128 \epsilon^2}$ as $T\to\infty$.
Since $M$ is independent of $\epsilon$, letting $\epsilon\searrow 0 $ gives
$\lim\limits_{t\to\infty}\sup\limits_{|y|\le R}|\mathcal{E}(t,y)|=0$.

This implies 
\begin{equation}
\sup_{|y|\leq R}
\left\vert n(t,s(t)+y)-U_{A_\infty}(y)\right\vert \leq \left(\sup_{|y|\leq R} \mathfrak{m}(y)e^{-y}\right) \cdot \sup_{|y|\leq R} \left\vert\mathcal{E}(t,y)\right\vert {\to0}, 
\end{equation}
as $t\to\infty$.
This completes the proof.
\end{proof}

\section{Construction of supersolutions to the competition system}\label{sec:supersolsystem}

In this section, we construct an explicit supersolution to \eqref{e.com} in the sense of systems.   
It is worth recalling that this system is not cooperative; however, the system satisfied by $(u,1-v)$ is. As a consequence, by a supersolution to \eqref{e.com} we mean a supersolution to the equation for $u$ and a subsolution to the equation for $v$; see \cite[Section 2.1]{girardin2019invasion} and \cite[Chapter 3.3]{lam2022introduction} for the formulation of generalised sub- and supersolutions for competition systems.

We will treat both monostable cases $0<a<1<b$ and $0<a,b<1$. We first give some definitions and lemmas that work for both cases, and then discuss separately the construction for the two cases.

\begin{definition}
    We say that $(\overline{U},\underline{V})$ is a supersolution of \eqref{e.com} presented in the shifted frame $\xi = x-ct - \rho(t)$ in the domain $\Omega$ if the pair satisfies the following differential inequalities there:
\begin{equation}\label{e.com.xi}
\begin{cases}
\mathcal{N}_1[\overline{U},\underline{V}]:= \overline{U}_t - (c+\rho'(t))\overline{U}_\xi - \overline{U}_{\xi\xi} - \overline{U}(1-\overline{U} - a\underline{V}) \geq 0, \\[8pt]
\mathcal{N}_2[\overline{U},\underline{V}]:= \underline{V}_t - (c+\rho'(t))\underline{V}_\xi - d\underline{V}_{\xi\xi} - r\underline{V}(1-b\overline{U} - \underline{V}) \leq 0. \end{cases}
\end{equation}
\end{definition}

The basic building blocks are the travelling wave solutions of a perturbed version of \eqref{e.com}, introduced in \cite{girardin2019invasion}. 
Precisely, for given $0<a<1$, $b \in (0,\infty)\setminus\{1\}$ and $\alpha>0$ sufficiently small (that is $\alpha\in(0,\frac12)$ in case $0<a<1<b$ and \eqref{e.919.7} in case $0<a,b<1$), consider
\begin{equation}\label{e.com.eta}
u_t=u_{xx}+u\left(1-\dfrac{u}{1+\alpha}-av\right) \quad \text{ and }\quad 
v_t=dv_{xx}+rv(1-v-bu). 
\end{equation}

\begin{definition}\label{d.0829.1} For $0<a<1<b$  (resp. $0<a,b<1$), 
let $c^\alpha_{\mathrm{LLW}} \in [2\sqrt{1-a}, 2]$ be the minimal speed such that a travelling wave connecting 
\begin{equation}
(1+\alpha,0)~ \text{ ~ to ~}~ (0,1) \quad \qquad \left(\text{resp. } \quad (U^\star,V^\star):=\left(\frac{1-a}{\tfrac{1}{1+\alpha}-ab},\frac{\tfrac{1}{1+\alpha}-b}{\tfrac{1}{1+\alpha}-ab}\right) ~ \text{ ~ to ~}~ (0,1) \right),
\end{equation}
which exists if and only if $c \geq c^\alpha_{\rm LLW}$ \cite[Chapter 2, Theorem 2.2]{Volpert1994travelling}. 
For each $c > c^\alpha_{\mathrm{LLW}}$, let $(U_{c}^{\alpha},V_{c}^{\alpha})$ be the monotone travelling-wave profile of the $\alpha$-perturbed system connecting $(1+\alpha,0)$ to $(0,1)$ (resp. $(U^\star,V^\star)$ to $(0,1)$) with speed $c$. 
Its profile equations are
\begin{equation}\label{e.TWeta}
\begin{cases}
(U_c^\alpha)''
+c(U_c^\alpha)'
+U_c^\alpha
\left(
1-\dfrac{U_c^\alpha}{1+\alpha}-aV_{c}^{\alpha}
\right)=0, &\text{ for }\xi \in \mathbb{R},\\[5pt]
d(V_c^\alpha)''
+c(V_c^\alpha)'
+rV_c^\alpha(1-bU_c^\alpha-V_c^\alpha)=0, &\text{ for }\xi \in \mathbb{R}.
\end{cases}
\end{equation}
\end{definition}

\begin{lemma}\label{l.0907.1}
Let $c^0_{\rm LLW}$ denote the minimal wave speed when $\alpha = 0$. For each $c > c^0_{\rm LLW}$, there exists $\alpha >0$ such that $c > c^\alpha_{\rm LLW}$, i.e.   such that $(U^\alpha_c, V^\alpha_c)$ exists and that 
\begin{equation}\label{e.0907.1}
\lim_{\xi \to +\infty}    e^{\lambda_c \xi}U^\alpha_c(\xi)= a_0>0  \quad \text{ with }\quad \lambda_c = \frac{c- \sqrt{c^2 - 4(1-a)}}2.
\end{equation}
Moreover, fixing $a_0 \geq 1$ large enough, we have
\begin{equation}\label{e.0816.1}
\left\{
\begin{aligned}
\inf_{\xi \leq 1} U_c^\alpha(\xi)
&\geq 1 + \frac{2\alpha}{3},
\qquad
\sup_{\xi \leq 1} V_c^\alpha(\xi)
\leq \frac{\alpha}{8},
\\[8pt]
\inf_{\xi \in (-\infty,1]}
\frac{(V_c^\alpha)'(\xi)}{V_c^\alpha(\xi)}
&> \mu_1
:= \frac{1}{2d}
\left[
-c + \sqrt{
c^2 + 4rd\left(
b\left(1+\tfrac{3}{4}\alpha\right)-1
\right)}
\right].
\end{aligned}
\right.
\end{equation}
in case $0<a<1<b$;
\begin{equation}\label{e.0816.1'}
\inf_{\xi \leq 1} U_c^\alpha(\xi) \geq \frac{1-a}{\tfrac{1}{1+\frac{\alpha}{2}}-ab} \quad \text{ and }\quad \sup_{\xi \leq 1} V_c^\alpha(\xi) \leq \frac{\tfrac1{1+\frac{\alpha}{2}} -b}{\tfrac1{1+\frac{\alpha}{2}} -ab} 
\end{equation}
in case $0<a,b<1$.
\end{lemma}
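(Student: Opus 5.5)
The plan is to prove the three assertions in turn: first that $c^\alpha_{\rm LLW}$ is close to $c^0_{\rm LLW}$ for small $\alpha$, then the exponential tail \eqref{e.0907.1}, and finally that a translation normalising $a_0$ yields the bounds \eqref{e.0816.1} and \eqref{e.0816.1'}.

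\emph{Step 1: choice of $\alpha$.} I would show that $\alpha\mapsto c^\alpha_{\rm LLW}$ is upper semicontinuous at $\alpha=0$, i.e. $\limsup_{\alpha\to0^+}c^\alpha_{\rm LLW}\le c^0_{\rm LLW}$. Given $c>c^0_{\rm LLW}$, this lets me pick $\alpha>0$ with $c>c^\alpha_{\rm LLW}$, so that $(U^\alpha_c,V^\alpha_c)$ exists by \Cref{d.0829.1}.
\begin{itemize}
\item First attempt: use the minimax (variational) characterisation of the minimal speed for monotone systems of the form $(u,1-v)$ in \cite[Chapter~2]{Volpert1994travelling}. It expresses $c^\alpha_{\rm LLW}$ as an infimum over admissible monotone test pairs of a supremum of quantities depending continuously on $\alpha$. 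An infimum of continuous functions is upper semicontinuous.
\item Fallback: characterise $c^\alpha_{\rm LLW}$ as the Lewis--Li--Weinberger spreading speed. Then show that spreading at speed $c'\in(c^0_{\rm LLW},c)$ persists under the small $\alpha$-perturbation, using a compactly supported subsolution built from the $\alpha=0$ problem.
\end{itemize}
I expect this step to be the main obstacle, since the lemma gives no explicit formula for $c^\alpha_{\rm LLW}$ and only semicontinuity is available.

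\emph{Step 2: tail.} For $c>c^\alpha_{\rm LLW}$, linearise the profile equations \eqref{e.TWeta} at $(0,1)$ as $\xi\to+\infty$. The $U$-equation becomes $U''+cU'+(1-a)U=0$ up to terms that decay exponentially, because $V^\alpha_c\to1$ exponentially fast. Its characteristic roots $\lambda^2-c\lambda+1-a=0$ are real and distinct because $c>2\sqrt{1-a}$.

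It is standard for non-minimal monotone waves (Kan-On, and \cite{Volpert1994travelling}) that the tail is governed by the smaller root $\lambda_c$. The argument is that if the faster rate were selected, the wave would be linearly determined at the minimal speed, contradicting $c>c^\alpha_{\rm LLW}$. A Levinson-type ODE asymptotic theorem then gives $e^{\lambda_c\xi}U^\alpha_c(\xi)\to a_0>0$; no polynomial prefactor appears because the two roots are distinct.

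\emph{Step 3: normalisation and bounds.} Translating $\xi\mapsto\xi+h$ multiplies $a_0$ by $e^{\lambda_c h}$. Fixing $a_0$ large is therefore equivalent to pushing the front far to the right, so that the window $\xi\le1$ lies in the region where $(U^\alpha_c,V^\alpha_c)$ is as close to its left limit as desired.
\begin{itemize}
\item Case $0<a<1<b$: the left limit is $(1+\alpha,0)$. Closeness gives $U^\alpha_c\ge1+\tfrac{2\alpha}{3}$ and $V^\alpha_c\le\tfrac{\alpha}{8}$ on $\xi\le1$. For the logarithmic derivative, linearise the $V$-equation at $(1+\alpha,0)$ to get $d\mu^2+c\mu+r(1-b(1+\alpha))=0$. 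Since $V^\alpha_c\to0$ as $\xi\to-\infty$, $V^\alpha_c$ must decay along the unique positive root $\mu_\alpha=\frac1{2d}\big[-c+\sqrt{c^2+4rd(b(1+\alpha)-1)}\big]$. Moreover $(V^\alpha_c)'/V^\alpha_c\to\mu_\alpha$ as $\xi\to-\infty$, which follows from a Riccati argument for $W=V'/V$ together with $U\to1+\alpha$. Since $\mu_\alpha>\mu_1$ (where $\mu_1$ uses $1+\tfrac34\alpha$ in place of $1+\alpha$), the inequality holds on $(-\infty,-L]$, and the translation places $\xi\le1$ inside that range.
\item Case $0<a,b<1$: the map $\alpha\mapsto U^\star(\alpha)=\frac{1-a}{1/(1+\alpha)-ab}$ is increasing and $\alpha\mapsto V^\star(\alpha)$ is decreasing. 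Hence the $\alpha/2$ values are strictly below $U^\star(\alpha)$ and strictly above $V^\star(\alpha)$, respectively. The monotonicity $U'<0<V'$ of the wave and convergence to $(U^\star,V^\star)$ at $-\infty$ then give \eqref{e.0816.1'} on $\xi\le1$ after the same translation.
\end{itemize}
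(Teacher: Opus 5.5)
Your Steps 2 and 3 follow the paper, whose proof is essentially three citations:
\begin{itemize}
\item continuous dependence of $\alpha\mapsto c^\alpha_{\rm LLW}$ \cite[Chapter~3, Theorem~4.2]{Volpert1994travelling};
\item the remark that the linearisation of the $U$-equation at $(0,1)$ does not depend on $\alpha$;
\item the standard asymptotics as $\xi\to-\infty$.
\end{itemize}
Your translation argument (enlarging $a_0$ by $e^{\lambda_c h}$ pushes the front to the right) correctly expands the last point. So do your use of $U'<0<V'$ to reduce the infimum and supremum to $\xi=1$, and the limit $V'/V\to\mu_\alpha>\mu_1$. One correction in Step 2: decay at the fast rate $\lambda_+(c)=\frac12\bigl(c+\sqrt{c^2-4(1-a)}\bigr)$ is the signature of a \emph{pushed} minimal wave, not of linear determinacy. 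The standard comparison argument shows that such decay forces $c$ to be the minimal speed, and that is what contradicts $c>c^\alpha_{\rm LLW}$.

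The real gap is Step 1. The perturbation only strengthens $u$, so comparison already gives $c^\alpha_{\rm LLW}\ge c^0_{\rm LLW}$. All the content is therefore the \emph{upper} bound $c^\alpha_{\rm LLW}<c$.
\begin{itemize}
\item \emph{Fallback.} It proves the wrong inequality: a compactly supported subsolution bounds the spreading speed from below. An upper bound needs a supersolution pair (supersolution for $u$, subsolution for $v$) moving at a speed below $c$. The $\alpha=0$ wave is not one, since its $u$-residual in the perturbed system is $\frac{\alpha}{1+\alpha}U^2>0$.
\item \emph{Minimax route.} It fails as stated. The left state $(1+\alpha,0)$, resp.\ $(U^\star,V^\star)$, moves with $\alpha$, so the class of test pairs is not fixed. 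Moreover, a supremum over $\xi$ of quantities continuous in $\alpha$ is in general only lower semicontinuous. Indeed, the $\alpha$-dependent part $\frac{\alpha}{1+\alpha}\rho_1^2/(-\rho_1')$ is unbounded as $\xi\to-\infty$ for any test pair with $\rho_1(-\infty)>0$.
\end{itemize}

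For $0<a<1<b$ there is a clean repair. Setting $\tilde u=u/(1+\alpha)$ turns the perturbed system into \eqref{e.com} with $b$ replaced by $b(1+\alpha)$. Take the $\alpha=0$ wave $(U,V)$ of speed $c'\in(c^0_{\rm LLW},c)$ and evaluate it at $x-(c'+C\alpha)t$, with $C:=rb\sup_{\mathbb R}UV/V'$. This constant is finite, using $V'/V\to\mu>0$ at $-\infty$ and the tail of $1-V$ at $+\infty$. The resulting pair is a supersolution pair of the rescaled system. The Lewis--Li--Weinberger identification of the minimal speed with the spreading speed then yields $c^\alpha_{\rm LLW}\le c'+C\alpha<c$ for small $\alpha$. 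In the weak--weak case, $V(-\infty)>0$ makes $V/V'$ unbounded and the coexistence state still moves with $\alpha$. There you genuinely need a continuity result such as the one the paper cites.
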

Importantly, we also normalise other travelling waves with $c > c^\alpha_{\rm LLW}$ by \eqref{e.0907.1}, so that the derivative of $(U^\alpha_c,V^\alpha_c)$ with respect to $c$ is well defined.

\begin{proof}[{\bf Proof of \Cref{l.0907.1}}]
The existence of $(U^\alpha_c, V^\alpha_c)$ follows from the continuous dependence of $\alpha \mapsto c^\alpha_{\rm LLW}$  \cite[Chapter 3, Theorem 4.2]{Volpert1994travelling}. Note that the linearisation of the first equation at $(0,1)$ is independent of $\alpha \geq 0$, and therefore \eqref{e.0907.1} holds. Finally, \eqref{e.0816.1} follows from the standard asymptotics of travelling waves as $\xi \to -\infty$.
\end{proof}

Before going to the construction, we record some properties satisfied by $(U_{c}^{\alpha},V_{c}^{\alpha})$.
\begin{lemma}\label{lem.0815.1}
The family $(U_{c}^{\alpha},V_{c}^{\alpha})$ is strictly monotone in $\xi$, i.e.
\begin{equation}\label{e.4aa}
(U_{c}^{\alpha})' <0 < (V_{c}^{\alpha})' \quad \text{ for }\xi \in \mathbb{R}.
\end{equation}
Next, define
\begin{equation}\label{P-Q-def}
P(\xi):=A(U^\alpha_c)'-\partial_cU_{c}^{\alpha}(\xi),
\qquad
Q(\xi):=A(V^\alpha_c)'-\partial_cV_{c}^{\alpha}(\xi),
\end{equation}
then there exists $A>0$ such that
\begin{equation}
P< 0 \text{ in } [-1,\infty), \qquad Q> 0
\quad {\text{ in } [-1,1].} 
\end{equation}
and
\begin{equation}\label{e.4}
P(\xi)
=
O\left((1+\xi)e^{-\lambda_c\xi}\right),
\qquad
P'(\xi)
=
O\left((1+\xi)e^{-\lambda_c\xi}\right) \quad \text{ as } \xi\to+\infty.
\end{equation}
In fact,  there exists $C_0>1$ such that
\begin{align}
&U_{c}^{\alpha} + |P|+|P'| \le
C_0(1+|\xi|)(-(U_{c}^{\alpha})') \quad \mathrm{ for } \; \xi  \geq -1,\label{e.9} \\[5pt] 
&(1-V_{c}^{\alpha}) + |Q|+|Q'| \leq C_0(1+|\xi|)(V_{c}^{\alpha})' \quad \mathrm{ for } \; \xi  \geq -1\label{e.9b}, 
\end{align}
and in particular, we have
\begin{equation}\label{e.9c}
{\mathbf{1}_{\xi \geq -1}} U_{c}^{\alpha} |P| + {
\frac{P^2}{1+\alpha}+a\left|PQ
\right\vert}
\le
C_0(-(U_{c}^{\alpha})') \qquad |Q^2+bPQ| \le
C_0(V_{c}^{\alpha})' ,
\end{equation}
for $\xi \geq -1$. 
\end{lemma}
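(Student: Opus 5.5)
The plan is to treat the three groups of assertions in Lemma~\ref{lem.0815.1} separately: strict monotonicity, then the sign and size of $P$, and finally the bounds \eqref{e.9}--\eqref{e.9c}, which reduce to asymptotics as $\xi\to\pm\infty$.

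\emph{Monotonicity.} Change variables to $(U^\alpha_c,1-V^\alpha_c)$, which satisfies a cooperative system. Monotonicity of the profile in the sense $(U_c^\alpha)'\le0\le(V_c^\alpha)'$ is part of the construction in \cite{Volpert1994travelling}. Differentiating \eqref{e.TWeta} shows that $(-(U_c^\alpha)',(V_c^\alpha)')$ is a nonnegative solution of the linearised cooperative system, whose off-diagonal coupling coefficients $aU_c^\alpha$ and $rbV_c^\alpha$ are positive. The strong maximum principle for irreducible cooperative systems then gives strict inequalities in \eqref{e.4aa}, unless one component vanishes identically, which is impossible since the profile connects distinct states.

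\emph{Sign of $P$ and $Q$.} By the normalisation \eqref{e.0907.1}, the family $c\mapsto(U_c^\alpha,V_c^\alpha)$ is differentiable. Then $(\partial_cU,\partial_cV)$ solves the linearised system with source $(-U',-V')$. I would use the leading-edge expansion $U_c^\alpha(\xi)=a_0e^{-\lambda_c\xi}+o(e^{-\lambda_c\xi})$ together with $\partial_c\lambda_c<0$ to obtain
\[
\partial_cU_c^\alpha\sim a_0(-\partial_c\lambda_c)\,\xi e^{-\lambda_c\xi}>0
\qquad\text{as }\xi\to+\infty.
\]
This term dominates $A(U_c^\alpha)'\sim -A\lambda_ca_0e^{-\lambda_c\xi}$, so $P<0$ for $\xi\ge\xi_A$. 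It also gives \eqref{e.4}, with the same reasoning for $P'$. On the compact set $[-1,\xi_1]$ we have $(U_c^\alpha)'<0$ and $(V_c^\alpha)'>0$ with positive minima. The derivatives $\partial_cU$ and $\partial_cV$ are bounded there, so taking $A$ large yields $P<0$ on $[-1,\xi_1]$ and $Q>0$ on $[-1,1]$.

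The delicate part is choosing $\xi_1$ uniformly in $A$. I would choose $\xi_1$ from the asymptotic expansion of $\partial_cU$ alone, so that $\partial_cU>0$ on $[\xi_1,\infty)$. Since $A(U_c^\alpha)'<0$ everywhere, this gives $P<0$ there for every $A>0$. This step needs justification that the expansion of $\partial_cU$ holds, which comes from differentiating the decay asymptotics. I expect it to be the main obstacle, and I would verify it via a variation-of-constants representation of $\partial_cU$ at $+\infty$, using that $V\to1$ exponentially fast.

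\emph{Bounds \eqref{e.9}--\eqref{e.9c}.} For \eqref{e.9}, as $\xi\to\infty$ we have $-(U_c^\alpha)'\sim\lambda_ca_0e^{-\lambda_c\xi}$, while $U_c^\alpha$, $|P|$ and $|P'|$ are all $O((1+\xi)e^{-\lambda_c\xi})$. On compact sets the ratio is bounded because $-(U_c^\alpha)'>0$.

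For \eqref{e.9b}, note that $1-V_c^\alpha$ and $(V_c^\alpha)'$ decay at $+\infty$ with the same exponential rate. That rate is either the $V$-equation's own mode or the rate forced by $U$, and in the resonant case a factor $(1+\xi)$ appears. Likewise $Q=A(V_c^\alpha)'-\partial_cV_c^\alpha$ decays at that rate times at most $(1+\xi)$.

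Finally, \eqref{e.9c} follows by multiplying these estimates. The products $U|P|$, $P^2$ and $|PQ|$ decay like $(1+\xi)^2e^{-2\lambda_c\xi}$ or faster, which is $O(e^{-\lambda_c\xi})=O(-(U_c^\alpha)')$. Similarly, $Q^2+bPQ=O((V_c^\alpha)')$, where we use that the decay rate of $1-V$ is at most $\lambda_c$ plus the $V$-rate.
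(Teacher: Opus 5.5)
Your proposal is correct and follows essentially the same route as the paper. Strict monotonicity comes from the cooperative structure; you upgrade the non-strict monotonicity to strict inequality by the strong maximum principle, where the paper simply cites \cite{Volpert1994travelling} and \cite{kan1995parameter}. The sign of $P$ comes from the asymptotics $\partial_c U_c^\alpha\sim -a_0\lambda_c'\,\xi e^{-\lambda_c\xi}$, obtained by differentiating \eqref{e.0907.1} in $c$, followed by taking $A$ large on compacts. The bounds \eqref{e.9}--\eqref{e.9c} then follow from the tail asymptotics and compactness.

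There are two minor differences. You take the differentiability of $c\mapsto(U_c^\alpha,V_c^\alpha)$ for granted, whereas the paper justifies it via the one-dimensional kernel of the linearised operator and the implicit function theorem. On the other hand, you make explicit that $\xi_1$ must be chosen independently of $A$, a point the paper leaves implicit.
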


\begin{proof}[{\bf Proof of \Cref{lem.0815.1}}] 
The strict spatial monotonicity follows from
\cite[Chapter 2, Theorem 2.2]{Volpert1994travelling}
and \cite{kan1995parameter}.

Upon introducing $W=1-V$, the
travelling-wave system becomes cooperative
and irreducible. Its translational mode
has components of the same strict sign.
The standard nondegeneracy theory for
monotone heteroclinic waves gives (see, e.g., \cite[Chapter~4, \S5]{Volpert1994travelling})
\begin{equation}
\ker\mathcal L_c
=\operatorname{span}
\{((U_c^\alpha)',-(V_c^\alpha)')\}
\end{equation}
in the cooperative variables.
After fixing the normalisation
\eqref{e.0907.1}, the standard
implicit-function theorem yields smooth
dependence on $c$.

Hence, $\partial_c U^\alpha_c$ is well-defined.

Differentiating \eqref{e.0907.1} with respect to $c$ gives
\begin{equation}
\partial_cU_{c}^{\alpha}(\xi)
\sim
-a_0\,\lambda_c'\,\xi e^{-\lambda_c\xi}
\qquad\text{as }\xi\to+\infty.
\end{equation}
Therefore,
\begin{equation}\label{e.P-leading}
-\partial_cU_{c}^{\alpha}(\xi)
\sim a_0\,
\lambda'_c\,\xi e^{-\lambda_c\xi}
=
-\frac{a_0\lambda_c^2}{1-a-\lambda_c^2}\,
\xi e^{-\lambda_c\xi},
\end{equation}
where
\begin{equation}
\lambda'_c
=
-\frac{\lambda_c}{c-2\lambda_c}
=
-\frac{\lambda_c^2}{1-a-\lambda_c^2}
<0.
\end{equation}
This proves $P<0$ for $\xi \gg 1$. Taking $A>0$ large enough, we can ensure that $P<0$ in $[-1,\infty)$ 
and also $Q>0$ in $[-1,1]$.

The estimates in \eqref{e.4} follow from the corresponding
travelling-wave asymptotics. Since
\begin{equation}
-(U_{c}^{\alpha})'(\xi)\sim a_0\,\lambda_c\,e^{-\lambda_c\xi}
\qquad\text{as }\xi\to+\infty,
\end{equation}
and all quantities involved are bounded on compact subsets,
\eqref{e.9}--\eqref{e.9b} follow after increasing $C_0$
if necessary.
\end{proof}
We are now ready to construct our supersolution (in the $(t,\xi)$ variables) step by step. We first treat the more delicate case $0<a<1<b$. 

\subsection{Supersolution for the case \texorpdfstring{$0 < a < 1 < b$}{0 < a < 1 < b}}

To increase transparency, we work in the Bramson frame $\xi=x-ct -\rho(t)$ and we 
split the entire construction into three steps. In \Cref{lem:U0V0}, we add an $O(t^{-1})$ correction to a travelling wave solution so it moves in the Bramson frame with nontrivial sublinear speed. 
In Lemmas \ref{lem.bar} and \ref{lem.qsmall}, we then perform two more truncations to obtain the final form of the supersolution for the competition system.

\begin{lemma}\label{lem:U0V0}
For each $c > c^0_{\rm LLW}$, there exists $\hat\delta>0$ small such that for any  $\delta \in (0,\hat\delta]$, and for any
 $\rho\in \mathscr{C}^1([0,\infty))$ satisfying
\begin{equation}\label{e.0816.2}
    |\rho'(t)| \leq \frac{\sqrt\delta}{1 +\delta  t} \quad \text{ for }t\geq 0,
\end{equation}
then there exists
a supersolution
$(\overline{U}_0,\underline V_0)$ presented in the shifted frame $\xi = x-ct - \rho(t)$ 
satisfying \eqref{e.com.xi} in
\begin{equation}
\Omega
=
\left\{
(t,\xi):
t\ge 0,\;
0\le \xi\le\delta t
\right\}.
\end{equation}
Moreover, with $\alpha>0$ as given in \Cref{l.0907.1}, the following properties are satisfied:
\begin{itemize}
    \item[{\rm(i)}] On $\left\{(t,\xi):~t>0,~0\leq \xi \leq 1\right\}$, \begin{equation}\label{e.sup.1} 
\overline{U}_0 > 1 + \frac{\alpha}{2} \quad \text{ and }\quad \underline{V}_0 < \frac{\alpha}4.
\end{equation}
\begin{equation}\label{e.0831.2}
\frac{\underline{V}_{0,\xi}}{\underline{V}_0} > \mu_1:=\frac{1}{2d}\left[-c+ \sqrt{c^2 + 4rd\left(b\left(1+\tfrac34\alpha\right)-1\right)}\right] \end{equation}

\item[{\rm(ii)}] $\overline{U}_0$ has the same spatial decay as the travelling-wave profile $U_c^\alpha$:
\begin{equation}\label{e.sup.2}
\limsup_{t\to\infty} \sup_{ \frac{\delta}{2}t \leq \xi \leq \delta t} e^{\lambda_{c} \xi}\overline{U}_0 < +\infty \quad \text{ with }\quad \lambda_c = \tfrac12 \left(c-\sqrt{c^2 - 4\left(1-a\right)}\right). 
\end{equation}

\item[{\rm(iii)}] $\overline{U}_0 >0$ and $\underline{V}_0 >0$ in $\Omega$.
\end{itemize}

\end{lemma}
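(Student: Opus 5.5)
The natural ansatz, guided by \Cref{lem:3A} in the scalar case, is to perturb the travelling wave in the speed parameter. Fix $c>c^0_{\rm LLW}$ and choose $\alpha>0$ by \Cref{l.0907.1}. With $A$ and $(P,Q)$ from \Cref{lem.0815.1}, we set
\begin{equation}
\overline U_0(t,\xi):=U^\alpha_c(\xi)+\varepsilon(t)P(\xi),\qquad
\underline V_0(t,\xi):=V^\alpha_c(\xi)+\varepsilon(t)Q(\xi),
\qquad \varepsilon(t)=\frac{k\sqrt\delta}{1+\delta t},
\end{equation}
with $k>0$ a fixed constant (say $k=5$, as in \Cref{lem:3A}). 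Formally this is the profile at speed $c-\varepsilon$, translated by $A\varepsilon$: to first order, $U^\alpha_{c-\varepsilon}(\xi+A\varepsilon)\approx U^\alpha_c+\varepsilon P$. Since $P<0$, $Q>0$, the correction lowers $\overline U_0$ and raises $\underline V_0$ in a controlled way.

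We then compute $\mathcal N_1,\mathcal N_2$. We use the profile equations \eqref{e.TWeta}, the equations obtained by differentiating \eqref{e.TWeta} in $\xi$ (for $(U',V')$) and in $c$ (for $(\partial_cU,\partial_cV)$), and the identity $U(1-\frac{U}{1+\alpha}-aV)\geq U(1-U-aV)-\frac{\alpha}{1+\alpha}U^2$ where helpful. The linear-in-$\varepsilon$ terms combine exactly because $(P,Q)$ solves the linearised equation with source $-(U',V')$ coming from the $c$-derivative. This should give
\begin{equation}
\mathcal N_1=\varepsilon'P-\rho'(U'+\varepsilon P')+\varepsilon\bigl(-(U^\alpha_c)'\bigr)+\frac{\alpha}{1+\alpha}\overline U_0^2\text{-type terms}+\varepsilon^2\Bigl(\tfrac{P^2}{1+\alpha}+aPQ\Bigr),
\end{equation}
and an analogous expression for $\mathcal N_2$ with $+\varepsilon (V^\alpha_c)'$ of the favourable sign, together with $\varepsilon^2(Q^2+bPQ)$. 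The terms are handled as follows. The main good term is $\varepsilon(-U')\gtrsim \varepsilon e^{-\lambda_c\xi}$. The error $|\varepsilon'P|$ is bounded by $\delta\varepsilon\,(1+\xi)e^{-\lambda_c\xi}/(1+\delta t)\cdot(1+\delta t)/(k\sqrt\delta)\cdot\ldots$; since $\xi\le \delta t$, we have $|\varepsilon'|(1+\xi)\le \delta\varepsilon(1+\xi)/(1+\delta t)\lesssim \delta \varepsilon$. The term $|\rho'||U'|\le \frac{\varepsilon}{k}|U'|$ is absorbed by choosing $k$ large. The term $\rho'\varepsilon P'$ is $O(\delta\varepsilon)(-U')$ by \eqref{e.9}. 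Finally, the quadratic terms are $\le C_0\varepsilon^2(-U')\le C_0k\sqrt\delta\,\varepsilon(-U')$ by \eqref{e.9c}. For $\mathcal N_2$ the same bookkeeping applies with \eqref{e.9b} and \eqref{e.9c}. In particular, we use $(1+|\xi|)\varepsilon\le k\sqrt\delta$ on $\Omega$, which makes every factor $(1+\xi)$ harmless. Taking $\hat\delta$ small then closes both inequalities on $\Omega$.

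The remaining properties follow from this structure. For (ii), $\overline U_0\le U^\alpha_c\lesssim e^{-\lambda_c\xi}$, since $P<0$ on $[-1,\infty)$. For (iii), positivity follows from $|\varepsilon P|\le C_0\varepsilon(1+\xi)(-U')\le C_0k\sqrt\delta\,(-U')\le\frac12U$, using that $-U'\lesssim U$ on $\xi\ge0$; for $\underline V_0$ it is immediate. For (i), on $0\le\xi\le1$ we use \eqref{e.0816.1} plus $\varepsilon\le k\sqrt\delta$ small to get $\overline U_0>1+\alpha/2$ and $\underline V_0<\alpha/4$. The ratio $\underline V_{0,\xi}/\underline V_0$ is an $O(\sqrt\delta)$ perturbation of $V'/V>\mu_1$ (strict, on a compact set), so \eqref{e.0831.2} persists.

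The main obstacle I expect is the exact cancellation of the $O(\varepsilon)$ terms. This requires verifying carefully that the $c$-derivative equation produces precisely the good terms $-\varepsilon U'$, $+\varepsilon V'$ with the right signs in the non-cooperative formulation (super for $u$, sub for $v$). It also requires that the $\alpha$-modified logistic term does not spoil the sign near $\xi\in[0,1]$. If the signs come out reversed, I would replace $\varepsilon$ by $-\varepsilon$ (equivalently use the wave at speed $c+\varepsilon$); the sign bookkeeping in \Cref{lem:3A} indicates the choice $P<0$ is the right one.
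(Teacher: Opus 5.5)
Your proposal is correct and follows the paper's proof essentially verbatim: the same ansatz $(U^\alpha_c+\varepsilon P,\,V^\alpha_c+\varepsilon Q)$ with $\varepsilon(t)=5\sqrt\delta/(1+\delta t)$, the same exact formulas for $\mathcal N_1,\mathcal N_2$ coming from the $c$-differentiated profile equations, and the same absorption of every error term using $(1+\xi)\varepsilon\le 5\sqrt\delta$ on $\Omega$ together with \eqref{e.9}--\eqref{e.9c}; the only difference is that the paper drops $\varepsilon'P\ge0$ instead of bounding it, while an absolute-value bound like yours is needed for $\varepsilon'Q$ anyway. There are two harmless slips. First, the relation between the two logistic terms is the identity $U(1-\frac{U}{1+\alpha}-aV)=U(1-U-aV)+\frac{\alpha}{1+\alpha}U^2$ with a plus sign, which is exactly why $\frac{\alpha}{1+\alpha}\overline U_0^2\ge0$ appears and causes no trouble on $[0,1]$. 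Second, $|\rho'\varepsilon P'|$ is $O(\sqrt\delta\,\varepsilon)(-U')$ rather than $O(\delta\varepsilon)(-U')$, which is still absorbed for small $\delta$.
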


\begin{proof}[{\bf Proof of \Cref{lem:U0V0}}]
Let $\alpha$ be as given in \Cref{l.0907.1}.
For $t\geq 0$ and $\xi \in \mathbb{R}$, define 
\begin{equation}\label{e.bar}
\overline{U}_0(t,\xi)
=
U_c^\alpha(\xi)+\frac{5\sqrt \delta }{1+\delta t}P(\xi),
\qquad
\underline V_0(t,\xi)
=
V_c^\alpha(\xi)+\frac{5\sqrt \delta }{1+\delta 
t}Q(\xi),
\end{equation}
where $(P,Q)$ is defined in \eqref{P-Q-def}.

We will later show that it forms a supersolution when restricted to $\Omega$. The motivation of this expansion comes from \cite{hamel2013short}, where it is shown that the logarithmically delayed invasion wave, when viewed in the self-similar variables \begin{equation}
(\tau,y) = \left(\ln(t_0+t),\frac{x-\xi_\kappa(t)}{\sqrt{t_0 + t}}\right),\end{equation} 
exhibits exponential convergence to an equilibrium state in the rescaled time $\tau = \ln(t_0 +t)$. This establishes that the optimal rate of convergence to the wave profile in the original time variable $t$ is of order $(t_0 + t)^{-1}$ for some large $t_0$, which is roughly equivalent to $\frac{\sqrt\delta}{1+\delta t}$ for fixed, small  $\delta$.

We first verify the assertions (i) and (ii) before verifying the differential inequality for supersolution.  To prove assertion (i), we use \eqref{e.0816.1} and choose $\delta \in (0,\hat\delta)$, with $\hat\delta$ being a small enough constant, to ensure that \eqref{e.sup.1} and \eqref{e.0831.2} hold uniformly for $t \geq 0$ and $|\xi|\leq 1$, as well as 
\begin{equation}\label{e.0815.3}
0<\sqrt\delta <  \frac{1}{55C_0 (1+r)}.
 \end{equation}
 Assertion (ii) follows from  
\eqref{e.0907.1} and \eqref{e.4}. Taking $\hat\delta>0$ smaller, we can ensure (iii) holds.

Next, we proceed to show that $(\overline U_0, \underline V_0)$ is a supersolution. 
Differentiating \eqref{e.TWeta} with respect to $c$, and using the fact
that $((U_c^\alpha)',(V_c^\alpha)')$ solves the homogeneous linearised
system, gives
\begin{equation}\label{e.Peq}
P''
+cP'
+
\left(
1-\frac{2U_c^\alpha}{1+\alpha}-aV_c^\alpha
\right)P
-aU_c^\alpha Q
=
(U_c^\alpha)',
\end{equation}
\begin{equation}\label{e.Qeq}
dQ''
+cQ'
+r(1-2V_c^\alpha-bU_c^\alpha)Q
-rbV_c^\alpha P
=
(V_c^\alpha)'.
\end{equation}
Denoting $\varepsilon(t) = \frac{5\sqrt \delta }{1+ \delta  t}$, then
\begin{equation}\label{e.ratio}
\varepsilon'(t) = -\frac{\sqrt\delta}5 \varepsilon(t)^2 \quad  \text{ and }\quad \sup_{0 \leq \xi \leq \delta t} \varepsilon(t)\cdot (1+\xi) \leq 5\sqrt\delta.
\end{equation}
A direct calculation using \eqref{e.TWeta}, \eqref{e.Peq} and $ \varepsilon'P \geq 0$ gives
\begin{align}\label{e.N1}
\mathcal{N}_1[\overline{U}_0,\underline V_0]
={}&
\frac{\alpha}{1+\alpha}\overline{U}_0^2
-\left(
 \rho'+ \varepsilon
\right)(U_c^\alpha)'
-{ \varepsilon \rho'}P'
+  \varepsilon' P
+ \varepsilon^2
\left(
\frac{P^2}{1+\alpha}+aPQ
\right)\\
\geq{}& \frac{\alpha}{1+\alpha}\overline{U}_0^2
-\left(
 \rho'+ \varepsilon
\right)(U_c^\alpha)'
-{ \varepsilon \rho'}P'
+a \varepsilon^2
PQ.
\end{align}
By \eqref{e.0816.2} and   $(U_c^\alpha)'<0$,
\begin{equation}\label{e.main}
-\left(
 \rho'+ \varepsilon
\right)(U_c^\alpha)'
\ge
\frac{4}{5}  \varepsilon(-(U_c^\alpha)').
\end{equation}

Dropping the first term on the right-hand side of \eqref{e.N1}
(which is nonnegative),  and using \eqref{e.9}--\eqref{e.9c}, we obtain from \eqref{e.N1}:
\begin{align}
\mathcal{N}_1[\overline{U}_0,\underline V_0]
&\ge 
\frac{4}{5}  \varepsilon
(-(U_c^\alpha)')
- \sqrt\delta C_0   \varepsilon(-(U_c^\alpha)') 
-  \varepsilon^2 C_0 (-(U_c^\alpha)')\\
&\ge
-(U_c^\alpha)' \varepsilon
\left[\frac45 -  C_0(\sqrt\delta + 5\sqrt\delta 
)\right].
\end{align}

Using  
\eqref{e.0815.3}, we thus have
\begin{equation}\label{used-prop7.8}
\mathcal{N}_1[\overline{U}_0,\underline V_0]
\ge
\frac{\sqrt\delta}{1+\delta t}(-(U_c^\alpha)')
\ge0,
\qquad
t\ge 0,\quad 0\le\xi\le\delta t.
\end{equation}

For the second component, \eqref{e.Qeq} gives
\begin{align}\label{e.N2}
\mathcal{N}_2[\overline{U}_0,\underline V_0]
={}&
-\left(
 \rho'+ \varepsilon
\right)(V_c^\alpha)'
- \varepsilon{ \rho'}Q'
+ \varepsilon'Q +{r \varepsilon^2}
\left(Q^2+bPQ\right).
\end{align}
Therefore, (recalling now  $(V_c^\alpha)'>0$,  \eqref{e.9b}--\eqref{e.9c})
\begin{align}
\mathcal{N}_2[\overline{U}_0,\underline V_0]
&\le
-\frac45  \varepsilon(V_c^\alpha)'
+ \frac15  \varepsilon^2 C_0(1+\xi) (V_c^\alpha)' + \sqrt\delta  \varepsilon^2 C_0(1+\xi)(V_c^\alpha)'
+ r \varepsilon^2 C_0(V_c^\alpha)'\\
&\le
{ \varepsilon}(V_c^\alpha)'
\left[-\frac4{5} + C_0( \sqrt\delta + 5\sqrt\delta + 5r \sqrt\delta)
\right].
\end{align}
Using the choice \eqref{e.0815.3}, we have
\begin{equation}\label{e.0907.3}
    \mathcal{N}_2[\overline{U}_0,\underline V_0]
\le
-\frac{\sqrt\delta}{1 +\delta t}(V_c^\alpha)' 
\le0,
\qquad \text{ for }
t\ge 0,\quad 0\le\xi\le\delta t.
\end{equation}
Hence $(\overline{U}_0,\underline V_0)$ is a supersolution of \eqref{e.com}
in $\Omega$.
\end{proof}

While $(\overline{U}_0,\underline{V}_0)$ is only defined in $t>0$, $0\leq \xi \leq \delta t$, we continue to build a supersolution in the larger domain $t>0$, $-\infty < \xi < \delta t$. Our strategy is to first truncate the component $U$ at $1+\frac{\alpha}4$ and then truncate the component $V$ to vanish for $\xi \ll 1$. The latter requires particular care and is only an issue for the case $b>1$.

\begin{lemma}\label{lem.bar}
Under the hypotheses of \Cref{lem:U0V0}, there exists $\hat \delta>0$ small such that for any $\delta \in (0,\hat\delta]$, there exists
a supersolution
$(\overline{U},\underline{V}_{1})$ of \eqref{e.com.xi} in $\{(t,\xi): t\geq0, \, -\infty<\xi\leq \delta t\}$
satisfying property {\rm(ii)} of \Cref{lem:U0V0} and, moreover,
\begin{itemize}
    \item[{\rm(i$'$)}] There exists a number $\underline\xi<0$ such that \begin{equation}\label{e.sup.1'} 
\overline{U} \equiv 1+\frac{\alpha}{4}\quad \text{ and }\quad \underline{V}_{1} \equiv 0 ~\text{ for }t>0,~\xi \leq \underline\xi ,
\end{equation}
where $\alpha$ is as given in \Cref{l.0907.1} and  $\underline\xi$ is a fixed constant depending on $\delta$.
\item[{\rm(ii$'$)}] $\overline{U}$ has the same spatial decay as the travelling-wave profile $U_c^\alpha$:
\begin{equation}\label{e.sup.2'}
\limsup_{t\to\infty} \sup_{\delta \frac{t}{2} \leq \xi \leq \delta t} e^{\lambda_{c} \xi}\overline{U} < +\infty \quad \text{ with }\quad \lambda_c = \tfrac12(c- \sqrt{c^2 - 4(1-a)}), 
\end{equation}
\end{itemize}
\end{lemma}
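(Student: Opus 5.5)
The plan is to build $(\overline U,\underline V_1)$ from the supersolution $(\overline U_0,\underline V_0)$ of \Cref{lem:U0V0} by two gluing operations in the generalised sense of \cite[Section 2.1]{girardin2019invasion}. For a supersolution of \eqref{e.com.xi}, the minimum of two supersolutions in the $U$-component and the maximum of two subsolutions in the $V$-component are again admissible, provided each branch is active only where its own inequality holds. Property (ii$'$) is inherited directly from (ii) of \Cref{lem:U0V0}, because the modifications will only affect $\xi\le 1$.

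\emph{Step 1 (truncating $U$).} Set $\overline U:=\min\{\overline U_0,1+\tfrac{\alpha}{4}\}$ for $\xi\ge 0$ and $\overline U:\equiv 1+\tfrac{\alpha}{4}$ for $\xi<0$. By \eqref{e.sup.1}, $\overline U_0>1+\tfrac\alpha2$ on $0\le\xi\le1$. Hence the minimum equals the constant near $\xi\in[0,1]$, and the two definitions agree on an overlap, so there is no corner at $\xi=0$. The constant $1+\tfrac\alpha4$ satisfies $\mathcal N_1\ge0$ whenever the paired $V$ satisfies $a\underline V\le \tfrac{\alpha}{4}\cdot\tfrac{1}{1+\alpha/4}$. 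This follows because $1-(1+\tfrac\alpha4)-a\underline V<0$, so the reaction term is nonpositive, which is the right sign. The condition is automatic once $\underline V\le \tfrac\alpha4$, or even for any $\underline V\ge0$.

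\emph{Step 2 (truncating $V$; main obstacle).} We need $\underline V_1\le \underline V_0$-type behaviour with $\underline V_1\equiv0$ for $\xi\le\underline\xi$ and $\mathcal N_2\le0$ against $\overline U\equiv 1+\tfrac\alpha4$. On $\xi\le 1$ the equation for $\underline V$ with $\overline U=1+\tfrac\alpha4$ has linear part $-(c+\rho')\underline V_\xi-d\underline V_{\xi\xi}-r(1-b(1+\tfrac\alpha4))\underline V$, and since $b>1$ the zero-order coefficient is a positive killing rate. The plan is to use the exponential $w(\xi)=\varepsilon_1\big(e^{\mu\xi}-e^{\mu\underline\xi}\big)_+$, or more simply $\varepsilon_1 e^{\mu\xi}-\varepsilon_2$, with $\mu$ slightly above the root defining $\mu_1$ in \eqref{e.0816.1}. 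A direct computation shows $-d\mu^2-(c+\rho')\mu+r(b(1+\tfrac34\alpha)-1)\le0$ for such $\mu$ once $|\rho'|\le\sqrt\delta$ is small. The slack between $\tfrac34\alpha$ and $\tfrac\alpha4$, with $b\,\overline U\ge b(1+\tfrac\alpha4)$, must absorb the $-\underline V^2$ term and the constant $-\varepsilon_2$. Checking this: for the subtracted constant, $\mathcal N_2[\,-\varepsilon_2]$ contributes $r\varepsilon_2(1-b\overline U-\underline V)<0$, which has the good sign. We then set $\underline V_1=\max\{\underline V_0,w\}$ on $0\le\xi\le1$, $\underline V_1=w$ on $\xi<0$, and $\underline V_1=\underline V_0$ for $\xi\ge1$.

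For the maximum to be consistent we need $w<\underline V_0$ at $\xi=1$ and $w>\underline V_0$ at $\xi=0$, uniformly in $t$. The gradient bound \eqref{e.0831.2}, $\underline V_{0,\xi}/\underline V_0>\mu_1$ on $[0,1]$, gives $\underline V_0(t,0)\le e^{-\mu_1}\underline V_0(t,1)$. Choosing $\mu$ in a suitable window relative to $\mu_1$, we then take $\varepsilon_1$ so that $w(0)$ sits between the two values, using the uniform positive bounds of $\underline V_0$ on $[0,1]$ from (iii) and \eqref{e.sup.1}. This is the delicate step, and the precise ordering of $\mu$ versus $\mu_1$ may need adjusting, for instance by comparing the ratio $w/\underline V_0$ via its logarithmic derivative. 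Finally, $\underline\xi=\tfrac1\mu\ln(\varepsilon_2/\varepsilon_1)$ is a fixed negative constant depending on $\delta$ through $\hat\delta$. Together with Step 1 this yields (i$'$), and the generalised supersolution property follows on all of $\{\xi\le\delta t\}$.
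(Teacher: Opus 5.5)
Your overall construction matches the paper's: truncate $\overline U_0$ by the constant $1+\frac\alpha4$, then replace $\underline V_0$ near $\xi=0$ by the maximum with an explicit exponential-type subsolution that vanishes for $\xi\le\underline\xi$, gluing on $[0,1]$ where $\overline U\equiv 1+\frac\alpha4$. Step 1 is fine.

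\textbf{The gap.} Step 2 fails at exactly the point you flag, and with the parameters you state it cannot be closed. For the maximum to be carried by $w$ near $\xi=0$ and by $\underline V_0$ near $\xi=1$, the function $\ln(\underline V_0/w)$ must increase across $[0,1]$. The only information you have on $\underline V_0$ is $\underline V_{0,\xi}/\underline V_0>\mu_1$ from \eqref{e.0831.2}. So $w$ must have logarithmic derivative \emph{strictly below} $\mu_1$ on $[0,1]$.

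Your $w$ has logarithmic derivative at least $\mu$, and the $-\varepsilon_2$ makes it larger still. You take $\mu$ slightly above $\mu_1$. Then $w(1)/w(0)\ge e^{\mu}>e^{\mu_1}$, while you only know $\underline V_0(t,1)/\underline V_0(t,0)>e^{\mu_1}$. Hence no choice of $\varepsilon_1$ is guaranteed to put the crossing inside $(0,1)$.

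\textbf{The missing observation.} You imposed the wrong zero-order coefficient. On the truncation region $\overline U\equiv 1+\frac\alpha4$, so the killing rate in $\mathcal N_2$ is $r(b(1+\frac\alpha4)-1)$, not $r(b(1+\frac34\alpha)-1)$. The margin between the levels $\frac\alpha4$ and $\frac34\alpha$ has to be split, with the exponent placed at the intermediate level. This is what the paper does with $\theta=(1+\delta)e^{\mu_+\xi}-\delta e^{\mu_-\xi}$, where $\mu_\pm$ are the roots of $d\mu^2+(c-5\sqrt\delta)\mu=r(b(1+\frac\alpha2)-1)$. The three pieces of margin are used as follows.
\begin{itemize}
\item The gap between $\frac\alpha4$ and $\frac\alpha2$ absorbs the quadratic term. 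Since $\underline V_1\le \underline V_0(t,1)\le\frac\alpha4$ on $\xi\le 1$, one is left with the net margin $-r(b-1)\frac\alpha4\theta$; this is where $b>1$ enters.
\item The shifted speed $c-5\sqrt\delta$, together with $\theta'>0$, absorbs $\rho'$ and leaves $-(5\sqrt\delta+\rho')\theta'\le 0$.
\item The gap between $\frac\alpha2$ and $\frac34\alpha$ gives $\theta'/\theta\le\mu_2<\mu_1$ on $[-1,1]$ for small $\delta$. Hence $\partial_\xi \ln\bigl(\underline V_0/(\underline V_0(0,0)\theta)\bigr)\ge \mu_1-\mu_2>0$, and the crossing is unique.
\end{itemize}
The paper then tracks the crossing as $\hat\xi(t)$, with $\hat\xi(0)=0$ and $0<\hat\xi(t)<\min\{1,\delta t\}$, using $\partial_t\underline V_0<0$ on $[-1,1]$ (because $Q>0$ there).

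With the exponent chosen this way, your simpler $w=\varepsilon_1e^{\mu\xi}-\varepsilon_2$ with $\varepsilon_2/\varepsilon_1$ small also works, even with a $t$-independent crossing window. You need only take $\delta$ small enough that $\sup_t\underline V_0(t,0)/\inf_t\underline V_0(t,0)<e^{\mu_1-\mu_2}$, and choose $w(0)$ just above $\sup_t\underline V_0(t,0)$.
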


\begin{proof}[{\bf Proof of \Cref{lem.bar}}]

Start by observing simply that by defining
\begin{equation}
\overline{U}
=
\begin{cases}
   \displaystyle \min\left\{1 + \frac{\alpha}{4},\overline{U_0}\right\} &\text{ for }\xi \geq 0,\\[5pt]
\displaystyle 1 + \frac{\alpha}{4} &\text{ for }\xi < 0.
\end{cases}\end{equation}
From property (i) of \Cref{lem:U0V0}, 
it is clear that $\overline{U}$ is continuous: 
\begin{equation}\label{e.bar22}
\overline{U}(t,\xi) \equiv 1+\frac{\alpha}{4}\quad \text{ for }0\leq \xi \leq 1.
\end{equation}
Moreover, a direct verification of the differential inequalities shows that
$(\overline U,\underline V_0)$ remains a supersolution in the domain $\Omega=\left\{(t,x):~t\geq 0,\,0 \leq \xi \leq \delta t\right\}.$ Since this verification is immediate, we omit the details.

\medskip

We shall now truncate $\underline{V}_1$ to vanish for $\xi \ll -1$ (this is inspired by \cite{girardin2019invasion}). Define the continuous function
\begin{equation}
    \theta(\xi) =\begin{cases}
        (1+\delta) e^{\mu_+ \xi} -\delta e^{\mu_- \xi} >0  &\text{ for }\xi > \underline\xi,\\
        0 &\text{ for }\xi \leq \underline\xi,
    \end{cases}
\end{equation}
where
\begin{align}\label{e.0831.5}
&\underline\xi:= \frac{d}{\sqrt{(c-5\sqrt\delta)^2 + 4rd(b(1+\tfrac{\alpha}2})-1)} \ln \frac{\delta}{1+\delta},\\   
&\mu_\pm = \frac{1}{2d}\left(-\left(c-5\sqrt\delta\right) \pm \sqrt{\left(c-5\sqrt\delta\right)^2 + 4rd\left(b\left(1+\tfrac{\alpha}{
2}\right)-1\right)}\right)\notag.
\end{align}
By construction, $\theta$ is continuous in $\mathbb R$ and satisfies 
$$d\theta'' +(c -5\sqrt{\delta}) \theta'- r\theta (b(1+\tfrac{\alpha}{2})-1 ) = 0 \qquad \text{ for } \xi > \underline\xi.$$ 
By choosing $\delta_2>0$ smaller if necessary,
there exists $\mu_2 \in (0,\mu_1)$\footnote{Recall that $\mu_1 = \frac1{2d}\left( -c + \sqrt{c^2 + 4rd(b(1+3\frac{\alpha}{4})-1)}\right)$ is defined in \Cref{lem:U0V0}.} such that for any $\delta \in (0,\delta_2)$, we have
\begin{equation}\label{e.0907.2}
\sup_{|\xi|\leq 1}\frac{\theta'(\xi)}{\theta(\xi)} \leq \mu_2 < \mu_1.
\end{equation}
Indeed, it is clear that $\theta'>0$ wherever $\theta>0$. Moreover, if we send $\delta\searrow 0$, then 
\begin{equation}
\frac{\theta'(\xi)}{\theta(\xi)} \to \frac{1}{2d}\left(-c + \sqrt{c^2 + 4rd\left(b\left(1+\frac{\alpha}{
2}\right)-1\right)}\right)\in (0,\mu_1) \quad \text{  in } \mathscr{C}_{\text{loc}}(\mathbb{R}).
\end{equation}
This proves \eqref{e.0907.2} holds for all $\delta$ sufficiently small.

\begin{claim}\label{c.0907.1} 
There exists a $\hat\xi:[0,\infty) \to [0,1]$ such that 
\begin{equation}
\hat\xi(0) = 0, \quad \hat\xi'>0,  \quad 0 < \hat\xi(t) < \min\left\{1,\delta t\right\} \quad \text{ for }t>0,
\end{equation}
and 
\begin{equation}
G(t,\xi):= \ln \underline{V}_0(t,\xi) - \ln [\underline{V}_0(0,0)\theta(\xi))]  
\begin{cases}
 <0 \quad &\mathrm{ for }  \quad
 0 \leq \xi < \hat\xi(t), \\[5pt]
 =0 &\mathrm{ for }\quad \xi = \hat\xi(t), \\[5pt]
 >0 &\mathrm{ for } \quad\hat\xi(t) < \xi < 1.
\end{cases}
\end{equation}
\end{claim}
Indeed, thanks to \eqref{e.0831.2} and \eqref{e.0907.2}, there is at most one root since
\begin{equation}\label{e.0831.3}
G_\xi(t,\xi) \geq \mu_1 - \mu_2 >0 \quad  \text{ for }t>0,~\xi \in [0,1].
\end{equation}
Since $G(0,0) = 0$, we can apply the implicit function theorem to obtain the existence of $\hat\xi(t)$. Moreover, differentiating $G(t,\hat\xi(t)) \equiv 0$ we have
\begin{equation}\label{e.0831.4}
-\hat\xi'(t) G_\xi =  G_t= \frac{(\underline{V}_0)_t}{\underline{V}_0} = - \frac{5 \delta^{\frac{3}{2}}}{(1+\delta t)^2}\frac{Q(\xi)}{\underline{V}_0}<0.
\end{equation}
Combining with \eqref{e.0831.3}, we have $\hat\xi'>0$. Moreover, using the fact that $\underline{V}_0$ is bounded from below by a positive constant uniformly in $|\xi| \leq 1$, we have
\begin{equation}
0 < \hat\xi'(t) \leq \frac{C \delta^{\frac{3}{2}}}{(1+\delta t)^2} = C \sqrt\delta \frac{d}{dt}\left(1- \frac{1}{1+\delta t}\right).
\end{equation}
Upon integration and using $\hat{\xi}(0)=0$, we obtain
\begin{equation}
0<\hat{\xi}(t)
\leq C\sqrt{\delta}
\left(1-\frac{1}{1+\delta t}\right)
=
\frac{C\delta^{\frac{3}{2}}t}{1+\delta t}.
\end{equation}
Possibly choosing $\hat \delta>0$ smaller so that
$C\sqrt{\hat\delta}<1$, we obtain (for every $\delta \in (0,\hat\delta]$)
\begin{equation}
0<\hat{\xi}(t)<\min\{1,\delta t\},
\qquad t>0.
\end{equation}
This proves the claim.
\medskip

Define
\begin{equation}\label{e.bar2}
\underline{V}_1(t,\xi)= \begin{cases}
\underline{V}_0(t,\xi) &\text{ for }\xi \ge  \hat\xi(t),\\
\underline{V}_0(0,0)\theta(\xi) &\text{ for }\xi < \hat\xi(t).
\end{cases}
\end{equation}
It is clear that, near $\xi=\hat{\xi}(t)$, $\underline{V}_1$ is continuous and is the maximum of $\underline{V}_0$ and $\underline{V}_0(0,0)\theta$.  Then $(\overline{U},\underline{V}_{1})$ is a supersolution in the region $\hat\xi(t)<\xi<\delta t$, thanks to \Cref{lem:U0V0}. 

It remains to check the region $-\infty<\xi \leq \hat\xi(t)$. Indeed, verification of $\mathcal{N}_1\geq 0$ is simple, since $\overline{U}\equiv 1 +\frac{\alpha}{4}$ for $\xi \leq \hat\xi(t)$. We have
\begin{equation}
\mathcal{N}_1[\overline{U},\underline{V}_1] = -\overline{U} (1 - \overline{U} - a \underline{V}_1) \geq  - \left(1+\frac{\alpha}{4}\right)\left(1 - \left(1+\frac{\alpha}{4}\right) - 0\right) >0.
 \end{equation}

Before verifying $\mathcal{N}_2 \leq 0$, note that $\theta$ and $\underline{V}_0$ are both monotone increasing in $\xi$, so that\footnote{We used $\hat\xi(t) <1$ and \eqref{e.sup.1}.}
\begin{equation}
 0\leq \underline{V}_1(t,\xi) \leq \underline{V}_1(t,1) = \underline{V}_0(t,1) \leq \frac{\alpha}{4} \quad \text{ for }-\infty< \xi \leq 1.
\end{equation}
For $\xi \leq 
\underline\xi$, $\underline{V}_1\equiv 0$ so $\mathcal{N}_2 \equiv 0$ holds trivially. It remains to consider the case when $\xi \in (\underline\xi, \hat\xi(t))$:
\begin{align}
   \frac{1}{\underline{V}_0(0,0)} \mathcal{N}_2[\overline{U},\underline{V}_1] &= -(c + \rho'(t)) \theta' - d\theta'' - r\theta (1- b(1+\tfrac{\alpha}{4}) - \underline{V}_1) \notag\\
&\leq  -(c + \rho'(t)) \theta' - d\theta'' - r\theta (1- b(1+\tfrac{\alpha}{4}) - \tfrac{\alpha}{4}) \notag \\
&=  -(c + \rho'(t)) \theta' - d\theta'' - r\theta (1- b(1+\tfrac{\alpha}{2}) )- r(b-1) \theta \frac{\alpha}{4} \notag \\
&= -(5\sqrt\delta + \rho'(t)) \theta' - r(b-1) \theta \frac{\alpha}{4} \leq 0,
\label{e.0907.5}
\end{align}
where the last equality is due to the differential equation satisfied by $\theta$.

Finally, we verify the assertions (i$'$) and (ii$'$). Assertion (i$'$) follows from \eqref{e.bar22} and \eqref{e.bar2}, while   
Assertion  (ii$'$) still holds as $(\overline{U},\underline{V}_1) = (\overline{U}_0,\underline{V}_0)$ for $\xi \geq \delta \frac{t}{2}$ and $t\gg 1$. 
\end{proof}

\begin{proposition}
\label{lem.qsmall}
Under the hypotheses of \Cref{lem:U0V0}, 
for each 
\begin{align}\label{gamma-choice}
 \gamma \in \left(0,\min\left\{r,r(b-1)
\right\}\right),   
\end{align}

there exists a small constant $0<{\chi^{\star}_0}\leq 1$ (depending on $\delta,\gamma$) such that for any ${\chi_0} \in (0,{\chi^{\star}_0}]$, the functions
\begin{equation}\label{e.101.1}
(\overline{U}, \underline{V})
=(\overline{U}, (\underline{V}_1-\chi)_+),
\quad \text{ with }\quad 
\chi(t)=\chi_0e^{-\gamma t},
\end{equation}
form a supersolution to \eqref{e.com.xi}  on
$\{(t,\xi): t\geq0, \, -\infty<\xi\leq \delta t\}$. Moreover, the following properties hold:
\begin{itemize}
    \item[{\rm(i$''$)}] There exists $\underline\xi<0$ such that \begin{equation}\label{e.sup.1''} 
\overline{U} \equiv 1+\frac{\alpha}{4}\quad \text{ and }\quad \underline{V} \equiv 0 ~\text{ for }t>0,~\xi \leq \underline\xi ,
\end{equation}
where $\alpha$ is as given in \Cref{l.0907.1} and  $\underline\xi$ is a fixed constant depending on $\delta$.
\item[{\rm(ii$''$)}] $\overline{U}$ has the same spatial decay as the travelling-wave profile $U_c^\alpha$:
\begin{equation}\label{e.sup.2''}
\limsup_{t\to\infty} \sup_{\delta \frac{t}{2} \leq \xi \leq \delta t} e^{\lambda_{c} \xi}\overline{U} < +\infty \quad \text{ with }\quad \lambda_c = \tfrac12(c- \sqrt{c^2 - 4(1-a)}), 
\end{equation}

 \item[{\rm(iii$''$)}] On the ray $\left\{(t,x):~ t>0,~\xi = \delta t\right\}$, we have
\begin{equation}\label{e.sup.3''}
\underline{V} \leq 1- \chi_0 e^{-\gamma t} \quad \text{ for }t \geq 0.
\end{equation}

\end{itemize}
\end{proposition}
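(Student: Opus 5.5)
The plan is to keep $\overline U$ from \Cref{lem.bar} unchanged and check that lowering the second component by the exponentially small amount $\chi(t)=\chi_0e^{-\gamma t}$ does not spoil either inequality in \eqref{e.com.xi}.

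\textbf{The first component.} Since $\underline V\le \underline V_1$ and $\mathcal N_1$ is nonincreasing in its second argument, we get $\mathcal N_1[\overline U,\underline V]\ge \mathcal N_1[\overline U,\underline V_1]\ge0$ on the whole domain.

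\textbf{The second component.} On the set $\{\underline V_1\le \chi\}$ the function $\underline V$ vanishes, so $\mathcal N_2=0$ there. The positive part of the maximum of two subsolutions-type functions is handled in the generalised sense, as elsewhere in the paper. Where $\underline V=\underline V_1-\chi>0$, write
\begin{equation}
\mathcal N_2[\overline U,\underline V]=\mathcal N_2[\overline U,\underline V_1]-\chi'(t)+r\chi\bigl(1-b\overline U-\underline V_1\bigr)-r\underline V_1\chi+r\chi\,\underline V .
\end{equation}
Expanding $r\underline V(1-b\overline U-\underline V)$ directly, this simplifies to
\begin{equation}
\mathcal N_2[\overline U,\underline V]=\mathcal N_2[\overline U,\underline V_1]+\chi\bigl[\gamma+r(1-b\overline U-\underline V_1)-r\underline V_1+r\chi\bigr]\quad(\text{up to sign bookkeeping}).
\end{equation}
It therefore suffices to show that the bracket $\gamma+r(1-b\overline U)-r(\underline V_1+\underline V)$ is $\le0$, or that it is absorbed by the strict negativity of $\mathcal N_2[\overline U,\underline V_1]$. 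We split into two regions.
\begin{enumerate}
\item[(a)] \emph{Region $\xi\le 1$.} Here $\overline U\ge 1$, so $r(1-b\overline U)\le -r(b-1)$. Hence $\gamma+r(1-b\overline U)<0$ by \eqref{gamma-choice}.
\item[(b)] \emph{Region $\xi\in[1,\delta t]$.} Here $\overline U$ may be small, and $1-\underline V_1-\underline V\approx 2(1-V_c^\alpha)$ is small only as $\xi\to\infty$. The bracket is at most $\gamma+r-r(\underline V_1+\underline V)$. Since $\gamma<r$ and $\underline V_1\to1$, this is not automatically negative near moderate $\xi$. So I would instead use the quantitative margin $\mathcal N_2[\overline U_0,\underline V_0]\le -\frac{\sqrt\delta}{1+\delta t}(V_c^\alpha)'$ from \eqref{e.0907.3}. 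On $[1,\delta t]$, $(V_c^\alpha)'\gtrsim e^{-\Lambda\xi}\ge e^{-\Lambda\delta t}$ for some decay rate $\Lambda$ of $V'$. Choosing $\delta$ small so that $\Lambda\delta<\gamma$ (or choosing $\gamma$ accordingly), the error $C\chi_0e^{-\gamma t}$ is dominated by $\frac{\sqrt\delta}{1+\delta t}e^{-\Lambda\delta t}$ once $\chi_0$ is small.
\end{enumerate}

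\textbf{Main obstacle.} The delicate point is region (b): matching the exponential time decay of $\chi$ against the spatial decay of $(V_c^\alpha)'$ along $\xi\le\delta t$. I would try to compare $e^{-\gamma t}$ with $e^{-\Lambda \delta t}$ using $\xi\le \delta t$. The fallback is to note that where $1-b\overline U-\underline V_1-\underline V\ge -\gamma/r$ fails, $\underline V$ is close to 1, and to use \eqref{e.9b} to bound $1-V_c^\alpha$ by $(V_c^\alpha)'$.

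\textbf{Listed properties.} Property (i$''$) is immediate from (i$'$), since $\underline V\le\underline V_1=0$ there. Property (ii$''$) is unchanged, since $\overline U$ is unchanged. For (iii$''$), on $\xi=\delta t$ we have $\underline V_1=\underline V_0\le 1$: this uses $Q>0$ near there, or else bounding $V_c^\alpha+\varepsilon Q\le1$ via \eqref{e.9b} with $\delta$ small. Hence $\underline V\le 1-\chi$.
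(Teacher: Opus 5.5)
\textbf{First component.} Your monotonicity claim is backwards, and this is a genuine gap. In $\mathcal N_1[\overline U,\underline V]$ the second argument enters only through the term $+a\overline U\,\underline V$. So $\mathcal N_1$ is \emph{nondecreasing} in $\underline V$: less competition from $v$ means more growth for $u$, which the supersolution must still dominate. Hence $\underline V\le\underline V_1$ gives $\mathcal N_1[\overline U,\underline V]\le\mathcal N_1[\overline U,\underline V_1]$, which is the wrong direction. Lowering the second component really does cost something. Where $U^\alpha_c$ is small, $V^\alpha_c\approx1$ and $t$ is large, one has $\mathcal N_1[\overline U_0,0]=\mathcal N_1[\overline U_0,\underline V_0]-a\overline U_0\underline V_0<0$, so $\chi$ cannot be arbitrary.

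The missing step runs as follows.
\begin{itemize}
\item Where $\overline U=1+\frac{\alpha}{4}$, one has directly $\mathcal N_1=(1+\frac{\alpha}{4})(\frac{\alpha}{4}+a\underline V)>0$.
\item Where $\overline U=\overline U_0<1+\frac{\alpha}{4}$, property (i) of \Cref{lem:U0V0} forces $\xi>1$. Hence $\underline V_1=\underline V_0$ and $\underline V_1-\underline V\le\chi$, so $\mathcal N_1[\overline U,\underline V]\ge\mathcal N_1[\overline U_0,\underline V_0]-a\chi\overline U_0$.
\item This error must be absorbed by the quantitative margin $\frac{\sqrt\delta}{1+\delta t}(-(U^\alpha_c)')$ from \eqref{used-prop7.8}. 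That works because $\overline U_0\le U^\alpha_c\le C_0(-(U^\alpha_c)')$ on $\{\xi\ge1\}$ (equal exponential tails), provided $aC_0\chi^\star_0\sup_{t\ge0}e^{-\gamma t}(1+\delta t)\le\sqrt\delta$.
\end{itemize}

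\textbf{Second component.} Your region (a) is correct: $\overline U\equiv1+\frac{\alpha}{4}$ on $\{\xi\le1\}$, so $\gamma<r(b-1)$ makes the bracket negative. Your main argument in region (b), however, needs $\Lambda\delta<\gamma$, and the statement does not grant this. Here $\delta\in(0,\hat\delta]$ is fixed by the hypotheses of \Cref{lem:U0V0}, and the claim is for \emph{every} $\gamma\in(0,\min\{r,r(b-1)\})$. In the proof of \Cref{thm:syst1}, $\gamma$ must moreover lie below a rate $\gamma_1$ obtained only after $\delta$ is fixed, so neither parameter can be readjusted. Your route also never uses $\gamma<r$, a sign that it is not the right mechanism.

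You already observe that the bracket is negative for large $\xi$; the missing step is to exploit this.
\begin{itemize}
\item Since $(\overline U,\underline V_1)\to(0,1)$ as $\xi\to\infty$ uniformly in $t\ge0$, there is a fixed $L$ with $1-2\underline V_1-b\overline U\le-\frac12(1+\frac{\gamma}{r})$ for $\xi\ge L$.
\item There the bracket $\gamma+r(1-b\overline U-2\underline V_1)+r\chi$ is at most $-\frac{r-\gamma}{2}+r\chi_0\le0$, once $r\chi_0^\star\le\frac{r-\gamma}{4}$.
\item The margin $-\frac{\sqrt\delta}{1+\delta t}(V^\alpha_c)'$ from \eqref{e.0907.3} transfers to $\mathcal N_2[\overline U,\underline V_0]$ via $\overline U\le\overline U_0$ and monotonicity of $\mathcal N_2$ in $U$. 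It is then needed only on the compact interval $[1,L]$, where $(V^\alpha_c)'$ has a $t$-independent positive lower bound.
\item The condition $3r\chi_0^\star\sup_{t\ge0}e^{-\gamma t}(1+\delta t)\le\sqrt\delta\inf_{[0,L]}(V^\alpha_c)'$ then closes the argument for every $\gamma>0$.
\end{itemize}

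A smaller point concerns (iii$''$): your justification goes the wrong way. $Q>0$ raises $\underline V_0$, and \eqref{e.9b} bounds $1-V^\alpha_c$ from above. What you need is $\varepsilon(t)|Q|\le1-V^\alpha_c$ at $\xi=\delta t$, i.e.\ a \emph{lower} bound on $1-V^\alpha_c$. For instance, $(V^\alpha_c)'\lesssim1-V^\alpha_c$ together with $\varepsilon(t)(1+\xi)\le5\sqrt\delta$ would do.
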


\begin{proof}[{\bf Proof of \Cref{lem.qsmall}}] It is clear that properties (i$''$)--(iii$''$) hold.

\noindent{\# First component.}
If $\overline U=1+\frac{\alpha}{4}$, then
\begin{equation}
\mathcal N_1[\overline U,\underline V]
=\left(1+\frac{\alpha}{4}\right)
\left(\frac{\alpha}{4}+a\underline V\right)>0.
\end{equation}
Otherwise, $\overline U=\overline U_0$ and,
by \eqref{e.sup.1}, necessarily $\xi>1$.
Hence $\underline V_1=\underline V_0$ and
\begin{align}
\mathcal N_1[\overline U,\underline V]
&\ge
\mathcal N_1[\overline U_0,\underline V_0]
-a\chi(t)\overline U_0\\
&\ge
\frac{\sqrt\delta}{1+\delta t}
(-(U_c^\alpha)')
-aC_0\chi_0e^{-\gamma t}
(-(U_c^\alpha)').
\end{align}
Here we used
$\overline U_0\le U_c^\alpha
\le C_0(-(U_c^\alpha)')$ for $\xi\ge1$,
which follows from the travelling-wave
asymptotics and strict monotonicity.
Therefore $\mathcal N_1\ge0$, provided
\begin{equation}
aC_0\chi_0^\star
\sup_{t\ge0}e^{-\gamma t}(1+\delta t)
\le\sqrt\delta.
\end{equation}

 \medskip

\noindent \# Second component. For the second component, note that
\begin{align}
\mathcal{N}_2[\overline{U}, \underline{V}_1]
&\le \begin{cases}
\displaystyle- r(b-1) \underline{V}_1 \frac{\alpha}{4} &\text{ for }\underline\xi < \xi < \hat\xi(t)\\[5pt]
-\displaystyle\frac{\sqrt\delta}{1+ \delta t}(V_c^\alpha)' &\text{ for }\hat\xi(t) < \xi < \delta t.
\end{cases}
\end{align}
We shall treat the zones $\xi > \hat\xi(t)$ and its complement separately. 

\#\# First consider the case $\xi > \hat\xi(t)$. If $\underline{V}\equiv 0$, there is nothing to prove. Otherwise $\underline{V}=\underline{V}_1 -\chi$ has a positive value and $\chi'=-\gamma \chi$, and hence 
\begin{align}\label{e.qN2}
\mathcal{N}_2[\overline{U}, \underline{V}]
={}&
\mathcal{N}_2[\overline{U}, \underline{V}_1]
+\gamma \chi
+r\chi(1-2\underline{V}_1-b\overline{U})
+r\chi^2.
\end{align}

Since $(\overline U, \underline{V}_1) \to (0,1)$ as $\xi \to \infty$ (among points $(t,\xi)$ in the wedge), we may 
fix a constant $L>0$ sufficiently large so that
$$
1-2{\underline V_1}-b\overline{U}
\le
-\frac12 \left(1+\frac{\gamma}{r}\right)
\qquad\text{for }\xi\ge L, ~ \text{ uniformly in }t\geq 0.
$$
Then, for $\xi\ge L$, (recalling that $0<\gamma<r$)
\begin{equation}\label{e.912.1}
\gamma \chi
+r\chi(1-2{\underline V_1}-b\overline{U}) + r\chi^2
\le
\chi \left(-\frac{r-\gamma}{2} + r \chi_0 \right) \le 0,
\end{equation}
since $\chi_0 \in (0,\chi^{\star}_0]$ and we required $r\chi^{\star}_0\le\frac{r-\gamma}{4}.
$  
Hence, \eqref{e.qN2} leads to
\begin{equation}
\mathcal{N}_2[\overline{U}, \underline{V}]
\le
\mathcal{N}_2[\overline{U},\underline{V}_1]
\le0 \quad \text{ for }t \geq 0,~ L\le\xi\le\delta t.
\end{equation}

On the fixed interval $0\le\xi\le L$, the $\chi$-terms in
\eqref{e.qN2} are bounded by
\begin{equation}
\gamma \chi
+r\chi(1-2\underline V_1-b\overline{U}) + r\chi^2
\le 3r \chi.
\end{equation}
For $\xi \in [\hat\xi(t),L]$, $\overline{U}\leq \overline{U}_0$ and  $\underline{V}_1 = \underline{V}_0$. Using also the monotonicity of the nonlinearity $U\mapsto -rV(1-bU-V)$, 
\begin{equation}
\mathcal{N}_2[\overline{U},\underline{V}_1]= \mathcal{N}_2[\overline{U},\underline{V}_0]  \leq \mathcal{N}_2[\overline{U}_0,\underline{V}_0] \leq -\frac{\sqrt\delta}{1+ \delta t}\inf_{-1 \leq \xi \leq L}|(V_c^\alpha)'|, 
\end{equation}
where the last inequality follows from \eqref{e.0907.3}. 
Choosing $\chi^{\star}_0$ smaller if necessary so that
\begin{equation}\label{e.qsmall3}
3r \chi^{\star}_0\sup_{t \geq 0} e^{-\gamma t} (1 + \delta t)\le \sqrt\delta  \inf_{\xi \in [0,L]} (V_{c}^{\alpha})',
\end{equation}
we obtain
\begin{equation}
\mathcal{N}_2[\overline{U}, \underline{V}]
\le 
\mathcal{N}_2[\overline{U}, \underline{V}_1]
+ 3r \chi
\le -\frac{\sqrt\delta}{1+ \delta t}\inf_{-1 \leq \xi \leq L}|(V_c^\alpha)'| + 3r\chi \le 0
\qquad\text{for }0\le\xi\le L.
\end{equation}

\#\# Finally consider the case 
$\xi < \hat\xi(t)$. We only need to consider the case   
\begin{equation}
\underline{V}
=\underline{V}_1-\chi>0,
\quad
\underline{V}_1=\underline{V}_0(0,0)\theta.
\end{equation}
Otherwise  $\underline{V}\equiv 0$ and $\mathcal{N}_2\equiv 0$ holds trivially. 

Since $\xi<\hat\xi(t)<1$, we have
$\overline U=1+\frac{\alpha}{4}$.
Note that from Lemma~\ref{lem.bar},
$\mathcal{N}_2[\overline{U},\underline{V}_1]\leq 0$. Hence, using $\chi<\underline{V}_1$, we have 
\begin{align}\label{eqN2-1009}
\mathcal{N}_2[\overline{U}, \underline{V}]
&=\mathcal{N}_2[\overline{U}, \underline{V}_1]
+\gamma \chi+r\chi(1-2\underline{V}_1-b\overline{U})+r\chi^2.\\
&\leq
\chi\left[\gamma+r\left(1-b\left(1+\frac{\alpha}{4}\right)\right)-2r\underline{V}_1+r\chi
\right]\\
&\leq\chi\left[\gamma-r\left(b\left(1+\frac{\alpha}{4}\right)-1\right)\right]
<0,
\end{align}
where the last inequality holds if $\gamma<r(b-1)$.

To summarise, for any $\gamma$ satisfying \eqref{gamma-choice}, 
we can choose $\chi_0^{\star}\in(0,1]$ sufficiently small such that for any $\chi_0 \in (0,\chi_0^{\star}]$, the pair $(\overline{U},\underline{V})$ given in \eqref{e.101.1} forms a supersolution to \eqref{e.com.xi}.
\end{proof}

\begin{figure}[ht]
\begin{tikzpicture}[
    x=5.22cm, y=4.67cm,
    line cap=round,
    line join=round,
    font=\normalsize,
    axe/.style={
        black,
        line width=1.25pt,
        -{Stealth[length=3.5mm,width=2.5mm]}
    },
    courbe/.style={line width=1.35pt},
    courbebarre/.style={line width=2.5pt},
    rappel/.style={
        red,
        line width=0.75pt,
        dash pattern=on 2pt off 7pt
    },
    rappelblue/.style={
        blue,
        line width=0.75pt,
        dash pattern=on 2pt off 7pt
    },
    pointille/.style={
        blue,
        line width=1.45pt,
        dash pattern=on 0.7pt off 4.4pt
    }
]
  
    \def\xUn{0.85}

    \path[use as bounding box]
        (-0.64,-0.29) rectangle (2.53,1.56);

    \draw[axe] (-0.46,0) -- (2.40,0);
    \draw[axe] (-0.30,-0.07) -- (-0.30,1.48);
    \node[anchor=west] at (2.42,-0.045) {$\xi$};

    \draw[black, line width=0.75pt]
        (-0.30,1) -- (2.40,1);
    \draw[red, line width=1.25pt]
        (-0.30,1.20) -- (1,1.20);

    \draw[rappel] (\xUn,0) -- (\xUn,1.37);

     \draw[rappelblue] (0.60,0) -- (0.60,1.37);

    \fill[black] (-0.30,1) circle[radius=2.1pt];
    \fill[red] (-0.30,1.20) circle[radius=2.3pt];
    \node[anchor=east] at (-0.36,1.02) {$1$};
    \node[
        red, anchor=east,
        font=\normalsize, inner sep=1.5pt
    ] at (-0.33,1.20) {$1+\dfrac{\alpha}{4}$};

    \draw[
        red, courbe,
        dash pattern=on 1.2pt off 4.5pt
    ]
        (0.925,1.345)
        .. controls (0.950,1.295) and (0.974,1.249)
            .. (1.000,1.200);

    \draw[red,courbe]
        (1.000,1.200)
        .. controls (1.075,1.059) and (1.145,0.910)
            .. (1.225,0.815)
        .. controls (1.431,0.570) and (1.610,0.394)
            .. (1.793,0.266)
        .. controls (1.993,0.126) and (2.160,0.094)
            .. (2.350,0.060);

    \draw[pointille]
        (-0.255,0.054)
        .. controls (-0.092,0.041) and (0.067,0.066)
            .. (0.285,0.120)
        .. controls (0.400,0.151) and (0.516,0.186)
            .. (0.600,0.227);

    \draw[blue,courbe]
        (0.600,0.227)
        .. controls (0.720,0.292) and (0.800,0.403)
            .. (0.908,0.534)
        .. controls (1.012,0.660) and (1.150,0.768)
            .. (1.225,0.815)
        .. controls (1.420,0.937) and (1.702,0.970)
            .. (2.105,0.985);

      \draw[black,courbe]
        (-0.415,-0.072)
        .. controls (-0.263,-0.042) and (-0.137,0.030)
            .. (0.000,0.068)
        .. controls (0.202,0.124) and (0.385,0.183)
            .. (0.575,0.230)
        .. controls (0.655,0.250) and (0.725,0.275)
            .. (0.780,0.310);

      \draw[bluebar,courbebarre]
        (-0.200,0.000)
        .. controls (-0.142,0.02) and (-0.065,0.064)
            .. (0.000,0.090)
        .. controls (0.200,0.151) and (0.380,0.205)
            .. (0.600,0.257)
        .. controls (0.720,0.322) and (0.800,0.433)
            .. (0.908,0.564)
        .. controls (1.012,0.690) and (1.150,0.798)
            .. (1.225,0.845)
        .. controls (1.430,0.955) and (1.680,0.980)
            .. (1.875,0.992);

    \draw[redbar,courbebarre]
        (-0.300,1.22) -- (1.010,1.22)
        .. controls (1.087,1.084) and (1.15,0.935)
            .. (1.229,0.840)
        .. controls (1.431,0.595) and (1.610,0.419)
            .. (1.793,0.291)
        .. controls (1.993,0.149) and (2.160,0.113)
            .. (2.350,0.075);

   \node[redbar, font=\normalsize]
        at (0.53,1.3) {$\overline{U}$};
    \node[bluebar, font=\normalsize]
        at (0.73,0.50) {$\underline{V}$};
    \node[red, font=\normalsize]
        at (1.6,0.3) {$U_\alpha^c$};
    \node[blue, font=\normalsize]
        at (1.98,0.9) {$V_\alpha^c$};
    \node[rotate=15, font=\normalsize]
            at (0.19,0.23) {$\propto\theta$};

    \fill[blue]  (-0.20,0) circle[radius=2.7pt];
    \fill[black] ( 0.00,0) circle[radius=2.4pt];
    \fill[blue]  ( 0.60,0) circle[radius=2.7pt];
    \fill[red] (\xUn,0) circle[radius=2.7pt];
    \fill[black] ( 2.00,0) circle[radius=2.5pt];

    \node[blue, anchor=north]
        at (-0.22,-0.02) {$\underline{\xi}$};
    \node[anchor=north]
        at (0.00,-0.02) {$0$};
    \node[blue, anchor=north]
        at (0.60,-0.02) {$\hat{\xi}(t)$};
    \node[red, anchor=north]
        at (\xUn,-0.02) {$1$};
    \node[anchor=north]
        at (2.00,-0.02) {$\delta t$};
\end{tikzpicture}
\caption{Illustration of the supersolution formed by the couple $(\overline{U},\underline{V})$.}
\end{figure}

\subsection{Supersolution for the case \texorpdfstring{$0<a,b<1$}{0<a,b<1}}

We consider the weak-weak competition case $0<a,b<1$. We state a counterpart of \Cref{lem.qsmall}.

\begin{proposition}
\label{lem.qsmall'} Suppose $0<a,b<1$.
For each $c > c^0_{\rm LLW}$, there exists $\hat\delta>0$ such that for any  $\delta \in (0,\hat\delta]$ and for any
 $\rho\in \mathscr{C}^1([0,\infty))$ satisfying \eqref{e.0816.2}, the following statements hold.

\begin{itemize}
    \item[{\rm(a)}]There exists 
a supersolution
$(\overline{U}_2,\underline V_2)$ presented in the shifted frame $\xi = x-ct - \rho(t)$ 
satisfying \eqref{e.com.xi} in
\begin{equation}
\Omega
=
\left\{
(t,\xi):
t\ge 0,\;
-\infty< \xi\le\delta t
\right\}.
\end{equation}
Moreover, with $\alpha>0$ as given in \Cref{l.0907.1}, the following properties are satisfied:
\begin{itemize}
    \item[{\rm(i)}] On $\left\{(t,\xi):~t>0,~-\infty<\xi \leq 1\right\}$, \begin{equation}\label{e.919.4} 
\overline{U}_2 \geq  \frac{1-a}{\tfrac1{1+\frac{\alpha}{m+1}}-ab}
\quad \text{ and }\quad \underline{V}_2 \leq  \frac{\tfrac{1}{1+\frac{\alpha}{m}} -b}{\tfrac{1}{1+\frac{\alpha}{m}} -ab},
\end{equation}
for 
some constants $m,\alpha$ such that
\begin{equation}\label{e.919.7}
ab < \frac{m}{m+1}, \quad \text{ and }\quad 0< \alpha < \min\left\{\frac{\tfrac{m}{m+1}-ab}{\frac{ab}{m+1}}, \frac{m(1-b)}{b}\right\}.
\end{equation}

\item[{\rm(ii)}] $\overline{U}_2$ has the same spatial decay as the travelling-wave profile $U_c^\alpha$:
\begin{equation}
\limsup_{t\to\infty} \sup_{ \frac{\delta}{2}t \leq \xi \leq \delta t} e^{{\lambda_c} \xi}\overline{U}_2 < +\infty \quad \text{ with }\quad \lambda_c = \tfrac12 \left(c-\sqrt{c^2 - 4\left(1-a\right)}\right). 
\end{equation}
\end{itemize}

\item[{\rm(b)}]For each $\gamma \in \left(0,r\right)$\footnote{Only needed for \eqref{e.912.1}.}, there exists a small constant $0<\chi^{\star}_0\leq 1$ such that for any $\chi_0 \in (0,\chi_0^{\star}]$, the functions
\begin{equation}
(\overline{U}_3, \underline{V}_3)
=(\overline{U}_2, (\underline{V}_2-\chi)_+),
\quad \text{ with }\quad 
\chi(t)=\chi_0e^{-\gamma t},
\end{equation}
form a supersolution to \eqref{e.com.xi}  on $\Omega$. In particular, assertions (i), (ii) and the following hold for $(\overline{U}_3, \underline{V}_3)$:

\begin{itemize}
 \item[{\rm(iii)}] On the ray $\left\{(t,x):~ t>0,~\xi = \delta t\right\}$, we have
\begin{equation}
\underline{V}_3 \leq 1- \chi_0 e^{-\gamma t} \quad \text{ for }t \geq 0.
\end{equation}
\end{itemize}
    
\end{itemize}

\end{proposition}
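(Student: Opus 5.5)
Part (a) will mirror Lemma~\ref{lem:U0V0} and Lemma~\ref{lem.bar}, with one simplification: in the weak--weak case the left state $(U^\star,V^\star)$ has $V^\star>0$, so $V$ never has to be truncated to vanish. The $\theta$-construction and Claim~\ref{c.0907.1} are therefore unnecessary. We first fix $m$ and $\alpha$ satisfying \eqref{e.919.7}. This guarantees that the equilibrium $(U^\star,V^\star)$ of the $\alpha$-perturbed system and the intermediate states with $\alpha/m$, $\alpha/(m+1)$ in place of $\alpha$ are well defined with positive components. Since $c>c^0_{\rm LLW}$, Lemma~\ref{l.0907.1} then gives $c>c^\alpha_{\rm LLW}$ after shrinking $\alpha$. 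On $\Omega_+=\{0\le\xi\le\delta t\}$ we set
\[
\overline U_0=U_c^\alpha+\varepsilon(t)P,\qquad \underline V_0=V_c^\alpha+\varepsilon(t)Q,\qquad \varepsilon(t)=\frac{5\sqrt\delta}{1+\delta t},
\]
with $P,Q$ from Lemma~\ref{lem.0815.1}. The computation of $\mathcal N_1,\mathcal N_2$ in the proof of Lemma~\ref{lem:U0V0} uses only \eqref{e.Peq}--\eqref{e.Qeq} and the bounds \eqref{e.9}--\eqref{e.9c}; it never uses $b>1$. Hence it applies verbatim and gives $\mathcal N_1\ge \frac{\sqrt\delta}{1+\delta t}(-(U_c^\alpha)')$ and $\mathcal N_2\le-\frac{\sqrt\delta}{1+\delta t}(V_c^\alpha)'$, and it also yields the decay (ii).

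We then extend to $\xi<0$ by truncation. The natural choice is the constant pair
\[
\overline U_2=\max\bigl\{\overline U_0,\ k_U\bigr\},\qquad \underline V_2=\min\bigl\{\underline V_0,\ k_V\bigr\},
\]
taken with the constants extended to all $\xi<0$, where
\[
k_U=\frac{1-a}{\tfrac1{1+\alpha/(m+1)}-ab},\qquad k_V=\frac{\tfrac1{1+\alpha/m}-b}{\tfrac1{1+\alpha/m}-ab}.
\]
Here we must take care with the direction of the truncation. By \eqref{e.0816.1'}, on $\xi\le1$ the wave $U_c^\alpha$ lies above the $\alpha/2$-equilibrium level and $V_c^\alpha$ lies below the corresponding $V$-level. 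For $\delta$ small, $\overline U_0$ and $\underline V_0$ keep a strict margin on $|\xi|\le1$. Since $\alpha/(m+1)<\alpha/m\le\alpha/2$ (taking $m\ge2$), we get $\overline U_0>k_U$ and $\underline V_0<k_V$ near $\xi\in[0,1]$. Thus the constants must be used on the \emph{supersolution side}, namely $\overline U=\min\{\overline U_0,\tilde k_U\}$ with $\tilde k_U$ slightly above $\overline U_0$ near $\xi=0$, in analogy with the truncation at $1+\alpha/4$ in Lemma~\ref{lem.bar}.

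The key check is that the constant pair $(\tilde k_U,\tilde k_V)$ is itself a supersolution for the original system, i.e.
\[
1-\tilde k_U-a\tilde k_V\le0\quad\text{and}\quad 1-b\tilde k_U-\tilde k_V\ge0.
\]
This asks for a point with $\tilde k_U$ above the $u$-nullcline and $\tilde k_V$ below the $v$-nullcline, i.e. strictly inside the invariant rectangle ordered away from $(u^\star,v^\star)$. Such points exist near the equilibrium of the perturbed system with parameter $\alpha/m$ or $\alpha/(m+1)$; these are exactly the bounds \eqref{e.919.4}, and \eqref{e.919.7} is what ensures both nullcline inequalities hold simultaneously. I expect this choice of constants, together with the continuity and matching check at the gluing point via \eqref{e.0816.1'}, to be the main obstacle. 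The gluing itself is handled as a generalised min/max supersolution, as in Lemma~\ref{lem.bar}.

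For part (b), we repeat the proof of Proposition~\ref{lem.qsmall} with $\underline V_3=(\underline V_2-\chi)_+$. The formula \eqref{e.qN2} again splits the error into three regions. For large $\xi\ge L$, \eqref{e.912.1} uses only $\gamma<r$. On the compact range $[0,L]$, the margin $-\frac{\sqrt\delta}{1+\delta t}(V_c^\alpha)'$ absorbs $3r\chi$ once $\chi_0$ is small. On $\xi<0$, where $\underline V_2=\tilde k_V$ is constant with $\mathcal N_2[\overline U_2,\tilde k_V]<0$ strictly, the strict negativity absorbs $O(\chi)$, again for $\chi_0$ small. The first component only gains the term $-a\chi\overline U$, which is controlled by the $\mathcal N_1$ margin exactly as before. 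Property (iii) follows from $\underline V_2\le1$ on the ray $\xi=\delta t$.
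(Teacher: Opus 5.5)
Your route is the paper's: the same corrected pair $(\overline U_0,\underline V_0)$ on $\{0\le\xi\le\delta t\}$, glued to the constant state $(k_U,k_V)$ (the paper's $(U_{\mathrm{con}},V_{\mathrm{con}})$), with (b) obtained by rerunning \Cref{lem.qsmall} and the $\theta$-region replaced by the strict sign of the reaction terms at the constant state. The gluing, however, is misstated, and as written it does not work. It must be
\begin{equation*}
\overline U_2=\min\{\overline U_0,k_U\},\quad \underline V_2=\max\{\underline V_0,k_V\}\quad (\xi\ge 0),\qquad (\overline U_2,\underline V_2)=(k_U,k_V)\quad (\xi<0).
\end{equation*}
Here $k_U$ lies strictly \emph{below} $\overline U_0$ and $k_V$ strictly \emph{above} $\underline V_0$ on $[0,1]$, which is exactly what you derived one sentence earlier. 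Then the constant branch is the active one near $\xi=0$, the functions are continuous, and \eqref{e.919.4} holds with equality on $\xi\le1$.

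Your first choice ($\max$ for $\overline U$, $\min$ for $\underline V$) jumps at $\xi=0$. It also gives $\overline U_2\ge k_U$ everywhere, which destroys (ii). A constant ``slightly above $\overline U_0$ near $\xi=0$'' also creates a jump at $\xi=0$. For the strict margin in the $V$-component, take $m>2$ rather than $m\ge2$: at $m=2$, $k_V$ coincides with the bound in \eqref{e.0816.1'}, and $Q>0$ on $[0,1]$. Alternatively, keep $m>1$ and translate the wave so that $V_c^\alpha$ is close to its left limit on $\xi\le1$, as the paper does.

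The step you single out as the main obstacle is a two-line computation. Let $U(s)=\frac{1-a}{s-ab}$, $V(s)=\frac{s-b}{s-ab}$ be the coexistence state of the system with $u/(1+\beta)$ in place of $u$, where $s=\frac{1}{1+\beta}$. It satisfies $sU+aV=1$ and $V=1-bU$, and $V$ is increasing in $s$. With $s_1=\frac{1}{1+\alpha/(m+1)}>s_2=\frac{1}{1+\alpha/m}$ one has $k_U=U(s_1)$ and $k_V=V(s_2)$, so
\begin{equation*}
1-bk_U-k_V=V(s_1)-V(s_2)>0,\qquad 1-k_U-ak_V=-(1-s_1)U(s_1)+a\bigl(V(s_1)-V(s_2)\bigr).
\end{equation*}
The last quantity is negative if and only if $\frac{\alpha}{m+1}>ab\,\frac{\alpha}{m}\left(1+\frac{\alpha}{m+1}\right)$, which is the first bound in \eqref{e.919.7}. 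The second bound gives $k_V>0$.

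Finally, in (b) the first component on the constant branch is not controlled ``as before''. Since $k_U<1$ for small $\alpha$, the margin there is $k_U(k_U+ak_V-1)>0$ (using $\underline V_2\ge k_V$), which absorbs the loss $ak_U\chi$ once $\chi_0$ is small. Likewise, the constant branch of $\underline V_2$ is active on $[0,1]$, not only on $\xi<0$, and is handled by the same strict sign.
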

\begin{proof}[{\bf Proof of \Cref{lem.qsmall'}}]
Choose
{$m>1$ so that $ab< \frac{m}{1+m}$ and then choose 
$\alpha>0$ sufficiently small  such that \eqref{e.919.7}, $c>c^{\alpha}_{\rm LLW}$ and $\frac{1}{1+\alpha}>b$}
hold. Then, 
denote the constant values
\begin{equation}
{(U^{\alpha}_{-\infty},V^{\alpha}_{-\infty})=\left( \frac{1-a}{\tfrac1{1+\alpha}-ab}, \frac{\tfrac{1}{1+\alpha} -b}{\tfrac{1}{1+\alpha} -ab}\right)},\quad 
(U_{\mathrm{con}},V_{\mathrm{con}})=\left( \frac{1-a}{\tfrac1{1+\frac{\alpha}{m+1}}-ab}, \frac{\tfrac{1}{1+\frac{\alpha}{m}} -b}{\tfrac{1}{1+\frac{\alpha}{m}} -ab}\right).  
\end{equation}
Then the choice of $m,\alpha$ implies 
\begin{align}\label{shift}
U^{\alpha}_{-\infty} > U_{\mathrm{con}} > \frac{1-a}{1-ab},\quad V^{\alpha}_{-\infty}< V_{\mathrm{con}} < \frac{1-b}{1-ab}
\end{align}
and 
\begin{equation}\label{e.919.3}
    U_{\mathrm{con}}(1-U_{\mathrm{con}}-aV_{\mathrm{con}}) < 0 < rV_{\mathrm{con}}(1-bU_{\mathrm{con}}-V_{\mathrm{con}}).
\end{equation}

The proof of (a) follows by first repeating the proof of \Cref{lem:U0V0} for the case $0<a,b<1$, to get $(\bar{U}_0,\underline{V}_0)$ (using the same notation) defined on $\left\{0 \leq \xi \leq \delta t\right\}$ and satisfying \eqref{e.919.4}, by starting with an appropriate translation of the travelling wave solutions and choosing $\delta$ small enough, due to \eqref{shift}. 
Then we can obtain $(\overline{U}_2,\underline{V}_2)$ as the truncation of $(\overline{U}_0,\underline{V}_0)$:
\begin{equation}
\overline{U}_2(t,\xi) := \begin{cases}
\min\left\{ U_{\mathrm{con}},\overline{U}_0(t,\xi)\right\}  &\text{ for }\xi \geq 0,\\
U_{\mathrm{con}} &\text{ for }\xi <0,
\end{cases}
\end{equation}
\begin{equation}
\underline{V}_2(t,\xi) := \begin{cases}
\max\left\{ V_{\mathrm{con}},\underline{V}_0(t,\xi)\right\}  &\text{ for }\xi \geq 0,\\
V_{\mathrm{con}}&\text{ for }\xi <0.
\end{cases}
\end{equation}
Using \eqref{e.919.3} and the monotonicity of the nonlinearity, 
it is easy to see that the truncated and extended functions form a supersolution pair in the domain $\left\{(t,\xi):~t>0,~-\infty < \xi \leq \delta t\right\}$.

Part (b) follows from repeating the proof of \Cref{lem.qsmall}. Note that the only place we need to restrict the rate $\gamma$ is at \eqref{e.912.1}.
\end{proof}

\begin{figure}[H]
\begin{tikzpicture}[
    x=5.35cm, y=5.15cm,
    line cap=round,
    line join=round,
    font=\tiny,
    axe/.style={black,line width=1.35pt,
        -{Stealth[length=3.7mm,width=2.7mm]}},
    courbe/.style={line width=1.35pt},
    courbeepaisse/.style={line width=2.55pt},
    niveau/.style={line width=0.8pt},
    rappel/.style={red,line width=0.85pt,
        dash pattern=on 6pt off 9pt}
]
    \def\xzero{0.10}
    \def\xone{0.92}
    \def\xdelta{2.02}
    \def\yOne{1.00}
    \def\yRhaut{0.88}
    \def\yRhautbar{0.9}
    \def\yRbas{0.79}
    \def\yBhaut{0.305}
    \def\yBmil{0.215}
    \def\yBbas{0.115}
    \def\yBbar{0.095}

    \path[use as bounding box] (-0.52,-0.15) rectangle (2.45,1.18);
    \draw[axe] (-0.24,0) -- (2.26,0);
    \draw[axe] (0,-0.13) -- (0,1.16);

    \draw[black,niveau] (0,\yOne) -- (2.25,\yOne);

    \draw[red,niveau] (0,\yRbas) -- (2.14,\yRbas);
    \draw[blue,niveau]  (0,\yBhaut) -- (2.20,\yBhaut);
    \draw[rappel] (\xone,0) -- (\xone,1.14);

    \draw[red,courbe] (0,\yRhaut) -- (\xone,\yRhaut)
        .. controls (0.955,0.845) and (1.015,0.735) .. (1.120,0.580)
        .. controls (1.280,0.375) and (1.445,0.205) .. (1.645,0.112)
        .. controls (1.780,0.052) and (1.885,0.026) .. (1.945,0.018);

     \draw[redbar,courbeepaisse] (0,\yRhautbar) -- (0.93,\yRhautbar)
        .. controls (0.970,0.862) and (1.030,0.752) .. (1.135,0.603)
        .. controls (1.29,0.398) and (1.445,0.228) .. (1.650,0.133)
        .. controls (1.790,0.070) and (1.900,0.039) .. (1.945,0.036);

    \draw[blue,courbe] (0,\yBbar) -- (0.9,\yBbar)
        .. controls (0.935,0.165) and (0.985,0.255) .. (1.045,0.372)
        .. controls (1.142,0.555) and (1.34,0.730) .. (1.56,0.848)
        .. controls (1.7,0.92) and (1.8,0.95) .. (1.985,0.970)
        .. controls (2.135,0.98) and (2.180,0.98) .. (2.22,0.98);

    \draw[bluebar,courbeepaisse] (0,\yBbas) -- (0.885,\yBbas)
        .. controls (0.910,0.140) and (0.950,0.230) .. (1.010,0.350)
        .. controls (1.108,0.535) and (1.262,0.700) .. (1.495,0.838)
        .. controls (1.675,0.944) and (1.840,0.986) .. (2.055,0.995)
        .. controls (2.120,0.997) and (2.170,0.997) .. (2.220,0.997);

     \fill[black] (0,\yOne) circle[radius=2.2pt];
    \fill[red] (0,\yRhaut) circle[radius=2.5pt];
    \fill[red] (0,\yRbas)  circle[radius=2.5pt];
    \fill[blue]  (0,\yBhaut) circle[radius=2.5pt];
    \fill[blue]  (0,\yBbar)  circle[radius=2.5pt];
 
    \fill[black] (\xzero,0) circle[radius=2.7pt];
    \fill[red] (\xone,0)  circle[radius=3.0pt];
    \fill[black] (\xdelta,0) circle[radius=2.8pt];

    \node[anchor=north,font=\tiny] at (\xzero,-0.04) {$0$};
    \node[red,anchor=north,font=\tiny] at (\xone,-0.04) {$1$};
    \node[anchor=north,font=\tiny] at (\xdelta,-0.04) {$\delta t$};
    \node[anchor=west,font=\tiny] at (2.24,-0.005) {$\xi$};

    \node[black,anchor=west,font=\tiny] at (-0.1,1.0) {$1$};
    \node[red,anchor=east,font=\tiny] at (-0.01,0.902)
        {$\dfrac{1-a}{\frac{1}{1+\frac{\alpha}{m+1}}-ab}$};
    \node[red,anchor=north east,font=\tiny] at (-0.01,0.85)
        {$\dfrac{1-a}{1-ab}$};
    \node[blue,anchor=south east,font=\tiny] at (-0.01,0.22)
        {$\dfrac{1-b}{1-ab}$};
    \node[blue,anchor=north east,font=\tiny] at (-0.01,0.23)
        {$\dfrac{\frac{1}{1+\frac{\alpha}{m}}-b}{\frac{1}{1+\frac{\alpha}{m}}-ab}$};

    \node[redbar,font=\normalsize] at (0.53,0.95) {$\overline{U}$};
    \node[bluebar,font=\normalsize] at (1.29,0.585) {$\underline{V}$};
    \node[red,font=\normalsize] at (1.50,0.11) {$U_\eta^c$};
    \node[blue,font=\normalsize] at (1.99,0.865) {$V_\eta^c$};
\end{tikzpicture}

    \caption{Illustration of the supersolution formed by the couple $(\overline{U},\underline{V})$.
    }
    \label{fig:supersolution2}
\end{figure}

\section{Competition system: proofs of Theorems \texorpdfstring{\ref{thm:syst1}}{thm:syst1} and \texorpdfstring{\ref{thm:syst2}}{thm:syst2}}\label{sec:proofsyst}

\begin{lemma}\label{lem.0815.4}
Fix $c \in (c^0_{\mathrm{LLW}},\infty)$, let $\lambda$ be the smaller root of $c\lambda = \lambda^2 + 1-a$, and let $\underline{c}, \overline{c}$ be such that 
\begin{equation}\label{e.0816.a}
c < \underline c <  \overline{c} \leq +\infty. 
\end{equation}
 Suppose that there are $\eta>0$ and $q\in\mathbb{R}$ such that
\begin{equation}\label{e.0816.6}
\sup_{ (\underline{c}-\eta)t \leq x \leq (\overline c + \eta)t} \left(|u(t,x)| + | v(t,x)-1|\right) \to 0 \quad \text{ as }t \to \infty,
\end{equation}
\begin{equation}\label{e.0816.7}
\limsup_{t\to\infty} \, t^{-q} e^{\lambda(\overline c -c)t}u(t,\overline c t) <+\infty \quad \text{ when }\bar{c}<\infty,
\end{equation}
\begin{equation}\label{e.0816.7'}
{\limsup_{x\to\infty} \, x^{-q} e^{\lambda x}u(T',x) <+\infty \quad \text{ for some } T'>0 } \text{ when }\bar{c}=\infty.
\end{equation}
Then there exist $T>0$ and $C_T>0$ such that
\begin{equation}\label{e.0816.8}
u(t,x) \leq  C_T\left[
e^{-\underline \lambda (x-  \underline{c} t)} + (x-2\lambda t)^q e^{-\lambda (x - c t)}
\right] 
\end{equation}
in     $\Omega = \left\{(t,x):~ t >T,~ \underline{c} t< x <\overline{c} t\right\},$ where $\underline{\lambda}$ is the larger root of $\underline c \underline{\lambda}  = \underline{\lambda}^2 + 1-a$.
\end{lemma}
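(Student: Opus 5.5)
The plan is a comparison argument for the scalar linear inequality satisfied by $u$ inside the sector $\Omega$. Since $v\to1$ uniformly on $[(\underline c-\eta)t,(\overline c+\eta)t]$ by \eqref{e.0816.6}, for any $\epsilon>0$ there is $T$ such that $u_t-u_{xx}\le u(1-a v)\le (1-a+\epsilon)u$ there. To avoid losing the sharp prefactor, I would not absorb $\epsilon$ into the growth rate directly. Instead I would use the integrability in time of the error: writing $1-av=1-a+a(1-v)$, I would aim for a time-dependent bound $\sup_{\Omega_t}|1-v|=:h(t)$. If $h$ is not integrable, I would replace the sharp sector by a slightly wider one and accept a constant $C_T$. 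Concretely, I would compare $u$ with $\beta(t)\,[\overline w_1+\overline w_2]$, where $\beta(t)=\exp(a\int_T^t h)$. As a fallback, I would argue that on $\Omega$ the linear solution of $w_t=w_{xx}+(1-a)w$ already dominates, because of the strict speed gap.

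Next I build the two linear supersolutions of $w_t=w_{xx}+(1-a)w$. The first is $\overline w_1=e^{-\underline\lambda(x-\underline ct)}$. It is an exact solution, since $\underline\lambda^2+1-a=\underline c\underline\lambda$, and it controls the left boundary $x=\underline ct$, where it equals $1\ge u/C$. The second is $\overline w_2$, the solution of the linear heat equation with growth $1-a$ started at time $T$ from data $\asymp x^q e^{-\lambda x}$. This is exactly what \Cref{lem:C1}(i)-type asymptotics, or an explicit Gaussian convolution, describe: $\overline w_2\asymp (x-2\lambda t)^q e^{-\lambda x+(\lambda^2+1-a)t}=(x-2\lambda t)^qe^{-\lambda(x-ct)}$ uniformly for $x/t\in[\underline c,\overline c]$. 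Here one uses $\underline c>c>2\lambda$, which holds because $\lambda$ is the smaller root.

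The boundary and initial comparisons are handled case by case. If $\overline c<\infty$, then on the right boundary $x=\overline ct$ hypothesis \eqref{e.0816.7} gives $u\lesssim t^qe^{-\lambda(\overline c-c)t}$, which is $\lesssim \overline w_2(t,\overline ct)$ since $(\overline c-2\lambda)^q t^q\asymp t^q$. At the initial time $T$ the section $\{\underline cT<x<\overline cT\}$ is compact and $u\le1$, so a large constant suffices. If $\overline c=\infty$, I would take $T=T'$, so that the domain becomes $\{x>\underline ct\}$; the initial comparison $u(T,x)\le C x^q e^{-\lambda x}$ then follows from \eqref{e.0816.7'} together with boundedness on compacts, and $\overline w_2$ is started from that very datum. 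Applying the parabolic comparison principle on the moving domain yields \eqref{e.0816.8}.

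The main obstacle is the mismatch between the perturbed growth $1-av$ and $1-a$, because an $\epsilon$-perturbed exponent would destroy the polynomial factor $t^q$. I would handle this by exploiting the strict gap $\underline c>c$. In the region $x\ge\underline ct$, the exponential weight $e^{-\lambda(x-ct)}$ is already $e^{-\lambda(\underline c-c)t}$, so it is small. I would choose the supersolution exponent slightly perturbed, $\lambda'$ with $\lambda'^2+1-a+\epsilon=c'\lambda'$, only in $\overline w_1$. For $\overline w_2$, I would verify directly that $\overline w_2$ satisfies $\partial_t\overline w_2-\partial_{xx}\overline w_2-(1-a)\overline w_2\ge a(1-v)\overline w_2$, using the multiplier $\beta$ when $1-v$ decays fast enough. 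If it does not, I would first bootstrap the decay of $1-v$ from the $v$-equation, where $u$ is exponentially small in $\Omega$.
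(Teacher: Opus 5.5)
Your comparison function $\beta(t)\bigl[e^{-\underline\lambda(x-\underline ct)}+\overline w_2\bigr]$ is the paper's: there $\overline w_2=\psi$ is the linear solution from data $(1+x_+)^qe^{-\lambda\max\{1,x\}}$, bounded below by \Cref{lem:C0}, and $\beta$ is a bounded time factor. Your boundary and initial comparisons are also the ones the paper makes. The gap is the one non-routine input: integrability in time of $h(t)=\sup_{\Omega_t}(1-v)$. Hypothesis \eqref{e.0816.6} only gives $h(t)\to0$. You leave integrability as a conditional, and your fallbacks do not replace it.

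The error $a(1-v)u$ is relative (linear in $u$), so the smallness of $e^{-\lambda(x-ct)}$ on $\Omega$ does not help absorb it, and widening the sector only makes $h$ larger. Since $\overline w_2$ solves $\partial_t\overline w_2-\partial_{xx}\overline w_2-(1-a)\overline w_2=0$ exactly, the inequality you propose to verify directly fails unless $v\equiv1$. With the multiplier, the cost is exactly $\exp(a\int_T^t h)$. Using the rate $1-a+\epsilon$ instead costs $e^{\epsilon t}$, which destroys the $t^q$ prefactor. Perturbing the exponent only in $\overline w_1$ cannot compensate the deficit $-a(1-v)\overline w_2$ where $\overline w_2\gg\overline w_1$ (for instance for large $x$ when $\overline c=\infty$). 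It also would not reproduce the exponent $\underline\lambda$ of \eqref{e.0816.8}.

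The bootstrap in your last sentence is the right fix. It is what the paper imports from \cite[Lemma 1.12]{wu2023sharp}, namely $u+|1-v|\le Ce^{-\eta' t}$ on $[(\underline c-\eta')t,(\overline c+\eta')t]$. However, its premise that $u$ is exponentially small is not a hypothesis and must be proved first. One way to do this:
\begin{itemize}
\item For large $t$ on the cone of \eqref{e.0816.6}, $u\le\epsilon$ and $v\le1$ give $u_t-u_{xx}\ge(1-a-\epsilon)u$.
\item A point $x_0\in[(\underline c-\eta/2)t_0,(\overline c+\eta/2)t_0]$, together with a ball $B_R(x_0)$, stays in the cone for $t\in[t_0,(1+\kappa)t_0]$ when $\kappa$ is small.
\item Set $\gamma=1-a-\epsilon-\pi^2/(4R^2)>0$. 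The parabolic Harnack inequality and the Dirichlet subsolution $m\,e^{\gamma(t-t_0-1)}\cos\bigl(\pi(x-x_0)/(2R)\bigr)$ on $B_R(x_0)$ then give $\epsilon\ge u((1+\kappa)t_0,x_0)\gtrsim e^{\gamma\kappa t_0}u(t_0,x_0)$, so $u$ is exponentially small on that slightly narrower cone.
\item Next, $w=1-v\ge0$ satisfies $w_t-dw_{xx}\le-\tfrac r2 w+rbu$ wherever $w\le\tfrac12$.
\item Because of this damping, $e^{-\mu(x-c_1t)}+e^{\mu(x-c_2t)}+Ke^{-\gamma' t}$, with $c_1=\underline c-\eta/2$, $c_2=\overline c+\eta/2$, small $\mu>0$ and large $K$, is a supersolution on $[c_1t,c_2t]$. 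Drop the right layer if $\overline c=\infty$.
\item This yields exponential decay of $1-v$ on a slightly narrower cone that still contains $\Omega$.
\end{itemize}
With that estimate your $\beta$ is bounded, and the rest of your argument goes through as written.
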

 
\begin{proof}[{\bf Proof of \Cref{lem.0815.4}}]
First, it follows from \cite[Lemma 1.12]{wu2023sharp} that there is $\eta'>0$ such that
\begin{equation}
\lim_{t\to\infty} e^{\eta' t}\sup_{(\underline c-\eta') t \leq x \leq (\bar c + \eta')t} \left( u(t,x) + |v(t,x)-1|\right) = 0.
\end{equation}
Hence, up to multiplication by a factor 
\begin{equation}
\beta(t) = \exp
\left(\pm C e^{-\frac{\eta'}{2} t} \right)
\end{equation}
which is bounded uniformly by positive constants, we may assume without loss of generality 
that $u$ satisfies
\begin{equation}
u_t - u_{xx} \leq (1-a) u \quad \text{ for }t>0,~ \underline{c} t \leq x \leq \bar{c} t.
\end{equation}
Next, choose a supersolution of the form 
\begin{equation}
\overline{u} = C\left[
e^{-\underline \lambda (x-  \underline{c} t)}+ \psi(t,x)
\right],
\end{equation}
where $\psi$ is the solution to the homogeneous equation 
\begin{equation}
\psi_t - \psi_{xx} = (1-a) \psi  \quad \text{ for }t>0,~ x \in \mathbb R,
\end{equation}
with initial data $ (1 + x_+)^q e^{-\lambda \max\left\{1,x\right\} }$. 
Since $c=\lambda+\tfrac{1-a}{\lambda}$ and $\underline{c}>c>2\lambda$, by 
\Cref{lem:C0},
\begin{equation}
\psi(t,x) \gtrsim
(x-2\lambda t)^q e^{-\lambda(x-ct)}
\qquad \text{ for }t>0,\  x \geq  \underline{c} t+1. 
\end{equation}

On the left boundary, by taking $C>1$, we have 
$\bar{u}(t,\underline{c} t)\geq C \geq  u(t,\underline{c} t)$ for $t\geq 0$.
On the right boundary, again by \Cref{lem:C0},
\begin{equation}
\bar{u}(t, \bar c t) \gtrsim \psi(t, \bar ct) \gtrsim t^{q} e^{-\lambda (\bar c - c)t} \gtrsim u(t,\bar c t) \quad \text{ for } t\gg1,
~\text{ in case }\bar c <\infty,
\end{equation}
and
\begin{equation} 
\psi(T,x)
\asymp x^q e^{-\lambda x} \gtrsim u(T,x) \quad \text{ as } x\to\infty, \text{ in case }\bar c = \infty.
\end{equation}
By choosing $T=0$ in case $\bar{c}<\infty$ (or $T=T'$ in case $\bar{c}=\infty$) and the constant multiple $C>1$ in $\overline u$ large enough, we can verify in either case that $\overline{u} \geq u$ on the parabolic boundary of $\Omega$. The conclusion follows by the comparison principle. 
\end{proof}

This lemma will be applied below with ($c=c_\lambda$,  $\bar{c}=+\infty$, and with $\underline{c} \in (c_\lambda, c_{\lambda} + 1)$) and ($c = c_{\mathrm{GL}}$ and $\underline c,\,\overline c \in (c_{\mathrm{GL}}, 2\sqrt{dr})$).

\begin{proof}[{\bf Proof of \Cref{thm:syst1}}]

Let $\lambda \in (0,\lambda_{\mathrm{LLW}})$. Then $c_\lambda=\lambda + \frac{1-a}{\lambda} > c^0_{\mathrm{LLW}}.$  We mainly treat the case $0<a<1<b$, and comment on the necessary modifications to handle the case $0<a,b<1$.

\medskip

\noindent {\bf \# Step 0.} The lower bound of the level set $\xi_{\kappa}[u](t)$ of $u$ immediately follows from Lemma \ref{lem:C1} and $u_t \geq u_{xx}+ u(1-a-u)$ (thanks to $v \leq 1$). Hence, \Cref{lem:C1}(ii) implies there exists $L>0$ such that 
\begin{equation}
\liminf_{t\to\infty} \inf_{x\in\R} \left( u(t,x) -\Phi_{1-a}(x-c_{\lambda} t-\frac{q}{\lambda}\ln (1+t)+L)\right) \geq 0.
\end{equation}
This gives both the lower bound for the level set as well as the necessary tail bound for the $u$-component when we pass to the limiting entire solution.

It remains to show the upper bound.

First, we fix $\alpha>0$ as in \Cref{l.0907.1}. Then, we fix $\delta\in \left(0, \left(\tfrac{\lambda}{|q|+1}\right)^2\right)$ so that the conclusion of \Cref{lem.qsmall} holds.

\medskip
\noindent {\bf \# Step 1.} 
Let $\psi$ be defined in Lemma~\ref{lem:C0}.
Choose $K>0$ such that $u_0\le K\psi_0$. Since $u_t-u_{xx}\leq u$,
by comparison, $u(t,x)\le Ke^{at}\psi(t,x)$. Hence, by Lemma~\ref{lem:C0}, for any sufficiently large $t$,
\begin{equation}
u(t,x)\lesssim x^qe^{-\lambda x}
\qquad\text{as }x\to+\infty.
\end{equation}
Here the implicit constant may depend on $t$.

\medskip

\noindent {\bf \# Step 2.}
Since $u_t-u_{xx}\le u$, comparison
with the scalar linear equation gives,
for every $\hat c>\lambda+\frac{1}{\lambda}$,
\begin{equation}
\sup_{x\ge\hat ct}u(t,x)\to0
\qquad(t\to\infty).
\end{equation}
By the far-field stability result for
the second component, for every
$\varepsilon>0$,
\begin{equation}
\sup_{x\ge(\hat c+\varepsilon)t}
|v(t,x)-1|\to0.
\end{equation}

We next apply \Cref{lem:perturbed-TW}.
For each $\varepsilon>0$, choose its
comparison-wave speed
\begin{equation}
c_\varepsilon\in
\left(c_\lambda,c_\lambda+\frac{\varepsilon}{2}\right).
\end{equation}
Then we can compare with a translated version of the supersolution pair $(\mathfrak U, \mathfrak V)$ constructed in \Cref{lem:perturbed-TW}. 
The comparison principle and the
exponential right-tail decay of the
comparison profiles then yield
$C_\varepsilon,\gamma_\varepsilon>0$
such that
\begin{equation}
u(t,x)+|1-v(t,x)|
\le C_\varepsilon e^{-\gamma_\varepsilon t},
\qquad
t\gg1,\quad
x\ge(c_\lambda+\varepsilon)t.
\end{equation}
In particular, taking
$\varepsilon=\frac{\delta}{8}$ gives
\begin{equation}\label{e.0829.4}
0\leq  1-v(t,x) \leq \frac{1}{\gamma_1}e^{-\gamma_1 t}  \quad \text{ for }t\gg 1,~x \geq (c_\lambda + \tfrac{\delta}7)t. 
\end{equation}

\medskip
\noindent {\bf \# Step 3.} We claim that for the $\delta>0$ chosen in \# Step 0, there exists $T$ such that 
\begin{equation}\label{e.0828.1}
u(t,x) \lesssim  |x-2\lambda t|^q e^{-\lambda (x - c_\lambda t)},\quad \text{ for } x\geq (c_\lambda+\tfrac{\delta}{5})t,~t\geq T.
\end{equation}
To this end, apply \Cref{lem.0815.4} with $c=c_\lambda$,  $\bar{c}=+\infty$, and with $\underline{c} \in (c_\lambda, c_{\lambda} + 1)$ chosen such that
\begin{equation}\label{e.0828.2}
\lambda + \underline\lambda < c_\lambda + \frac{\delta}5,
\end{equation}
where $\underline\lambda$ is the larger root of $\underline{c} \lambda = \lambda^2 + 1-a$. 
The last condition is possible since $\lambda + \underline\lambda \to c_\lambda$ as $\underline{c} \searrow c_\lambda$ (the sum of two roots of $-c_\lambda \lambda + \lambda^2 + 1-a$ is $c_\lambda$). 
Then, by \Cref{lem.0815.4},
\begin{equation} 
u(t,x) \lesssim \left[
e^{-\underline \lambda (x-  \underline{c} t)} + |x-2\lambda t|^q e^{-\lambda (x - c t)}
\right] \quad \text{ for } x \geq \underline{c}t.
\end{equation}
Since the second term dominates uniformly on $\{\tfrac{x}{t} \geq \lambda + \underline\lambda \} \supseteq \{\tfrac{x}{t}\geq c_\lambda + \frac{\delta}5\}$. This completes the \# Step 3.

\medskip
\noindent {\bf \# Step 4.}  For $t_0 \gg 1$, we consider $(\widetilde{u},\widetilde{v})(t,\xi) = (u_{\mathrm{new}},v_{\mathrm{new}})(t,\xi + c_\lambda t + \rho_{\mathrm{new}}(t))$, where
\begin{equation}
(u_{\mathrm{new}},v_{\mathrm{new}})(t,x) = (u,v)(t + t_0,x + (c_\lambda + \delta)t_0) \quad \text{ and }\quad \rho_{\mathrm{new}}(t) = \frac{q}{\lambda}\ln(t_0 + t),
\end{equation}
is a fixed translation of $(u,v)$ in the $(t,x)$ variables. 
Note that (recall \eqref{e.0828.1}),
\begin{equation}\label{e.0829.3a}
\widetilde{u}(t,\xi) = u(t+t_0, \xi + c_\lambda t + (c_\lambda+\delta)t_0 + \tfrac{q}{\lambda}\ln(t+t_0)) \lesssim e^{-\lambda\xi  -\lambda \delta t_0} \qquad \text{ when }\xi = \delta t.
\end{equation}
Moreover, by choosing $t_0 \gg 1$, the function $\rho_{\mathrm{new}}$
satisfies \eqref{e.0816.2}. So 
the hypothesis on $\rho_{\mathrm{new}}$ in \Cref{lem:U0V0} is satisfied.  
Choose $\gamma \in (0,\min\left\{r,r(b-1),\gamma_1\right\})$. By  \Cref{lem.qsmall}, there exists $\chi_0>0$ such that $(\overline{U}, \underline{V})$ is a supersolution in \begin{equation}
\Omega = \left\{(t,\xi):~ t \geq 0,~ \xi \leq \delta t\right\}.\end{equation} We claim that the following hold for $t_0 \gg 1$. 

We claim that, for sufficiently large $t_0$, 
\begin{itemize}
    \item[{\rm(i)}] $\displaystyle e^{\lambda \xi} \widetilde{u}(t,\xi) \leq  e^{\lambda \xi}  \overline{U}(t,\xi)$ for all $t>0$,~$\xi = \delta t.$
    \item[{\rm(ii)}]  $1- \widetilde{v}(t,\delta t) \leq 1- \underline{V}(t,\delta t)$   for $t \geq 0$.
    \item[{\rm(iii)}] $\displaystyle  \sup_{x\in \mathbb{R}}\widetilde{u}(t,x) \leq 1+ \tfrac{\alpha}9$  for $t \geq 0$.
\end{itemize}
Assertion (i) follows from \eqref{e.0829.3a} and the explicit construction of $\overline{U}_0$.

For assertion (ii), it follows from  \eqref{e.0829.4} and \Cref{lem.qsmall}(iii$''$) that, for $t_0 \gg 1$,
\begin{equation}
1-\widetilde{v}(t,\xi)\big|_{\xi = \delta t} \leq 
1- \underline{V}(t,\xi) \big|_{\xi = \delta t},
\end{equation}
This proves assertion (ii).
 Assertion (iii) is standard, since $u$ is a subsolution to the Fisher-KPP equation with nonlinearity $u(1-u)$. Based on (i)--(iii) and \Cref{lem.qsmall}(i$'$), 
 we can apply the comparison principle in the domain $\Omega$ to deduce that
 \begin{equation}\label{e.110.2}
 \widetilde{u}(t,\xi) \leq \overline{U} \quad \text{ and }\quad \widetilde{v}(t,\xi) \geq \underline{V} \quad \text{ for }t\geq 0,~ \xi \leq \delta t.
 \end{equation}
 It follows that the level sets satisfy
 \begin{equation}
 \xi_{\kappa}[\widetilde{u}](t) \leq O(1),\qquad 
\xi_{\kappa}[u_{\mathrm{new}}](t)  \leq c_\lambda t + \rho_{\mathrm{new}}(t) + O(1),
\end{equation}
 and hence
 \begin{equation}
 \xi_{\kappa}[u](t+ t_0) \leq c_\lambda t + \frac{q}{\lambda}\ln (t_0 +t) + O(1).
 \end{equation}
This completes the estimate of the level set of $u$ up to a bounded error.

Let $t_n\to\infty$ and extract a locally convergent subsequence of
\begin{equation}
(\widetilde{u}_n(t,y), \widetilde{v}_n(t,y)):= (\widetilde{u}(t+t_n,y),\widetilde{v}(t+t_n,y)).
\end{equation}
Then the limit $(\widetilde{u}_\infty, \widetilde{v}_\infty)$ in $\mathscr{C}_{\mathrm{loc}}(\mathbb{R}^2)$ satisfies the parabolic system \eqref{eq:LV} in $\mathbb{R}^2$, with $c=c_\lambda$.

By \eqref{e.110.2} and the
construction of $\overline U$,
every subsequential limit satisfies
\begin{equation}\label{e.110.3}
\widetilde u_\infty(t,y)
\le U_{c_\lambda}^\alpha(y)
\lesssim e^{-\lambda y},
\qquad t\in\mathbb R,\quad y\ge0.
\end{equation}
Moreover, thanks to \# Step 0,  
\begin{equation}\label{e.919.6a}
 \widetilde{u}_\infty(t,y) \gtrsim e^{-\lambda y} \quad \text{ for }t\in\mathbb R,~y \geq 0, \qquad 
\inf_{t\in \mathbb{R}, \xi \leq 0}\widetilde{u}_\infty >0,
\end{equation}
($\widetilde{u}_\infty$ is bounded from below by a travelling wave solution $\Phi_{1-a}$ of the KPP problem $c_\lambda \Phi' + \Phi'' + \Phi(1-a - \Phi) = 0$)
and 
\begin{equation}\label{e.919.6b}
\inf_{t\in \mathbb{R}, \xi \geq 0}\widetilde{v}_\infty >0,
\end{equation}
since $\widetilde{v} \geq \underline{V}$, so that $\widetilde{v}_\infty \geq V^\alpha_c(\xi)$, where the latter is the travelling wave solution appearing in \eqref{e.bar}.

The estimates \eqref{e.110.3}, \eqref{e.919.6a} and \eqref{e.919.6b} allow us to apply \Cref{thm:liouville} to conclude the convergence to the travelling profile, which is \eqref{sys1-TW-conv}. 

For the weak-weak competition case $0<a,b<1$,
the argument is similar,  by using the supersolution constructed in \Cref{lem.qsmall'}.
\end{proof}

We now turn to \Cref{thm:syst2}. We first need an additional reduction result. 

\begin{proposition}\label{lem:1C}
Let $\rho \in (0,1)$, $T>0$ and $\alpha>0$ be given. Set $c_v= 2\sqrt{dr}$, $\eta_v = \frac32 \sqrt{\frac{d}{r}}$ and let $w$ be a positive solution to  $$w_t - dw_{xx} = rw(1 - \rho e^{-\alpha t} -w)$$ in the domain 
 \begin{equation}
 \Omega = \left\{(t,x):~ t>T,~  x >\underline{c}t\right\} \quad \text{ for some }\underline c \in (0,c_v).
 \end{equation}
 If there exists $\delta >0$ small such that
 \begin{equation}\label{e.102.6}
 \lim_{t\to+\infty} e^{\delta t} |w(t,\underline{c}t) -1| = 0 ,
 \end{equation}
 then there exist $T_1 \geq T$, $\chi_0 \geq 1$ and $0<\chi_1 <1$ and $L_0 \in \mathbb{R}$ such that
 \begin{equation}
 w(t,x) \geq \frac{1}{1 + \chi_0 e^{-\chi_1 t}} \Phi_1^\star (x - s(t) + L_0) \quad \text{ for }\Omega_1:= \{(t,x):~ t\geq T_1, \underline c t \leq x \leq s(t)\},\label{e.102.3}
  \end{equation}
where $s(t) = c_v t - \eta_v \ln(1+t)$, and  $\Phi_{1}^\star$ is the travelling wave with spatial exponent $\lambda_v$,
\begin{equation}
-c_v\Phi_1'-d\Phi_1''=r\Phi_1(1-\Phi_1).
\end{equation}
\end{proposition}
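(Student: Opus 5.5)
The plan is to build an explicit subsolution for the perturbed equation and compare it with $w$ on the moving domain $\Omega_1$. I first remove the time-dependent perturbation of the growth rate. Set $\beta(t):=\bigl(1+\chi_0e^{-\chi_1 t}\bigr)^{-1}$ with $0<\chi_1\le\alpha$ small and $\chi_0\ge1$ large. The candidate is $\underline w(t,x):=\beta(t)\,\Phi_1^\star(x-s(t)+L_0)$, and I check
\begin{equation}
\underline w_t-d\underline w_{xx}-r\underline w(1-\rho e^{-\alpha t}-\underline w)\le 0 .
\end{equation}
Writing $\Phi=\Phi_1^\star(\cdot+L_0)$ and using $-c_v\Phi'-d\Phi''=r\Phi(1-\Phi)$, the left-hand side equals
\begin{equation}
\beta\Bigl[\tfrac{\beta'}{\beta}+\tfrac{\eta_v}{1+t}\Phi'+r\rho e^{-\alpha t}+r\Phi\bigl(\beta-1\bigr)\Bigr].
\end{equation}
Here $\Phi'<0$, so the drift term $\tfrac{\eta_v}{1+t}\Phi'$ has a favourable sign; the logarithmic delay of $s(t)$ is exactly what makes this term harmless. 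We have $\tfrac{\beta'}{\beta}=\tfrac{\chi_0\chi_1e^{-\chi_1t}}{1+\chi_0e^{-\chi_1 t}}$ and $\beta-1=-\tfrac{\chi_0e^{-\chi_1t}}{1+\chi_0e^{-\chi_1t}}$. Hence on the region where $\Phi\ge\kappa_0$ for a fixed $\kappa_0>0$, the negative term $-r\kappa_0\chi_0e^{-\chi_1 t}/(1+\chi_0 e^{-\chi_1t})$ dominates $\beta'/\beta+r\rho e^{-\alpha t}$, provided $\chi_1<r\kappa_0/2$ and $\chi_1\le\alpha$ (take $\chi_0\ge 1$).

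The real difficulty is the leading edge, where $\Phi\to0$ and the term $r\rho e^{-\alpha t}$ can no longer be absorbed by $r\Phi(\beta-1)$. There I would use the drift term. Since $\Phi'/\Phi\to-\lambda_v$ at $+\infty$, we have $\tfrac{\eta_v}{1+t}\Phi'\approx-\tfrac{\lambda_v\eta_v}{1+t}\Phi$. This gives a negative contribution of order $\Phi/t$, which still fails against $e^{-\chi_1 t}$ when $\Phi\ll e^{-\chi_1 t}t$.

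I therefore expect this step to be the main obstacle. The first remedy I would try is to restrict to $x\le s(t)$, as in $\Omega_1$. There $\Phi(x-s(t)+L_0)\ge \Phi(L_0)>0$, so $\Phi$ is bounded below on all of $\Omega_1$ and the zone $\Phi\to0$ never occurs. This is precisely why the statement is only claimed for $x\le s(t)$. So I would take $\kappa_0=\Phi_1^\star(L_0)$, with $L_0$ fixed (say $L_0=0$ after a translation), and the computation of the previous paragraph closes uniformly on $\Omega_1$. The behind-the-front side $x\to\underline c t$ is harmless because $\Phi\to1$ there.

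Next comes the parabolic boundary of $\Omega_1$.
\begin{itemize}
\item[(a)] On the left boundary $x=\underline ct$, I need $w(t,\underline ct)\ge\beta(t)\Phi$. Hypothesis \eqref{e.102.6} gives $w(t,\underline ct)\ge1-e^{-\delta t}$ for $t$ large, while $\beta(t)\Phi\le\beta(t)\le 1-\tfrac12\chi_0e^{-\chi_1t}$. This works once $\chi_1<\delta$.
\item[(b)] On the right boundary $x=s(t)$, I need $w(t,s(t))\ge\beta(t)\Phi_1^\star(L_0)$. For this I would first show that $\inf_{t\ge T_1}w(t,s(t))>0$, by comparing $w$ with the scalar KPP subsolution of \Cref{prop:subsolutionshiftallbeta} in the homogeneous case $\g\equiv1$ (speed $c_v$, diffusion $d$, rate $r$, and $\eta=\eta_v$ after rescaling). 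The factor $e^{-\rho e^{-\alpha t}}$ is absorbed by a bounded multiplier, as in the proof of \Cref{t.0}. After that, I choose $L_0$ large so that $\Phi_1^\star(L_0)\le\inf w(t,s(t))$.
\item[(c)] At the initial time $T_1$, $w(T_1,\cdot)$ is positive and continuous on the compact segment $[\underline cT_1,s(T_1)]$. Taking $\chi_0$ large makes $\beta(T_1)$ small enough to lie below $w(T_1,\cdot)$ there.
\end{itemize}
The comparison principle on $\Omega_1$ then yields \eqref{e.102.3}. The step I expect to require the most care is (b), the uniform positivity of $w$ at the delayed Bramson position $s(t)$. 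If the lower barrier from \Cref{prop:subsolutionshiftallbeta} only gives positivity at $s(t)+O(1)$, I would absorb that constant into $L_0$.
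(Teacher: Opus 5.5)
Your proposal is correct and follows essentially the same route as the paper. The shared steps are the barrier $\beta(t)\Phi_1^\star(x-s(t)+L_0)$ on $\Omega_1$, discarding the nonpositive drift term $\frac{\eta_v}{1+t}\Phi'$, and using $\Phi_1^\star(x-s(t)+L_0)\ge\Phi_1^\star(L_0)$ for $x\le s(t)$ to absorb $r\rho e^{-\alpha t}$. The boundary handling also matches: $\chi_1<\delta$ on the left boundary, uniform positivity of $w$ at $s(t)+O(1)$ from the rescaled homogeneous subsolution of \Cref{prop:subsolutionshiftallbeta} on the right boundary, and $\chi_0$ large at the initial time.

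Two cosmetic fixes are needed.
\begin{itemize}
\item[(1)] Your displayed identity is missing factors of $\Phi$. It should read $\beta\Phi\bigl[\tfrac{\beta'}{\beta}+\tfrac{\eta_v}{1+t}\tfrac{\Phi'}{\Phi}+r\rho e^{-\alpha t}-r(1-\beta)\Phi\bigr]$; this does not affect the sign argument.
\item[(2)] You should take $\chi_1<\alpha$ strictly, with $T_1$ large, so that $r\rho e^{-\alpha t}\le\tfrac{r\kappa_0}{2}(1-\beta(t))$ holds uniformly in $\chi_0\ge1$. With $\chi_1=\alpha$ you would also need $\chi_0$ large.
\end{itemize}
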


\begin{proof}[{\bf Proof of \Cref{lem:1C}}]
Let $\widetilde w(t,x) = \exp\left( r \rho \int_0^t \exp(-\alpha s) \, \dd s\right) w(t,x)$; then it is a positive supersolution to
\begin{equation}\label{e.116.2}
\widetilde{w}_t \geq d\widetilde{w}_{xx} + r\widetilde{w}(1-\widetilde w) \qquad \text{ in }\Omega.
\end{equation}
Let $\underline\zeta = \mathcal{R}\underline\Psi$ be the subsolution given as in Proposition 4.3, with $\g \equiv 0$, and let 
\begin{equation}
\underline{w}(t,x) = w_1\left(rt, x \sqrt{\frac{r}{d}}\right),\qquad w_1(t,x) = \delta_1 \underline \zeta\left(t,x -2t + \frac{3}{2}\ln(1+t) - L_1\right).
\end{equation}
Then for any $0<\delta_1\ll 1$ and  $L_1\gg 1$, $\underline{w}$ is a subsolution to \eqref{e.116.2}. Hence, it follows by comparison in the domain $\Omega_1$ that for each $L_2 >L_1$ large enough,
\begin{equation}
w(t,s(t) + L_2) \asymp \widetilde{w}(t,s(t) + L_2) \geq \delta_1 \underline \zeta\left(rt,(s(t) + L_2) \sqrt{\frac{r}{d}} -2t + \tfrac32 \ln(1+t)-L_1\right) \qquad \text{ for }t \gg 1.
\end{equation}
By the definition of $\underline\zeta$,  the right-hand side is uniformly bounded from below by a positive number as $t\to\infty$.
Hence, for any $T'>T$ large enough,
\begin{equation}
\delta:= \min\Big\{ 1, \inf_{t \geq T'}w(t,\underline c),~ \inf_{cT'\leq x \leq s(T') + L_2} w,~
\inf_{t \geq T'} w(t,s(t)+L_2)\Big\} >0.    
\end{equation}
It then follows by the comparison principle that 
\begin{equation}\label{e.1016.10}
w(t,x) \geq \delta \qquad \text{ for }t \geq T',~ \underline c t \leq x \leq s(t) + L_2.
\end{equation}
Fix
\begin{equation}
0<\chi_1<\min\left\{\alpha,\delta,r\delta\right\},
\qquad
B(t):=\frac{1}{1+\chi_0e^{-\chi_1t}},
\end{equation}
while letting $T_1\geq T$ be chosen later, 
so that
\begin{equation}
B'(t)=\chi_1B(t)(1-B(t)),
\end{equation}
and (by \eqref{e.102.6}) we have 
\begin{equation}
1-B(t) \asymp e^{-\chi_1 t},  \qquad 1-w(t,\underline ct) = o(e^{-\delta t}).
\end{equation}
Hence, 
there exists $T_1\geq T'$ such that
\begin{equation}\label{e.102.6b}
1-B(t) \geq 1-w(t,\underline ct) \quad \text{ for }t \geq T_1.   
\end{equation}

Let
\begin{equation}
\underline w(t,x)=B(t) \Phi_1^\star(x- s(t) +L_0) \qquad \text{ in }\Omega,
\end{equation}
where $\Phi^\star_1$ is as above, and $L_0$ is chosen such that $\Phi^\star_1(L_0) = \delta$.

A direct computation gives
\begin{align}
\underline w_t-d\underline w_{xx}
-r\underline w\left(1-\rho e^{-\alpha t}-\underline w\right)
&=\frac{\eta_v}{1+t}B(\Phi^\star_1)'
+rB\Phi^\star_1\left[
(1-B)(\tfrac{\chi_1}{r}-\Phi^\star_1)+\rho e^{-\alpha t}
\right]\\
&\leq rB \Phi^\star_1 \left[ (1-B) (\tfrac{\chi_1}{r} - \delta) + \rho e^{-\alpha t}\right],
\end{align}
where we used $(\Phi^\star_1)'\leq0$ and $\Phi^\star_1\geq\delta>\tfrac{\chi_1}{r}$ in $\Omega$ for the second last inequality. Since $r\delta-\chi_1>0$, we can take $T_1\gg 1$ such that, for all $\chi_0 \geq 1$, 
\begin{equation}
\frac{e^{-\alpha t}}{1-B(t)} = e^{-\alpha t}\left( \frac1{\chi_0}e^{\chi_1 t} + 1 \right) \leq \frac{\delta - \tfrac{\chi_1}{r}}
{\rho} \quad \text{ for }t \geq T_1.
\end{equation} 
As a consequence,
\begin{align}
\underline w_t-d\underline w_{xx}
-r\underline w\left(1-\rho e^{-\alpha t}-\underline w\right)\leq 0 \quad \text{ in }\Omega, 
\end{align}
provided $t\geq T_1$. 
Moreover, using \eqref{e.1016.10},
 \begin{align}
\underline{w}(t,s(t)) =  B(t)\Phi^\star_1(L_0) \leq \delta \le  \inf_{\underset{t \geq T}{\underline{c}t< x < s(t)}} w(t, x)\leq w(t,s(t)).    
 \end{align}
Next, using \eqref{e.102.6b} we have
\begin{equation}\label{e.0823.7b}
     \underline w(t,\underline{c} t) \leq  B(t)\leq w(t,\underline{c} t) \qquad \text{ for }t \geq T_1.
 \end{equation}
Lastly, up to taking a larger $\chi_0 \geq 1$, 
\begin{equation}\label{e.0823.7c}
    \underline w(T_1,x) \leq w(T_1,x) \quad \text{ for }\underline{c}T_1 \leq x \leq s(T_1).
 \end{equation}
Gathering all previous steps, $\underline w$ is a subsolution, and the parabolic comparison principle
yields \eqref{e.102.3}. This completes the proof.
\end{proof}

\begin{proof}[{\bf Proof of \Cref{thm:syst2}}]
We first treat the case $0<a<1<b$,
for which the spreading-speed estimates
were established in
\cite{girardin2019invasion}.
We then briefly discuss the modifications
needed for $0<a,b<1$, whose spreading
speed was determined in
\cite{liu2019asymptotic}.  We denote the speed of $u$ by $c_{\mathrm{GL}}$, that of $v$ by $c_v= 2\sqrt{dr}$. In particular, for each sufficiently small $\eta>0$, 
\begin{equation}\label{e.spu}
\lim_{t\to\infty} \sup_{x > (c_{\rm GL} + \eta)t} u(t,x) = 0 \qquad  \liminf_{t\to\infty} \inf_{0 \leq x <  (c_{\rm GL} - \eta)t} u(t,x) >0  
\end{equation}
\begin{equation}\label{e.spv}
\lim_{t\to\infty} \sup_{x > (c_v + \eta)t} v(t,x) = 0 \qquad  \liminf_{t\to\infty} \inf_{(c_{\rm GL} + \eta)t < x <  (c_v - \eta)t} v(t,x) >0. 
\end{equation}
Under the intended parameter regime, we have
\begin{equation}
c^0_{\rm LLW}<c_{\mathrm{GL}}<2, \qquad 2<c_v<\fraksc.
\end{equation}
Moreover, let $\lambda_v = \sqrt{\frac{r}{d}}$, $\lambda_{\mathrm{GL}} = \sqrt{dr} - \sqrt{a}$,  $\eta_{v} = \frac3{2\lambda_v}$ and $q_u = \frac32 (\sqrt \frac{ad}{r} -1)$ as defined in the introduction. 
We establish the sharp level-set estimates.

Let 
\begin{equation}\label{e.sss}
s(t) = c_v t - \eta_{v} \ln (1+t).\end{equation}  We claim that there exists $y_1\geq 1$ such that
\begin{equation}\label{e.921.1}
    av(t,y + s(t)) \leq \overline\sigma(y):= a\min\left\{\exp\left(- \frac{\lambda_v}{2} (y - y_1)\right),1\right\} \quad \text{ in }t\geq 0,~ y \in \mathbb{R}.
\end{equation}
Indeed, $v$ is a subsolution to the homogeneous Fisher-KPP problem, i.e. $v_t - d v_{xx} \leq rv(1-v)$ for $t \geq 0$, $x \in \mathbb{R}$ with compactly supported initial data, so \eqref{e.921.1} follows from \cite{bramson1983convergence,hamel2013short}. In fact, by using a sub- and supersolution technique, there exists $y_0 >1$ such that
\begin{equation}
v(t,y + y_0 + s(t)) \leq \min\left\{(1+y)\exp\left(- \frac23{\lambda_v} y\right),1\right\} \quad \text{ in } t\geq 0,~ y \geq 0.
\end{equation}
Together with $v\le1$, this implies
\eqref{e.921.1} upon increasing $y_1$.

It follows that $u$ satisfies
\begin{equation}
\begin{cases}
    u_t \geq u_{xx} + u(1- \overline{\sigma}(x-s(t)) -u) \quad &\text{ for } t>0,~ x \in \mathbb{R}, \\
    u(t,s(t)) >0 \quad &\text{ for }t >0.
\end{cases}
\end{equation}
Since
\begin{equation}
\int_0^\infty y\overline{\sigma}(y) \dd y <+\infty, \qquad \int_{-\infty}^0 |\overline\sigma(y) -a| \dd y <+\infty,
\end{equation}
it follows from \Cref{thm:pulling} that the level set of $u$ satisfies
\begin{equation}\label{e.106.1}
\xi_{\kappa}[u](t) \geq c_{\mathrm{GL}} t+\frac{q_u}{\lambda_{\mathrm{GL}}}\ln t+O(1)\quad \text{ as } t\to\infty.
\end{equation}

Next, we prove the upper bound for level sets. 
In view of \Cref{lem.0815.4}, it is enough to show
\begin{equation}\label{e.0829.z}
  \limsup_{t\to\infty} \,t^{-q_u} e^{\lambda_{\mathrm{GL}}(\overline{c} - c_{\mathrm{GL}} )t} u(t,\overline{c} t) \lesssim 1,
\end{equation}
for some $\overline c \in (c_{\mathrm{GL}}, 2\sqrt{dr})$. After that, the level-set estimate and 
the remaining convergence argument follow the strategy in the proof of \Cref{thm:syst1}, once the corresponding comparison and Liouville hypotheses are verified. We now proceed to show \eqref{e.0829.z}.

\medskip
\noindent {\bf \# Step 1.} 
 (Lower bound for the leading $v$-front.)

Let $\Phi^\star_1$ be as in \Cref{lem:1C} and fix any $\underline{c} \in (c_{\mathrm{GL}}, 2)$. We claim that there exist $L_0 \in \mathbb{R}$, $\chi_0,\chi_1>0$ such that
\begin{equation}\label{e.0829.21}
    v(t,x) \geq   \Phi_1^\star(x-s(t) + L_0)\mathbf{1}_{\{x \leq s(t)\}} - \chi_0 e^{-\chi_1 t} \quad \text{ for }x \geq \underline{c} t,~t\geq 0.
\end{equation}
Indeed, by \eqref{e.spu}, 
since $u \to 0$ exponentially uniformly in $$\Omega_1= \left\{(t,x):~ t \geq T,~ \underline c t \le  x \right\}$$ as $t\to\infty$, we may use \Cref{lem:1C} to deduce 
the existence of some $\chi_0,\chi_1,T_1>0$ and $L_0 \in \mathbb{R}$ such that \eqref{e.0829.21} holds for $t \geq T_1$. By enlarging $\chi_0>0$, we can take $T_1 = 0$.

\medskip
\noindent {\bf \# Step 2.} (Reduction to a scalar shifting-habitat problem.)

We compare the $u$-component with a scalar equation in a shifting habitat. 
\begin{equation}\label{eggg}
\g(y) = \begin{cases}
    1-a \Phi_1^\star(y+L_0)  \quad &\text{ for }y \leq 0,\\
    1 &\text{ for }y > 0,
\end{cases}, 
\end{equation}
Combining these estimates together with the spreading speed estimates \eqref{e.spu}--\eqref{e.spv} yields, for each $\underline{c} \in (c_{\mathrm{GL}}, 2)$, 
\begin{equation}\label{e.0829.22}
\begin{cases}
    u_t \leq u_{xx} + u(\g(x-s(t)) + \chi(t) -u) \quad &\text{ for }t \geq0,~x \ge \underline{c}t,\\
 u(t,x) \leq 1 &\text{ for }t \geq 0,~ x \geq \underline{c}t,
\end{cases}
\end{equation}
where $\chi(t) = a\chi_0 e^{-\chi_1 t}$ is an integrable function.

\medskip
\noindent {\bf \# Step 3.} (Removal of the integrable perturbation.) Let 
\begin{equation}
\widetilde{u}(t,x) = u(t,x) \exp\left(-\int_0^t \chi(s) \dd s\right)
\end{equation}
then $\widetilde{u}$ is a subsolution to \eqref{eq:mainshift}, i.e. for each $\underline{c} \in (c_{\rm GL},2)$, we have
\begin{equation}\label{e.0829.22'}
\begin{cases}
    \widetilde{u}_t \leq \widetilde{u}_{xx} + \widetilde{u}(\g(x-s(t))  -\widetilde u) \quad &\text{ for }t \geq 0,~\underline{c}t \leq x \leq s(t),\\
 \widetilde u(t,x) \leq 1 &\text{ for }t \geq 0,~x \geq \underline{c}t.
\end{cases}
\end{equation}
Since $u \asymp \widetilde u$ uniformly in the region $\{t>0,~2t \leq x \leq s(t)\}$, it is equivalent to estimate $\widetilde u$.

\medskip

\noindent {\bf \# Step 4.} (Choice of the auxiliary parameters.)
We claim that 
\begin{equation}\label{e.103.1e}
    \frac{c_v^2}4-1 < (1+\sqrt{a})(c_v - 2).
\end{equation}
Indeed,
\begin{align}
   \frac{1}{c_v-2}\left[\frac{c_v^2}4-1 - (1+\sqrt{a})(c_v - 2) \right]&= \frac{c_v +2}4 -1-\sqrt a\\
   &\leq \frac12\left[\sqrt{1-a} -\sqrt{a}-1\right]<0,
\end{align}
    where we used  $0<a<1$ and $\frac{c_v}2 \in (1,\sqrt{a}+\sqrt{1-a})$.

Using  \eqref{e.103.1e} and (G1), we choose $L>0$ and $a' \in (0,a)$ but close to $a$ such that 
\begin{equation}\label{e.1011.1}
  \s = c_v,\qquad   \frac{c_v^2}4-1 < (1+\sqrt{a'})(\s-2), \qquad
      2 < \s < 2(\sqrt{a'} + \sqrt{1-a'}),  \end{equation}
      \begin{equation}
        \g(y) \leq 1 -a'  
    \qquad \text{ for }y \leq -L.
    \end{equation}

\medskip
\noindent {\bf \# Step 5.} (Upper bound at the environmental interface.) We claim that 
\begin{equation}\label{e.1011.6}
    u(t,s(t))\asymp \widetilde{u}(t,s(t))\lesssim (1+t)^{-\tfrac32 + \tfrac{\s \eta}2} e^{-\left[ \tfrac{\s^2}4 -1\right]t}.
\end{equation}
Define \begin{equation}
\widetilde\g = 1+\frac{a'}{a}(\g-1),\end{equation} then 
$
\widetilde\g\geq \g
$
and $\widetilde\g$ satisfies the conditions (G1)--(G2) with $a$ replaced by $a'$. 
Now, let $n$ (resp. $\widetilde n$) be
the corresponding solution of \eqref{eq:mainshift}, with $\g$ (resp. $\widetilde\g$ in place of $\g$) and with common initial data $N_0$ satisfying \eqref{e.ini} and
\begin{equation}
N_0 \geq u_0, \qquad N_0(0)>0.
\end{equation}
Then $\widetilde{n} \geq n$  and $\widetilde n$ has spreading speed
\begin{equation}
\widetilde{c}_{\rm GL} = \tfrac{\s}2-\sqrt{a'} + \frac{1-a'}{\tfrac{\s}2-\sqrt{a'}}  \in (c_{\rm GL},2).
\end{equation}
Hence for each $\underline{c} \in (c_{\rm GL}, \widetilde{c}_{\rm GL})$, there exists  $\widetilde{k}\ge 1$, such that
\begin{equation}
    \begin{cases}
        (\widetilde{k}\widetilde n)_t - (\widetilde{k}\widetilde n)_{xx} - \widetilde{k}\widetilde n(\g(x-s(t)) - \widetilde{k}\widetilde n) \geq 0 &\text{ for }t \geq 0,~ x \in \mathbb{R},\\[8pt]
        \widetilde{k}\widetilde{n}(t,\underline c t) \geq 1 &\text{ for }t \geq 0,\\[8pt]
        \widetilde k\widetilde{n}(t,s(t))  
        \asymp(1+t)^{-\tfrac32 + \tfrac{\s \eta}2} e^{-\left[ \tfrac{\s^2}4 -1\right]t}  &\text{ for }t \gg 1.
    \end{cases}
\end{equation}
Here, the first differential inequality is straightforward;  the second condition follows from the fact $\widetilde{n}(t,\underline{c}t) >0$ for all $t$ and $\widetilde{n}(t,\underline{c}t) \to 1-a'$ as $t\to\infty$;   
sharp asymptotics at $x=s(t)$ follow by \Cref{e.1011.5}. The estimate \eqref{e.1011.6} follows by comparison in the domain $\{(t,x):~t>0,~x \geq \underline{c} t\}$ and then restricting the resulting inequality $u(t,x) e^{-\int_0^t \chi}= \widetilde{u} \leq \widetilde k \widetilde n$ on the ray $\{x = s(t)\}$.

\medskip
\noindent {\bf \# Step 6.} (Construction of a supersolution behind the interface.)

Let $n$ be the solution of \eqref{eq:mainshift} with the same initial data as $\widetilde{n}$ and with  $\g$ and $s(t)$ (as in \eqref{eggg} and \eqref{e.sss}) and define
\begin{equation}
\bar{u}(t,x) = K(n(t,x) + \alpha(t,x)), \qquad \alpha(t,x) = e^{-(1+\sqrt{a'})(x-2t)}.
\end{equation}
We claim that for every $K\ge1$, $\bar{u}$ 
satisfies
\begin{equation}\label{e.103.1a}
     \bar{u}_t - \bar{u}_{xx} - \bar{u}(\g - \bar u) \geq 0 \qquad \text{ for }t \geq T',~ 2t \leq x \leq s(t)-L,
\end{equation}
\begin{equation}\label{e.103.1b}
    \bar{u}(t,2t) \geq 1 \qquad \text{ for }t \geq T',
\end{equation}
\begin{equation}\label{e.103.1c}
    \frac{\bar{u}(t,s(t)-L)}{n(t,s(t)-L)} \lesssim 1 \qquad \text{ for } t \geq T',
\end{equation}

The differential inequality \eqref{e.103.1a} follows from 
\begin{align}
    \bar{u}_t - \bar{u}_{xx} - \bar{u}(\g - \bar u) &\geq K\left[n_t - n_{xx} - n(\g - Kn)\right] + K\left[\alpha_t - \alpha_{xx} - \alpha(\g - 2Kn -K\alpha)\right]\\
    &\geq  K\alpha(2n + \alpha)\geq 0,
\end{align}
where we used \eqref{eq:mainshift} and 
\begin{equation}
\frac1{\alpha}\left[\alpha_t - \alpha_{xx} - \g(x-s(t))\alpha\right]  \geq 2(1+\sqrt{a'}) - (1+\sqrt{a'})^2 - (1-a') = 0 \qquad \text{ for }x-s(t) \le -L.
\end{equation}

The inequality \eqref{e.103.1b} follows from $\alpha(t,2t) = 1$. Next, observe that
\begin{equation}
n(t,s(t)-L)\approx n(t,s(t)) \gtrsim (1+t)^{-\tfrac32 + \tfrac{\s \eta}2} e^{-\left[ \tfrac{\s^2}4 -1\right]t} \gtrsim 
{\alpha(t,s(t)-L)},
\end{equation}
where the first estimate follows from Harnack's inequality, the second from \Cref{e.1011.5}, and the latter follows from the first condition in \eqref{e.1011.1}. This establishes \eqref{e.103.1c}.

\medskip
\noindent {\bf \# Step 7.}(Completion of the front estimate.) 
Setting $\bar c = c_v- \delta$ and taking $x = \bar ct$, we deduce from \eqref{e.102.13} that
\begin{equation}\label{e.111.a}
     n(t,\bar c t) \lesssim  t^{q_u} e^{-\lambda_{\rm GL}(\bar c - c_{\rm GL})t},
\end{equation}
\eqref{e.102.13} in the end with $q_B =  -\tfrac32 +\sqrt a \eta_v$, $\lambda_\star = \lambda_{\mathrm{GL}}$, $c_\star =c_{\rm GL}= \lambda_{\mathrm{GL}} + \tfrac{1-a}{\lambda_{\mathrm{GL}}}.$ 
Next, note that $\alpha$ satisfies
\begin{equation}\label{e.111.b}
 \alpha(t,\bar ct) = e^{-(1+\sqrt{a'})(c_v -\delta-2 )t} \lesssim (1+t)^{-\tfrac32 + \tfrac{\s \eta}2} e^{-\left[ \tfrac{\s^2}4 -1\right]t} \lesssim t^{q_u} e^{-\lambda_{\rm GL}(\bar c - c_{\rm GL})t}.
\end{equation}

By \eqref{e.103.1a}--\eqref{e.103.1c}, we may choose $K>1$ sufficiently large to apply comparison to obtain
\begin{equation}
\widetilde u(t,x) \leq \overline{u}(t,x) \qquad \text{ in }\{(t,x):~t \geq T',~ 2t \le x \le s(t)-L\}.
\end{equation}
Specialising to $x= \bar{c}t$ with $\bar c = \s -\delta$, we have in particular the estimate
\begin{equation}
u(t,\overline{c} t) \asymp \widetilde u(t,\overline{c} t)\lesssim n(t,\overline c t) + \alpha(t,\overline c t)
\lesssim 
t^{q_u} e^{-\lambda_{\mathrm{GL}}(\overline{c} - c_{\mathrm{GL}} )t}
\end{equation}
where we used \eqref{e.111.a}--\eqref{e.111.b}.
This proves \eqref{e.0829.z}.

Having proved \eqref{e.0829.z}, we may repeat the proof of \Cref{thm:syst1} to obtain the upper estimate of the level set of $u$ to be
\begin{equation}
\xi_{\kappa}[u](t) \leq c_{\mathrm{GL}} t + \frac{q_u}{\lambda_{\rm GL}} \ln(1+t) + O(1),
\end{equation}
as well as the convergence to the travelling-wave profile. 
This completes the proof. 
\end{proof}

\section{Rigidity results for competition systems: proofs of \texorpdfstring{Theorems \ref{t.uniqueness} and \ref{thm:liouville}}{thm:liouville}}\label{sec:liouville}

In this section, we establish the rigidity
and uniqueness results for the competition
system. We present the main arguments
for $0<a<1<b$ and indicate the
modifications needed for $0<a,b<1$.
The proofs are self-contained and rely on two maximum principles
on unbounded spatial domains, which we
establish at the end of the section.

\begin{lemma}\label{l.liouville.1}
    Assume $0 < a < 1 < b$. Under the assumptions of \Cref{thm:liouville},  for each $A \ge 1$, there exist positive constants $k_A,K_A$ such that 
        \begin{equation}\label{e.918.1}
            0<k_A \leq \inf_{t\in \mathbb R, |y|\leq A} v(t,y) \leq \sup_{{t\in \mathbb R}, |y|\leq A} v(t,y)\leq K_A<1.
        \end{equation}
Furthermore,
\begin{equation}\label{e.917.0}
\lim_{y \to +\infty} v(t,y) = 1 \quad \text{ holds uniformly in }t\in \mathbb{R}.
\end{equation}
Moreover,
    \begin{equation}\label{e.917.1}
\lim_{y \to -\infty} (u(t,y), v(t,y)) = (1,0), \quad \text{ holds uniformly for }t\in \mathbb{R}. 
\end{equation}

\end{lemma}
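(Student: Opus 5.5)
We prove the three assertions of \Cref{l.liouville.1} in the order \eqref{e.917.0}, \eqref{e.918.1}, \eqref{e.917.1}. Each one rests on a compactness argument. Since $(u,v)$ is bounded, parabolic regularity gives precompactness in $\mathscr{C}_{\rm loc}(\mathbb R^2)$ of the family of space--time translates $(u,v)(t+t_n,y+y_n)$. We then classify the possible limits by means of hypotheses (i)--(ii) of \Cref{thm:liouville} and the strong maximum principle.

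\textbf{Step 1: the limit \eqref{e.917.0}.} Let $y_n\to+\infty$ and $t_n\in\mathbb R$ be arbitrary, and extract a limit $(u_\infty,v_\infty)$ of the translates. By \eqref{e.917.2}, $u(t,y)\lesssim (\ell_0+\ell y)e^{-\lambda y}$ uniformly in $t$, so $u_\infty\equiv 0$. The limit $v_\infty$ then solves the scalar equation $v_t=dv_{yy}+cv_y+rv(1-v)$ on $\mathbb R^2$. By \eqref{e.917.-1} it satisfies $\inf v_\infty>0$, and it is bounded by $1$. The standard Liouville result for the KPP equation with positive infimum (compare with the ODE $\dot z=rz(1-z)$ started below $\inf v_\infty$ at time $-\infty$) gives $v_\infty\equiv 1$. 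Because the sequence $(t_n,y_n)$ was arbitrary, the convergence $v\to1$ as $y\to+\infty$ is uniform in $t$.

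\textbf{Step 2: the bounds \eqref{e.918.1}.} For the lower bound, suppose $v(t_n,y_n)\to0$ with $|y_n|\le A$. The limit $v_\infty\ge0$ vanishes at an interior point, so the strong maximum principle (applied to the linear equation satisfied by $v$ with bounded coefficient) forces $v_\infty\equiv0$. This contradicts $v_\infty\ge \inf_{y\ge A_0}v>0$ for $y\ge A_0$.

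For the upper bound, suppose $v(t_n,y_n)\to1$. Set $w=1-v$. It solves $w_t=dw_{yy}+cw_y+r(1-w)(bu-w)$, which we write as $w_t-dw_{yy}-cw_y+r(1-w)w=rb(1-w)u\ge0$. Since $w_\infty\ge0$ and $w_\infty(0,y_\infty)=0$, the strong maximum principle gives $w_\infty\equiv0$ backward in time, hence $v_\infty\equiv1$ on $(-\infty,0]\times\mathbb R$. The same equation then forces $bu_\infty(1-v_\infty)\equiv0$ there, which is consistent and gives no contradiction yet. To conclude, we use the $u$-equation: with $v_\infty\equiv1$ it reads $u_t=u_{yy}+cu_y+u(1-a-u)$. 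This equation has positive growth rate $1-a$, while $u_\infty\ge\inf_{y\le -A_0}u>0$ for $y\le-A_0$ and $u_\infty>0$. A KPP spreading argument shows that $u_\infty$ is bounded below by a positive constant near $y_\infty$. Then $bu_\infty>0$ there, so the right-hand side $rb(1-w)u$ of the $w$-equation is strictly positive near the zero of $w_\infty$, contradicting $w_\infty$ attaining its minimum $0$ there. Applying the same reasoning to $w$ directly, with source $\ge rb\,u_\infty>0$, gives the contradiction immediately. We therefore get $K_A<1$.

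\textbf{Step 3: the limit \eqref{e.917.1}.} Take $y_n\to-\infty$ and extract a limit $(u_\infty,v_\infty)$. By \eqref{e.917.-1}, $\inf u_\infty>0$. We show first that $v_\infty\equiv0$. Since $b>1$, for $v$ small the $v$-equation has negative effective growth $r(1-bu)$ wherever $u$ is close to $1$. We therefore aim to prove that $u_\infty\equiv1$ and $v_\infty\equiv0$ simultaneously, using a Lyapunov or sandwich argument on the cooperative system for $(u,1-v)$. We take the spatially homogeneous ODE solutions $(\underline u(t),\overline v(t))$ started at $(\inf u_\infty,1)$ at time $-T$. These are a sub/supersolution pair for the cooperative system, so comparison yields $u_\infty\ge\underline u(t+T)$ and $v_\infty\le\overline v(t+T)$. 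The ODE system with $0<a<1<b$ sends every point with $u>0$ to $(1,0)$. Letting $T\to\infty$ gives $u_\infty\ge1$ and $v_\infty\le0$. Combined with $u\le1$, which follows from the comparison $u_t\le u_{yy}+cu_y+u(1-u)$ for a bounded entire solution, this gives $(u_\infty,v_\infty)\equiv(1,0)$.

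\textbf{Main obstacle.} The hardest step is the ODE-comparison in Step 3. It requires that the trajectory from $(\inf u_\infty,1)$ converges to $(1,0)$, which holds because $(1,0)$ is the global attractor of the kinetic system on $\{u>0\}$ when $a<1<b$. It also requires the comparison to be uniform in the starting time; this uniformity is supplied by the positive lower bound on $u_\infty$ being independent of $t$.
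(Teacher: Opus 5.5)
Your proposal is correct and follows essentially the paper's route: compactness of translates, classification of the limiting entire solutions, and the strong maximum principle. The only substantive difference is that your ODE sandwich in Step 3 proves the $y\to-\infty$ classification that the paper cites from \cite{liu2019asymptotic,girardin2019invasion}.

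There is one slip in Step 2, where you say the equation forces $bu_\infty(1-v_\infty)\equiv0$. The source term is actually $rb(1-w)u=rbuv$, so $v_\infty\equiv1$ on $(-\infty,0]\times\mathbb R$ immediately gives $u_\infty\equiv0$ there, contradicting \eqref{e.917.2} (or \eqref{e.917.-1}). That is exactly the paper's argument, and it makes your KPP-spreading detour unnecessary.

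You should also record $0\le u,v\le1$ at the outset, since both $w_\infty\ge0$ and the comparison in Step 3 use $v\le1$.
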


In the case $0<a,b<1$, the conclusions are modified as follows. 
\begin{align}\label{eq:changesab}
\text{(i)} &\quad \eqref{e.918.1} \text{ holds with }\tfrac{1-b}{1-ab}< k_A \leq K_A <1, \\
\text{(ii)} &\quad \eqref{e.917.0} \text{ holds without change},
\end{align}
and \eqref{e.917.1} needs to be replaced by
    \begin{equation}\label{e.917.1'}
\lim_{y \to -\infty} (u(t,y), v(t,y)) = (\tfrac{1-a}{1-ab},\tfrac{1-b}{1-ab}), \quad \text{ holds uniformly for }t\in \mathbb{R}.
\end{equation}
The condition \eqref{e.917.-2} (in addition to \eqref{e.917.-1}) is needed for the validity of \eqref{e.917.1'}.

\begin{proof}[{\bf Proof of \Cref{l.liouville.1}}]
Since $u$ and $v$ are nonnegative and uniformly bounded, the inequalities $0 \leq u \leq 1$ and $0 \leq v \leq 1$ follow from standard comparison with spatially homogeneous solutions of \eqref{eq:LV}. 
Since $u$ is neither identically $0$ nor identically $1$ (due to \eqref{e.917.2}), we have $0 < u < 1$ thanks to the strong maximum principle. Similarly, since $v \not\equiv 0$ (due to \eqref{e.917.-1}) and $v\not\equiv 1$ (otherwise, the second equation of \eqref{eq:LV} implies $u\equiv 0$ and contradicts \eqref{e.917.2}), we have $0 < v < 1$.

Next, we prove \eqref{e.918.1}. The uniform lower bound follows from \eqref{e.917.-1} and the Harnack inequality. It remains to prove the upper bound. 
Suppose, to the contrary, that there is a sequence $(t_n,y_n)$ such that $y_n \to \bar y$ and $v(t_n,y_n) \to 1$ (or, resp., $v(t_n,y_n) \to 0$). Then, by compactness and the strong maximum principle, the sequence of functions $(\widetilde{u}_n,\widetilde{v}_n)(t,y) = (u,v)(t+t_n,y)$ converges (up to passing to a subsequence) to $(\widetilde u(t,y),1)$ (resp. $(\widetilde u(t,y),0)$). 
In case $\widetilde{v}_n \to 1$, we can pass to the limit in the second equation of \eqref{eq:LV} to obtain $\widetilde u \equiv 0$, which is impossible, since $\widetilde u$ satisfies \eqref{e.917.2}. We have proved \eqref{e.918.1}.

Next, we show \eqref{e.917.1}. Indeed, for any $t_n \in \mathbb R$ and $y_n \to -\infty$, we have (up to passing to a subsequence), 
\begin{equation}
(u_n,v_n)(t,y) := (u,v)(t + t_n, y + y_n) \to (\hat u, \hat v)(t,y) \quad \text{ in } \mathscr{C}_{\mathrm{loc}}(\mathbb R^2),
\end{equation}
where $(\hat u, \hat v)$ is another bounded entire solution of \eqref{eq:LV} such that $\inf_{\mathbb{R}^2} \hat u >0$.
We deduce that 
\begin{equation}
(\hat u, \hat v) \equiv (1,0) \qquad \text{ in }\mathbb{R}^2,
\end{equation}since every bounded, nonnegative entire solution of the homogeneous competition system whose $u$-component has 
a strictly positive infimum value is identically $(1,0)$, in the regime $0<a<1<b$ (see \cite[Lemma 2.3]{liu2019asymptotic} or \cite[Proof of Theorem 1.2]{girardin2019invasion}).
Since $(t_n,y_n)$ are arbitrary sequences with $y_n \to -\infty$, we have proved \eqref{e.917.1}. The proof of \eqref{e.917.0} is similar, as every positive bounded entire solution of the Fisher-KPP equation whose infimum value is positive is identically $1$ in $\mathbb R^2$.
\end{proof}

For $h
\in \mathbb R$ and  $\xi \in \mathbb{R}$, define
\begin{equation}
u^\xi(t,y) = u(t+\xi, y + h - \xi c), \quad 
(p^\xi(t,y), q^\xi(t,y)) = (u^\xi - u, v^\xi - v).
\end{equation}
Then the differences $(p^\xi, q^\xi)$ satisfy the linear problem:
\begin{equation}
\begin{cases}
    p_t- p_{yy} -cp_y= p(1-u^\xi - u - av) - au^\xi q  &\quad \text{ in }\mathbb{R}^2\\
    q_t - dq_{yy} - cq_y = -rb v^\xi p + rq(1-bu - v^\xi - v)&\quad \text{ in }\mathbb{R}^2
\end{cases}
\end{equation}
We need the following lemmas, which apply in both case (i) $0<a<1<b$ and case (ii) $0<a,b<1$, and which are derived from the stability of the homogeneous states. Their proofs are postponed to the end of the section to improve clarity of the overall argument.
\begin{lemma}\label{l.liouville.2}
There exists $A_1\geq 0$  such that for any $\xi > \frac{h}{c}$ and any $\bar y \geq A_1$, we have 
\begin{equation}
\inf_{y \geq \bar y} p^\xi(t,y)\geq 0,\quad q^\xi(t,\bar y)<0 ~\mathrm{ for } \; t\in \mathbb{R}\quad \Longrightarrow  \quad q^\xi(t,y)<0 \quad \mathrm{ for } \; t\in \mathbb R,~y \geq \bar y.
\end{equation}
\end{lemma}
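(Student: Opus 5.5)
We argue by contradiction with a maximum-principle argument on the half-line $[\bar y,\infty)$, using that $(0,1)$ is linearly stable for the $v$-equation when $u$ is small. The key structural fact is the sign of the coupling in the $q$-equation. Since $v^\xi>0$ and $p^\xi\ge 0$ on $y\ge\bar y$, the term $-rbv^\xi p^\xi$ is nonpositive, so $q:=q^\xi$ satisfies
\begin{equation}
q_t-dq_{yy}-cq_y\le r\,q\,(1-bu-v^\xi-v)\quad\text{wherever } q\ge 0,\ y\ge \bar y .
\end{equation}
By \eqref{e.917.0}, \eqref{e.917.2} and the shift invariance (uniformity in $t$), we can choose $A_1$ so large that for $y\ge A_1$ and all $t$, one has $v,v^\xi\ge 1-\tfrac14$ and $u\le \tfrac14$ (with $h-\xi c<0$ meaning $v^\xi(t,y)=v(t+\xi,y+h-\xi c)$; we need $y+h-\xi c$ large). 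Here the restriction $\xi>h/c$ enters, and we also enlarge $A_1$ using uniformity of \eqref{e.917.0}. This gives the zeroth-order coefficient bound
\begin{equation}
1-bu-v^\xi-v\le -\tfrac14 .
\end{equation}
Thus $q^+$ is a subsolution of a linear equation with zeroth-order coefficient $\le -r/4$ on $\mathbb R\times[\bar y,\infty)$, with $q^+(t,\bar y)=0$.

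Next, let $S:=\sup_{t\in\mathbb R,\,y\ge\bar y}q^+$, which is finite since $|q|\le 1$, and suppose $S>0$. We take a sequence $(t_n,y_n)$ with $q(t_n,y_n)\to S$. There are two cases. If $y_n$ stays bounded, translate in time, pass to a limit via parabolic estimates, and obtain an entire limit attaining an interior maximum $S>0$. At that point $q_t=0$, $q_{yy}\le 0$, $q_y=0$, so the left-hand side is $\ge 0$ while the right-hand side is $\le -rS/4<0$, a contradiction. If instead $y_n\to\infty$, then $q(t_n,y_n)=v(\cdot)-v(\cdot)\to 1-1=0$ by \eqref{e.917.0}, contradicting $S>0$. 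Hence $q\le 0$ on $y\ge\bar y$.

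Finally, we upgrade to strict inequality. The function $q\le 0$ satisfies the linear inequality $q_t-dq_{yy}-cq_y-r(1-bu-v^\xi-v)q=-rbv^\xi p\le 0$. By the strong maximum principle, if $q(t_0,y_0)=0$ at some interior point, then $q\equiv 0$ for $t\le t_0$ on the component. This contradicts $q(t,\bar y)<0$ by continuity up to the boundary. Therefore $q<0$ on $y\ge\bar y$.

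The main obstacle is the noncompactness in the case $y_n\to\infty$, together with the lack of attainment of the supremum. The plan handles this by combining the uniform limit \eqref{e.917.0} with translation compactness. If one prefers an explicit argument, one can instead compare $q^+$ against $\varepsilon e^{\kappa (y-\bar y)}+Se^{-r(t-t_1)/4}$ on bounded time strips and let $t_1\to-\infty$.
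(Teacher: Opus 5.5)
You have the same overall scheme as the paper: a maximising sequence, boundedness of $y_n$ via \eqref{e.917.0}, time-translation compactness, a contradiction at an interior maximum, and then the strong maximum principle. The step that fails is the choice of $A_1$.

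\textbf{The gap.} The lemma requires $A_1$ to be independent of $\xi$. This uniformity is exactly what the proof of \Cref{l.liouville.4} uses: there the lemma is applied with one fixed $A_1$ for every $\xi\ge\xi_1$. You cannot arrange $v^\xi\ge\frac34$ on $\mathbb{R}\times[A_1,\infty)$ uniformly in $\xi>\frac hc$. Since $h-\xi c<0$, the point $y+h-\xi c$ lies to the \emph{left} of $y$, and it tends to $-\infty$ as $\xi\to\infty$. By \eqref{e.917.1}, when $b>1$ this gives $v^\xi(t,A_1)=v(t+\xi,A_1+h-\xi c)\to0$ uniformly in $t$. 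Your own parenthetical, ``we need $y+h-\xi c$ large'', forces $A_1\gtrsim \xi c-h$, which is a $\xi$-dependent threshold. The restriction $\xi>\frac hc$ works against you here rather than for you. It is needed for \Cref{l.liouville.3}, not for this lemma.

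\textbf{The fix.} No a priori bound on $v^\xi$ is needed, because at any point where $q^\xi>0$ one has $v^\xi=v+q^\xi>v$.
\begin{itemize}
\item Choose $A_1$ only so that the unshifted $v\ge\frac34$ on $\mathbb{R}\times[A_1,\infty)$. This choice is $\xi$-independent by \eqref{e.917.0}.
\item At your interior maximum of the limit, $v^\xi_\infty(0,y^\star)=v_\infty(0,y^\star)+S\ge\frac34$.
\item Use only $u\ge0$; your bound $u\le\frac14$ is unnecessary. Together with $-rbv^\xi p\le0$, the right-hand side is at most $rS\bigl(1-\frac34-\frac34\bigr)=-\frac{rS}{2}<0$, which gives the contradiction.
\end{itemize}
Equivalently, your coefficient bound $1-bu-v^\xi-v\le-\frac12$ holds on the set $\{q^\xi>0\}$, and that is all you need for $q^+$ to be a subsolution. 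With this change the rest of your argument goes through as in the paper. That includes the observation that the limit point satisfies $y^\star>\bar y$, since $q^\xi(t,\bar y)<0$, and the strong-maximum-principle upgrade to strict inequality.
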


\begin{lemma}\label{l.liouville.3}
There exists $A_2\geq 0$  such that for any $\xi >\frac{h}{c}$ and $\bar y \leq -A_2$, if 
\begin{equation}\label{e.918.8a}
p^\xi(t,\bar y)>0 > q^\xi(t,\bar y) \; \mathrm{ for } \; t\in \mathbb{R},
\end{equation}
then
\begin{equation}
p^\xi(t,y)>0 > q^\xi(t,y) \quad \; \mathrm{ for } \; t\in \mathbb R,~y \leq \bar y.
\end{equation}
\end{lemma}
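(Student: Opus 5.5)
We argue by contradiction, using the stability of the left state, which is $(1,0)$ for $b>1$ and the coexistence state for $b<1$. By \Cref{l.liouville.1}, \eqref{e.917.1} holds uniformly in $t$, and $v^\xi$ has the same limit because it is a translate of $v$. Hence for $y\le -A_2$ with $A_2$ large, the coefficients of the linear system for $(p^\xi,q^\xi)$ lie within $\varepsilon$ of the linearisation at the stable state. In the cooperative variables $(p,-q)$ that linearised matrix is
\[
\begin{pmatrix} 1-2u^\star-av^\star & au^\star\\ rbv^\star & r(1-bu^\star-2v^\star)\end{pmatrix},
\]
with $(u^\star,v^\star)$ the stable state from \eqref{e.uv.uv}. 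It is cooperative, and its principal eigenvalue is negative. For $(1,0)$ it is triangular with diagonal entries $-1$ and $r(1-b)$, both negative. In the coexistence case stability is standard, and a positive eigenvector $(\phi_1,\phi_2)$ gives $M\phi<-\kappa\phi$ componentwise. We fix such a vector with $\phi_1,\phi_2>0$ and $\kappa>0$. We then choose $A_2$ so that $\widetilde M(t,y)\phi\le -\frac{\kappa}{2}\phi$ componentwise for $y\le -A_2$, where $\widetilde M$ is the actual coefficient matrix with $u^\xi,v^\xi,u,v$ inserted.

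Set $(P,Q)=(p^\xi,-q^\xi)$. This pair solves a cooperative linear system, and we must show $P>0$ and $Q>0$ on $y\le\bar y$. Since $p,q$ are bounded, the quantity
\[
\theta^*=\inf\{\theta\ge0:\ P+\theta\phi_1\ge0,\ Q+\theta\phi_2\ge0 \text{ on } \mathbb R\times(-\infty,\bar y]\}
\]
is finite. Suppose $\theta^*>0$. Then $(\widehat P,\widehat Q)=(P+\theta^*\phi_1,Q+\theta^*\phi_2)$ is nonnegative on the half-space $y\le\bar y$ and satisfies $\mathcal L\widehat P\ge \frac{\kappa}{2}\theta^*\phi_1-\dots$. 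More precisely, the operator applied to the constant vector $\theta^*\phi$ yields the strictly positive source $-\widetilde M\theta^*\phi\ge\frac{\kappa}{2}\theta^*\phi$.

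By minimality there is a sequence $(t_n,y_n)$ with $\min(\widehat P/\phi_1,\widehat Q/\phi_2)(t_n,y_n)\to0$. The hypothesis \eqref{e.918.8a} only gives positivity of $p^\xi$ and $-q^\xi$ at $y=\bar y$, not a uniform positive bound. So the first step is to show that $y_n$ stays away from $\bar y$, or else to handle the boundary by continuity; I expect this to be the main technical obstacle. For $\bar y$ fixed and $\xi$ fixed, both $p^\xi(t,\bar y)$ and $q^\xi(t,\bar y)$ are continuous in $t$, but a $\liminf$ as $|t|\to\infty$ may vanish. The remedy is that at points where $y_n\to\bar y$, the boundary value of $\widehat P$ is at least $\theta^*\phi_1>0$, which prevents the minimum from being approached there. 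Thus $\bar y-y_n$ is bounded below by some $\delta>0$, by uniform parabolic regularity of $(\widehat P,\widehat Q)$.

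Next we translate by $(t_n,y_n)$ and pass to a limit in $\mathscr C_{\rm loc}$. The translated coefficients converge along a subsequence, and if $y_n\to-\infty$ they converge to the constant matrix $M$. The limit $(P_\infty,Q_\infty)$ is nonnegative on a half-space and attains the value $0$ in one component at the origin, where that component reaches an interior minimum. At that point the time derivative vanishes, the diffusion term is nonnegative, and the off-diagonal coupling term is nonnegative. The equation for that component therefore gives $0\ge\frac{\kappa}{2}\theta^*\phi_i>0$, a contradiction. Hence $\theta^*=0$, which yields $p^\xi\ge0\ge q^\xi$ on $y\le\bar y$.

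Finally, strict inequality follows from the strong maximum principle for the cooperative system, which is irreducible since $au^\xi>0$ and $rbv^\xi>0$. If $p^\xi$ vanished at an interior point, it would vanish identically on the half-space up to that time, by the coupling through $-au^\xi q$. This would contradict \eqref{e.918.8a} at $y=\bar y$, and the same argument applies to $q^\xi$. The restriction $\xi>h/c$ plays no role beyond ensuring the translates are defined as in the setup, and $A_2$ is independent of $\xi$ because the convergence in \eqref{e.917.1} (resp. \eqref{e.917.1'}) is uniform in $t$ and hence uniform over all translates.
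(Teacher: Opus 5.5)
Your proof is correct, but it follows a different route from the paper's.

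\textbf{The paper's route.} The paper works on bounded cylinders $(-T,t_0]\times[-L,\bar y]$ with the explicit barrier $W=Z+\varepsilon\mathbf e+Me^{-\alpha(t+T)}\mathbf e$, where $\mathbf e=(1,1)^\top$. The uniform decay $Z\to0$ as $y\to-\infty$ handles the lateral boundary $y=-L$, and the exponentially decaying term handles the initial time $-T$. Only the classical comparison principle on bounded domains is used, followed by $T\to\infty$ and then $\varepsilon\to0$.

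\textbf{Your route.} You slide by the least multiple $\theta^*\phi$ of a strictly stable positive direction. You then get a contradiction at an interior contact point produced by translation compactness, which is the device the paper itself uses for \Cref{l.liouville.2}. Your handling of $y=\bar y$ is right: positivity there together with $\theta^*>0$ gives the margin $\theta^*\phi_i$, and a uniform Lipschitz bound on $Z$ keeps the minimising sequence a fixed distance from $\bar y$. The case $y_n\to-\infty$ is in fact excluded by $Z\to0$, although your limit argument would cover it anyway.

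\textbf{What each buys.} Your version needs only boundedness of $Z$, not its decay at $-\infty$, but relies on uniform $\mathscr C^{1,2}$ estimates for the translates. The paper's version avoids compactness altogether.

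\textbf{First correction: the role of $\xi>h/c$.} Your closing remark on this is wrong. Uniformity in $t$ only gives uniformity over \emph{time} translates. What matters is the spatial shift $h-\xi c$, which is negative exactly when $\xi>h/c$. Then $y\le -A_2$ forces $y+h-\xi c<-A_2$, so the entries of $\widetilde M$ built from $(u^\xi,v^\xi)$ are as close to the stable state as those built from $(u,v)$. For $\xi<h/c$ the shift is positive and unbounded as $\xi\to-\infty$. The coefficients on $\{y\le -A_2\}$ are then no longer uniformly close to the limiting matrix, and your argument would not produce an $A_2$ independent of $\xi$. So this hypothesis is precisely what gives the uniformity of $A_2$ asserted in the statement.

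\textbf{Second correction: the choice of $\phi$ when $b>1$.} The limiting matrix $\begin{pmatrix}-1&a\\0&-r(b-1)\end{pmatrix}$ is reducible. When $r(b-1)>1$ it has no strictly positive eigenvector. You only need some $\phi\gg0$ with $M\phi\ll0$, and $\phi=(1,1)$ works because $a<1$; it also works in the coexistence case, as the paper checks.
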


Now, we fix $h\in \mathbb{R}$ and define the following set (see also \cite{hamel2016logarithmic})
\begin{equation}
\Xi:= \left\{\xi'>\frac{h}{c}: p^\xi > 0 > q^\xi ~\text{ in }\mathbb{R}^2 ~\text{ for every }\xi \geq \xi'\right\}.
\end{equation}

\begin{lemma}\label{l.liouville.4}
  Define $\xi_* := \inf \Xi$. Then  $\xi_* =  \frac{h}{c}.$
\end{lemma}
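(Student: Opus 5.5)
The proof has two parts: first show $\Xi\neq\emptyset$, so that $\xi_*<\infty$; then show by contradiction that $\xi_*>\frac hc$ is impossible. Throughout I use $p^\xi(t,y)=u(t+\xi,y+h-\xi c)-u(t,y)$. For $\xi>\frac hc$ the spatial argument $y+h-\xi c$ equals $y$ shifted left by $\xi c-h>0$. Since $u$ is "decreasing-like" and $v$ is "increasing-like" in $y$, a large shift should give $p^\xi>0>q^\xi$.

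\textbf{Step 1: $\Xi$ is nonempty.} Fix $A:=\max\{A_1,A_2\}$.
\begin{itemize}
\item[(a)] On the compact strip $|y|\le A$, Lemma~\ref{l.liouville.1} gives uniform limits at $\pm\infty$. Hence, for $\xi$ large, $u^\xi$ is uniformly close to $1$ (resp.\ $u^\star$) and $v^\xi$ is close to $0$ (resp.\ $v^\star$), uniformly in $t$ and $|y|\le A$.
\item[(b)] Meanwhile $u\le 1-\epsilon_A$ and $v\ge k_A$ on the strip, by the strong maximum principle and \eqref{e.918.1} (with $\tfrac{1-b}{1-ab}<k_A$ in the coexistence case).
\item[(c)] So $p^\xi>0>q^\xi$ on $|y|\le A$ for all $\xi\ge\xi_0$.
\item[(d)] Lemma~\ref{l.liouville.3} at $\bar y=-A$ extends this to $y\le -A$.
\item[(e)] For $y\ge A$, I first get $p^\xi\ge0$ from the two-sided tail \eqref{e.917.2}. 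Indeed, $u^\xi(t,y)\asymp(\ell_0+\ell(y-L))e^{-\lambda(y-L)}$ with $L=\xi c-h$, and the factor $e^{\lambda L}$ beats the multiplicative constants once $L$ is large. The region $A\le y\le R$ is handled as in (a).
\item[(f)] Lemma~\ref{l.liouville.2} then gives $q^\xi<0$ on $y\ge A$, and the strong maximum principle gives $p^\xi>0$ there.
\end{itemize}
Hence $[\xi_0,\infty)\subset\Xi$.

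\textbf{Step 2: contradiction if $\xi_*>\frac hc$.} By continuity, $p^{\xi_*}\ge0\ge q^{\xi_*}$. The strong maximum principle for the cooperative system in $(p,-q)$ (irreducible since $a,b>0$) forces either $(p^{\xi_*},q^{\xi_*})\equiv0$ or strict inequalities.
\begin{itemize}
\item \emph{The identically-zero case.} Then $u(t+\xi_*,y-L)=u(t,y)$ with $L>0$. Iterating gives $u(t+k\xi_*,y-kL)=u(t,y)$, which contradicts $u\to u^\star$ at $-\infty$ together with $u(t,y)<u^\star$ at fixed $y$, or alternatively the tail asymptotics.
\item \emph{The strict case.} Take $\xi_n\nearrow\xi_*$ with $\xi_n\notin\Xi$. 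I need infima over compact strips, uniform in $t$, to stay positive. Since $t$ ranges over $\mathbb R$, I would argue by translation compactness. If there were $(t_n,y_n)$ with $|y_n|\le A$ and $p^{\xi_n}(t_n,y_n)\le0$ (or $q^{\xi_n}\ge0$), recentre at $t_n$ and extract a limit entire solution. The limit still satisfies hypotheses (i)--(iii) uniformly, because those bounds are uniform in $t$. The limit difference pair is $\ge0$ (resp.\ $\le0$) and touches $0$, so by the strong maximum principle it vanishes identically, giving the same contradiction as in the zero case.
\end{itemize}
So for $\xi$ slightly below $\xi_*$ the strict inequalities hold on $|y|\le A$, and Lemmas~\ref{l.liouville.2}--\ref{l.liouville.3} propagate them outside. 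The remaining point is $p^{\xi}\ge0$ on $y\ge A$, which Lemma~\ref{l.liouville.2} requires as an input. This is the main obstacle. I would obtain it from the tail \eqref{e.917.2} in the form of a ratio bound: the limiting inequality $p^{\xi_*}>0$ must hold with a quantitative margin at $+\infty$. I would use the comparison $\frac{u(t,y-L)}{u(t,y)}\ge$ a constant $>1$ for large $y$, exploiting the same-rate exponential tail and the positive shift $L$ (so that, for $\xi_*$ close to $\xi$, the two-sided constants can be tracked via a sliding argument on the ratio $u^\xi/u$ on $y\ge A$ using the linear equation for $p$). Having $p^\xi\ge0$ there, we get $\xi\in\Xi$ for $\xi$ near $\xi_*$, contradicting minimality. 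Hence $\xi_*=\frac hc$.
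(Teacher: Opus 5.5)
Your Step~1 (up to the region $R\le y\le R+L$ in (e), which grows with $L$ and needs $\inf_{t\in\mathbb R,\,y\le R}u>0$ rather than (a)) follows the paper's argument. So do the identically-zero case and the uniform strict ordering on compact strips for $\xi$ just below $\xi_*$, obtained by recentring in time.

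\textbf{The gap.} It is exactly the point you flag: proving $p^{\xi}\ge0$ on $y\ge A$ for $\xi$ slightly below $\xi_*$. The ratio bound you propose does not follow from \eqref{e.917.2}, which is only a two-sided bound with some unknown constant $C\ge1$. For the fixed shift $L_*=\xi_*c-h>0$ it gives only
\[
\frac{u(t+\xi_*,y-L_*)}{u(t,y)}\ \ge\ C^{-2}e^{\lambda L_*}\,\frac{\ell_0+\ell(y-L_*)}{\ell_0+\ell y}\ \longrightarrow\ C^{-2}e^{\lambda L_*}\qquad(y\to+\infty).
\]
Nothing prevents $C^{-2}e^{\lambda L_*}<1$, since $L_*$ is not known to be large. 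A ``quantitative margin'' for $p^{\xi_*}$ at $+\infty$ is therefore something you must prove, not something you can use. Your ``sliding argument on $u^\xi/u$ using the linear equation for $p$'' is also left unspecified. Note that the zeroth-order coefficient $1-u^\xi-u-av$ in the $p$-equation tends to $1-a>0$ as $y\to+\infty$, because the $u$-direction is unstable at $(0,1)$. This is why Lemma~\ref{l.liouville.2} controls only $q$ and must take $p^\xi\ge0$ on the right as a hypothesis.

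\textbf{The missing idea: blow-up plus iteration.}
\begin{enumerate}
\item Run your compact-strip argument on $|y|\le n$. This gives $\underline\xi_n\nearrow\xi_*$ such that the strict ordering holds on $\mathbb R\times(-\infty,n]$ for all $\xi\in[\underline\xi_n,\xi_*)$.
\item Since $\underline\xi_n\notin\Xi$, Lemma~\ref{l.liouville.2} forces some $\xi_n\in[\underline\xi_n,\xi_*)$ and $(t_n,y_n)$ with $y_n>n$ and $p^{\xi_n}(t_n,y_n)\le0$.
\item Set $U_n=u(\cdot+t_n,\cdot+y_n)/u(t_n,y_n)$, and let $\hat U_n$ be the same quotient with the extra shift $(\xi_*,h-c\xi_*)$. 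By \eqref{e.917.2} these are locally bounded.
\item Their limits solve $U_t-U_{yy}-cU_y=(1-a)U$ and satisfy $\hat U\ge U$, because $p^{\xi_*}\ge0$. Evaluating $\hat U_n$ at $(\xi_n-\xi_*,c(\xi_*-\xi_n))\to(0,0)$ gives $\hat U(0,0)\le1=U(0,0)$, so $\hat U\equiv U$ by the strong maximum principle.
\item Iterating this identity gives $u(t_n+k\xi_*,y_n-kL_*)/u(t_n,y_n)\to1$ for each fixed $k$. But \eqref{e.917.2} forces the $\liminf$ in $n$ of this ratio to be at least $C^{-2}e^{\lambda kL_*}$, which exceeds $1$ once $k$ is large.
\end{enumerate}
It is this $k$-fold amplification of the shift that defeats the unknown constant $C$. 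Without it, your contradiction step does not close.
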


\begin{proof}[{\bf Proof of \Cref{l.liouville.4}}]
By Lemmas \ref{l.liouville.2} and \ref{l.liouville.3}, it follows that $\Xi$ 
is non-empty. Since $\Xi$ is automatically bounded from below by construction, we deduce that $\xi_*$ is well defined. Indeed, based on the uniform convergence to $(0,1)$ and $(1,0)$ as $y \to \pm \infty$ and the detailed decay property \eqref{e.917.2}, we can choose $\xi_1 \gg 1$ such that, uniformly in $\xi \geq \xi_1$ and $t\in \mathbb{R}$,  
\begin{equation}\label{e.919.1}
p^\xi >0 \quad \text{ in }[-A_2,\infty), \quad \text{ and }\quad q^\xi <0 \quad \text{ in }[-A_2,A_1].
\end{equation}
Thanks to  \Cref{l.liouville.3} (resp. \Cref{l.liouville.2}), we can further replace $-A_2$ by $-\infty$ (resp. $A_1$ by $+\infty$) in \eqref{e.919.1}. This proves that $[\xi_1,\infty) \subset \Xi$. Hence it is nonempty.

Suppose on the contrary that $\xi_* > \frac{h}{c}$. By continuity, $p^{\xi_*} \geq  0\geq   q^{\xi_*} $ in $\mathbb{R}^2$. 
We claim that 
    \begin{equation}\label{e.917.3a}
    p^{\xi_*} > 0 >  q^{\xi_*} \quad \text{ in }\mathbb{R}^2.
    \end{equation}
If equality holds anywhere, by the strong maximum principle for the cooperative, irreducibly coupled system satisfied by $(p^\xi, -q^\xi)$, we have $(p^\xi,-q^\xi) \equiv (0,0)$ in $\mathbb{R}^2$, and hence $(u^{\xi_*},v^{\xi_*}) \equiv (u,v)$. In particular, 
   \begin{equation}\label{e.917.3b}
  u(t+ k\xi_*, x + k(h- \xi_*c)) \equiv u(t,x) \quad \text{ in }\mathbb{R}^2  
  \end{equation}
  holds for $k=1$.  Iterating, we deduce that \eqref{e.917.3b} holds for all $k \in \mathbb{N}$, and this contradicts \eqref{e.917.2}. Hence, \eqref{e.917.3a} holds, which also implies that $\xi_* \in \Xi$.
  
  Next, we claim the following.
      \begin{equation}\label{e.917.3}
    \inf_{t \in \mathbb{R},~ |y|\leq M} p^{\xi_*} > 0 >  \sup_{t \in \mathbb{R},~ |y|\leq M} q^{\xi_*} \quad \text{ in }\mathbb{R}^2,~\text{ for every }M>0.
    \end{equation}
    Indeed, suppose not; then (still fixing $\xi = \xi_*$) we may pass to a sequence $(t_n,y_n)$ such that $y_n \to y_\infty$ exists and is finite, and \begin{equation}
    \begin{cases}
    (u,v)(t+t_n, y) \to (u_\infty,v_\infty)(t,y) &\quad \text{ in }\mathscr{C}_{\mathrm{loc}}(\mathbb{R}^2),\\
     (p^{\xi_*},q^{\xi_*})(t+t_n,y) \to (p_\infty,q_\infty)(t,y) &\quad \text{ in }\mathscr{C}_{\mathrm{loc}}(\mathbb{R}^2),\\
     p_\infty(0,y_\infty) =0 \quad \text{ or }\quad q_\infty(0,y_\infty) = 0. &
     \end{cases}\end{equation}
     Note that $u_\infty$ satisfies \eqref{e.917.2} and $v_\infty >0$ thanks to \eqref{e.917.-1}. 
   Now, the differences satisfy the non-strict inequalities $p_\infty \geq 0 \geq q_\infty$ in $\mathbb{R}^2$ and at least one equality holds at $(t,y) = (0,y_\infty)$. By repeating the above argument, we deduce that $u_\infty$ satisfies \eqref{e.917.3b} for $k=1$ and, by iteration, $k \in \mathbb{N}$. This is again in contradiction with the quantitative tail estimate \eqref{e.917.2}.

Thanks to \eqref{e.917.3} and \Cref{l.liouville.3}, for each $A>0$,  there exists $\delta_A>0$ sufficiently small
such that $\xi_*-\delta_A >\frac{h}{c}$ and 
\begin{equation}\label{ineq:2sides}
p^\xi > 0 > q^\xi \quad \text{ for }\xi \in (\xi_*-\delta_A,\xi_*),~t\in \mathbb{R},~ y \in (-\infty,A].
\end{equation}
Hence, we may choose a sequence $\underline{\xi}_n \nearrow \xi_*$ such that for all $\xi \in [\underline{\xi}_n,\xi_*)$, 
\begin{equation}\label{e.110.5}
p^{\xi} > 0 > q^{\xi}\quad \text{ in }\R\times (-\infty,n].
\end{equation}
Note that $\underline{\xi}_n<\inf \Xi$, so there exist $\xi_n \in [\underline{\xi}_n,\xi_*)$ and 
 $(t_n,y_n)$ with $y_n>n$ such that
\begin{align}\label{p-xi-n<0}
p^{\xi_n}(t_n,y_n)\leq 0 \quad \text{for all } n.
\end{align}
Indeed, suppose no such $\xi_n,t_n,y_n$ exist. 
Then for every $\xi \in [\underline\xi_n,\xi_*)$,  $p^\xi>0$ in $\mathbb{R}^2$ (using also \eqref{e.110.5} here). Moreover, by \Cref{l.liouville.2}, we have $q^{\xi}<0$ in $\R\times [A_1,\infty)$ for every $\xi \in [\underline\xi_n,\xi_*)$. Together with \eqref{ineq:2sides} and the fact that $\xi_* \in \Xi$, we have $\underline{\xi}_n\in \Xi$. We have reached a contradiction. Therefore, \eqref{p-xi-n<0} holds and $y_n\to\infty$.

Next, define
\begin{equation}
U_n(t,y):=  \frac{u(t+t_n,y + y_n )}{u(t_n,y_n)}, \quad \text{ and }\quad \hat{U}_n(t,y):= \frac{u(t+t_n + \xi_*,y + y_n +h - \xi_* c)}{u(t_n,y_n)}.
\end{equation}
Thanks to the uniform estimate \eqref{e.917.2}, we have
\begin{equation}
0 \leq U_n \lesssim e^{-\lambda y} \quad \text{ for }t\in \mathbb{R},~ y \in [ C_1 - y_n, C_1 + 2y_n]
\end{equation}
This estimate and parabolic regularity guarantee that, after extracting a common subsequence, both sequences converge locally uniformly to limits
\begin{equation}
U = \lim U_n \quad \text{ and }\quad \hat U = \lim \hat{U}_n \quad \text{ in }\mathscr{C}_{\mathrm{loc}}(\mathbb{R}^2).
\end{equation} 

Since $y_n\to\infty$, by \Cref{l.liouville.1} and \eqref{e.917.2}, $U$ and $\hat{U}$ both solve the linear equation
\begin{equation}\label{U-eq0923}
U_t-U_{yy}-cU_y=(1-a)U\quad \text{ in } \R^2.
\end{equation}
Furthermore, $U(0,0)=1$ and $\hat{U}\geq U$ because $p^{\xi_*}\geq 0$ in $\R^2$. On the other hand, by \eqref{p-xi-n<0}, we have
\begin{equation}
 \widehat{U}_n\left(\xi_n-\xi_*,c(\xi_*-\xi_n)
    \right)=
    \frac{
      u(t_n+\xi_n,y_n+h-c\xi_n)
    }{u(t_n,y_n)}
    \le1.
\end{equation}
Taking $n\to\infty$, $\hat{U}(0,0)\leq 1=U(0,0)$. Consequently, $\hat{U}(0,0)=U(0,0)$.
Since $\hat{U}-U\geq 0$ solves \eqref{U-eq0923}, the strong maximum principle gives $\hat{U}\equiv U$ in $\R^2$.

We now can use an iteration argument to prove that, for each fixed $k\in \mathbb{N}$,
\begin{align}\label{iteration-k}
\frac{u(t_n + k\xi_*, y_n + k(h-\xi_*c))}{u(t_n,y_n)} \to 1 \quad \text{ as } n\to\infty.
\end{align}
To see this, since $\hat{U}\equiv U$ in $\R^2$, we have 
\begin{align}\label{limit0923}
 \widehat U_n(t,y)-U_n(t,y)\to0 \quad \text{ for all } (t,y)\in\R^2.    
\end{align}
Now, fix any $k\geq 1$. Evaluating \eqref{limit0923} at $(t,y)=((k-1)\xi_*, (k-1)(h-\xi_*c))$ gives
\begin{equation}
\frac{u(t_n+k\xi_*,y_n{+}k(h-\xi_*c))}{u(t_n,y_n)}-\frac{
  u\left(
    t_n+(k-1)\xi_*,
    y_n{+}(k-1)(h-\xi_*c)
  \right)
}{u(t_n,y_n)}\to 0 \quad \text{ as } n\to\infty,
\end{equation}
for any fixed $k$. Inductively in $k$, we obtain \eqref{iteration-k}.

But this violates the tail decay hypothesis \eqref{e.917.2}. Indeed, by \eqref{e.917.2}, there exists
$R>0$ such that
\begin{equation}
u(t,y) \asymp (\ell_0+\ell y)e^{-\lambda y},\quad t\in\R, \ y\geq R.
\end{equation}
For any fixed $k$, since $y_n{+}k(h-\xi_*c)\to\infty$, we have 
\begin{align}
1=\liminf_{n\to\infty} \frac{u(t_n+k\xi_*,y_n+k(h-\xi_*c))}{u(t_n,y_n)}&\gtrsim
e^{-\lambda k(h-\xi_*c)}\lim_{n\to\infty}\frac{\ell_0+\ell(y_n+k(h-\xi_*c))}{\ell_0+\ell y_n}\\
&\gtrsim e^{{\lambda k(\xi_*c-h)}}.        
\end{align}
Taking $k\gg1$ such that the latter is strictly greater than $1$ leads to a contradiction with \eqref{iteration-k}.
Therefore, $\xi_*>\frac{h}{c}$ is impossible.
We have proven $\xi_* = \frac{h}{c}$.
\end{proof}

\begin{proof}[{\bf Proof of \Cref{thm:liouville}}]
Let $h\in \mathbb{R}$ be given; the sliding argument implies (by taking $\xi = \frac{h}{c}$)
\begin{equation}\label{e.918.2}
u\left(t + \frac{h}{c}, y\right) \geq u(t,y) \quad \text{ and }\quad v\left(t + \frac{h}{c}, y\right) \leq v(t,y) \quad \text{ in }\mathbb{R}^2.
\end{equation}
Repeating the argument with $h$ replaced by $-h$, we obtain the reverse inequality.
Hence, we obtain that 
 $p^{\frac{h}{c}} \equiv 0$ and $q^{\frac{h}{c}} \equiv 0$ in $\mathbb{R}^2$, i.e.
\begin{equation}
(u,v)\left(t + \frac{h}{c}, y\right) = (u,v)(t,y) \quad \text{ for }(t,y) \in \mathbb{R},~ h \in \mathbb{R}.
\end{equation}
Since $h$ is arbitrary, $(u,v)$ is independent of $t$ and hence is a travelling-wave profile. 
Moreover, as a by-product of the sliding method, we have in fact proved that $U_{\mathrm{TW}}$ is decreasing in $y$ and $V_{\mathrm{TW}}$ is increasing in $y$ and they connect $(1,0)$ at $-\infty$ to $(0,1)$ at $+\infty$. We have finished the proof for \Cref{thm:liouville} in the case $0<a<1<b$, except for the uniqueness of travelling wave solutions (\Cref{t.uniqueness}), which we will present separately.

For the case $0<a,b<1$, we use \Cref{l.liouville.1} in conjunction with \eqref{eq:changesab} and \eqref{e.917.1'}. Assuming the validity of Lemmas \ref{l.liouville.2} and \ref{l.liouville.3}, the sliding argument is completely analogous to the former case, and we omit the details.
\end{proof}

Next, we show that the above sliding argument implies a general uniqueness result for travelling-wave solutions.
\begin{proof}[{\bf Proof of \Cref{t.uniqueness}}]
Fix $c>0$, and 
suppose $(U,V)$ and $(\widetilde U, \widetilde V)$ are wave profiles satisfying \eqref{TW-sys} with $E^+=(0,1)$ and $E^- = (1,0)$ in case $b>1$ and $(u^\star,v^\star) = (\tfrac{1-a}{1-ab}, \tfrac{1-b}{1-ab})$ in case $0<b<1$. By a translation, we may assume that
\begin{equation}\label{e.921.10}
\lim_{y\to+\infty} \frac{e^{\lambda y}}{\ell_0 + \ell_1 y}U(y) =1 = \lim_{y\to+\infty} \frac{e^{\widetilde{\lambda} y}}{\widetilde\ell_0 + \widetilde\ell_1 y}\widetilde U(y)     
\end{equation}
 for some $\lambda,\widetilde\lambda>0$ and $\ell_i, \widetilde\ell_i \geq 0$ with $\ell_0 + \ell_1>0$ and $\widetilde\ell_0 + \widetilde\ell_1 >0$; see \cite[Appendix A]{girardin2019invasion}. 
Thanks to \eqref{e.921.10}, the tail of travelling waves must be ordered. Therefore, by swapping $(U,V)$ and $(\widetilde U,\widetilde V)$ if necessary, we may reduce to the following cases:
\begin{equation}
\textup{(i)} \quad  \frac{\widetilde U(y)}{{U}(y)} \to +\infty \quad \text{ or }\quad \textup{(ii)} \quad  \frac{\widetilde{U}(y)}{U(y)} \to 1 \quad \text{ as }y\to+\infty.
\end{equation}
To this end, define
\begin{equation}
    \widetilde{p}^\xi(y) = \widetilde{U}(y-c\xi) - U(y) \quad \text{ and }\quad \widetilde{q}^\xi(y) = \widetilde{V}(y-c\xi) - V(y).
\end{equation}
and define
\begin{equation}\label{e.921.7}
\widetilde{\Xi}:= \{\xi' \in \mathbb{R}:~ \widetilde{p}^\xi >0 > \widetilde{q}^\xi \quad \text{ in }\mathbb{R}~\text{ for every }\xi \geq \xi'\}.   
\end{equation}
(Note that in case (ii), $\widetilde\Xi \subset [0,\infty)$.) The proofs of \Cref{l.liouville.2,l.liouville.3}
also apply to differences of two
travelling-wave profiles with the same
speed and limiting equilibria.
Indeed, the resulting linearised
difference system has the same
cooperative structure and the same
limiting matrices at spatial infinity.
We use these extensions below.

\medskip

\noindent {\bf \# Step 1. } We claim that $\widetilde\Xi$ is nonempty and bounded below. Indeed, by the limits of $(U,V)$ and $(\widetilde U, \widetilde V)$ at $\pm\infty$, it is clear that $\widetilde\Xi$ is bounded below. To see it is nonempty, take $\xi \gg 1$, so that 
\begin{equation}
\widetilde p^\xi >0 \quad \text{ in }[-A_2,\infty) \quad \text{ and }\quad \widetilde q^\xi <0 \quad \text{ in }[-A_2,A_1].
\end{equation}
Then it follows from Lemmas \ref{l.liouville.2} and \ref{l.liouville.3} (note that $(U, V)$ are automatically bounded entire solutions of \eqref{eq:LV}, so these lemmas are applicable) that $\xi \in \widetilde\Xi$ for all $\xi \gg 1$. 

In particular, $\xi_* = \inf \widetilde\Xi$ is well defined. 

\medskip

\noindent {\bf \# Step 2. }
At $\xi = \xi_*$, we claim that either $(\widetilde U(\cdot - c\xi_*),\widetilde V(\cdot - c\xi_*)) \equiv (U,V)$ (then we are done and have ruled out case (i) along the way) or 
\begin{equation}\label{e.921.8}
\widetilde{p}^{\xi_*} >  0 > \widetilde{q}^{\xi_*} \quad \text{ in }\mathbb{R}.\end{equation}
To see that, observe that the non-strict version of  \eqref{e.921.8} follows from continuity. The dichotomy follows from the strong maximum principle. Henceforth we proceed assuming \eqref{e.921.8}.

\medskip

\noindent {\bf \# Step 3.}
Assuming either (i) $\frac{\widetilde{U}}{U} \to +\infty$ or (ii') $\frac{\widetilde{U}}{U} \to 1$  and $\xi_* >0$, then
\begin{equation}
\liminf_{y \to +\infty} \frac{\widetilde{U}(y-c\xi_*)}{U(y)} >1,
\end{equation}
and 
we can find $\xi_{**} < \xi_*$ such that for any $\xi \in [\xi_{**},\xi_*)$, 
\begin{equation}
\widetilde{p}^{\xi}>0 \quad \text{ in }[-A_2,\infty) \quad \text{ and }\quad \widetilde{q}^{\xi}<0 \quad \text{ in }[-A_2,A_1].
\end{equation}
It follows from Lemmas \ref{l.liouville.2} and \ref{l.liouville.3} that  for any $\xi \in [\xi_{**},\xi_*)$, \begin{equation}
\widetilde{p}^{\xi}>0 > \widetilde{q}^{\xi} \quad \text{ in }\mathbb{R},
\end{equation} i.e. 
$\xi_{**} \in \widetilde\Xi$. This is a contradiction. Hence, we have ruled out the possibility of case (i) $\frac{\widetilde{U}}{U} \to +\infty$, i.e. any two travelling wave solutions with the same speed $c$ must have the same asymptotics as in case (ii) $\frac{\widetilde{U}}{U} \to 1$. Moreover, we must have $\xi_* \leq 0$, i.e. 
\begin{equation}
\widetilde{U}(y) \geq U(y) \quad \text{ and }\quad \widetilde{V}(y) \leq V(y) \quad \text{ in }\mathbb{R}.
\end{equation}

\medskip

\noindent {\bf \# Step 4.} 
Interchanging the roles of $\widetilde U$ and $U$ (since $\frac{U}{\widetilde{U}} \to 1$ at $+\infty$ is a symmetric relation), we show that the reverse inequalities hold, so $(\widetilde{U},\widetilde{V}) \equiv (U,V)$.
\end{proof}

We shall finish the section with the proofs of \Cref{l.liouville.2} and \Cref{l.liouville.3}, which have been used in the proof of \Cref{t.uniqueness,thm:liouville} in an essential way.

\begin{proof}[{\bf Proof of \Cref{l.liouville.2}}] 
The argument is the same in both cases $b >1$ and $0<b<1$.
Let $h \in \mathbb{R}$ be fixed and let $\xi > \frac{h}{c}$. 
    Choose $A_1>1$ (which is independent of $\xi$) such that 
    \begin{equation}\label{e.918.6}
    v(t,y) \geq \frac34 \quad \text{ for }t\in \mathbb R,~ y\geq A_1.
    \end{equation}
    Let $\bar y \geq A_1$. 
Suppose $q^\xi(t,\bar y) <0$ for $t \in \mathbb R$ and $p^\xi(t,y)\geq 0$ for $t\in \mathbb R$, $y \geq \bar y$. We claim that 
\begin{equation}\label{e.918.7}
    q^\xi(t,y) <0 \quad \text{ for }t\in \mathbb R,~ y\geq \bar y.
\end{equation}
Suppose not; then $M:= \sup_{\mathbb R \times [\bar y,\infty)} q^\xi >0.$ Since $q^\xi(t,+\infty) \to 0$ (thanks to \eqref{e.917.0}) uniformly in $t$, the maximising sequence $(t_n,y_n)$ satisfies $\sup |y_n|<+\infty$. Passing to a subsequence, we have $y_n \to  y^\star$ and $({u}_n,{v}_n)(t,y):= (u,v)(t+t_n,y)$ converge to some entire solution $(u_\infty , v_\infty)$ in $\mathscr{C}_{\mathrm{loc}}(\mathbb R^2)$. Moreover, $y^\star \neq \bar y$ since $q(t,\bar y) <0$, so $y^\star > \bar y$. Then the corresponding differences 
\begin{equation}
(p^\xi_n,q^\xi_n)(t,y)=(p^\xi,q^\xi)(t+t_n,y) \to (p_\infty, q_\infty)(t,y)
\end{equation}
with 
\begin{equation}
q_\infty(0,y^\star) = M = \sup_{\mathbb{R} \times [\bar y,\infty)} q_\infty.
\end{equation}
Hence,
\begin{equation}\label{e.918.5}
    (q_\infty)_t - d(q_\infty)_{yy} - c(q_\infty)_y \geq 0 \quad \text{ at }(0,y^\star).
\end{equation}
But $M>0$, so by \eqref{e.918.6}, we have
\begin{equation}
v^\xi_\infty(0,y^\star) = v_\infty(0,y^\star) + M \geq v_\infty(0,y^\star) \geq \frac34.
\end{equation}
The equation of $q_\infty$ then gives
\begin{align}
(q_\infty)_t - d(q_\infty)_{yy} - c(q_\infty)_y  &= -rbv^\xi_\infty p_\infty + rq_\infty(1-bu_\infty - v^\xi_\infty - v_\infty)\\
    &\leq rq_\infty(1- \frac34 - \frac34) = -\frac{rM}2 \quad \text{ at }(0,y^\star). 
\end{align}
This contradicts \eqref{e.918.5}, which proves 
\begin{equation}\label{e.918.7a}
q^\xi(t,y) \leq 0 \quad \text{ for }t\in \mathbb{R},~ y \geq \bar y.
\end{equation}
By the strong maximum principle and the fact that $q^\xi(t,\bar y) <0$, \eqref{e.918.7a} implies \eqref{e.918.7}.
\end{proof}
\begin{proof}[{\bf Proof of \Cref{l.liouville.3}}] We first handle the case $0<a<1<b$ in detail. 
Set $P = p^\xi$ and $Q = -q^\xi$; we have the cooperative system
\begin{equation}\label{e.918.9}
        Z_t - DZ_{yy} -c Z_y = B^\xi(t,y) Z, \quad Z = \begin{pmatrix}P\\Q\end{pmatrix}, 
    \end{equation}
    where $D = \mathrm{diag}(1,d)$ and  $B^\xi$ is a (Metzler) cooperative matrix
    \begin{equation}
        B^\xi = \begin{pmatrix}1 - u^\xi- u - av & au^\xi \\ rbv^\xi & r(1-bu - v^\xi - v)  \end{pmatrix} .
    \end{equation}
Since $\xi > \frac{h}{c}$, we deduce that $(u,v)(t,y)$ and $(u^\xi, v^\xi)(t,y)$ converge to $(1,0)$ as $y \to -\infty$ uniformly in $t$ and uniformly in $\xi>\frac{h}{c}$. Consequently, \begin{equation}
\lim_{y\to-\infty} B^\xi(t,y) = B_-:= \begin{pmatrix} -1  & a \\ 0 & -r(b-1)  \end{pmatrix} \quad \text{ uniformly in }t \text{ and }\xi>\frac{h}{c}.
\end{equation}
Choose $\vec{\mathbf{e}}=(1,1)^\top$; then 
\begin{equation}\label{e.921.2}
B_- \vec{\mathbf{e}} = \begin{pmatrix} -1 +a \\ -r(b-1)\end{pmatrix} \ll_{\mathbb{R}^2} 0.
\end{equation}
Hence, we can choose $\mu >0$ and  $A_2 \gg 1$ (independently of $\xi>\frac{h}{c}$) such that
\begin{equation}\label{e.918.3a}
    B^\xi(t,y) \vec{\mathbf{e}} \leq_{\mathbb{R}^2} - \mu \vec{\mathbf{e}} \quad \text{ for }t \in \mathbb R,~ y \leq -A_2.
\end{equation}
Now, suppose \eqref{e.918.8a} holds for some $\bar y \leq -A_2$. Then $Z(t,\bar y) \gg_{\mathbb{R}^2} 0$ for all $t$. Note also that 
\begin{equation}
Z(t,y) \to 0 \quad \text{ as }y \to -\infty,~\text{ uniformly in }t.
\end{equation}
Again, (thanks to the strong maximum principle) it suffices to show $Z \geq 0$ on $\mathbb R \times (-\infty, \bar y]$.

Fix an arbitrary point $(t_0,y_0) \in \mathbb R \times (-\infty,\bar y]$, and an arbitrary constant $\varepsilon>0$. 
Since $Z$ is globally bounded, we can choose $M>0$ such that
\begin{equation}\label{e.918.11}
Z(t,y) + M\vec{\mathbf{e}} \geq 0 \quad \text{ for all }t\in \mathbb R,~ y \leq \bar y.
\end{equation}
Then choose $ L=L(\varepsilon) \gg 1$ such that
\begin{equation}\label{e.918.10}
-L < y_0, \quad Z(t,-L) + \varepsilon \vec{\mathbf{e}} \geq 0 \quad \text{ for all }t.    
\end{equation}

For $T>t_0$, define on the domain
$Q_{T,L} := (-T,t_0] \times [-L,\bar y]$
the barrier function
\begin{equation}
W = Z + \varepsilon \vec{\mathbf{e}} + M e^{-\alpha(t+T)} \vec{\mathbf{e}},
\end{equation}
with $\alpha \in (0,\mu)$.
We compute 
\begin{equation}
\mathcal{L} W := W_t - D W_{yy} - c W_y - B^\xi W.
\end{equation}
Using $\mathcal{L}Z = 0$, we have
\begin{align}
\mathcal{L}W
&= -\varepsilon B^\xi \vec{\mathbf{e}}
   + M e^{-\alpha(t+T)}
     \left(-\alpha \vec{\mathbf{e}} - B^\xi \vec{\mathbf{e}}\right) \notag\\
&\geq \mu\varepsilon \vec{\mathbf{e}}
   + M(\mu-\alpha)e^{-\alpha(t+T)}\vec{\mathbf{e}}
   \gg 0
\end{align}
where we used \eqref{e.918.3a}, 
i.e. $W$ is a strict supersolution of the cooperative system \eqref{e.918.9}. 

Now, we check that $W \geq 0$ on the parabolic boundary of $Q_{T,L}$:
\begin{equation}
W(t,\bar y) \gg 0 \qquad\text{ since }Z(t,\bar y)\gg 0.
\end{equation}
\begin{equation}
    W(t,-L) \geq 0 \qquad \text{ by }\eqref{e.918.10}.
\end{equation}
\begin{equation}
    W(-T,y) = Z(-T,y) + \varepsilon \vec{\mathbf{e}} + M \vec{\mathbf{e}} \geq 0 \quad \text{ by }\eqref{e.918.11}.
\end{equation}
Hence, by the  comparison principle, we have $W \geq 0$ in $Q_{T,L}$. In particular,
\begin{equation}
Z(t_0,y_0) + \varepsilon \vec{\mathbf{e}} + M e^{-\alpha(t_0 + T)} \vec{\mathbf{e}} \geq 0 \quad \text{ in }Q_{T,L}.
\end{equation}
Letting $T\to\infty$ (note that $L,M$ are independent of $T$) and then $\varepsilon \searrow 0$, we have $Z(t_0,y_0) \geq 0$. Since $(t_0,y_0)$ is arbitrary, $P \geq 0$ and $Q\geq 0$ for $y \leq \bar y$. Again, the strict inequality holds thanks to the strong maximum principle. The lemma is proved for the case $b>1$.

For the case $0<b<1$, we have $(u,v) \to (u^{\star},v^{\star}):= (\tfrac{1-a}{1-ab},\tfrac{1-b}{1-ab})$ as $y\to-\infty$, and observe  
\begin{equation}
B_- = \lim_{y \to -\infty} B^\xi =\frac1{1-ab} \left( \begin{array}{cc}-(1-a) & a(1-a) \\ rb(1-b) & -r(1-b) \end{array} \right),
\end{equation}
which again has two negative eigenvalues, and satisfies again \eqref{e.921.2} with the same choice $\vec{\mathbf{e}}=(1,1)^T$:
\begin{equation}
{
B_-\mathbf e
=-\frac1{1-ab}
\begin{pmatrix}
(1-a)^2\\
r(1-b)^2
\end{pmatrix}
\ll0.
}
\end{equation}
Thus \eqref{e.918.3a} also holds
in this case, and the preceding
barrier argument applies unchanged.
\end{proof}

\section*{Acknowledgements} 
KYL is supported in part by a grant from the Simons Foundation (SFI-MPS-TSM-00025569) and the National Science Foundation (DMS-2602291).
CHW is partly supported by the Taiwan National Science and Technology Council (NSTC 113-2628-M-A49-004-MY4, 113-2918-I-A49-002) and the National Centre for Theoretical Sciences. CHW also thanks the Mathematical Research Institute (MRI) at Ohio State University for hospitality during his sabbatical
visit in 2024-2025.
EB and KYL are grateful to the Banff International Research Station for its warm welcome during an event in November 2025, at which this work was initiated. 

\section*{Declaration of generative AI use}
The paper being rather long, generative AI tools were used during the preparation of this manuscript
for language editing and for streamlining mathematical
arguments to improve readability. All the new mathematical ideas, however, come from the authors. The authors have independently written 
all mathematical statements, proofs, computations, and
references, and 
take full responsibility for the content of the manuscript.

\appendix

\section{A particular supersolution to the competition system}\label{app:auxiliary}

\begin{lemma}
\label{lem:perturbed-TW}
Let $0<a<1<b$. 
For every $c>c^0_{\mathrm{LLW}}$, there exists
$\alpha_0>0$ such that, for each
$\alpha\in(0,\alpha_0]$, there exist
$K,\chi_0^\star,\chi_1>0$ such that, for any 
$0<\chi_0\le\chi_0^\star$, and $L\in\mathbb R$,
the functions
\begin{equation}
\chi(t):=\chi_0 e^{-\chi_1 t},
\qquad
p(t):=K\int_0^t \chi(\tau)\, \dd \tau
=\frac{K\chi_0}{\chi_1}
(1-e^{-\chi_1 t}),
\end{equation}
and
\begin{equation}
(\mathfrak{U}, \mathfrak{V}) 
=
\left( U_{c}^{\alpha}
\left(x-c t-L-p(t)\right),\,\left[V_{c}^{\alpha}
\left(x-c t-L-p(t)\right)
-\chi(t)
\right]_+ \right)
\end{equation}
form a generalised supersolution of \eqref{e.com}
with respect to the competitive ordering.
\end{lemma}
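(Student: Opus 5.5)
Set $\xi=x-ct-L-p(t)$, so the moving frame has speed $c+p'(t)=c+K\chi(t)$. In the notation of \eqref{e.com.xi} with $\rho=L+p$, the task is to check $\mathcal N_1[\mathfrak U,\mathfrak V]\ge0$ and $\mathcal N_2[\mathfrak U,\mathfrak V]\le0$ wherever $\mathfrak V>0$. Where $\mathfrak V=0$ the second inequality is trivial, and the truncation is handled as a maximum with the trivial subsolution $0$, in the generalised sense.

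I would take $\alpha_0$ from \Cref{l.0907.1}, so that for $\alpha\le\alpha_0$ the wave $(U_c^\alpha,V_c^\alpha)$ exists, is strictly monotone by \Cref{lem.0815.1}, and satisfies \eqref{e.0816.1}.

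\emph{First component.} By \eqref{e.TWeta}, writing $U=U_c^\alpha(\xi)$, $V=V_c^\alpha(\xi)$,
\begin{equation}
\mathcal N_1=-K\chi U'+\frac{\alpha}{1+\alpha}U^2+aU(\mathfrak V-V).
\end{equation}
Since $\mathfrak V-V\ge-\chi$ and $-U'>0$, it suffices to have
\begin{equation}
K(-U')+\frac{\alpha}{\chi}\,\frac{U^2}{1+\alpha}\ge aU
\end{equation}
for all $\xi$. For $\xi\ge-R$ this follows from $U\le C_0(1+|\xi|)(-U')$ in \eqref{e.9}, improved on the right tail to $U\le C(-U')$ because $U\sim a_0e^{-\lambda_c\xi}$. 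This gives a constant $C_R$ with $U\le C_R(-U')$ on $[-R,\infty)$, and I take $K\ge aC_R$. For $\xi\le-R$ I use $U\ge1+\tfrac{2\alpha}{3}$ from \eqref{e.0816.1}. Then $\frac{\alpha}{1+\alpha}U^2-a\chi U\ge U\bigl(\frac{\alpha}{1+\alpha}U-a\chi_0\bigr)>0$ once $\chi_0^\star<\alpha/(2a)$, so the $K$-term is not needed there.

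\emph{Second component.} On $\{\mathfrak V>0\}$ set $\mathfrak V=V-\chi$. Using \eqref{e.TWeta},
\begin{equation}
\mathcal N_2=-K\chi V'+\chi_1\chi+r\chi(1-bU-2V+\chi).
\end{equation}
I split into three regions.

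On $\xi\ge L_1$, where $V\to1$ and $U\to0$, I have $1-bU-2V+\chi\le-\tfrac12$ once $\chi_0^\star$ is small. Choosing $\chi_1<r/2$ then gives $\mathcal N_2\le0$ without using the $K$-term.

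On $\xi\le-R$, where $U\ge1+\tfrac{2\alpha}{3}$ and $V\le\alpha/8$, I have $1-bU-2V+\chi\le1-b(1+\tfrac{2\alpha}{3})+\chi_0<-(b-1)$. So $\mathcal N_2\le0$ if $\chi_1<r(b-1)$. This is exactly where $b>1$ enters, as in \eqref{eqN2-1009}.

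On the compact middle region $[-R,L_1]$, $V'\ge m>0$, so $-K\chi V'+(\chi_1+3r)\chi\le0$ once $K\ge(\chi_1+3r)/m$. I fix $\chi_1$ first, then $R$ and $L_1$, then $K$, and finally $\chi_0^\star$.

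The main obstacle is the interplay of constants. $K$ must dominate both $aU/(-U')$ on the right and $1/V'$ on the middle compact set. $\chi_0^\star$ must be small relative to $\alpha$ and to the margin $\frac{\alpha}{1+\alpha}$. No circularity arises, since $K$ depends only on $\chi_1$, $R$, $L_1$ and the wave, while $\chi_0^\star$ can be chosen last. The translation $L$ plays no role, because everything is translation invariant.
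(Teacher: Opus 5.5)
Your proposal is correct and follows the paper's proof essentially step by step. You use the same identities for $\mathcal N_1$ and $\mathcal N_2$ and the bound $\mathfrak V-V\ge-\chi$. You then split $\mathbb R$ into the two tails, where the limits $(1+\alpha,0)$ (with $b>1$) and $(0,1)$ make the reaction coefficient negative, plus a compact middle region absorbed by $K$ times the positive lower bound of $V'$.

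The differences are only cosmetic. In the first component you split at a far-left point and absorb the compact part into $K$ via $U\le C_R(-U')$ on $[-R,\infty)$. The paper instead splits at a far-right point $M$ and absorbs the compact part by taking $\chi_0$ small relative to $\inf_{\xi\le M}U$. In the second component you write out the three regions explicitly, where the paper packages them into a single bound $r(1-bU-2V)-KV'\le-\gamma$.
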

\begin{proof}[{\bf Proof of \Cref{lem:perturbed-TW}}]
By continuity of
$\alpha\mapsto c^\alpha_{\rm LLW}$,
we may choose $\alpha_0>0$ such that
$c>c^\alpha_{\rm LLW}$ whenever
$0<\alpha\le\alpha_0$. Fix such an $\alpha$, and set
\begin{equation}
\xi=x-ct-L-p(t),\qquad
U=U_c^\alpha(\xi),\quad
V=V_c^\alpha(\xi).
\end{equation}
Then
\begin{equation}
\mathfrak U=U,\qquad
\mathfrak V=(V-\chi)_+,
\qquad
p'=K\chi,\quad
\chi'=-\chi_1\chi.
\end{equation}

\medskip

\noindent {\bf \# First component.} 
Using 
$\mathfrak{V}=(V-\chi)_+$ and 
\eqref{e.TWeta},
\begin{align}
\mathcal{N}_1[\mathfrak{U}, \mathfrak{V}]&= U_t- U_{xx}
- U(1- U-a(V-\chi)_+)
\\
&=
U\left(1-\frac{{U}}{1+\alpha}-a
V\right)
-p'(t){U}'
-{U}\left(1-{U}-a({V}-\chi)_+\right)
\\
&=
\frac{\alpha}{1+\alpha}{U}^2
+a [-V + (V - \chi)_+] U
-K \chi U'\\
&\geq   \frac{\alpha}{1+\alpha} U^2
-a \chi U
-K\chi U'.
\end{align}
Since $U'<0$, the last term is nonnegative.

Moreover, by the exponential behaviour of the travelling wave
at $+\infty$,
\begin{equation}
-\frac{{U}'(\xi)}{{U}(\xi)}
\longrightarrow\lambda:= \frac12 \left( c - \sqrt{c^2 - 4(1-a)}\right)
\qquad\text{as }\xi\to+\infty.
\end{equation}
Hence there exists $M>0$ such that
\begin{equation}
-{U}'(\xi)\ge\frac{\lambda}{2}{U}(\xi),
\qquad \xi\ge M.
\end{equation}
Choose $K$ sufficiently large so that
\begin{equation}
K\frac{\lambda}{2}\ge a.
\end{equation}
Then, for $\xi\ge M$,
\begin{align}
\frac{\alpha}{1+\alpha} U^2
-a\chi U
-K \chi U'
&\ge
\frac{\alpha}{1+\alpha} U^2
+\chi U
\left(
K\frac{\lambda}{2}-a
\right) \ge0.
\end{align}
On the other hand, for $\xi\le M$, monotonicity gives
\begin{equation}
 U(\xi)\ge m_M:= \inf_{\xi \leq M}{U}>0.
\end{equation}
Consequently,
\begin{align}
\frac{\alpha}{1+\alpha}U^2-a\chi U-K \chi U'
&\ge
U\left(\frac{\alpha}{1+\alpha}m_M-a\chi\right)
\ge0,
\end{align}
provided $\chi_0>0$ is chosen sufficiently small so
that
\begin{equation}
a \chi_0
\le
\frac{\alpha}{1+\alpha}m_M.
\end{equation}
Thus $\mathcal{N}_1[\mathfrak{U}, \mathfrak{V}]\geq 0$ holds.

\medskip

\noindent {\bf \# Second component.} On $\{V>\chi\}$, we have
$\mathfrak V=V-\chi$. Hence
\begin{align}
\mathcal N_2[\mathfrak U,\mathfrak V]
&=(V-\chi)_t-d(V-\chi)_{xx}-r(V-\chi)(1-V+\chi-bU)\\
&=rV(1-V-bU)-K\chi V'
+\chi_1\chi -r(V-\chi)(1-V+\chi-bU)\\
&=\chi\left[
\chi_1+r(1-bU-2V)-KV'
\right]+r\chi^2.
\end{align}

We claim that $K$ may be chosen so that for some $\gamma>0$, we have
\begin{equation}
r(1-b{U}-2{V})-K V'
\le -\gamma
\qquad\text{on }\mathbb R.
\end{equation}

Indeed, since
\begin{equation}
({U},{V})(-\infty)=(1+\alpha,0),
\end{equation}
and $b>1$,
\begin{equation}
r(1-b{U}-2{V})
\longrightarrow
r(1-b(1+\alpha))<0
\qquad\text{as }\xi\to-\infty.
\end{equation}
Similarly,
\begin{equation}
r(1-b{U}-2{V})
\longrightarrow-r<0
\qquad\text{as }\xi\to+\infty.
\end{equation}
Therefore the set on which
$r(1-b U-2 V)$ can be nonnegative is contained
in a compact interval. Since the travelling wave is strictly
monotone, $ V'>0$ there. Increasing $K$ if necessary,
we may thus find $\gamma>0$ such that
\begin{equation}
r(1-b U-2 V)-K  V'
\le-\gamma
\qquad\text{for all }\xi\in\mathbb R.
\end{equation}

It follows that
\begin{equation}
\mathcal N_2
\le
\chi(-\gamma+\chi_1+r \chi).
\end{equation}
Choose $\chi_1$ and $\chi_0$ sufficiently small
so that
\begin{equation}
\chi_1+r\chi_0\le\gamma.
\end{equation}
Then
\begin{equation}
\mathcal{N}_2[\mathfrak{U}, \mathfrak{V}]\le0
\qquad\text{on }\left\{{V}>\chi\right\}.
\end{equation}

Finally, on $\left\{{V}\le \chi\right\}$ we have
$\mathfrak{V}=0$, which satisfies the $v$-inequality.
Therefore the positive-part construction gives a generalised
subsolution in the second component.
\end{proof}

\section{Results from previous papers \texorpdfstring{{\rm \cite{bramson1983convergence,hamel2016logarithmic,LamWu_preprint}}}{s = 2}}

\label{app:previouspaper}

The following lemma improves \cite[Lemma 2.3]{LamWu_preprint}.

\begin{lemma}\label{lem:C0}
Let $0<a<1$ and $\psi$ be the solution to  $$\psi_t - \psi_{xx} = (1-a)\psi$$ in $(0,\infty)\times \mathbb R$, with initial data $\psi_0$ satisfying, for some $q\in \mathbb{R}$, $\lambda \in (0,\sqrt{1-a}]$ and $x_0 \geq 1$,
\begin{equation}\label{e.26.i}
0 \leq \psi_0 \in \mathsf{L}^\infty(\mathbb{R}),\quad \liminf_{x\to-\infty} \psi_0(x)>0 \quad \text{ and }\quad \psi_0(x) = x^q e^{-\lambda x} \quad \text{ for }x \geq x_0.
\end{equation} 
Then for each  
$\delta>0$, 
\begin{equation}\label{e.26.22a'}
\lim_{t\to\infty} \sup_{x\geq (2\lambda +\delta)t} \Big| (x-2\lambda t)^{-q} e^{\lambda(x-c_\lambda t)}\psi(t,x)-1 \Big|=0, \end{equation}
where 
$c_\lambda = \lambda + \frac{1-a}{\lambda}$. 
In particular, there exist $C>1$ and $T>0$ such that
\begin{equation}\label{e.1004}
   C^{-1}(x-2\lambda t)^q e^{-\lambda(x-c_\lambda t)}\leq  \psi(t,x)\le C(x-2\lambda t)^q e^{-\lambda(x-c_\lambda t)},
 \end{equation}
for $t\ge T$ and  $x\geq  (2\lambda+\delta)t$,
\end{lemma}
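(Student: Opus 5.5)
We argue by the explicit heat-kernel representation. Writing $\psi(t,x)=e^{(1-a)t}(G_t*\psi_0)(x)$ with $G_t(z)=(4\pi t)^{-1/2}e^{-z^2/(4t)}$, we split $\psi_0=\psi_0\mathbf 1_{\{x<x_0\}}+x^qe^{-\lambda x}\mathbf 1_{\{x\ge x_0\}}=:\psi_0^{(1)}+\psi_0^{(2)}$ and treat the two contributions separately in the region $x\ge(2\lambda+\delta)t$.

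The bounded part is handled by a crude bound. Since $\psi_0^{(1)}\le \|\psi_0\|_\infty$ is supported in $(-\infty,x_0)$, we have
\[
e^{(1-a)t}(G_t*\psi_0^{(1)})(x)\lesssim e^{(1-a)t}e^{-(x-x_0)^2/(4t)}
\]
for $x\ge x_0$. We compare this with the target profile $(x-2\lambda t)^qe^{-\lambda(x-c_\lambda t)}$. The ratio has exponent $-\frac{(x-x_0)^2}{4t}+\lambda x-\lambda^2t+O(1)=-\frac{(x-2\lambda t)^2}{4t}+O(1+x/t)$, after completing the square. With $x-2\lambda t\ge\delta t$ this exponent is $\le-\delta^2t/4+O(1+x/t)$. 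It beats any polynomial factor $(x-2\lambda t)^{-q}$, and the linear-in-$x/t$ error is dominated because $(x-2\lambda t)^2/t\gtrsim x^2/t$ when $x\gg t$. So this contribution is $o(1)$ relative to the profile, uniformly on $x\ge(2\lambda+\delta)t$.

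For the main part, we conjugate by the exponential. The function $\phi:=e^{\lambda x-(\lambda^2+1-a)t}\,e^{(1-a)t}G_t*\psi_0^{(2)}$ satisfies $\phi_t=\phi_{xx}-2\lambda\phi_x$, with initial datum $y^q\mathbf 1_{y\ge x_0}$. It is therefore the heat semigroup applied to $y^q\mathbf 1_{y\ge x_0}$, evaluated at $x-2\lambda t$:
\[
\phi(t,x)=\int_{x_0}^\infty y^q\,G_t(x-2\lambda t-y)\,\dd y .
\]
Set $X=x-2\lambda t\ge\delta t$ and substitute $y=X+\sqrt t\,w$. This gives $\phi=X^q\int (1+\sqrt t\,w/X)^q\mathbf 1_{\{X+\sqrt tw\ge x_0\}}\frac{e^{-w^2/4}}{\sqrt{4\pi}}\dd w$. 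Since $\sqrt t/X\le \delta^{-1}t^{-1/2}\to0$ uniformly, dominated convergence shows the integral tends to $1$ uniformly in $X\ge\delta t$. Recalling $\lambda^2+1-a=\lambda c_\lambda$, this is exactly \eqref{e.26.22a'}, with the bounded part negligible.

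The main obstacle is making the convergence uniform when $q\ne 0$ in the region where $w$ is very negative, i.e. $y$ near $x_0$ while $X\sim\delta t$. Here $(1+\sqrt t w/X)^q$ may blow up (if $q<0$) or be non-integrable against nothing only polynomially. We split at $|w|\le X/(2\sqrt t)$, where the factor lies in $[2^{-|q|},2^{|q|}]$ and dominated convergence applies. On the complement the Gaussian weight is at most $e^{-X^2/(16t)}\le e^{-\delta^2t/16}$, while $y^q$ is at most polynomial in $y$ (respectively bounded by $x_0^{q}$ when $q<0$). That tail is therefore $o(X^q)$ uniformly. Finally, \eqref{e.1004} follows directly from \eqref{e.26.22a'} for $t\ge T$ large.
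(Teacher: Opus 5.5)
Your proposal is correct and follows essentially the same route as the paper: the heat-kernel representation, completing the square to shift by $2\lambda t$, the substitution $y=X+\sqrt t\,w$, dominated convergence on the bulk, and Gaussian smallness on the remainder. The only differences are cosmetic: you peel off $\psi_0\mathbf 1_{\{x<x_0\}}$ first, whereas the paper splits the single integral of $\zeta=e^{\lambda y}\psi_0$ at $y=\bar y/2$. Also, on the far tail $w>X/(2\sqrt t)$ with $q>0$, keep half of the Gaussian, e.g.\ $e^{-w^2/4}\le e^{-X^2/(32t)}e^{-w^2/8}$, since the pointwise bound $e^{-X^2/(16t)}$ alone does not give integrability there.
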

\begin{proof}[{\bf Proof of \Cref{lem:C0}}]
Define $\overline{y}(t,x) =x-2\lambda t$ and $\zeta(y)=e^{\lambda y}\psi_0(y)$. Write
\begin{equation}
G(t,z)=\frac{1}{\sqrt{4\pi t}}e^{-\frac{z^2}{4t}}.
\end{equation}
Then by completing the square in the heat kernel, we have
\begin{equation}
\psi(t,x)=e^{-\lambda(x-c_{\lambda }t)} \int_{\R}G(t,y-\overline{y}(t,x))\zeta(y)dy,
\end{equation} 
where we used
\begin{equation}
(1-a)t-\frac{(x-y)^2}{4t}-\lambda y=-\lambda(x-c_{\lambda }t)-\frac{(y-(x-2\lambda t))^2}{4t}.
\end{equation}
Choose $T>0$ such that
$\delta t\geq \max\{1,2x_0\}$ and  $\delta\sqrt{t}\geq 1$ for all $t\geq T$.
Note that $\zeta(y)=y^q$ for $y\geq x_0$. For $x\geq (2\lambda+\delta)t $, we can write
\begin{align}
\overline{y}^{-q}e^{\lambda(x-c_{\lambda}t)}\psi &= \overline{y}^{-q}\int_{-\infty}^{\frac{\overline{y}}{2}}G(t,y-\overline{y})\zeta(y) \, \dd y + r^{-q}\int_{\frac{\overline{y}}{2}}^{\infty}G(t,y-\overline{y})y^q \, \dd y\\
&:=J_1(t,x)+J_2(t,x),
\end{align}
\medskip
\noindent \# We first prove that 
\begin{align}\label{J1-0}
\lim_{t\to\infty}\sup_{x\geq (2\lambda+\delta)t } |J_1(t,x)|=0.    
\end{align}
By \eqref{e.26.i}, we have
$0\leq \zeta(y)\lesssim (1+y_+)^{q_+}$ for $y\in\R$.
Using $y\leq \frac{\overline{y}}{2}$ and $\overline{y}\geq \delta t\geq 1$, we obtain $\overline{y}^{-q}\zeta(y) \lesssim \overline{y}^{q}(1+\frac{\overline{y}}{2})^{q_+} \lesssim \overline{y}^{ q_+ - q} = \overline{y}^{ q_-}$ and thus
\begin{align}
0\leq J_1(t,x)& \lesssim \overline{y}^{q_-}\int_{-\infty}^{\frac{\overline{y}}{2}}G(t, y-\overline{y})dy= \overline{y}^{q_-}\int_{-\infty}^{-\frac{\overline{y}}{4\sqrt{t}}}\frac{1}{\sqrt{\pi}}e^{-\xi^2}d\xi\\
&\lesssim \overline{y}^{q_-}e^{-\frac{\overline{y}^2}{16t}}\lesssim \overline{y}^{q_-}e^{-\delta \frac{\overline{y}}{16}},
\end{align}
where the last inequality holds since $\overline{y}\geq \delta t$. Therefore, 
\begin{equation}
\sup_{x\geq (2\lambda+\delta)t } |J_1(t,x)|\lesssim \sup_{\overline{y}\geq \delta t}\overline{y}^{q_-}e^{-\delta \frac{\overline{y}}{16}}\to 0,\quad \text{ as } t\to0,
\end{equation}
and \eqref{J1-0} holds.

\medskip
\noindent \# Next, we prove 
\begin{align}\label{J2-0}
\lim_{t\to\infty}\sup_{x\geq (2\lambda+\delta)t } |J_2(t,x)-1|=0.    
\end{align}
For this, we set $y=\overline{y}+z\sqrt{t} $. It follows that
\begin{equation}
J_2(t,x)
=\int_{-\frac{\overline{y}}{2\sqrt{t}}}^\infty
G(1,z)\left(1+\frac{\sqrt t}{\overline{y}}z\right)^q \, \dd z.
\end{equation}
Using $\overline{y}\geq \delta t$, we have $\frac{\sqrt t}{\overline{y}} \leq \frac{1}{\delta\sqrt{t}}$. By the Lebesgue dominated
convergence theorem, we immediately get \eqref{J2-0}. Combining \eqref{J1-0} and \eqref{J2-0}, we obtain \eqref{e.26.22a'} and thus \eqref{e.1004} holds. This completes the proof.

\end{proof}

\begin{lemma}\label{lem:C1}
Let $0<a<1$ and $w$ be the solution to the nonlinear equation,
$$w_t - w_{xx} = w(1-a-w)$$ in $(0,\infty)\times \mathbb R$, with initial data $w(0,\cdot)=\psi_0$ satisfying \eqref{e.26.i} for some $q\in \mathbb{R}$, $\lambda \in (0,\sqrt{1-a}]$ and $x_0 \geq 1$. 
Set
\begin{equation}
c_\lambda = \lambda + \frac{1-a}{\lambda}.
\end{equation}
Then the following statements hold.
\begin{itemize}
    \item [{\rm(i)}] For each $\delta>0$, we have 
\begin{equation}\label{e.26.22a}
\lim_{t\to\infty} \sup_{x\geq (c_\lambda + \delta)t }
\Big| (x-2\lambda t)^{-q} e^{\lambda(x-c_\lambda t)}w(t,x)-1 \Big|=0.
\end{equation}
    \item [{\rm(ii)}] There exists a bounded function $\Lambda(t)$ such that
\begin{equation}\label{e.26.22b}
\sup_{x\in \mathbb R} \left|w(t,x) - \Phi_{1-a}(x- m(t) - \Lambda(t)) \right\vert \to 0 \quad \text{ as }t\to\infty,
\end{equation}
where  $m(t) = c_\lambda t + \frac{q}{\lambda} \ln (1+t)$ if $\lambda \in (0,\sqrt{1-a})$ and $m=c_\lambda t -\ell$ in \eqref{m^w-2} if $\lambda = \sqrt{1-a}$.
\end{itemize}
\end{lemma}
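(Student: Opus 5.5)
We prove the two parts separately: (i) by sandwiching $w$ between the linear solution $\psi$ of \Cref{lem:C0} and a slightly damped version of it, and (ii) by Bramson-type barriers followed by the Liouville step used in the scalar proofs above.

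\emph{Part (i).} The upper bound is immediate. Since $w_t-w_{xx}\le(1-a)w$ and $w(0,\cdot)=\psi_0$, comparison gives $w\le\psi$, where $\psi$ is the linear solution of \Cref{lem:C0}. That lemma then yields $\limsup\le 1$ uniformly on $x\ge(c_\lambda+\delta)t$, because this region lies inside $\{x\ge(2\lambda+\delta)t\}$ (recall $c_\lambda\ge2\lambda$).

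For the lower bound we use $w_t-w_{xx}\ge w(1-a-\psi)$. The key input is that on $\{x\ge(c_\lambda+\delta/2)t\}$ the linear solution is exponentially small: by \eqref{e.1004}, $\psi\lesssim t^{|q|}e^{-\lambda\delta t/2}$ there. We then set
\[
\underline w=\psi\,\exp\Big(-\int_0^t \sup_{x\ge (c_\lambda+\delta/2)s}\psi(s,x)\,\dd s\Big)
\]
on the moving domain $\{x\ge(c_\lambda+\delta/2)t\}$. The damping factor is bounded below by a positive constant because the integrand is integrable in time. On the left boundary $x=(c_\lambda+\delta/2)t$, we need $w$ to dominate a constant multiple $\varepsilon\underline w$. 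We would try to obtain this from a lower bound on $w$ there built from $\psi$ itself, or else replace the boundary by a Dirichlet-type correction and absorb the resulting error.

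The main obstacle is exactly this matching at the left boundary, where $w$ and $\psi$ are both exponentially small but need not be comparable. We expect to handle it by moving the comparison region slightly: work on $x\ge(c_\lambda+\delta')t$ with $\delta'<\delta$. Near that boundary, the contribution of boundary data to the region $x\ge(c_\lambda+\delta)t$ is exponentially smaller than $\psi$, by a Gaussian heat-kernel estimate. An alternative we would try first is the classical inequality $w\ge\psi-\int_0^t e^{(1-a)(t-s)}G*\psi^2$, i.e.\ the Duhamel formula with $w\le\psi$. This gives $w\ge\psi(1-o(1))$ directly on $x\ge(c_\lambda+\delta)t$, since the quadratic term is controlled by $\psi$ times the supremum of $\psi$ along the characteristics.

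\emph{Part (ii).} We first localise the front to within $O(1)$ of $m(t)$.

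For $\lambda<\sqrt{1-a}$, we use the supersolution of \Cref{lem:3A} with $\rho=\frac q\lambda\ln(t_0+t)$, truncated at $1-a$, together with the right-tail estimate from part (i) at $\xi=\delta t$. This gives the upper bound. For the lower bound, we place under $w$ a subsolution of the form $\Phi(x-m(t)-L)$ corrected by $O(t^{-1})$ terms, matched to the tail from part (i) at $x=m(t)+\delta t$.

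For $\lambda=\sqrt{1-a}$, the shift $\ell$ comes from the Dirichlet-kernel analysis. The effective tail exponent is $q$, and this yields the three cases in \eqref{m^w-2}, with the case $q=-1$ producing the $\ln\ln t$ term. We will cite the HNRR-type barriers \cite{hamel2013short}: the Gaussian sub/supersolutions of \Cref{prop:supersolutionshiftallbeta,prop:subsolutionshiftallbeta} in the homogeneous setting, adapted to the weighted tail.

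Once the front is localised, convergence \eqref{e.26.22b} follows as in the proof of \Cref{t.0}. Subsequential limits in the frame $x-m(t)$ are entire solutions trapped between translates of $\Phi_{1-a}$, so \Cref{p.109.1} identifies them as translates of the travelling wave, with uniformly bounded phase.
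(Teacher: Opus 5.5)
\textbf{The genuine gap is the critical case $\lambda=\sqrt{1-a}$ of part (ii).} You defer it to the HNRR barriers of \Cref{prop:supersolutionshiftallbeta,prop:subsolutionshiftallbeta} ``adapted to the weighted tail''. Those barriers cannot do this job, and the adaptation is the whole content of the claim. In the frame $y=x-2\lambda t$, the function $p=e^{\lambda y}w$ is, to leading order, a Dirichlet heat flow of data $p_0(z)\approx z^q$. At bounded $y$ its size is about $y\,t^{-3/2}\int_1^{\sqrt t}z^{1+q}\dd z$. This is of order $t^{-3/2}$, $t^{-3/2}\ln t$ or $t^{(q-1)/2}$ according as $q<-2$, $q=-2$ or $q>-2$.

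The Gaussian barriers hard-wire the factor $t^{-3/2}$ and carry Gaussian tails, which causes two failures:
\begin{itemize}
\item The supersolution can never lie above $w$. By part (i), $p\approx y^q$ on $y\ge\delta t$, whereas the barrier is $O(e^{-\delta^2 t/4})$ there.
\item The subsolution only yields the classical delay $\frac{3}{2\lambda}\ln t$, which is not sharp when $q\ge-2$.
\end{itemize}
So the upper localisation of the front at $c_\lambda t-\ell(t)+O(1)$ is not established in any subcase. In particular, the $\ln\ln t$ term needs a barrier that tracks the logarithmically growing Dirichlet mass.

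Your identification of the regimes is also wrong. With the dictionary $\eta=\frac1{\sqrt a}(q+\frac32)$ used in \Cref{lem:liminf-case}, the three cases of \eqref{m^w-2} are $q<-2$, $q=-2$ and $q>-2$, with $\ell=\frac{1-q}{2\lambda}\ln t$ in the last case. The $\ln\ln t$ term therefore appears at $q=-2$, not at $q=-1$.

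The paper constructs nothing here. It proves (i) by citing \cite[Proposition 3.1]{LamWu_preprint} together with \Cref{lem:C0}, and it proves (ii) by citing \cite[Theorem A, Examples 1, 3, 4]{bramson1983convergence}. You need either that citation or a real construction: a Dirichlet heat flow of $z^q$-type data on a moving half-line, matched to part (i) at $y=\delta t$.

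\textbf{Smaller points.}
\begin{itemize}
\item In part (i), the damped function $\psi\exp(-\int_0^t\sup\psi)$ only gives $\liminf\ge\beta_\infty<1$. The damping is of order one at early times, because $\psi$ is of order one near the left boundary then. To reach the limit $1$ you have two options.
  \begin{itemize}
  \item Use the Duhamel route. Bound $w^2\le w\min\{\psi,M\}$ rather than $\psi^2$, since $\psi\sim e^{(1-a)t}$ behind the front. Then split the convolution so that paths leaving a neighbourhood of the characteristic $x-2\lambda(t-s)$ contribute $o(\psi(t,x))$.
  \item Shift the comparison domain by a large constant $\kappa_0$, so that $\int_0^\infty\sup\psi\to0$. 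Control the boundary error by $e^{-\mu(x-c_\mu t)}$ with $\lambda+\mu\in(c_\lambda+\delta',c_\lambda+\delta)$, as in \Cref{lem.0815.4}.
  \end{itemize}
  With either fix, (i) becomes a valid self-contained replacement for the citation.
\item The lower half of (ii) needs no new subsolution in either regime. \Cref{lem:liminf-case}, applied with $s(t)=(c_\lambda+\delta)t$, takes exactly the tail information of part (i) as input.
\item The subcritical upper bound via \Cref{lem:3A} and the Liouville step are fine.
\end{itemize}
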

\begin{proof}[{\bf Proof of \Cref{lem:C1}}] 
It follows from \cite[Proposition 3.1]{LamWu_preprint} and Lemma~\ref{lem:C0} that 
\eqref{e.26.22a} holds. For \eqref{e.26.22b}, see \cite[Theorem A and Example 1]{bramson1983convergence} for the subcritical and \cite[Theorem A, Examples 3 and 4]{bramson1983convergence} for the critical case.
\end{proof}

Next, we recall \cite[Lemma 4.6]{LamWu_preprint}, cited using the notation of the present paper, for the convenience of the reader.

\begin{lemma}
\label{lem:liminf-case}
Let $0 < a < 1$, $\lambda\in (0,\sqrt{1-a}]$ be given and set $c_\lambda = \lambda + \frac{1-a}{\lambda}$. 
{Set $s(t)=\s-\eta\ln(1+t)$ with $\s>c_{\lambda}$ and 
$\eta\in\R$.}
Let $n$ be a bounded nonnegative supersolution such that
\begin{equation}
    n_t - n_{xx} \geq n(1-a-n) \quad \text{ in  } \Omega_0:= \left\{(t,x):~t \geq t_0,~ x \leq s(t)\right\},
\end{equation}
and
\begin{align}
\liminf_{x \to -\infty}n(t_0,x) >0,\qquad  \liminf_{t \to +\infty} \left[(x - 2\lambda t)^{-q} e^{\lambda x - (\lambda^2 +1-a)t} n(t,x)\right]_{x=s(t)} >0\label{liminf}
\end{align}
hold for some $t_0 \geq 0$ and $q \in \mathbb R$. Then the following statements hold:
\begin{itemize}
    \item[{\rm(i)}] If $\lambda \in (0,\sqrt{1-a})$, then there is a constant $C\in \mathbb{R}$ such that 
    \begin{equation}
\liminf_{t\to\infty }\left[\inf_{ -\infty< x \leq s(t)}\left(n(t,x )  -\Phi_{\lambda,1-a}\left(x- c_\lambda t - \frac{q}{\lambda} \ln (1+t) -  C\right)\right)\right] \geq  0.
\end{equation}   
\item[{\rm(ii)}] If $\lambda = \sqrt{1-a}$, then there is a constant $C\in \mathbb{R}$ such that
    \begin{equation}
\liminf_{t\to\infty }\left[\inf_{ -\infty< x \leq s(t)}\left(n(t,x )  -\Phi_{1-a}(x-c_\lambda t + \ell(t)-  C)\right)\right] \geq  0,
\end{equation}  
where the expression of $\ell$, depending on the parameter value $\eta = \frac{1}{\sqrt a}(q+\tfrac32)$, is given in \eqref{m^w-2}.
\end{itemize}

\end{lemma}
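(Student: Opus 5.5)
The plan is a single comparison argument in the moving domain $\Omega_0$. The subsolution is the solution of the \emph{homogeneous} Fisher--KPP equation with a suitably chosen front-like initial datum. Its large-time behaviour is already described by \Cref{lem:C0} and \Cref{lem:C1}.

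\textbf{Step 1: the comparison function.} Fix $T\ge t_0$ large, to be chosen later. Let $w$ solve
\begin{equation}
w_t-w_{xx}=w(1-a-w)\qquad \text{for } t>T,~x\in\mathbb R,
\end{equation}
with initial datum $w(T,x)=\psi_0(x)$. Here $\psi_0$ satisfies \eqref{e.26.i} with the given $q$ and $\lambda$, but is multiplied by a small constant $\kappa\in(0,1)$. Thus $\psi_0\le\kappa$ everywhere, $\liminf_{-\infty}\psi_0>0$, and $\psi_0(x)=\kappa x^qe^{-\lambda x}$ for $x\ge x_0$.

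Since $n$ is a supersolution of the homogeneous KPP equation in $\Omega_0$, with $\liminf_{x\to-\infty}n(t_0,x)>0$ and positive boundary values, a standard comparison argument gives $\inf_{x\le s(T)}n(T,x)>0$. Choosing $\kappa$ small therefore yields $w(T,\cdot)\le n(T,\cdot)$ on $(-\infty,s(T)]$.

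\textbf{Step 2: the moving boundary $x=s(t)$.} Because $\s>c_\lambda$, the point $x=s(t)$ lies eventually in the region $x\ge(c_\lambda+\delta)(t-T)$ for some $\delta>0$. \Cref{lem:C1}(i), applied in the time variable $t-T$, gives
\begin{equation}
w(t,s(t))\sim \kappa\,(s(t)-2\lambda(t-T))^qe^{-\lambda(s(t)-c_\lambda(t-T))}.
\end{equation}
Replacing $t-T$ by $t$ changes the right-hand side only by a bounded factor depending on $T$. Note that $\lambda x-(\lambda^2+1-a)t=\lambda(x-c_\lambda t)$. Hence the hypothesis \eqref{liminf} gives
\begin{equation}
n(t,s(t))\ge c\,(s(t)-2\lambda t)^qe^{-\lambda(s(t)-c_\lambda t)}\qquad\text{for } t\ge T,
\end{equation}
for some $c>0$, provided $T$ is large. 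Decreasing $\kappa$ further, with $T$ fixed, we obtain $w(t,s(t))\le n(t,s(t))$ for all $t\ge T$.

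I expect this step to be the main obstacle. The issue is making the asymptotics of \Cref{lem:C1}(i) \emph{uniform} in $\kappa$ and in the time shift. Uniformity in $\kappa$ holds because the tail limit is linear in the amplitude for $x\ge(c_\lambda+\delta)t$, where the nonlinearity is negligible. If it fails, I would instead use the linear solution $\psi$ of \Cref{lem:C0} as an upper envelope for $w$ at $x=s(t)$, since $w\le\psi$ by comparison.

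\textbf{Step 3: conclusion.} Steps 1 and 2 verify the ordering on the parabolic boundary of $\Omega_0\cap\{t\ge T\}$. The comparison principle then gives $n\ge w$ there.

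By \Cref{lem:C1}(ii), again with the time shift $T$ absorbed into a bounded phase, we have $w(t,x)-\Phi_{1-a}(x-m(t)-\Lambda(t))\to0$ uniformly. Here $\Lambda$ is bounded, and $m(t)=c_\lambda t+\frac q\lambda\ln(1+t)$ in case (i), while $m(t)=c_\lambda t-\ell(t)$ in case (ii). The amplitude $\kappa$ only shifts $\Lambda$ by roughly $\lambda^{-1}\ln\kappa$, and the level $\kappa$ at $-\infty$ is irrelevant, since $w\to1-a$ behind the front.

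In case (ii) one must check that the effective tail exponent is $q=-\frac32+\sqrt a\,\eta$. This identifies which branch of $\ell$ in \eqref{m^w-2} applies, and it is exactly the relation $\eta=\frac1{\sqrt a}(q+\frac32)$ recorded in the statement.

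Finally, taking $C\ge\sup|\Lambda|$ and using the monotonicity of $\Phi_{1-a}$ yields both stated liminf inequalities.
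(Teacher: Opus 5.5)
Your argument is correct, but it does not follow the paper's route. The paper proves this lemma in two lines. It observes that $\s>c_\lambda$ gives $c_\lambda+\varepsilon\le \dot s(t)\le M$ for large $t$, and then quotes \cite[Lemma~4.6 and Remark~4.7]{LamWu_preprint} with $R=1-a$. You instead give a self-contained comparison with the homogeneous KPP solution $w$ started at time $T$ from $\kappa\psi_0$. This uses only the paper's own appendix, namely \Cref{lem:C0} and \Cref{lem:C1}, and hence ultimately Bramson.

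Your fallback in Step~2 is the clean way to proceed, and it removes what you call the main obstacle. Since $w(1-a-w)\le(1-a)w$, you have $w(t,\cdot)\le\kappa\psi(t-T,\cdot)$. \Cref{lem:C0} then bounds $w(t,s(t))$ linearly in $\kappa$ for $t-T\ge T_0$. On the compact interval $[T,T+T_0]$, boundedness of $\psi$ and positivity of $n(t,s(t))$ suffice, so no uniformity of \Cref{lem:C1}(i) in $\kappa$ or in the time shift is needed.

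What your route buys is transparency and slightly more generality: you only use $s(t)\ge(2\lambda+\delta)t$ eventually, not the bound $\dot s\le M$. What the paper's route buys is brevity.

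Two small corrections.
\begin{enumerate}
\item Since $\Phi$ is decreasing, the last step requires $C\le\inf_t\Lambda(t)$, for instance $C=-\sup|\Lambda|$, not $C\ge\sup|\Lambda|$.
\item In case (ii) there is nothing to check about $q$, because $q$ is given. Bramson's correction for a tail $x^qe^{-\sqrt{1-a}\,x}$ is $\frac{3}{2\lambda}\ln t$ for $q<-2$, the same with an extra $-\frac1\lambda\ln\ln t$ at $q=-2$, and $\frac{1-q}{2\lambda}\ln t$ for $q>-2$. This coincides with \eqref{m^w-2} evaluated at $\eta=\frac1{\sqrt a}(q+\frac32)$, which is exactly what \Cref{lem:C1}(ii) encodes.
\end{enumerate}
Finally, applying \Cref{lem:C1}(ii) to the datum $\kappa\psi_0$ rather than $\psi_0$ uses the fact that a constant factor on the tail only changes the bounded phase. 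Bramson's theorem covers this.
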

\begin{proof}[{\bf Proof of \Cref{lem:liminf-case}}] 
Since $\s>c_\lambda$, there exist $\varepsilon,M>0$ such that
\begin{equation}
c_\lambda+\varepsilon\leq s'(t)\leq M \quad\text{ for all large } t.
\end{equation}
Then the conclusions follow from \cite[Lemma 4.6 and Remark 4.7]{LamWu_preprint} with $R=1-a$.
\end{proof}
Next, we summarise the Liouville result developed in \cite{beresycki2007generalized,hamel2016logarithmic} stated in terms of the homogeneous scalar equation. 
\begin{proposition}\label{p.109.1}
Let $c,\nu>0$ and let $w$ be a bounded positive entire function  satisfying
\begin{equation}
\begin{cases}
w_t - w_{yy} - cw_y = w(\nu - w) &\text{ in }\mathbb{R}^2,\\
w(t,y) \asymp (\ell_0 + \ell_1 y)e^{-\lambda y} &\text{ in }\mathbb{R}\times [y_0,\infty),
\end{cases}\end{equation}
for some $y_0,\lambda >0$ and $\ell_0,\ell_1 \geq 0$ such that $\ell_0 + \ell_1 >0$. Then {there exists $h\in\R$ such that $w(t,\cdot) = \Phi_c(\cdot-h)$}, where $\Phi_c$ is the unique travelling wave function satisfying
\begin{equation}
\Phi'' + c\Phi' + \Phi(\nu - \Phi) = 0 \quad \text{ in }\mathbb{R},\quad 
{\phi_c(0)=\frac{\nu}2},\quad 
\Phi_c(-\infty) = \nu,\quad \Phi_c(+\infty) = 0. 
\end{equation}
\end{proposition}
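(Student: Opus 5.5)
This Liouville statement is a rigidity result for bounded entire solutions of the homogeneous scalar KPP equation in a moving frame. I would prove it by a sliding argument in the time direction, mirroring \Cref{l.liouville.4} in the scalar setting (which is simpler, since there is no second component). First, I establish the end-state behaviour. By the tail bound, $w(t,y)\to0$ as $y\to+\infty$ uniformly in $t$. Next I show that $\inf_{t,\,y\le A} w>0$ for every $A$: the tail bound gives positivity on $[y_0,y_0+1]$ uniformly in $t$, and the Harnack inequality propagates it to any compact $y$-range. For the left end, if $\inf w$ over $y\le A$ were approached along $y_n\to-\infty$, the translated limits would form an entire solution with infimum zero. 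To rule this out I would use the KPP spreading property: $w$ stays bounded below at $y=y_0$ for all past times, so the hair-trigger effect on the set $\{y\le y_0\}$ forces $w\ge\nu/2$ far to the left. By classification of entire solutions with positive infimum, this gives $w(t,y)\to\nu$ as $y\to-\infty$, uniformly in $t$.

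Second, I slide in time. For $h\in\mathbb R$ I set $w^\xi(t,y)=w(t+\xi,y+h-c\xi)$ and $p^\xi=w^\xi-w$. I then run the scalar analogues of \Cref{l.liouville.2,l.liouville.3}:
\begin{itemize}
\item Near $y=-\infty$, the linear coefficient $\nu-w-w^\xi\to-\nu<0$, so positivity of $p^\xi$ on a left half-line follows from its sign at the boundary $\bar y$, by the barrier argument of \Cref{l.liouville.3}.
\item On $[\bar y,\infty)$, positivity follows from the ordered tails: for $\xi$ large, $w^\xi$ is a translate to the right, and the $\asymp$ bounds with the factor $e^{\lambda c\xi}$ dominate the comparability constants.
\end{itemize}
Setting $\xi_*=\inf\Xi$, I show $\xi_*=h/c$ exactly as in \Cref{l.liouville.4}. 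If $\xi_*>h/c$, the strong maximum principle gives either strict positivity or periodicity. Periodicity is impossible, because the translation by $k(h-c\xi_*)$ contradicts the two-sided tail bound as $k\to\infty$. Failure of minimality must then occur along $y_n\to+\infty$. Normalizing by $w(t_n,y_n)$ gives two limits of the linear equation $U_t=U_{yy}+cU_y+\nu U$ that touch, so they coincide. Iterating contradicts the growth factor $e^{\lambda k(c\xi_*-h)}$ coming from the tail.

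Third, I conclude as in the proof of \Cref{thm:liouville}. Obtaining $\xi_*=h/c$ for both $h$ and $-h$ gives $w(t+h/c,y)=w(t,y)$ for all $h$, so $w$ is stationary, decreasing, and connects $\nu$ to $0$. It is therefore a travelling wave of speed $c$, and uniqueness of KPP fronts of a given speed up to translation yields $w=\Phi_c(\cdot-h)$.

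I expect the main obstacle to be the first step, namely uniform positivity and convergence to $\nu$ at $-\infty$, since nothing is assumed there. The hair-trigger and entire-solution classification argument must be applied carefully, uniformly in $t\in\mathbb R$.
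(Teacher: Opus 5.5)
Your proposal is correct and follows essentially the route the paper relies on: the paper's proof just cites \cite[Theorem 3.5]{beresycki2007generalized} and \cite[Lemma 8.2]{hamel2016logarithmic}, whose time-sliding argument is the scalar case of \Cref{l.liouville.4} and \Cref{t.c.1}, and that scalar argument is what you reproduce. Keep your hair-trigger step giving $w\to\nu$ as $y\to-\infty$ uniformly in $t$, together with the routine bound $\sup_{t,\,|y|\le A}w<\nu$ that you need to start the sliding, because, unlike \Cref{thm:liouville}, the proposition assumes nothing about $w$ on the left.
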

\begin{proof}[{\bf Proof of \Cref{p.109.1}}]
    Without loss of generality, we can assume $\nu = 1$. If $w$ is assumed to be trapped between two translates of $\Phi_c$, then the result follows from
    \cite[Theorem 3.5]{beresycki2007generalized}. The proof for the weakened assumption as above is due to \cite[Lemma 8.2]{hamel2016logarithmic}, where it is proved for the critical case $c_* = 2\sqrt{\nu}$ where $\ell_1>0$. Their argument works for the supercritical case without change. See also \Cref{thm:liouville} and \Cref{app:rigid} for the extension of the proof to systems.
\end{proof}

\section{Rigidity results for general monotone systems}\label{app:rigid}

In this section, we generalise \Cref{t.uniqueness,thm:liouville} to formulate and prove a rigidity result for monostable waves in general monotone parabolic systems. Such a uniqueness result for bistable travelling wave solutions can be found in Volpert et al. \cite[pp. 15, Theorem 2.1]{Volpert1994travelling}. Here we establish a result of a similar flavour for monostable situations.

\begin{definition}
For two vectors $\vec{a}=(a_1,...,a_k) \in \mathbb{R}^k$ and $\vec{b}=(b_1,...,b_k) \in \mathbb{R}^k$, $\vec{a} \leq_{\mathbb{R}^k} \vec{b}$ (resp. $\vec{b} \ll_{\mathbb{R}^k} \vec{b}$) means $a_j \leq b_j$ for all $j$ (resp. $a_j < b_j$ for all $j$).
\end{definition}

Let $c>0$ be given, and 
suppose $w=(u,v)$, with $u : \mathbb{R}^2 \to \mathbb{R}^n$ and $v : \mathbb{R}^2 \to \mathbb{R}^m$, is a bounded entire solution of a  system of PDEs with monotone structure:
\begin{equation}\label{e.923.1}
   \begin{cases}
       u_t = \widetilde{D}_u u_{yy} + c u_{y} + F(u,v) &\text{ for }(t,y)\in \mathbb{R}^2  
       \\
    v_t = \widetilde{D}_v v_{yy} + cv_y + G(u,v) &\text{ for }(t,y)\in \mathbb{R}^2.
   \end{cases} 
\end{equation}

We assume that, for $k\in \{u,v\}$, $\widetilde{D}_k = {\rm Diag}(d_{k,i})_i$ is a diagonal
matrix with positive diagonal entries and the vector-valued function $H(u,v) = (F(u,v),G(u,v)): \mathbb{R}^{n+m} \to \mathbb{R}^{n+m}$ is of class $\mathscr{C}^2$ and satisfies the following:

\begin{itemize}

\item[{\rm(H1)}]
The states $E^+=0$ and $E^-$ are equilibria, i.e. $H(0)=H(E^-)=0$, and
\begin{equation}
E^- \gg_{\mathbb R^{n+m}}0.
\end{equation}
We denote by $\mathcal I:=[0,E^-]$
the corresponding order interval.

\item[{\rm(H2)}]
The system is cooperative in $\mathcal I$: the matrix $DH(z)$ is of Metzler type for every
$z=(u,v)\in\mathcal I$. Moreover, the matrix $DH(z)$ is irreducible for every
$\displaystyle 
    0\ll_{\mathbb R^{n+m}} z
    \ll_{\mathbb R^{n+m}}E^-.
$

\item[{\rm(H3)}]
The equilibrium $E^-$ is strictly stable in a positive direction:
\begin{equation}
    DH(E^-)\mathbf a_-
    \leq_{\mathbb R^{n+m}}
    -\mu_-\mathbf a_-,
\end{equation}
for some 
$
    \mathbf a_- \gg_{\mathbb R^{n+m}}0$ and 
    $
    \mu_->0.
$

\item[{\rm(H4)}]
The face $\{u=0\}$ is invariant for the reaction term, namely
\begin{equation}
    F(0,v)=0
\end{equation}
for every $v$ such that $(0,v)\in\mathcal I$.
Furthermore,
$B_+:=D_uF(0,0)$
is an irreducible Metzler matrix\footnote{Condition (H4) marks the distinction from the kind of co-invasion travelling wave connecting $(0,0)$ to $(u^\star, v^\star)$ by Tang-Fife.}.

    \item[{\rm(H5)}] There exist $\delta_2>0$ and a vector
$\mathbf a_+=(a_{+,1},\ldots,a_{+,m})\gg_{\mathbb R^m}0$
such that the following property holds.
For every $(u,v)$ satisfying
\begin{equation}
    \|(u,v)-E^+\|<\delta_2,
\end{equation}
every $p\in\mathbb R^n$ with $p\ge_{\mathbb R^n}0$, and every
$q\in\mathbb R^m$ such that both $(u,v)$ and $(u+p,v+q)$ belong
to the order interval $\mathcal I$, if for some $k\in\{1,\ldots,m\}$,
\begin{equation}\label{e.H2prime}
    \frac{q_k}{a_{+,k}}
    =
    \min_{1\le j\le m}\frac{q_j}{a_{+,j}}
    <0,
\end{equation}
then
\begin{equation}\label{e.H2prime2}
    G_k(u+p,v+q)-G_k(u,v)>0.
\end{equation}

\end{itemize}

\begin{remark}
    If the $v$ component in problem \eqref{e.923.1} is void, then (H5) vacuously holds, and \Cref{t.c.1,t.c.2} hold. The latter provides a shorter proof of the main result in \cite{huang2007} without assuming irreducibility of $DH(E_-)$.
\end{remark}
\begin{theorem}\label{t.c.1}
Assume (H1)--(H5).
Let $w=(u,v)$ be a bounded entire solution  of \eqref{e.923.1} satisfying the two conditions:
\begin{itemize}
\item[{\rm(W1)}] $\displaystyle E^- \gg_{\mathbb{R}^{n+m}}w(t,y)\gg_{\mathbb{R}^{n+m}} E^+$ in $\mathbb{R}^2$ and  
\begin{equation}
\lim_{y \to -\infty} \sup_{t\in\mathbb R}\|w(t,y) - E^-\|= 0 = \lim_{y \to +\infty} \sup_{t\in\mathbb R}\|w(t,y) - E^+\|.
\end{equation}
\item[{\rm(W2)}]
There exist $C_1>1$, $\lambda>0$, and $r\in\mathbb N \cup \{0\}$,
independent of $i$, such that
\begin{equation}
    \frac1{C_1}
    \le
    \frac{e^{\lambda y}}{y^r}
    \left(u_i(t,y)-(E^+)_i\right)
    \le
    C_1,
    \qquad
    t\in\mathbb R,\quad y\ge C_1,\quad i=1,\ldots,n.
\end{equation}
\end{itemize}
Then there exists $\sigma \in \mathbb{R}$ such that
\begin{equation}
w(t,y) \equiv W_c(y  - \sigma),
\end{equation}
where $W_c$ is a monotone travelling wave solution of \eqref{e.923.1} such that 
\begin{equation}
W(-\infty) = E^-,\quad W(+\infty) = E^+.
\end{equation}

\end{theorem}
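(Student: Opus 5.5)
We follow the sliding scheme of \Cref{sec:liouville}, replacing the Lotka--Volterra structure by (H1)--(H5). Fix $h\in\mathbb R$, and for $\xi>h/c$ set $w^\xi(t,y)=w(t+\xi,y+h-\xi c)$ and $Z^\xi=(p^\xi,q^\xi)=w^\xi-w$. By (H2) and convexity of $\mathcal I$ (guaranteed by (W1)), $Z^\xi$ solves a linear cooperative system
\[
Z_t-\widetilde D Z_{yy}-cZ_y=B^\xi(t,y)Z,\qquad B^\xi=\int_0^1 DH\bigl(w+s Z^\xi\bigr)\,\dd s,
\]
with $B^\xi$ Metzler, and irreducible wherever $0\ll w,w^\xi\ll E^-$. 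The aim is to show $Z^{h/c}\ge 0$ for every $h$; replacing $h$ by $-h$ then gives $w(t+h/c,\cdot)\equiv w(t,\cdot)$, so $w$ is stationary. The monotonicity of the resulting profile $W_c$ is a by-product of the sliding, and its limits $E^\pm$ come from (W1).

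First I will prove two unbounded-domain maximum principles, the analogues of \Cref{l.liouville.2,l.liouville.3}.

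\emph{Left end.} Since $B^\xi\to DH(E^-)$ as $y\to-\infty$, uniformly in $t$ and in $\xi>h/c$ by (W1), hypothesis (H3) yields $A_2$ with $B^\xi\mathbf a_-\le-\tfrac{\mu_-}{2}\mathbf a_-$ for $y\le-A_2$. The barrier $W=Z+\varepsilon\mathbf a_-+Me^{-\alpha(t+T)}\mathbf a_-$ from the proof of \Cref{l.liouville.3} then applies verbatim: if $Z^\xi(\cdot,\bar y)\gg0$ for some $\bar y\le-A_2$, then $Z^\xi\gg 0$ on $(-\infty,\bar y]$.

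\emph{Right end.} Choose $A_1$ so that $\|w-E^+\|<\delta_2$ for $y\ge A_1$. Assume $p^\xi\ge0$ on $[\bar y,\infty)$ and $q^\xi(\cdot,\bar y)\gg0$, with $\bar y\ge A_1$. Suppose $\inf_j q_j/a_{+,j}<0$ on $\mathbb R\times[\bar y,\infty)$, and take a minimising sequence. Since $q^\xi\to0$ as $y\to+\infty$, the sequence stays in a bounded $y$-range, and passing to the limit in time translates produces an attained negative minimum at an interior point, for some index $k$. At that point the parabolic operator applied to $q_k$ is $\le0$. On the other hand, (H5) applied with $p\ge0$ gives $G_k(w^\xi)-G_k(w)>0$. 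These are incompatible, so $q^\xi\ge0$ on $[\bar y,\infty)$, and the strong maximum principle upgrades this to strict positivity. Here (H5) plays the role that the inequality $v\ge3/4$ played in \Cref{l.liouville.2}.

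Next comes the sliding itself, following \Cref{l.liouville.4}. Let $\Xi$ be the set of $\xi'>h/c$ such that $Z^\xi\gg0$ for all $\xi\ge\xi'$. To show $\Xi\neq\emptyset$, take $\xi$ large. Condition (W1) makes $Z^\xi\gg0$ on $[-A_2,A_1]$. On $[A_1,\infty)$, (W2) gives $p^\xi>0$: for large $\xi$ the shift $h-\xi c$ is very negative, so $u^\xi/u\gtrsim e^{\lambda(\xi c-h)}\gg1$, the polynomial factors $y^r$ being harmless. The two maximum principles then extend positivity to all of $\mathbb R$. Set $\xi_*=\inf\Xi$ and suppose $\xi_*>h/c$. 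If $Z^{\xi_*}$ vanished somewhere, the strong maximum principle for the irreducible system (H2), using interior positivity from (W1), would force $Z^{\xi_*}\equiv0$. Iterating the resulting periodicity gives $w(t+k\xi_*,y+k(h-\xi_*c))=w(t,y)$ for all $k$, which contradicts (W2) because the shift in $y$ goes to $-\infty$. The same contradiction, applied to limits of time translates, gives uniform positivity of $Z^{\xi_*}$ on compact $y$-sets. Hence slightly smaller $\xi$ keep $Z^\xi\gg0$ on $(-\infty,n]$, and failure of $\xi<\xi_*$ must occur through $p^{\xi_n}(t_n,y_n)\le0$ with $y_n\to\infty$.

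The main obstacle is this far-field step. The plan is to normalise $U_n=u(\cdot+t_n,\cdot+y_n)/u_1(t_n,y_n)$. By (W2), all components $u_i$ are comparable to $y^re^{-\lambda y}$ with constants independent of $i$, so every component of $U_n$ is locally bounded. By (H4), $F(u,v)=D_uF(0,v)u+O(|u|^2)$ and $v\to0$, so the limits satisfy $U_t=\widetilde D_uU_{yy}+cU_y+B_+U$. The two limits are ordered, and some component coincides at a point. Because $B_+$ is irreducible, the strong maximum principle for cooperative linear systems gives $\hat U\equiv U$. The iteration argument of \Cref{l.liouville.4} then yields $u_1(t_n+k\xi_*,y_n+k(h-\xi_*c))/u_1(t_n,y_n)\to1$, contradicting the factor $e^{\lambda k(\xi_*c-h)}$ supplied by (W2). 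Therefore $\xi_*=h/c$, and $Z^{h/c}\ge0$ follows by continuity, which is what was needed.
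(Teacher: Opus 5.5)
Your proposal is correct and follows the paper's proof of this theorem essentially step by step. It uses the same two unbounded-domain maximum principles (left end via (H3), right end via the weighted minimum and (H5)), the same sliding set $\Xi$, and the same far-field normalisation that closes the argument through the irreducibility of $B_+$ from (H4) and the iteration against (W2). One point should be stated explicitly, for the nonemptiness of $\Xi$ and the uniform positivity on compact sets. There, (W2) only controls $p^\xi$ where $y+h-\xi c\ge C_1$, so you also need $\inf_{t\in\mathbb R,\,y\le C_1}\min_k u_k>0$ and a uniform separation of $w$ from $E^-$ on compact $y$-sets; the paper obtains both by a time-translation compactness argument combined with the strong maximum principle, using $F(0,v)=0$.
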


We point out that the hypothesis (W2) is consistent with  Harnack's inequality \cite[Theorem 3.9]{Foldes2009cooperative}. Assuming in addition the following hyperbolicity condition, we can derive a general uniqueness of monostable travelling wave solutions.
\begin{itemize}

\item[{\rm(H6)}]The equilibrium $E^+=0$ is hyperbolic for the
spatial dynamical system associated with the travelling-wave equation.
More precisely, we assume that
\begin{equation}
    \det\!\left(
        \mu^2\widetilde D+c\mu I+J_+
    \right)\neq0 \qquad \text{ for all } \mu \in \mathbb C\text{ such that }\operatorname{Re}\mu=0,
\end{equation}
where
\begin{equation}
    \widetilde D
    :=
    \begin{pmatrix}
        \widetilde D_u & 0\\
        0 & \widetilde D_v
    \end{pmatrix},
    \qquad
    J_+:=DH(0).
\end{equation}

\end{itemize}

   \begin{remark}
If both $u$ and $v$ are scalar, then, in view of {\rm(H4)},
\begin{equation}
    DH(0,0)
    =
    \begin{pmatrix}
        \partial_uF(0,0)&0\\
        \partial_uG(0,0)&\partial_vG(0,0)
    \end{pmatrix}.
\end{equation}
Consequently, for $c>0$, hypothesis {\rm(H6)} is equivalent to
\begin{equation}
    \partial_uF(0,0)\neq0,
    \qquad
    \partial_vG(0,0)\neq0.
\end{equation}
In particular, for the Lotka--Volterra system
\eqref{eq:LV}, after the change of variables $w=1-v$, 
\begin{equation}
    DH(0,0)
    =
    \begin{pmatrix}
        1-a&0\\
        rb&-r
    \end{pmatrix}.
\end{equation}
Thus {\rm(H6)} holds whenever
$a\neq1, r\neq0$, 
and these conditions are automatically satisfied under the
monostable assumption $0<a<1$, $r>0$.
\end{remark}
\begin{theorem}

Assume (H1)--(H6).
Let $c>0$ be given. 
    If $W$ and $\widetilde{W}$ are two travelling wave solutions of \eqref{e.923.1} with the same speed $c>0$, both connecting $E^-$ and $E^+=0$, and
    \begin{equation}
E^- \gg  W, \widetilde W \gg E^+
    \end{equation}
on $\mathbb{R}$, then there exists $\sigma\in \mathbb{R}$ such that
    \begin{equation}
    W(y) = \widetilde{W}(y-\sigma) \quad \text{ for } y \in \mathbb{R}.
    \end{equation}
    \label{t.c.2}
\end{theorem}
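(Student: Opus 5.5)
The plan is to deduce \Cref{t.c.2} from \Cref{t.c.1} by the same sliding scheme used in the proof of \Cref{t.uniqueness}. The only genuinely new input is a precise description of the leading-edge tails of $W$ and $\widetilde W$, which hypothesis (H6) will supply.

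\textbf{Step 1: tail asymptotics.} Any travelling wave $W$ with $E^-\gg W\gg E^+$ connecting $E^-$ to $0$ is a stationary entire solution of \eqref{e.923.1}. It therefore satisfies (W1) trivially, so what has to be checked is (W2). Linearising the profile ODE at $0$, (H6) says the equilibrium is hyperbolic for the first-order system in $(W,W')$. Hence $W$ lies on the stable manifold of $0$ and decays exponentially, and classical ODE theory (Hartman--Grobman combined with a Levinson-type asymptotic expansion, the nonlinearity being $\mathscr C^2$) gives
\[
W(y)=y^{r}e^{-\lambda y}\bigl(\mathbf b+o(1)\bigr)\quad\text{or a finite combination of such terms.}
\]
By (H4) the $u$-block decouples at leading order: $u$ solves $\widetilde D_u u''+cu'+B_+u+O(|u|(|u|+|v|))=0$. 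Since $u\gg0$ and $B_+$ is irreducible Metzler, a Perron--Frobenius argument, together with the fact that a positive solution cannot carry oscillating modes, shows the following. The slowest mode present in $u$ has a real exponent $\lambda>0$ and a coefficient vector $\mathbf b\gg_{\mathbb R^n}0$, and its polynomial power $r\in\{0,1\}$ comes from a possible double root. This yields (W2) for $W$ and for $\widetilde W$. \Cref{t.c.1} then applies to each of them, but only to conclude monotonicity. The comparison between the two waves is done next.

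\textbf{Step 2: sliding between the two profiles.} We repeat the argument of the proof of \Cref{t.uniqueness}. Set $\widetilde p^{\,\sigma}=\widetilde u(\cdot-\sigma)-u$ and $\widetilde q^{\,\sigma}=\widetilde v(\cdot-\sigma)-v$, and let $\widetilde\Xi$ be the set of $\sigma'$ such that $(\widetilde p^{\,\sigma},\widetilde q^{\,\sigma})\gg0$ for all $\sigma\ge\sigma'$.
\begin{itemize}
\item[] The far-field maximum principles are the analogues of \Cref{l.liouville.2,l.liouville.3}. Near $E^-$ we use (H3) with the vector $\mathbf a_-$ in place of $\vec{\mathbf e}$. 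Near $E^+$ we use (H5) with $\mathbf a_+$ to control the $v$-block by a minimum-ratio argument, once the $u$-difference is known to be nonnegative.
\item[] With these, $\widetilde\Xi$ is nonempty and bounded below. At $\sigma_*=\inf\widetilde\Xi$, the strong maximum principle (via the irreducibility in (H2)) gives either identity or strict ordering.
\item[] In the strict case, the tails from Step 1 are ordered. Either $\widetilde u/u\to\infty$ along $\mathbf b$, or the two tails have the same exponent and power and the ratio converges componentwise to $e^{\lambda\sigma}$ times a fixed constant.
\end{itemize}
This lets one slide slightly below $\sigma_*$, giving a contradiction unless $\sigma_*$ is at the tail-matching shift. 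Exchanging the roles of $W$ and $\widetilde W$ then gives equality.

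\textbf{Main obstacle.} I expect the hardest part to be Step 1 in the vector case. We must show that the leading $u$-tail is a single real exponential mode, possibly with a factor $y$, with a strictly positive coefficient vector, and with the same $(\lambda,r)$ forcing comparable ratios uniformly in every component $i$. My first attempt is to project onto the left Perron eigenvector of $\mu^2\widetilde D_u+c\mu+B_+$ at the relevant real root. Positivity of $u$ excludes complex roots with real part closer to $0$, and Perron--Frobenius irreducibility forces all components to share the same mode. It may also be necessary to rule out the possibility that two distinct waves select different real roots. I would handle this exactly as in Step 3 of the proof of \Cref{t.uniqueness}: the sliding argument itself excludes the case $\widetilde u/u\to\infty$.
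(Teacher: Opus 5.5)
Your proposal is correct and is essentially the paper's proof. It obtains the tail asymptotics from (H6), (H4) and Perron--Frobenius, giving a common $(\lambda,r)$ with positive coefficient vectors, then slides using the stationary versions of Lemmas~\ref{l.liouville.2'} and~\ref{l.liouville.3'}, excludes $\widetilde U/U\to\infty$ by sliding below $\xi_*$, and concludes by exchanging the roles of the two waves. The differences are minor: the paper derives proportionality of the two leading coefficient vectors only at the end, from uniqueness of the positive kernel vector of the irreducible Metzler matrix $\lambda^2\widetilde D_u-c\lambda I+D_uF(E^+)$, whereas you build it into Step~1, and your appeal to Theorem~\ref{t.c.1} for monotonicity is not needed. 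Also, positivity alone does not rule out oscillating modes at the leading rate; what rules them out is the Perron--Frobenius comparison showing that no root $-\lambda+i\omega$ with $\omega\neq0$ shares its real part with a Perron root $-\lambda$.
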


Next, we remark that this theorem actually contains the rigidity of \eqref{eq:LV} as well as systems with more than two species.
\begin{remark}[A two-species Lotka--Volterra system]\label{r.LV2}
Hypotheses {\rm (H1)--(H6)} are satisfied by the
Lotka--Volterra system \eqref{eq:LV}. Here we verify condition {\rm(H5)}. (See the above remark for {\rm(H6)}.) 
We are interested in entire solutions of \eqref{eq:LV} connecting $(u^\star,v^\star)$ and $(0,1)$. Introduce $w=1-v$; then the equilibrium $(0,1)$ is transformed into $E^+=(0,0)$ and the equations for $(u,w)$ become \eqref{e.923.1} with 
\begin{equation}\label{e.926.7a}
A_1=1,\quad A_2=d,\quad F(u,w)=u(1-a-u + aw), \quad G(u,w) = r(1-w)(bu -w),
\end{equation}
where the relevant order interval is $\mathcal{I}=\{(u,w): 0\leq u \leq 1,~0 \leq w \leq 1\}$. Let $p \geq 0$ and $q <0$ and $(u,w)\approx (0,0)$; then 
\begin{align}
    G(u+p,w+q) - G(u,w) &= r(1-w-q)(bp-q)-rq(bu-w)  \\
    &\geq -rq\left[1-w-q - |bu-w| \right] >0.
\end{align}
\end{remark}

\begin{remark}[A three-species Lotka--Volterra system]\label{r.LV3}
Hypotheses {\rm (H1)--(H6)} are satisfied by a natural three-species
Lotka--Volterra system in which one species competes with two mutually
cooperative species. 
Consider
\begin{equation}\label{e.LV3-original}
\left\{
\begin{aligned}
 u_t
 &=
 u_{yy}+c u_y
 +u\left(1- u-a_1v_1-a_2v_2\right),\\
 (v_1)_t
 &=
 d_1(v_1)_{yy}+c(v_1)_y
 +r_1v_1\left(1-b_1u-v_1+\gamma_{12}v_2\right),\\
 (v_2)_t
 &=
 d_2(v_2)_{yy}+c(v_2)_y
 +r_2v_2\left(1-b_2u+\gamma_{21}v_1-v_2\right),
\end{aligned}
\right.
\end{equation}
where all coefficients are positive and
\begin{equation}
    \gamma_{12}\gamma_{21}<1, \qquad \min\{b_1,b_2\} >1, \qquad 1 - a_1\bar{v}_1 - a_2 \bar{v}_2 >0, 
\end{equation}
where $\bar v=(\bar v_1,\bar v_2)$  is the positive equilibrium of the $v$-subsystem:
\begin{equation}
    1-\bar v_1+\gamma_{12}\bar v_2=0,
    \qquad
    1+\gamma_{21}\bar v_1-\bar v_2=0.
\end{equation}
In such a case, $E^-=(1,\bar{v}_1,\bar{v}_2)$ and $E^+=(0,0,0)$.
\end{remark}

Before turning to the proof of the rigidity theorems,  we first develop two maximum principles for large domains reminiscent of the initial step in the moving-plane method for elliptic problems \cite{Gidas1981symmetry}. For $h
\in \mathbb R$ and  $\xi \in \mathbb{R}$, define
\begin{equation}
u^\xi(t,y) = u(t+\xi, y + h - \xi c), \qquad  v^\xi(t,y) = v(t+\xi, y + h - \xi c).
\end{equation}

Then the differences $(p^\xi, q^\xi)= (u^\xi - u, v^\xi - v)$ satisfy the linear problem:
\begin{equation}
\begin{cases}
    p_t- \widetilde{D}_u p_{yy} -cp_y= pF_1+ qF_2
      &\quad \text{ in }\mathbb{R}^2\\
    q_t - \widetilde{D}_v q_{yy} - cq_y = pG_1 + qG_2&\quad \text{ in }\mathbb{R}^2
\end{cases}
\end{equation}
where 
\begin{equation}
\left(\begin{array}{c}
pF_1+ qF_2 \\ pG_1 + qG_2
\end{array}\right)
= B^\xi \left(\begin{array}{c}
p \\ q
\end{array}\right) =\left(
\int_0^1 DH(W + s(W^\xi-W))\,ds
\right)\left(\begin{array}{c}
p \\ q
\end{array}\right)
\end{equation}
Note that $B^\xi(t,y)$ is Metzler.

\begin{lemma}\label{l.liouville.2'}
There exists $A_1\geq 0$  such that for any $\xi > \frac{h}{c}$ and any $\bar y \geq A_1$, if 
\begin{equation}\label{e.925.2a}
\inf_{y \geq \bar y} p^\xi(t,y)\geq_{\mathbb{R}^n} 0,\quad q^\xi(t,\bar y)\gg_{\mathbb{R}^m}0 ~\mathrm{ for } \, t\in \mathbb{R},
\end{equation}
then 
\begin{equation}\label{e.925.2b}
q^\xi(t,y)\gg_{\mathbb{R}^m}0 \quad \mathrm{ for } \, t\in \mathbb R,~y \geq \bar y.
\end{equation}
\end{lemma}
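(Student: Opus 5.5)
The plan is to mirror the proof of \Cref{l.liouville.2}, with hypothesis (H5) replacing the explicit sign computation used there for the Lotka--Volterra system. Fix $h$ and $\xi>\frac hc$. By (W1), I choose $A_1$, independent of $\xi$, so that $\|w(t,y)-E^+\|<\delta_2$ for all $t\in\mathbb R$ and $y\ge A_1$. Since $\xi>\frac hc$, the translate $w^\xi$ is a shift of $w$ by $h-\xi c<0$, so it is closer to $E^-$. The smallness requirement of (H5) only needs to hold at the base point $(u,v)=w(t,y)$, while $w^\xi=(u+p^\xi,v+q^\xi)$ lies in $\mathcal I$ by (W1). Let $\mathbf a_+$ be the vector from (H5) and define
\begin{equation}
\theta(t,y):=\min_{1\le j\le m}\frac{q^\xi_j(t,y)}{a_{+,j}}.
\end{equation}
I first prove the non-strict statement $q^\xi\ge_{\mathbb R^m}0$ on $\mathbb R\times[\bar y,\infty)$. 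Suppose instead that $-M:=\inf_{\mathbb R\times[\bar y,\infty)}\theta<0$.

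Take a minimising sequence $(t_n,y_n)$, with indices $k_n$ realising the minimum; passing to a subsequence, I may fix $k_n=k$. Since $q^\xi(t,y)\to0$ as $y\to+\infty$ uniformly in $t$ by (W1), the sequence $y_n$ stays bounded, so $y_n\to y^\star$. Because $q^\xi(t,\bar y)\gg0$, the boundary value of $\theta$ is positive, and hence $y^\star>\bar y$. Next I translate in time, $w_n(t,y):=w(t+t_n,y)$, and extract a subsequence converging in $\mathscr C_{\rm loc}(\mathbb R^2)$ to an entire solution $w_\infty$; the differences converge to $(p_\infty,q_\infty)$. These limits keep the hypotheses: $p_\infty\ge0$ on $[\bar y,\infty)$, and $\theta_\infty\ge -M$ with equality $q_{\infty,k}(0,y^\star)/a_{+,k}=-M$. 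Consequently the scalar function $q_{\infty,k}+Ma_{+,k}\ge0$ attains an interior minimum $0$ at $(0,y^\star)$. This gives
\begin{equation}
\partial_tq_{\infty,k}-d_{v,k}\partial_{yy}q_{\infty,k}-c\partial_yq_{\infty,k}\le0\quad\text{at }(0,y^\star).
\end{equation}
On the other hand, this component satisfies the nonlinear identity
\begin{equation}
\partial_tq_{\infty,k}-d_{v,k}\partial_{yy}q_{\infty,k}-c\partial_yq_{\infty,k}=G_k(u_\infty+p_\infty,v_\infty+q_\infty)-G_k(u_\infty,v_\infty).
\end{equation}
I apply (H5) at the point $(u_\infty,v_\infty)(0,y^\star)$, which is within $\delta_2$ of $E^+$ because $y^\star\ge A_1$, with $p=p_\infty\ge0$, $q=q_\infty$, and the index $k$ realising a negative minimum. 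It makes the right-hand side strictly positive, which is a contradiction. Therefore $\theta\ge0$, that is, $q^\xi\ge0$ on the half-line.

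The strict inequality comes from the strong maximum principle. I write the $q$-equation as a linear system using $B^\xi$. Since $p^\xi\ge0$ and the off-diagonal blocks are Metzler, $q^\xi$ is a nonnegative supersolution of $q_t-\widetilde D_vq_{yy}-cq_y-G_2q\ge0$, where $G_2$ is Metzler. Each component $q_j$ then satisfies a scalar inequality with a bounded zeroth-order term and nonnegative coupling, so the scalar strong maximum principle applies componentwise. If $q_j$ vanished at some interior point, it would vanish on the whole backward region $(-\infty,t]\times(\bar y,\infty)$ by connectivity. That contradicts $q_j(t,\bar y)>0$ after continuity up to the boundary, using time-translation invariance of the hypothesis. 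Hence $q^\xi\gg0$.

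The step I expect to be the main obstacle is the compactness and contact argument. Two points need care there. First, in the limit the minimising component index must be realised exactly. Second, hypothesis (H5) must be used with the limiting $p_\infty$, which is only nonnegative and not positive. To handle this I freeze the component index $k$ along a subsequence and rely on the strict inequality in (H5), so that no irreducibility in $v$ is needed at this stage.
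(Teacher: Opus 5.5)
Your proposal is correct and follows essentially the same route as the paper: a contradiction at a weighted negative minimum of $q^\xi/\mathbf a_+$ obtained by time-translation compactness and (H5), followed by the componentwise scalar strong maximum principle. Two small technical fixes, both of which the paper builds in. First, choose $A_1$ so that $\|w-E^+\|<\delta_2/2$ on $y\ge A_1$, since a strict bound $<\delta_2$ need not survive the limit. Second, deduce $y^\star>\bar y$ only after passing to the limit, from $q_{\infty,k}(0,\bar y)\ge0$, because $q^\xi(t,\bar y)\gg0$ is not uniform in $t$.
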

\begin{proof}[{\bf Proof of \Cref{l.liouville.2'}}]
By the uniform convergence
\begin{equation}
    (u(t,y),v(t,y))\to E^+
    \qquad\text{as }y\to+\infty,
\end{equation}
uniformly in $t\in\mathbb R$, we may choose $A_1>0$ such that
\begin{equation}\label{e.closeE2}
    \|(u(t,y),v(t,y))-E^+\|<\frac{\delta_2}2 
    \qquad
    \text{for all }t\in\mathbb R,\ y\ge A_1.
\end{equation}
Notice that $A_1$ is independent of $\xi$, since
\eqref{e.closeE2} only involves the unshifted solution.

Fix $\xi>\frac{h}{c}$ and $\bar y\ge A_1$, and suppose that
\eqref{e.925.2a} holds.
We first prove that
\begin{equation}
    q^\xi(t,y)\ge_{\mathbb R^m}0,
    \qquad
    \text{for all }t\in\mathbb R,\ y\ge\bar y.
\end{equation}
Suppose otherwise. Set
\begin{equation}\label{e.weighted-min}
    M:=
    \min_{1\le i\le m}
    \inf_{\substack{t\in\mathbb R\\ y\ge\bar y}}
    \frac{q_i^\xi(t,y)}{a_{+,i}}<0.
\end{equation}
Then we can pass to a sequence $(t_n,y_n,k_n)$ such that for some fixed $k$,
\begin{equation}
    \frac{q_{k}^\xi(t_n,y_n)}{a_{+,k}}\to M.
\end{equation}
Since
\begin{equation}
    q^\xi(t,y)\to0,
    \qquad\text{as }y\to+\infty,
\end{equation}
uniformly in $t$, the sequence $(y_n)$ is bounded.
Passing to a subsequence, we may assume that
\begin{equation}
    k_n=k,\qquad y_n\to y_*\ge\bar y.
\end{equation}

Using parabolic estimates and passing to a sequence, we can show that the time-translated functions converge
\begin{equation}
    (u_n,v_n)(t,y)
    :=
    (u,v)(t+t_n,y)
\to(u_\infty,v_\infty) \quad \text{ in }\mathscr{C}_{\mathrm{loc}}(\mathbb{R}^2),
\end{equation}
together with the corresponding differences
\begin{equation}
    (p_n^\xi,q_n^\xi)\to(p_\infty^\xi,q_\infty^\xi) \quad \text{ in }\mathscr{C}_{\mathrm{loc}}(\mathbb{R}^2).
\end{equation}
Then
\begin{equation}\label{e.weighted-contact}
    \frac{(q_{\infty,k}^\xi)(0,y_*)}{a_{+,k}}
    =M,
    \qquad
    \frac{(q_{\infty,j}^\xi)(0,y_*)}{a_{+,j}}
    \ge M
    \quad\text{for all }j.
\end{equation}
Moreover, $y_*>\bar y$. Indeed, if $y_*=\bar y$, then
\eqref{e.925.2a}, together with local convergence, would give
$q_{\infty,k}^\xi(0,\bar y)\ge0$, contradicting
\eqref{e.weighted-contact} and $M<0$.

Thus $(0,y_*)$ is an interior minimum point of the $k$-th
component, and hence
\begin{equation}\label{e.minimumineq}
    (q_{\infty,k}^\xi)_t
    -d_{2,k}(q_{\infty,k}^\xi)_{yy}
    -c(q_{\infty,k}^\xi)_y
    \le0
    \qquad\text{at }(0,y_*).
\end{equation}

On the other hand, the difference equation gives
\begin{align}
    &(q_{\infty,k}^\xi)_t
    -d_{2,k}(q_{\infty,k}^\xi)_{yy}
    -c(q_{\infty,k}^\xi)_y =
    G_k\left(
        u_\infty+p_\infty^\xi,
        v_\infty+q_\infty^\xi
    \right)
    -
    G_k(u_\infty,v_\infty).
    \label{e.qdifference}
\end{align}
Now, \eqref{e.925.2a} gives
$p_\infty^\xi(0,y_*)\ge0$, and, 
by \eqref{e.closeE2},
\begin{equation}
    \|(u_\infty(0,y_*),v_\infty(0,y_*))-E^+\|
    <\delta_2.
\end{equation}
Finally, \eqref{e.weighted-contact} is precisely the condition
\eqref{e.H2prime}. Therefore {\rm (H5)} yields
\begin{equation}
    G_k\left(
        u_\infty+p_\infty^\xi,
        v_\infty+q_\infty^\xi
    \right)
    -
    G_k(u_\infty,v_\infty)
    >0.
\end{equation}
This contradicts \eqref{e.minimumineq}. Hence
\begin{equation}\label{e.qnonnegative}
    q^\xi(t,y)\ge_{\mathbb R^m}0,
    \qquad
    \text{for all }t\in\mathbb R,\ y\ge\bar y.
\end{equation}

It remains to prove strict positivity. By {\rm (H2)}, the linear
system satisfied by $(p^\xi,q^\xi)$ is cooperative. In view of
$p^\xi\ge0$ and \eqref{e.qnonnegative}, each component
$q_i^\xi$ is a nonnegative supersolution of a scalar linear
parabolic equation. By \eqref{e.qnonnegative} and 
    $q_i^\xi(t,\bar y)>0$, 
the strong maximum principle implies
\begin{equation}
    q_i^\xi(t,y)>0
    \qquad
    \text{for all }t\in\mathbb R,\ y\ge\bar y,
\end{equation}
for every $i=1,\ldots,m$.
\end{proof}

\begin{lemma}\label{l.liouville.3'}
There exists $A_2\geq 0$  such that for any $\xi >\frac{h}{c}$ and $\bar y \leq -A_2$, if 
\begin{equation}
p^\xi(t,\bar y)\gg_{\mathbb{R}^n} 0 \qquad \mathrm{ and }\qquad  q^\xi(t,\bar y)\gg_{\mathbb{R}^m} 0~\quad\mathrm{ for } \quad t\in \mathbb{R},
\end{equation}
then
\begin{equation}\label{e.918.8b}
p^\xi(t,y)\gg_{\mathbb{R}^n} 0 \qquad \mathrm{ and }\quad  q^\xi(t,y)\gg_{\mathbb{R}^m} 0 \quad \mathrm{ for } \quad t\in \mathbb R,~y \leq \bar y.
\end{equation}
\end{lemma}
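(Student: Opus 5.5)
I will follow the barrier argument of \Cref{l.liouville.3}, with the scalar weight vector $(1,1)$ replaced by the vector $\mathbf a_-$ from (H3). Write $Z=(p^\xi,q^\xi)\in\mathbb R^{n+m}$. Then $Z$ solves the cooperative linear system
\begin{equation}
Z_t-\widetilde D Z_{yy}-cZ_y=B^\xi(t,y)Z,\qquad B^\xi=\int_0^1 DH(W+s(W^\xi-W))\,\dd s .
\end{equation}
By (W1), both $W$ and $W^\xi$ tend to $E^-$ as $y\to-\infty$. The convergence of $W^\xi$ is uniform in $t$, and also uniform in $\xi>\frac hc$, because the spatial shift $h-\xi c$ is negative and only pushes points further left. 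Since $H\in\mathscr C^2$, this gives $B^\xi(t,y)\to DH(E^-)$ uniformly in $t$ and in $\xi>\frac hc$. Combining this with (H3), I will choose $A_2$, independent of $\xi$, such that
\begin{equation}
B^\xi(t,y)\mathbf a_-\le_{\mathbb R^{n+m}}-\tfrac{\mu_-}{2}\mathbf a_-\qquad\text{for } t\in\mathbb R,\ y\le -A_2 .
\end{equation}

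Next, fix $\bar y\le -A_2$ with $Z(t,\bar y)\gg0$ for all $t$, together with a point $(t_0,y_0)$ satisfying $y_0\le\bar y$ and a constant $\varepsilon>0$. I collect three facts.
\begin{enumerate}
\item Since $Z$ is bounded, there is $M>0$ with $Z+M\mathbf a_-\ge0$ everywhere.
\item Since $Z(t,y)\to0$ as $y\to-\infty$ uniformly in $t$ (both $W$ and $W^\xi$ tend to $E^-$), there is $L>-y_0$ with $Z(t,-L)+\varepsilon\mathbf a_-\ge0$ for all $t$.
\item For $T>-t_0$ and $0<\alpha<\mu_-/2$, I set
\begin{equation}
\mathcal W=Z+\varepsilon\mathbf a_-+Me^{-\alpha(t+T)}\mathbf a_-
\end{equation}
on $Q_{T,L}=(-T,t_0]\times[-L,\bar y]$. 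A direct computation gives $\mathcal L\mathcal W\ge \tfrac{\mu_-}2\varepsilon\mathbf a_-+M(\tfrac{\mu_-}2-\alpha)e^{-\alpha(t+T)}\mathbf a_-\gg0$, so $\mathcal W$ is a strict supersolution.
\end{enumerate}
On the parabolic boundary of $Q_{T,L}$, $\mathcal W\ge0$: at $y=\bar y$ by hypothesis, at $y=-L$ by the choice of $L$, and at $t=-T$ by the choice of $M$. The comparison principle for cooperative (Metzler) systems on a bounded domain then yields $\mathcal W\ge0$ in $Q_{T,L}$. Letting $T\to\infty$ and then $\varepsilon\to0$ gives $Z(t_0,y_0)\ge0$, hence $Z\ge0$ on $\mathbb R\times(-\infty,\bar y]$.

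Finally, I upgrade $Z\ge0$ to $Z\gg0$ componentwise. Since the system is cooperative and $Z\ge0$, each component $Z_i$ is a nonnegative supersolution of a scalar linear parabolic equation. Moreover $Z_i(t,\bar y)>0$. By the strong maximum principle, if $Z_i$ vanished at an interior point $(t_1,y_1)$ with $y_1<\bar y$, it would vanish on $(-\infty,t_1]\times(-\infty,\bar y)$. By continuity this would force $Z_i(t,\bar y)=0$ for $t\le t_1$, a contradiction. Hence \eqref{e.918.8b} holds.

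The main obstacle is the uniformity in $\xi$ of the bound on $B^\xi$. It rests on the observation that for $\xi>\frac hc$ the shifted solution $W^\xi$ is evaluated further to the left, so the uniform-in-$t$ convergence in (W1) transfers to $W^\xi$ without any dependence on $\xi$. Irreducibility is not needed in this step, because the boundary data are strictly positive in every component.
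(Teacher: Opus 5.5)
Your proposal is correct and is exactly the argument the paper intends. The paper omits the proof, saying it is the barrier argument of \Cref{l.liouville.3} with $(1,1)^\top$ replaced by the vector $\mathbf a_-$ from (H3), and relying on the uniformity of $W^\xi\to E^-$ as $y\to-\infty$ in $(t,\xi)\in\mathbb R\times(\frac hc,\infty)$. That is the same uniformity you isolate, and your componentwise strong-maximum-principle step, which uses only the Metzler structure and strict positivity at $y=\bar y$, correctly supplies the strictness the paper leaves implicit.
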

\begin{proof}[{\bf Proof of \Cref{l.liouville.3'}}]
The proof relies on the hypothesis (H3) and a positive vector $\vec{a}_- \gg_{\mathbb{R}^{n+m}} 0$ and is completely analogous to that of Lemma \ref{l.liouville.3} so we omit it here. Note that for $\xi > \frac{h}{c}$, the convergences 
    \begin{equation}
    W^\xi(t,y) \to E^- \quad \text{ and }\quad W(t,y) \to E^- \quad \text{ as }y \to -\infty
    \end{equation} are uniform in
    the two parameters 
    \begin{equation}
    (t,\xi) \in \mathbb{R}\times \left(\frac{h}{c},+\infty\right).
    \end{equation}
This is easier to handle than Lemmas \ref{l.liouville.2} and \ref{l.liouville.2'}, whose proofs required the additional structural condition (H5).
\end{proof}

\begin{proof}[{\bf Proof of \Cref{t.c.1}}]

Fix $h\in \mathbb R$ and define
    \begin{equation}
    u^\xi(t,y) = u(t+ \xi, y + h - c\xi), \quad  v^\xi(t,y) = v(t+ \xi, y + h - c\xi),\end{equation}
    \begin{equation}W^\xi(t,y) = (u^\xi(t,y),v^\xi(t,y)),\quad 
    p^\xi = u^\xi - u, \quad \text{ and }\quad q^\xi = v^\xi - v.
    \end{equation}
This allows us to define the sliding parameter set:
    \begin{equation}
    \Xi:= \left\{\eta > \frac{h}{c}:~ W^\xi  > W \quad \text{ in }\mathbb{R}^2 ~\text{ for every }\xi \in [\eta,\infty)\right\}.
    \end{equation}

\medskip
\noindent {\bf \# Step 1.} First, we claim that there exist $\xi_1 \gg 1$ and $y_2>A_1$ such that for $\xi \geq \xi_1$,
    \begin{equation}\label{e.926.5}
    u^\xi(t,y) \gg_{\mathbb{R}^n} u(t,y) \quad \text{ for }~t\in \mathbb{R},~ y \geq y_2.
    \end{equation}
Indeed, by (W2) we can choose $\xi_1 >1$ such that for any $k$, and every $\xi \geq \xi_1$, 
\begin{equation}\label{e.926.6a}
    u_k(t',y+h- c\xi ) > u_k(t,y) \quad \text{ for every }t,t' \in \mathbb{R},~ y \geq C_1~\text{ such that }y+h - c\xi \geq C_1.
\end{equation}
Next, take $\delta_u = \min_{k} \inf_{\mathbb{R}\times (-\infty, C_1]} u_k(t,y)$. We claim that $\delta_u>0$. Otherwise for fixed $k$, we find a sequence $(t_n,y_n) \in \mathbb{R}\times (-\infty, C_1]$ such that $u_k(t_n,y_n) \to 0$. By (W1), $y_n$ is bounded, so we may further suppose $y_n \to y_\infty$. Now, $F(0,v) = 0$ implies $F(u,v) = B(u,v)u$. Since $B(u,v)$ is Metzler,
\begin{equation}
(u_k)_t - \widetilde{d}_k (u_k)_{yy} - c (u_k)_y = B_{kj}u_j \geq B_{kk} u_k.
\end{equation}
Then passing to the limit $\widetilde{u}_n(t,y) = u_k(t+t_n) \to \widetilde{u}(t,y)$, we see that the limit function $\widetilde{u}$ is a nonnegative supersolution of some linear parabolic equation. Then $\widetilde{u}(0,y_\infty) = 0$ implies that $\widetilde{u}\equiv 0$ thanks to the strong maximum principle. This contradicts (W2), and proves that $\delta_u>0$.

Next, choose $y_2 \geq C_1$ such that 
\begin{equation}\label{e.926.6b}
\max_{k} \sup_{\mathbb{R} \times [y_2,\infty)} u_k(t,y) <\delta_u. 
\end{equation}
Then for any $\xi \geq \xi_1$, \eqref{e.926.5}  holds thanks to \eqref{e.926.6a} if $y-c\xi \geq C_1$ and thanks to \eqref{e.926.6b} if $y - c\xi < C_1$.

\medskip

\noindent {\bf \# Step 2.}     
    (Initiation of sliding) Choose $y_2 = \max\{A_1,A_2,C_1\}$ and define
    \begin{equation}
    \delta_* = \min_j \inf_{\mathbb{R} \times [-y_2,y_2]} [(E^-)_j - W_j(t,y)].
    \end{equation}
    Then again $\delta_*>0$ upon performing a time-translation compactness/strong-comparison argument, which we omit. Using $\delta_*>0$ and $W(t,-\infty) = E^-$,  there exists $\xi_2 > \xi_1$ such that for $\xi \geq \xi_2$,
    \begin{equation}
    W^\xi(t,y) > W(t,y) \quad \text{ for }t\in \mathbb{R},~ |y| \leq y_2.
    \end{equation}
Next, we can use \Cref{l.liouville.2'} to propagate the inequality to the $V$ components in $y \geq y_2$, and then use \Cref{l.liouville.3'} to propagate 
the full positive ordering to $-\infty$.  In conclusion, we have $\xi \in \Xi$ for all $\xi \geq \xi_2$. 
Hence, $\xi_* = \inf\Xi$ is well defined. We will establish in Steps \#3a, 3b and 3c that $\xi_* = \frac{h}{c}$.

\medskip

\noindent {\bf \# Step 3a.}  Suppose, to the contrary, that $\xi_* > \frac{h}{c}$, 
we claim that \begin{equation}\label{e.926.1}
W^{\xi_*} = (u^{\xi_*},v^{\xi_*}) \gg_{\mathbb{R}^{n+m}}(u,v)=W  \quad \text{ in }\mathbb{R}^2.\end{equation}
Indeed, the non-strict inequalities hold by continuity and definition of $\xi_*$. Suppose equality holds for \eqref{e.926.1} at some component and some $(\bar t, \bar y)$, then the irreducibility of $DH$ as stated in (H2) implies that $W^{\xi_*} \equiv W$. Since $W^{\xi_*}$ is a translation of $W$, this implies that
\begin{equation}\label{e.926.6c}
W(t + k\xi_*, y + k(h-c\xi_*)) \equiv W(t,y) \quad \text{ for all }(t,y)\in\mathbb{R}^2\text{ and every }k\in \mathbb{N}.
\end{equation}
Taking $k \to +\infty$ in \eqref{e.926.6c}, we deduce from $h-c\xi_* <0$ and the first part of (W1) that $W \equiv E^-$, which  contradicts the second part of (W1) which states that $W(t,+\infty) = E^+$ since $E^- \neq E^+$. This proves \eqref{e.926.1} (in case $\xi_*>\frac{h}{c}$).

\medskip

\noindent {\bf \# Step 3b.}  Fix $\ell\in\mathbb N$, such that $\ell \geq A_1$. By \eqref{e.926.1} and a compactness
argument, there exists
    $\bar\xi_\ell\in\left(\frac hc,\xi_*\right)$ 
such that, for every $\xi\in[\bar\xi_\ell,\xi_*]$,
\begin{equation}\label{e.926.8}
    W^\xi(t,y)\gg_{\mathbb R^{n+m}}W(t,y)
    \qquad
    \text{for }(t,y)\in
    \mathbb R\times[-A_2,\ell].
\end{equation}
Indeed, otherwise there would exist
$\xi_j\to\xi_*$, $t_j\in\mathbb R$ and
$y_j\in[-A_2,\ell]$ at which the strict ordering fails.
Passing to a time-translated limit yields a nonnegative
difference at the critical shift having an interior contact.
The strong maximum principle then gives translation invariance and a contradiction with (W1) can be derived as in Step \#3a.

In particular, the inequality \eqref{e.926.8} holds at $y=-A_2$.
Hence, by Lemma \ref{l.liouville.3'}, after increasing
$\bar\xi_\ell$ (though still satisfies $\bar{\xi}_\ell \in \left(\tfrac{h}{c},\xi_*\right)$) if necessary, we may assume $\bar{\xi}_\ell \nearrow \xi_*$ and that 
\begin{equation}
    W^\xi(t,y)\gg_{\mathbb R^{n+m}}W(t,y)
    \qquad
    \text{for all }t\in\mathbb R,\ y\le\ell,~\text{ and every }\xi\in[\bar\xi_\ell,\xi_*].
\end{equation}

Since $\bar{\xi}_\ell<\xi_*$, we have $\bar{\xi}_\ell \not\in \Xi$. Hence, for each $\ell$, there exist a number $\xi_\ell \in [\bar\xi_\ell,\xi_*)$, a point $(t_\ell,y_\ell)$ with $y_\ell \geq \ell$, and a component $k$ such that
\begin{equation}
    W^{\xi_\ell}_k(t_\ell,y_\ell) \leq W_k(t_\ell,y_\ell)\quad \text{ for }\ell \to +\infty.
\end{equation}
Note that $\xi_\ell \to \xi_*$ since $\bar\xi_\ell \to \xi_*$. Now, it cannot happen that $u^{\xi_\ell} \gg u$ for $y\in [\ell,\infty)$; otherwise 
Lemma \ref{l.liouville.2'} implies that $v^{\xi_\ell} \gg v$. Thus we may always choose $(t_\ell, y_\ell)$ such that $y_\ell > \ell$ and that 
\begin{equation}\label{e.926.2}
    u^{\xi_\ell}_k(t_\ell,y_\ell) \leq u_k(t_\ell,y_\ell) \quad \text{ for }\ell \to +\infty.
\end{equation}

\medskip

\noindent {\bf \# Step 3c.} 

Define
\begin{equation}
U_\ell(t,y) := \frac{u(t+t_\ell, y + y_\ell)}{u_k(t_\ell, y_\ell)}, \quad \text{ and }\quad \hat U_\ell(t,y) := \frac{u(t+t_\ell + \xi_*, y + y_\ell + h - c\xi_*)}{u_k(t_\ell, y_\ell)},
\end{equation}
and consider $U = \lim U_\ell$ and $\hat U = \lim \hat U_\ell$, which exist  in $\mathscr{C}_{\mathrm{loc}}(\mathbb{R}^2)$ after passing to a subsequence, thanks to the condition (W2) and parabolic regularity theory. Since $y_\ell \to +\infty$, we have 
\begin{equation}\label{e.926.4}
    \hat{U}(t,y) \equiv  U(t+\xi_*, y + (h - c\xi_*)) \quad \text{ and }\quad  e^{\lambda y } U_i(t,y) \asymp 1 \quad \text{ in }\mathbb{R}^2,~\text{ for all }i.
\end{equation}
Using $F(0,v) = 0$, we write $\displaystyle F(u,v) = \left(\int_0^1 D_u F(su,v)\,ds\right)u$, then observe that $U_\ell$ satisfies
\begin{equation}
(U_\ell)_t - \widetilde{D}_u(U_\ell)_{yy} - c(U_\ell)_y = B_\ell(t,y) (U_\ell),
\end{equation}
where
\begin{equation}
B_\ell(t,y) = \int _0^1 D_u F(su(t+ t_\ell, y + y_\ell), v(t+t_\ell, y + y_\ell))\,ds. 
\end{equation}
Since $y_\ell \to +\infty$ and $W(t,+\infty) = E_+$ uniformly in $t$, we see that 
\begin{equation}
B_\ell \to D_uF(0,0) \quad \text{ in }\mathscr{C}_{\mathrm{loc}}(\mathbb{R}^2).
\end{equation}
Hence, $U$, and $\hat{U}$ for the same reason,  satisfy globally the linear cooperative system with constant coefficients:
\begin{equation}
    U_t - \widetilde{D}_u U_{yy} - c U_y = D_uF(E^+)U \quad \text{ in }\mathbb{R}^2.
\end{equation}

Furthermore, $U_k(0,0) = 1$ and $\hat{U} \geq_{\mathbb{R}^n} U$ since $u^{\xi_*} \geq_{\mathbb{R}^n} u$. On the other hand, by \eqref{e.926.2}, we have
\begin{equation}
(\hat{U}_\ell)_k(\xi_\ell - \xi_*, c(\xi_* - \xi_\ell)) = \frac{u_k(t_\ell + \xi_\ell, y_\ell + h - c\xi_\ell)}{u_k(t_\ell,y_\ell)} \leq 1 \quad \text{ for all }\ell.
\end{equation}
Taking $\ell \to +\infty$, we have $\hat{U}_k(0,0) \leq 1 = U_k(0,0)$. Put $Z = \hat{U} - U$; then $Z \geq 0$, $Z_k(0,0) = 0$ and it satisfies the linear system of equations
\begin{equation}
Z_t - \widetilde{D}_u Z_{yy} - c Z_y = B_+ Z \quad \text{ in }\mathbb{R}^2.
\end{equation}
Since $B_+=D_uF(E^+)$ is an irreducible Metzler matrix, 
the strong maximum principle for irreducible cooperative systems, followed by uniqueness for the Cauchy problem, implies the vector identity  $Z = \hat{U} - U \equiv 0$ in $\mathbb{R}^2$.

By the first half of \eqref{e.926.4}, we deduce that
\begin{equation}\label{e.926.4b}
U(t + j \xi_*, y + j(h-c\xi_*)) \equiv U(t,y) \quad \text{ for }(t,y) \in \mathbb{R}^2,~\text{ for every }j \in \mathbb{N}.
\end{equation}
Setting $t=y=0$ in \eqref{e.926.4b}, and using the second half of \eqref{e.926.4}, we have
\begin{equation}
\frac{1}{C} e^{-\lambda j(h-c\xi_*)} \leq C \quad \text{ for every }j \geq 1.
\end{equation}
But this is impossible if $\xi_* > \frac{h}{c}$. 
Therefore, we have established that $\xi_* = \frac{h}{c}$.

\medskip
\noindent {\bf \# Step 4.} To summarise, for a given $h\in \mathbb{R}$, we have proved that $\inf\Xi = \frac{h}{c}$. It follows by continuity that 
\begin{equation}
    W\left(t + \frac{h}{c}, y\right) \geq_{\mathbb{R}^{n+m}} W(t,y) \quad \text{ in }\mathbb{R}^2.
\end{equation}
Repeating the argument with $h$ replaced by $-h$, we obtain the reverse inequality. Hence, we have 
\begin{equation}
    W\left(t + \frac{h}{c}, y\right) \equiv  W(t,y) \quad \text{ in }\mathbb{R}^2.
\end{equation}
Since $h \in \mathbb R$ is arbitrary, it follows that
\begin{equation}
    W(t , y) \equiv  W_{\mathrm{TW}}(y) \quad \text{ in }\mathbb{R}^2,
\end{equation}
where $W_{\mathrm{TW}}$ is a travelling wave solution of \eqref{e.923.1}. Using the fact that $\xi \in \Xi$ implies $[\xi,\infty) \subset \Xi$ and that $\inf \Xi = \frac{h}{c}$, it follows that
$\Xi = (\frac{h}{c},\infty)$. Thus $W_{\mathrm{TW}}(y)$ is strictly decreasing in $y \in \mathbb{R}$.

The uniqueness of $W_{\mathrm{TW}}$ will be established next, by an adaptation of the above sliding argument.
\end{proof}
\begin{proof}[{\bf Proof of \Cref{t.c.2}}] Let $W= (U,V)$ and $\widetilde{W}= (\widetilde U, \widetilde V)$ be two travelling wave solutions of \eqref{e.923.1} with $W(-\infty) = E^-$ and $W(+\infty) = E^+$. By
the Harnack inequality and up to  
a translation, we may assume that there are $\lambda, \widetilde\lambda>0$ and $r \in \mathbb{N}\cup\{0\}$ (both are independent of $i$) such that
\begin{equation}
    \lim_{y\to+\infty} y^{-r}{e^{\lambda y}} U(y) = (a_i)  \quad \text{ and }\quad \lim_{y\to+\infty} y^{-\widetilde r}{e^{\widetilde \lambda y}} \widetilde U(y) = (\widetilde a_i) 
\end{equation}
where the form $y^r e^{-\lambda y}$ for each component follows from the hyperbolicity assumption (H6) and the standard asymptotic theory for solutions approaching a hyperbolic equilibrium of an autonomous ODE. Moreover, the fact that the same $\lambda$ and $P(y)$ can be chosen for all components is due to Harnack's inequality \cite[Theorem 3.9]{Foldes2009cooperative}. Moreover, the right hand side limits satisfy 
\begin{equation}
(a_i),(\widetilde a_i) \gg_{\mathbb{R}^n} 0. \end{equation}

\medskip

\noindent {\bf \# Step 0.}
By the asymptotic expansions above, after interchanging $W$ and
$\widetilde W$ if necessary, either
\begin{equation}
    \frac{\widetilde U_i(y)}{U_i(y)}\to+\infty
    \qquad\text{for every }i,
\end{equation}
or $\widetilde\lambda=\lambda$ and $\widetilde P$ and $P$ have the
same degree, in which case
\begin{equation}
    \frac{\widetilde U_i(y)}{U_i(y)}
    \longrightarrow \rho_i\in(0,\infty)
    \qquad\text{for every }i.
\end{equation}
In the latter case, translating $\widetilde W$ if necessary, we may
normalise
\begin{equation}
    \min_{1\le i\le n}\rho_i=1.
\end{equation}
Thus, without loss of generality, one of the following two cases holds:
\begin{itemize}
    \item[{\rm(i)}]  $\displaystyle \lim_{y \to +\infty} \frac{\widetilde U_k(y)}{U_k(y)} =+\infty$ holds for every $k$,
    \item[{\rm(ii)}]  $\displaystyle \lim_{y \to +\infty} \frac{\widetilde U_k(y)}{U_k(y)} \geq 1$ holds for every $k$ and equality holds for at least one $k$.   
\end{itemize}
Now, recall that $\widetilde{W} = (\widetilde{U}, \widetilde V)$, $W = (U,V)$ and define 
\begin{equation}
\widetilde{p}^\xi = \widetilde{U}(y-c\xi) - U(y) \quad \text{ and }\quad \widetilde{q}^\xi = \widetilde{V}(y-c\xi) - V(y), 
\end{equation}
and define the sliding set 
\begin{equation}
\widetilde\Xi:= \left\{\eta \in \mathbb R:~ \widetilde p^\xi \gg_{\mathbb{R}^{n}} 0,~  \widetilde q^\xi \gg_{\mathbb{R}^{m}} 0 \quad \text{ in }\mathbb{R} \quad \text{ for every }\xi\in [ \eta,\infty)\right\}.
\end{equation}
Note that in case (ii), $\widetilde\Xi \subset [0,\infty)$.
\medskip

\noindent {\bf \# Step 1. } We claim that $\xi_*:= \inf\widetilde\Xi$ is well defined. 

It suffices to show that $\widetilde\Xi$  
is nonempty and bounded from below. Indeed, at any fixed $y_0$, 
\begin{equation}
\lim_{\xi \to -\infty}\widetilde W(y_0 -c\xi) = E^+ \ll_{\mathbb{R}^{n+m}} W(y_0), 
\end{equation}
it follows that
$\widetilde\Xi$ is automatically bounded from below.  To see it is nonempty, take $\xi_0 \gg 1$ so that for any $\xi \in [\xi_0,\infty)$,
\begin{equation}
\widetilde{p}^\xi \gg_{\mathbb{R}^n} 0 \quad \text{ in }[-A_2,\infty) \quad \text{ and }\quad \widetilde{q}^\xi \gg_{\mathbb{R}^m} 0 \quad \text{ in }[-A_2,A_1].
\end{equation}
Then it follows from Lemmas \ref{l.liouville.2'} and \ref{l.liouville.3'} that $[\xi_0,\infty) \subseteq \widetilde\Xi$. 

\medskip

\noindent {\bf \# Step 2. } It follows from continuity that 
\begin{equation}
\widetilde p^{\xi_*} \geq_{\mathbb{R}^{n}} 0,~  \widetilde q^{\xi_*} \geq_{\mathbb{R}^{m}} 0 \quad \text{ in }\mathbb{R}.
\end{equation}
If equality holds for some point in some component $k$, then (thanks to the irreducibility assumption in (H2)) the strong maximum principle implies that $\widetilde W(\cdot - c \xi_*)\equiv W(\cdot)$ and we are done (and as a consequence we can rule out (i)). Hence, it remains to consider the case when 
there is no contact if $\xi = \xi_*$. Since we already know $(\xi_*,\infty) \subset \widetilde \Xi$, 
it remains to consider the case when $\xi_* \in \widetilde\Xi$.

\medskip

\noindent {\bf \# Step 3. } 
Continuing under the additional assumption $\xi_* \in \widetilde\Xi$, we claim that case (i) is impossible and in case (ii), we must have $\xi_* = 0$. Indeed, if we are either in case (i) or in case (ii) with additionally $\xi_*>0$, we can use the right-side tail estimate and the fact that $\xi_* \in \widetilde \Xi$ to find $\delta>0$ such that for any $\xi \in [\xi_*-\delta, \xi_*]$: 
\begin{equation}
\widetilde p^{\xi} \gg_{\mathbb{R}^{n}} 0\quad \text{ in }\mathbb{R}\times [-A_2, \infty), \quad ~  \widetilde q^{\xi} \gg_{\mathbb{R}^{m}} 0 \text{ in }\mathbb{R}\times [-A_2, A_1].
\end{equation}
Then
we can apply Lemmas \ref{l.liouville.2'} and \ref{l.liouville.3'} to propagate the strict inequality to  $y \in \mathbb{R}$, implying that $\xi_* -\delta \in \widetilde\Xi$ for some $\delta>0$. (Note that the specialisation to the time-independent versions of these lemmas no longer requires $\xi>\frac{h}{c}$ in case (i).) This contradicts the minimality of $\xi_*$. 

\medskip

\noindent {\bf \# Step 4. } By \# Step 3, case (ii) must hold and $\xi_* = 0$. Hence, $\widetilde\lambda = \lambda$ and  $\widetilde{W} \geq_{\mathbb{R}^{n+m}} W$ in $\mathbb{R}^2$. Moreover, 
$$
\widetilde{M}(a_i) = 0\quad \text{  and } \quad \widetilde{M}(\widetilde a_i)=0$$
where
\begin{equation}
\widetilde{M}(\lambda):=\lambda^2 \widetilde{D}_u -c\lambda I + D_u F(E^+)
\end{equation}
which is irreducible and Metzler since its off diagonal entries are equal to $D_u F(E^+)$ and the latter is irreducible and Metzler. It follows from the uniqueness of the positive eigenvector of irreducible Metzler matrices and the normalisation that $(a_i) \equiv s(\widetilde{a}_i)$ for some $s>0$ \cite[Chapter 8]{MR2978290}. Now, $s=1$ since we are in case (ii).

Now that case (ii) holds with $(a_i) \equiv (\widetilde{a}_i)$, we can repeat the whole argument while interchanging the roles of $\widetilde W$ and $W$ to conclude that $W \geq_{\mathbb{R}^{n+m}} \widetilde W$. This proves that $\widetilde{W} \equiv W$ up to a translation that we performed in dividing into cases (i) and (ii) back in \# Step 0.
\end{proof}

\bibliographystyle{siam}
\bibliography{references}

\begin{thebibliography}{10}

\bibitem{alfaro2024bramson}
{\sc M.~Alfaro, T.~Giletti, and D.~Xiao}, {\em The {Bramson} correction in the {Fisher}-{KPP} equation: from delay to advance}, Mathematische Annalen, 392 (2025), pp.~5275--5316.

\bibitem{aronson2006nonlinear}
{\sc D.~G. Aronson and H.~F. Weinberger}, {\em Nonlinear diffusion in population genetics, combustion, and nerve pulse propagation}, in Partial Differential Equations and Related Topics, J.~A. Goldstein, ed., vol.~446 of Lecture Notes in Mathematics, Springer, Berlin, Heidelberg, 1975, pp.~5--49.

\bibitem{berestycki2009can}
{\sc H.~Berestycki, O.~Diekmann, C.~J. Nagelkerke, and P.~A. Zegeling}, {\em Can a species keep pace with a shifting climate?}, Bulletin of Mathematical Biology, 71 (2009), pp.~399--429.

\bibitem{berestycki2018forced}
{\sc H.~Berestycki and J.~Fang}, {\em Forced waves of the {Fisher}--{KPP} equation in a shifting environment}, Journal of Differential Equations, 264 (2018), pp.~2157--2183.

\bibitem{beresycki2007generalized}
{\sc H.~Berestycki and F.~Hamel}, {\em Generalized travelling waves for reaction-diffusion equations}, in Perspectives in Nonlinear Partial Differential Equations: In Honor of {Ha{\"i}m Brezis}, H.~Berestycki, M.~Bertsch, F.~E. Browder, L.~Nirenberg, L.~A. Peletier, and L.~V{\'e}ron, eds., vol.~446 of Contemporary Mathematics, American Mathematical Society, Providence, RI, 2007, pp.~101--123.

\bibitem{bramson1983convergence}
{\sc M.~Bramson}, {\em Convergence of solutions of the {Kolmogorov} equation to travelling waves}, Memoirs of the American Mathematical Society, 44 (1983).

\bibitem{carrere2018spreading}
{\sc C.~Carr{\`e}re}, {\em Spreading speeds for a two-species competition-diffusion system}, Journal of Differential Equations, 264 (2018), pp.~2133--2156.

\bibitem{conley1984application}
{\sc C.~Conley and R.~Gardner}, {\em An application of the generalized {Morse} index to travelling wave solutions of a competitive reaction-diffusion model}, Indiana University Mathematics Journal, 33 (1984), pp.~319--343.

\bibitem{davis1981quaternary}
{\sc M.~B. Davis}, {\em Quaternary history and the stability of forest communities}, in Forest Succession: Concepts and Application, D.~C. West, H.~H. Shugart, and D.~B. Botkin, eds., Springer Advanced Texts in Life Sciences, Springer, New York, 1981, pp.~132--153.

\bibitem{Ducrot2015}
{\sc A.~Ducrot}, {\em On the large time behaviour of the multi-dimensional {Fisher}-{KPP} equation with compactly supported initial data}, Nonlinearity, 28 (2015), pp.~1043--1076.

\bibitem{fisher1937wave}
{\sc R.~A. Fisher}, {\em The wave of advance of advantageous genes}, Annals of Eugenics, 7 (1937), pp.~355--369.

\bibitem{Foldes2009cooperative}
{\sc J.~F\"{o}ldes and P.~Pol\'{a}\v{c}ik}, {\em On cooperative parabolic systems: {Harnack} inequalities and asymptotic symmetry}, Discrete and Continuous Dynamical Systems, 25 (2009), pp.~133--157.

\bibitem{gardner1982existence}
{\sc R.~A. Gardner}, {\em Existence and stability of travelling wave solutions of competition models: a degree theoretic approach}, Journal of Differential Equations, 44 (1982), pp.~343--364.

\bibitem{Gidas1981symmetry}
{\sc B.~Gidas, W.~M. Ni, and L.~Nirenberg}, {\em Symmetry of positive solutions of nonlinear elliptic equations in {$\mathbb{R}^n$}}, in Mathematical analysis and applications, {P}art {A}, L.~Nachbin, ed., vol.~7 of Advances in Mathematics, Supplementary Studies, Academic Press, New York, London, 1981, pp.~369--402.

\bibitem{girardin2024spreading}
{\sc T.~Giletti, L.~Girardin, and H.~Matano}, {\em Spreading properties of the {Fisher}--{KPP} equation when the intrinsic growth rate is maximal in a moving patch of bounded size}.
\newblock Accepted for publication in Annales de l'Institut Fourier; arXiv:2407.21549v3, 2025.

\bibitem{girardin2019invasion}
{\sc L.~Girardin and K.-Y. Lam}, {\em Invasion of open space by two competitors: spreading properties of monostable two-species competition-diffusion systems}, Proceedings of the London Mathematical Society, 119 (2019), pp.~1279--1335.

\bibitem{hamel2013short}
{\sc F.~Hamel, J.~Nolen, J.-M. Roquejoffre, and L.~Ryzhik}, {\em A short proof of the logarithmic {Bramson} correction in {Fisher}-{KPP} equations}, Networks and Heterogeneous Media, 8 (2013), pp.~275--289.

\bibitem{hamel2016logarithmic}
\leavevmode\vrule height 2pt depth -1.6pt width 23pt, {\em The logarithmic delay of {KPP} fronts in a periodic medium}, Journal of the European Mathematical Society, 18 (2016), pp.~465--505.

\bibitem{holzer2014accelerated}
{\sc M.~Holzer and A.~Scheel}, {\em Accelerated fronts in a two-stage invasion process}, SIAM Journal on Mathematical Analysis, 46 (2014), pp.~397--427.

\bibitem{MR2978290}
{\sc R.~A. Horn and C.~R. Johnson}, {\em Matrix analysis}, Cambridge University Press, Cambridge, second~ed., 2013.

\bibitem{hosono1998minimal}
{\sc Y.~Hosono}, {\em The minimal speed of traveling fronts for a diffusive {Lotka}-{Volterra} competition model}, Bulletin of Mathematical Biology, 60 (1998), pp.~435--448.

\bibitem{huang2007}
{\sc W.~Huang and M.~Puckett}, {\em A note on uniqueness of monotone mono-stable waves for reaction-diffusion equations}, International Journal of Qualitative Theory of Differential Equations and Applications, 1 (2007), pp.~157--170.

\bibitem{kan1995parameter}
{\sc Y.~Kan-On}, {\em Parameter dependence of propagation speed of travelling waves for competition-diffusion equations}, SIAM Journal on Mathematical Analysis, 26 (1995), pp.~340--363.

\bibitem{kan1997fisher}
\leavevmode\vrule height 2pt depth -1.6pt width 23pt, {\em {Fisher} wave fronts for the {Lotka}-{Volterra} competition model with diffusion}, Nonlinear Analysis: Theory, Methods \& Applications, 28 (1997), pp.~145--164.

\bibitem{kolmogorov1937study}
{\sc A.~N. Kolmogorov, I.~G. Petrovski, and N.~S. Piskunov}, {\em {\'E}tude de l'{\'e}quation de la diffusion avec croissance de la quantit{\'e} de mati{\`e}re et son application {\`a} un probl{\`e}me biologique}, Bull. Univ. Moscou, S{\'e}r. Internat., Sect. A, 1 (1937), pp.~1--25.

\bibitem{lam2022introduction}
{\sc K.-Y. Lam and Y.~Lou}, {\em Introduction to reaction-diffusion equations: Theory and applications to spatial ecology and evolutionary biology}, Lecture Notes on Mathematical Modelling in the Life Sciences, Springer, Cham, 2022.

\bibitem{lam2024asymptoticspreadingkppreactive}
{\sc K.-Y. Lam, G.~Nadin, and X.~Yu}, {\em Asymptotic spreading of {KPP} reactive fronts in heterogeneous shifting environments {II}: Flux-limited solutions}, Mathematische Annalen, 392 (2025), pp.~5429--5481.

\bibitem{LamWu_preprint}
{\sc K.-Y. Lam and C.-H. Wu}, {\em On the logarithmic correction of transition fronts in shifting environments}.
\newblock Accepted for publication in Indiana University Mathematics Journal; arXiv:2509.11521v2, 2026.

\bibitem{lam2026survey}
{\sc K.-Y. Lam, C.-H. Wu, and X.~Yu}, {\em Invasion fronts in shifting habitats and competition systems: A {Hamilton}-{Jacobi} approach and nonlocal effects}, Discrete and Continuous Dynamical Systems, 58 (2027), pp.~511--554.

\bibitem{Lam2022asymptotic}
{\sc K.-Y. Lam and X.~Yu}, {\em Asymptotic spreading of {KPP} reactive fronts in heterogeneous shifting environments}, Journal de Math{\'e}matiques Pures et Appliqu{\'e}es, 167 (2022), pp.~1--47.

\bibitem{lau1985nonlinear}
{\sc K.-S. Lau}, {\em On the nonlinear diffusion equation of {Kolmogorov}, {Petrovsky}, and {Piscounov}}, Journal of Differential Equations, 59 (1985), pp.~44--70.

\bibitem{Leung2011travelling}
{\sc A.~W. Leung, X.~Hou, and W.~Feng}, {\em Traveling wave solutions for {Lotka}-{Volterra} system re-visited}, Discrete and Continuous Dynamical Systems - B, 15 (2011), pp.~171--196.

\bibitem{lewis2002spreading}
{\sc M.~A. Lewis, B.~Li, and H.~F. Weinberger}, {\em Spreading speed and linear determinacy for two-species competition models}, Journal of Mathematical Biology, 45 (2002), pp.~219--233.

\bibitem{li2005spreading}
{\sc B.~Li, H.~F. Weinberger, and M.~A. Lewis}, {\em Spreading speeds as slowest wave speeds for cooperative systems}, Mathematical Biosciences, 196 (2005), pp.~82--98.

\bibitem{lin2012asymptotic}
{\sc G.~Lin and W.-T. Li}, {\em Asymptotic spreading of competition diffusion systems: the role of interspecific competitions}, European Journal of Applied Mathematics, 23 (2012), pp.~669--689.

\bibitem{liu2019asymptotic}
{\sc Q.~Liu, S.~Liu, and K.-Y. Lam}, {\em Asymptotic spreading of interacting species with multiple fronts {I}: A geometric optics approach}, Discrete and Continuous Dynamical Systems, 40 (2020), pp.~3683--3714.

\bibitem{liu2021asymptotic}
{\sc S.~Liu, Q.~Liu, and K.-Y. Lam}, {\em Asymptotic spreading of interacting species with multiple fronts {II}: Exponentially decaying initial data}, Journal of Differential Equations, 303 (2021), pp.~407--455.

\bibitem{matano2019asymptotic}
{\sc H.~Matano, Y.~Mori, and M.~Nara}, {\em Asymptotic behavior of spreading fronts in the anisotropic {Allen}-{Cahn} equation on {$\mathbb{R}^n$}}, Annales de l'Institut Henri Poincar{\'e} C, Analyse Non Lin{\'e}aire, 36 (2019), pp.~585--626.

\bibitem{nolen2017convergence}
{\sc J.~Nolen, J.-M. Roquejoffre, and L.~Ryzhik}, {\em Convergence to a single wave in the {Fisher}-{KPP} equation}, Chinese Annals of Mathematics, Series B, 38 (2017), pp.~629--646.

\bibitem{peng2021sharp}
{\sc R.~Peng, C.-H. Wu, and M.~Zhou}, {\em Sharp estimates for the spreading speeds of the {Lotka}-{Volterra} diffusion system with strong competition}, Annales de l'Institut Henri Poincar{\'e} C, Analyse Non Lin{\'e}aire, 38 (2021), pp.~507--547.

\bibitem{potapov2004climate}
{\sc A.~B. Potapov and M.~A. Lewis}, {\em Climate and competition: the effect of moving range boundaries on habitat invasibility}, Bulletin of Mathematical Biology, 66 (2004), pp.~975--1008.

\bibitem{Rothe}
{\sc F.~Rothe}, {\em Convergence to travelling fronts in semilinear parabolic equations}, Proceedings of the Royal Society of Edinburgh Section A: Mathematics, 80 (1978), pp.~213--234.

\bibitem{shigesada1997biological}
{\sc N.~Shigesada and K.~Kawasaki}, {\em Biological invasions: theory and practice}, Oxford University Press, Oxford, 1997.

\bibitem{stavrianidi2026longtimebehaviorsystemsfisherkpp}
{\sc A.~Stavrianidi}, {\em Long-time behavior for systems of {F}isher-{KPP} type with interacting components}, 2026.

\bibitem{tang1980propagating}
{\sc M.~M. Tang and P.~C. Fife}, {\em Propagating fronts for competing species equations with diffusion}, Archive for Rational Mechanics and Analysis, 73 (1980), pp.~69--77.

\bibitem{MR3243083}
{\sc G.~Teschl}, {\em Mathematical methods in quantum mechanics: With applications to {Schr{\"o}dinger} operators}, vol.~157 of Graduate Studies in Mathematics, American Mathematical Society, Providence, RI, second~ed., 2014.

\bibitem{uchiyama1978behavior}
{\sc K.~Uchiyama}, {\em The behavior of solutions of some non-linear diffusion equations for large time}, Journal of Mathematics of Kyoto University, 18 (1978), pp.~453--508.

\bibitem{Volpert1994travelling}
{\sc A.~I. Volpert, V.~A. Volpert, and V.~A. Volpert}, {\em Traveling wave solutions of parabolic systems}, vol.~140 of Translations of Mathematical Monographs, American Mathematical Society, Providence, RI, 1994.
\newblock Translated from the Russian manuscript by James F. Heyda.

\bibitem{Wang2019uniqueness}
{\sc Y.~Wang, L.~Shi, G.~Liu, and Z.~Ma}, {\em The uniqueness and stability of travelling fronts of competitive systems with different diffusive coefficients}, IMA Journal of Applied Mathematics, 84 (2019), pp.~171--196.

\bibitem{wu2023sharp}
{\sc C.-H. Wu, D.~Xiao, and M.~Zhou}, {\em Sharp estimates for the spreading speeds of the {Lotka}-{Volterra} competition-diffusion system: The strong-weak type with pushed front}, Journal de Math{\'e}matiques Pures et Appliqu{\'e}es, 172 (2023), pp.~236--264.

\bibitem{wxz2024}
\leavevmode\vrule height 2pt depth -1.6pt width 23pt, {\em Complete classification of traveling waves and resolution of linear conjecture in monostable systems}.
\newblock arXiv:2409.12463v2, 2025.
\newblock Version 2, 26 October 2025.

\bibitem{zhang2025sharpasymptoticskppequation}
{\sc M.~Zhang}, {\em Sharp asymptotics for the {KPP} equation with some front-like initial data}, arXiv,  (2025).

\bibitem{Zhao2011existence}
{\sc G.~Zhao and S.~Ruan}, {\em Existence, uniqueness and asymptotic stability of time periodic traveling waves for a periodic {Lotka}-{Volterra} competition system with diffusion}, Journal de Math{\'e}matiques Pures et Appliqu{\'e}es, 95 (2011), pp.~627--671.

\end{thebibliography}

\end{document}